\documentclass[11pt, leqno]{amsart} 
\usepackage{amssymb,amscd,amsfonts,amsbsy}
\usepackage{latexsym}
\usepackage{exscale}
\usepackage{amsmath,amsthm,amsfonts,amsxtra}
\usepackage{mathrsfs}
\usepackage{xcolor}
\usepackage[colorlinks=true, linkcolor=blue, citecolor=red, urlcolor=red, 
]{hyperref}
\usepackage{esint} 
\usepackage{stmaryrd}
\usepackage{pifont}
\usepackage{bm}

\usepackage[utf8]{inputenc}

\calclayout

\newcommand{\black}{\color{black}}

\allowdisplaybreaks

\makeatletter
\@namedef{subjclassname@2020}{%
	\textup{2020} Mathematics Subject Classification}
\makeatother

\theoremstyle{plain}
\newtheorem{theorem}[equation]{Theorem}
\newtheorem{proposition}[equation]{Proposition}
\newtheorem{lemma}[equation]{Lemma}
\newtheorem{corollary}[equation]{Corollary}

\theoremstyle{definition}
\newtheorem{definition}[equation]{Definition}

\numberwithin{equation}{section} 

\def\A{\mathcal{A}}
\def\B{\mathbf{B}}

\def\D{\mathcal{D}}

\def\G{\mathcal{G}}
\def\H{\mathcal{H}}
\def\K{\mathbf{K}}
\def\M{\mathcal{M}}
\def\N{\mathbb{N}}
\def\Q{\mathcal{Q}}
\def\R{\mathbb{R}}
\def\Rd{\mathbb{R}^d}
\def\S{\mathcal{S}}
\def\bS{\mathbb{S}}
\def\T{\mathbf{T}}
\def\bT{\mathbb{T}}
\def\W{\mathcal{W}}
\def\Z{\mathbb{Z}}
\def\c{\boldsymbol{c}}
\def\m{\boldsymbol{m}}
\def\q{\mathfrak{q}}
\def\w{\omega}

\def\Id{\operatorname{Id}}
\def\diam{\operatorname{diam}}

\def\dist{\operatorname{dist}}

\def\inter{\operatorname{int}}
\def\conv{\operatorname{conv}}

\def\supp{\operatorname{supp}}
\def\full{\operatorname{full}}
\def\lac{\operatorname{lac}}
\def\tri{\operatorname{tri}}
\def\rank{\operatorname{rank}}

\DeclareMathOperator*{\esssup}{ess\,sup}

\newcommand{\br}[1]{\langle #1\rangle}

\begin{document}

\title{Multi-scale analysis for multilinear operators}

\author{Mingming Cao}
\address{Mingming Cao\\
Institute for Advanced Study in Mathematics\\ 
Harbin Institute of Technology\\ 
Harbin 150001\\
People's Republic of China}
\email{mingming.cao@hit.edu.cn}

\author{Binyu Du}
\address{Binyu Du\\
Institute for Advanced Study in Mathematics\\ 
Harbin Institute of Technology\\ 
Harbin 150001\\
People's Republic of China} 
\email{bydu@stu.hit.edu.cn}

\author{K\^{o}z\^{o} Yabuta}
\address{K\^{o}z\^{o} Yabuta\\
Research Center for Mathematics and Data Science\\
Kwansei Gakuin University\\
Gakuen 2-1, Sanda 669-1337, Japan}
\email{kyabuta3@kwansei.ac.jp}

\thanks{The first author was supported by the National Natural Science Foundation of China (No. 12571101, 12526417, and 12671117).}

\year=2026 \month=10 \day=12

\date{\today}

\subjclass[2020]{42B20, 42B25}


\keywords{
Multi-scale analysis, 
Multilinear operators,
Sparse domination, 
Rough singular integrals,
Bochner--Riesz means, 
Pseudo-differential operators,
Oscillatory Fourier multipliers,
Spherical maximal functions,
Weighted inequalities}

\begin{abstract}
We develop a unified multi-scale framework for sparse domination of multilinear operators beyond the multilinear Calder\'{o}n--Zygmund theory. More precisely, under mild hypotheses on a support condition, weak type and restricted strong type uniform bounds for finite partial sums, and single-scale improving and regularity estimates, we establish $(r_1, \ldots, r_m, s')$-sparse form bound for a family of localized $m$-linear operators uniformly over all finite partial sums. This sparse bound yields weighted norm inequalities for the corresponding multilinear Muckenhoupt classes $A_{\vec{p}, (\vec{r}, s)}$, including the quasi-Banach range. As applications, we determine the optimal open sparse range for multilinear rough singular integrals, and characterize sparse domination for bilinear pseudo-differential operators. Further applications include sparse bounds for bilinear oscillatory Fourier multipliers, bilinear spherical maximal operators, and bilinear Bochner--Riesz means.
\end{abstract}

\maketitle

\section{Introduction}

\subsection{Motivation} 
Our starting point is the multi-scale structure shared by several classes of multilinear operators in harmonic analysis. Examples include multilinear rough singular integrals \cite{DS, GHH, GHHP23, GHHP24}, bilinear Bochner--Riesz means \cite{BGSY, BCS, JS}, bilinear spherical maximal operators \cite{BFO, BFOPZ, JL}, triangle averaging maximal operators \cite{BF, IPS20, IPS22, PS}, and bilinear pseudo-differential operators \cite{KMT, MT20, RS}. These operators are naturally studied through decompositions in the spatial or frequency variables, or through families of dilations organized into dyadic scales. Although the estimates for individual pieces depend on different features including angular integrability, curvature, oscillation, and symbol regularity, their multi-scale organization suggests a common approach. This observation motivates us to seek a unified framework to identify the estimates needed to pass from individual scales to the full operators.

Sparse domination provides a natural objective for such a framework. It controls an operator or its associated multilinear form by positive expressions built from local averages over sparse collections of cubes \cite{CDO, Ler, LN}. Such estimates contain more local information than a single operator-norm inequality and provide a common route to families of sharp or quantitative weighted inequalities \cite{BFP, CIRXY, CDO, LMO}. In particular, domination at the level of multilinear forms has made it possible to treat operators beyond classical  Calder\'{o}n--Zygmund theory \cite{BRS, BBL, CO, CCDO, Hu, KL, KraL, LMR, LS}. Rather than proving separate weighted estimates for each operator, one can first establish a sparse bound and then apply the weighted theory of the corresponding positive forms. Therefore, this naturally leads us to ask which common multi-scale properties of the operators above can be used to establish their sparse domination.

Important steps in this direction were taken by Lacey \cite{Lac} and Oberlin \cite{Obe}. Lacey used localized $L^p$-improving estimates and their continuity versions to obtain sparse bounds for the full and lacunary spherical maximal functions. Soon after, adapting this approach to a class of model singular and maximal Radon transforms, Oberlin combined single-scale improving estimates with regularization and Fourier decay to obtain sparse bounds with logarithmic factors in the local averages. Subsequently, Beltran--Roos--Seeger developed these ideas into an abstract theory in their monograph \cite{BRS}, which presented some useful applications include Fourier multipliers, maximal functions, square functions, and variation norm operators.

Motivated by these developments above, the purpose of this article is to establish a multi-scale framework of sparse domination for multilinear operators. We formulate common hypotheses on localized operator families that can be verified using the different analytic estimates available for the classes above. An additional difficulty in the multilinear setting is the interaction of good and bad functions in different input variables. Following the idea of using auxiliary exponents in the bilinear spherical analysis of \cite{BFOPZ}, we incorporate suitable auxiliary improving and regularity estimates into our abstract hypotheses. The resulting theorem yields sparse bounds uniform over finite partial sums, with logarithmic dependence on the regularity constant, and will prove highly useful in applications.

\subsection{Main theorem}
Before stating our main theorem, we introduce some notation and basic concepts. Let $\B_1, \ldots, \B_m, \B$ be Banach spaces. Let $\mathscr{S}_{\B_i}$ denote the collection of all $\B_i$-valued compactly supported simple functions. If an $m$-linear operator
\begin{align*}
T: \mathscr{S}_{\B_1} \times \cdots \times \mathscr{S}_{\B_m} \to \text{weakly measurable $\B$-valued functions}
\end{align*}
satisfies that for every $f_1 \in \mathscr{S}_{\B_1}, \ldots, f_m \in \mathscr{S}_{\B_m}, f_{m+1} \in \mathscr{S}_{\B^*}$ the $(m+1)$-linear pairing
\begin{align*}
\langle T(\vec{f}), f_{m+1} \rangle 
:= \int_{\R^d} \langle T(\vec{f})(x), f_{m+1}(x) \rangle_{(\B, \B^*)} \, dx
\end{align*}
is well defined, then we write $T \in \mathscr{L}_m$.

Define 
\begin{align*}
T \circ_i \Delta_h(\vec{f})
:= T(f_1, \ldots, f_{i-1}, \Delta_h f_i, f_{i+1}, \ldots, f_m), \quad i=1, \ldots, m,
\end{align*}
where $\Delta_h f(x) := f(x+h) - f(x)$. Throughout this paper, we assume
\begin{align*}
1 < r_1, \ldots, r_m, s < \infty
\quad \text{ and } \quad
\frac1s < \frac1r := \sum_{i=1}^m \frac{1}{r_i}.
\end{align*}
Let $\{T_j\}_{j\in\mathbb Z}$ be a family of $m$-linear operators in $\mathscr{L}_m$. To state the main theorem conveniently, we make the following assumptions:
\begin{list}{\rm (\theenumi)}{\usecounter{enumi}\leftmargin=1.3cm \labelwidth=1cm \itemsep=0.1cm \topsep=0.2cm \renewcommand{\theenumi}{H\arabic{enumi}}}

\item\label{H1} The support condition: 
\begin{align*}
\supp T_j(\vec{f})
\subset \big\{x \in \Rd: \dist(x, \supp(f_i)) \le A_1 \, 2^j, i=1, \ldots, m \big\},
\end{align*}
for every $j \in \Z$ and $f_i \in \mathscr{S}_{\B_i}$, $i=1, \ldots, m$.

\item\label{H2} The weak type estimate:
\begin{align*}
\sup_{N_1 \le N_2}
\bigg\|\sum_{j=N_1}^{N_2} T_j \bigg\|_{L^{r_1}(\B_1) \times \cdots \times L^{r_m}(\B_m) \to L^{r, \infty}(\B)}
\le A_2.
\end{align*}

\item\label{H3} The restricted strong estimate:
\begin{align*}
\sup_{N_1 \le N_2} \bigg\|\sum_{j=N_1}^{N_2} T_j \bigg\|_{L^{s_1, 1}(\B_1) \times \cdots \times L^{s_m, 1}(\B_m) \to L^s(\B)}
\le A_3,
\end{align*}
for some $\vec{s} \in (1, \infty)^m$ with $\frac1s = \sum_{i=1}^m \frac{1}{s_i}$.

\item\label{H4} The single-scale improving estimate:
\begin{align*}
\|T_j\|_{L^{r_1}(\B_1) \times \cdots \times L^{r_m}(\B_m) \to L^s(\B)}
& \le A_4 \, 2^{-jd(\frac1r - \frac1s)}, \quad \forall j \in \Z.
\end{align*}
In addition, there exist auxiliary exponents $r_1^*, \ldots, r_m^* \in [1, \infty]$ such that
\begin{align*}
\|T_j\|_{L^{r_1(I)}(\B_1) \times \cdots \times L^{r_m(I)}(\B_m) \to L^{s(I)}(\B)}
& \le A_4 \, 2^{-jd \sum_{i \in I} \frac{1}{r_i^*}}, \quad \forall j \in \Z,
\end{align*}
for every nonempty proper subset $I \subsetneq \{1, \ldots, m\}$, where $r_i(I) = r_i^*$ if $i \in I$, $r_i(I) = r_i$ if $i \notin I$, and $\frac{1}{s(I)} = \sum_{i \notin I} \frac{1}{r_i}$.

\item\label{H5} The single-scale regularity: each 
$T'_j \in \{\Delta_{2^j h} T_j, T_j \circ_1 \Delta_{2^j h}, \ldots, T_j \circ_m \Delta_{2^j h}\}$, 

\begin{align*}
\|T'_j\|_{L^{r_1}(\B_1) \times \cdots \times L^{r_m}(\B_m) \to L^s(\B)}
\le A_5 |h|^{\vartheta} 2^{-jd(\frac1r - \frac1s)}, \quad \forall j \in \Z,
\end{align*}
for all $|h| \le 1$ and for some $\vartheta \in (0, 1]$. With the same auxiliary exponents $r_1^*, \ldots, r_m^*$ as in \eqref{H4}, we further assume that
\begin{align*}
\|T'_j\|_{L^{r_1(I)}(\B_1) \times \cdots \times L^{r_m(I)}(\B_m) \to L^{s(I)}(\B)}
\le A_5 |h|^{\vartheta} \, 2^{-jd \sum_{i \in I} \frac{1}{r_i^*}}, \quad \forall j \in \Z,
\end{align*}
for all $|h| \le 1$ and for every nonempty proper subset $I \subsetneq \{1, \ldots, m\}$, where $r_i(I) = r_i^*$ if $i \in I$, $r_i(I) = r_i$ if $i \notin I$, and $\frac{1}{s(I)} = \sum_{i \notin I} \frac{1}{r_i}$.
\end{list}

A {\tt dyadic child} of a cube $Q \subset \Rd$ is any of the $2^d$ congruent subcubes obtained by bisecting $Q$ in each coordinate direction. Let $\D(Q)$ consist of $Q$ and all its dyadic descendants. A collection $\mathscr{D}$ of cubes is said to be a {\tt dyadic lattice} if it satisfies
\begin{enumerate}
\item[$(1)$] For every $Q \in \mathscr{D}$, each child of $Q$ is in $\mathscr{D}$ as well;

\item[$(2)$] For every cubes $Q_1, Q_2 \in \mathscr{D}$, there exists $Q \in \mathscr{D}$ such that $Q_1, Q_2 \in \D(Q)$;

\item[$(3)$] For every compact set $K \subset \Rd$, there exists a cube $Q \in \mathscr{D}$ containing $K$.
\end{enumerate}
Given $\gamma \in (0, 1)$, a subset $\S$ of a dyadic lattice is called {\tt $\gamma$-sparse} if for every $Q \in \S$ there exists a measurable set $E_Q \subset Q$ such that $|E_Q| \ge \gamma |Q|$ and the family $\{E_Q: Q \in \S\}$ is pairwise disjoint. Then define the maximal sparse form by
\begin{align*}
\Lambda_{\vec{r}, r_{m+1}}^*(\vec{f}, f_{m+1})
:= \sup_{\S: \gamma \text{-sparse}} 
\sum_{Q \in \S} |Q| \prod_{i=1}^{m+1} \langle |f_i|_{\B_i} \rangle_{Q, r_i},
\end{align*}
whenever $r_i \ge 1$ and $f_i \in \mathscr{S}_{\B_i}$, $i=1, \ldots, m+1$.

Our main theorem is formulated as follows.

\begin{theorem}\label{thm:main}
If the family $\{T_j\}_{j \in \Z}$ of $m$-linear operators in $\mathscr{L}_m$ satisfies the hypotheses \eqref{H1}--\eqref{H5} with $A_i \in (0, \infty)$, $i=1, \ldots, 5$, then
\begin{align*}
\sup_{\substack{N_1, N_2 \in \Z \\ N_1 \le N_2}} 
\bigg| \bigg\langle \sum_{j = N_1}^{N_2} T_j(\vec{f}), \, f_{m+1} \bigg\rangle_{(\B, \B^*)} \bigg|
\lesssim A_0 \, \Lambda_{\vec{r}, s'}^*(\vec{f}, f_{m+1}).
\end{align*}
for all $f_i \in L_c^{\infty}(\Rd; \B_i)$, $i=1, \ldots, m$, and $f_{m+1} \in L_c^{\infty}(\Rd; \B^*)$, where 
\begin{align*}
A_0 := A_2 + A_3 + A_4 \log_2 (2 + A_4^{-1} A_5).
\end{align*}
\end{theorem}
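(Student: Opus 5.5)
We follow the scheme of Beltran--Roos--Seeger \cite{BRS}: an iterative stopping-time argument on dyadic cubes, adapted to the multilinear setting. By the three-lattice trick and the support condition \eqref{H1}, we may fix a dyadic lattice $\mathscr{D}$ and a cube $Q_0 \in \mathscr{D}$ such that all $f_i$ are supported in $Q_0$. All scales $2^j$ with $2^j \gtrsim \ell(Q_0)$ are then treated directly. Since every $T_j(\vec f)$ is supported in a dilate of $Q_0$ of size $\sim 2^j$, the single-scale improving bound \eqref{H4} and Hölder's inequality give
\[
|\langle T_j(\vec f), f_{m+1}\rangle| \lesssim A_4\, 2^{-jd(\frac1r-\frac1s)} |3^j Q_0|\prod_i \langle f_i\rangle_{3^jQ_0,r_i}\langle f_{m+1}\rangle_{3^jQ_0,s'} .
\]
The factors produced this way decay geometrically in $j$ once $2^j \gg \ell(Q_0)$, so these terms are absorbed into a sparse family consisting of ancestors of $Q_0$. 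We may therefore reduce to the localized statement: for $j$ with $2^j \le \ell(Q)$,
\[
\Big|\Big\langle \sum_{N_1\le j\le \log_2\ell(Q_0)} T_j(\vec f), f_{m+1}\Big\rangle\Big| \lesssim A_0 |Q_0|\prod_{i=1}^m\langle f_i\rangle_{Q_0,r_i}\langle f_{m+1}\rangle_{Q_0,s'} + \sum_{Q'\in\mathcal{Q}}(\text{same form on }Q'),
\]
where $\mathcal Q$ is a disjoint family of maximal dyadic subcubes with $\sum|Q'|\le \frac12|Q_0|$. Iterating this estimate yields the sparse family.

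To build $\mathcal Q$, we take the maximal dyadic cubes on which $\langle f_i\rangle_{Q',r_i} > C\langle f_i\rangle_{Q_0,r_i}$ for some $i$, or $\langle f_{m+1}\rangle_{Q',s'} > C\langle f_{m+1}\rangle_{Q_0,s'}$. We then perform a Calderón--Zygmund decomposition $f_i = g_i + \sum_{Q'} b_{i,Q'}$ in $L^{r_i}$, and likewise for $f_{m+1}$ in $L^{s'}$. Here $\|g_i\|_\infty \lesssim \langle f_i\rangle_{Q_0,r_i}$ and each $b_{i,Q'}$ is supported on $Q'$ with mean zero. Expanding multilinearly, the pairing splits into terms indexed by the set $I\subset\{1,\dots,m\}$ of slots carrying bad functions, together with whether $f_{m+1}$ is good or bad. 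The term with all functions good, including $g_{m+1}$, is handled by the restricted strong bound \eqref{H3}, applied to bounded functions on $Q_0$, and gives the main term $A_3$. Terms in which all inputs are good but $f_{m+1}$ is bad split as follows. For scales $2^j<\ell(Q')$ the pairing localizes to a neighbourhood of $Q'$ by \eqref{H1}, and these pieces go into the recursion. For scales $2^j \ge \ell(Q')$ we use the cancellation of $b_{m+1,Q'}$ together with the regularity of $T_j$ in the output variable, namely the operator $\Delta_{2^jh}T_j$ in \eqref{H5}. The weak-type bound \eqref{H2} controls the exceptional set and the pieces near $\bigcup Q'$.

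The main obstacle is the class of mixed terms, where some inputs $f_i$ with $i\in I\ne\emptyset$ are bad. The scale sum splits into three ranges. Scales $2^j<\ell(Q')$ are local and are put into the recursion. For scales $2^j \ge \ell(Q')$ we have two competing bounds. The first is a pure size estimate from the auxiliary improving bound in \eqref{H4}: the bad functions sit in $L^{r_i^*}$ of a cube of scale $2^j$, the good functions in $L^{r_i}$, with output in $L^{s(I)}$. Summing over the cubes $Q'$ of comparable size, the normalization $2^{-jd\sum_{i\in I}1/r_i^*}$ converts $\|b_{i,Q'}\|_{r_i^*}$ into averages and yields a bound $\lesssim A_4$ per scale, with no decay. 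The second uses the mean zero of $b_{i,Q'}$, which lets us write $T_j\circ_i\Delta_{2^jh}$ averaged over $|h|\lesssim \ell(Q')2^{-j}$; then \eqref{H5} gives $A_5(\ell(Q')2^{-j})^{\vartheta}$. Writing $2^j = 2^{k}\ell(Q')$, we use the $A_4$ bound when $k \le \log_2(2+A_5/A_4)/\vartheta$ and the $A_5 2^{-k\vartheta}$ bound for larger $k$. Summing over $k$ gives exactly $A_4\log_2(2+A_4^{-1}A_5)$. The delicate point, which we expect to be the hardest, is organizing the sum over the configurations $(Q'_i)_{i\in I}$ of different sizes, which interact with a single scale $j$. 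We plan to order the cubes by size, assign each scale to the smallest bad cube involved, and use the bounded overlap coming from \eqref{H1} so that the sum over cubes reduces to $\sum_{Q'}|Q'|\prod\langle\cdot\rangle$. This sum is controlled by the stopping condition, via Hölder's inequality in the exponents $r_i^*$, $r_i$, $s(I)$ and $s'$.
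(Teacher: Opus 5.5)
There is a genuine gap. Your scheme has no mechanism for the quasi-Banach range $\frac1r=\sum_{i=1}^m\frac1{r_i}\ge 1$. The theorem allows this range, since only $r_i,s>1$ and $\frac1s<\frac1r$ are assumed, and it is the main new difficulty compared with the linear argument of \cite{BRS}.

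At some point you must bound pairings of full outputs against bounded functions. The terms with the good part $g_{m+1}$ and at least one bad input can only be handled collectively, as $\langle T(\vec f),g_{m+1}\rangle-\langle T(\vec g),g_{m+1}\rangle$. Likewise, with $T^{\le}_{Q'}:=\sum_{N_1\le j\le\log_2\ell(Q')}T_j$, converting $\langle T^{\le}_{Q'}(\vec f),b_{m+1,Q'}\rangle$ into the recursive quantity $\langle T^{\le}_{Q'}(\vec f),f_{m+1}\mathbf 1_{Q'}\rangle$ leaves the correction $[f_{m+1}]_{Q'}\int_{Q'}T^{\le}_{Q'}(\vec f)$. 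Hypothesis \eqref{H2} places these outputs only in $L^{r,\infty}$, which is not locally integrable when $r\le1$. So your claim that \eqref{H2} ``controls the pieces near $\bigcup Q'$'' has no content there. Note also that your exceptional set, built only from averages of the $f_i$, never uses \eqref{H2}.

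Relatedly, the small scales of the mixed terms cannot be ``put into the recursion''. The induction hypothesis applies only to $\langle T^{\le}_{Q'}(\vec f\mathbf 1_{3Q'}),f_{m+1}\mathbf 1_{Q'}\rangle$, with every input equal to the original $f_i$. A piece such as $\langle T_j(\ldots,b_{i,Q'},\ldots,g_k,\ldots),g_{m+1}\rangle$ with $2^j<\ell(Q')$ is not of that form.

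The paper fixes this as follows:
\begin{itemize}
\item It splits scales only relative to the cube carrying the bad part of $f_{m+1}$.
\item It adds $\{M_\tau(T(\vec f))>K_0A_2\prod_i\langle f_i\rangle_{3Q_0,r_i}\}$, with $\tau<\min\{r,1\}$, to the exceptional set.
\item It uses a Whitney decomposition, so every stopping cube $Q$ has a point of $\Omega^c$ in $\kappa Q$.
\item It replaces $[f_{m+1}]_Q\mathbf 1_Q$ by $[f_{m+1}]_Q\phi_Q$, where $\phi_Q=\frac{|Q|}{|F_Q|}\mathbf 1_{F_Q}$. Here $F_Q\subset Q$ has $|F_Q|\ge\frac12|Q|$, and on $F_Q$ both $|T(\vec f)|$ and $|T^{\le}_Q(\vec f)|$ are pointwise $\lesssim A_2\prod_i\langle f_i\rangle_{3Q_0,r_i}$.
\end{itemize}
The modified bad parts keep mean zero. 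Every pairing of $T(\vec f)$ or $T^{\le}_Q(\vec f)$ with the modified good part is then controlled pointwise, with no integrability of $T(\vec f)$ needed.

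Second, you use the auxiliary estimates in \eqref{H4} the wrong way round. You take $I$ to be the set of bad slots and put the bad functions in $L^{r_i^*}$. But the stopping condition controls $b_{i,Q'}$ only in $L^{r_i}$, and in the applications $r_i^*$ is large or even $\infty$, so $\|b_{i,Q'}\|_{L^{r_i^*}}$ is not available.

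One must instead take $I=\{1,\ldots,m\}\setminus\sigma$, where $\sigma$ is the set of bad slots:
\begin{itemize}
\item The good functions, being bounded, go into $L^{r_i^*}$, and the bad ones stay in $L^{r_i}$.
\item The output lies in $L^{s(I)}$ with $\frac1{s(I)}=\sum_{i\in\sigma}\frac1{r_i}$.
\item It is paired with the bad part of $f_{m+1}$ in $L^{s(I)'}$, which is controlled by $L^{s'}$ averages exactly when $\sum_{i\in\sigma}\frac1{r_i}<\frac1s$. Otherwise the main estimate in \eqref{H4} is used.
\end{itemize}
This gives, for each $Q$ with $\ell(Q)=2^j$, a bound $A_4\,\beta_{Q,\le j_0}\prod_{i=1}^{m+1}\langle f_i\rangle_{3Q_0,r_i}$. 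It is proportional to the total measure $\beta_{Q,\le j_0}$ of the bad cubes in $5Q$ of sidelengths $2^{j-k}$, $1\le k\le j_0$, rather than to $|Q|$. That proportionality is what lets the sum over all $j$ and $Q$ be bounded by $j_0|Q_0|$.

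Your near/far split at $k\approx\vartheta^{-1}\log_2(2+A_4^{-1}A_5)$ does match the paper. So does taking cancellation in the variable carrying the smallest bad cube, via $\Id-\mathbb E_{K-j}$ and Lemma~\ref{lem:reg}.
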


Our proof is a hybrid of ideas in \cite{BRS, Lac, Obe} and additional multilinear techniques. The core argument combines an induction on the number of scales with a common stopping-time decomposition. Unfortunately, in the quasi-Banach range $r \in (0, 1)$, the pairing cannot be estimated by the weak type estimate \eqref{H2}. To address this issue, our proof incorporates the maximal terms of both individual  functions and the partial sum operator into the stopping condition, and uses a Whitney decomposition to construct stopping cubes with controlled total volume. Inside each cube, it further selects a subset of large measure on which both the partial sum operator and its local version are controlled. We then restrict the average of the function $f_{m+1}$ to this subset, and guarantee cancellation of the bad part at the same time. In addition, the mixed good-bad terms are split into the full term and the all-good term, which can be bounded using weak type estimate and local pointwise estimates. Beyond above, another major multilinear difficulty is to control interactions among bad functions supported at different scales without accumulating losses in the recursive constants. To overcome this difficulty, we organize these interactions by the largest scale separation and use cancellation in a single selected variable. Moreover, auxiliary improving estimates  allow one to estimate each combination of good and bad functions by the total measure of the relevant bad cubes, while a near-far scale decomposition yields logarithmic dependence on the regularity constant.

\subsection{Applications of Theorem \ref{thm:main}}
Let us present several applications of Theorem \ref{thm:main}, which clearly illustrate the effectiveness and flexibility of our abstract sparse domination. Since weighted norm inequalities are a direct consequence of Theorems \ref{thm:main} and \ref{thm:form}, we omit their explicit statements. 

\subsubsection{\bf{Multilinear rough singular integrals}}
Given $\Omega \in L^q(\bS^{md-1})$ with $q \in [1, \infty]$ and $\int_{\bS^{md-1}} \Omega\, d\sigma=0$, we define the {\tt multilinear rough singular integral operator} $T_{\Omega}$ by 
\begin{equation*}
T_{\Omega}(f, g)(x) 
= \mathrm{p.v.} \int_{\R^{md}} K_{\Omega}(x-\vec{y}) \prod_{i=1}^m f_i(y_i) \, d\vec{y},  
\end{equation*}
where the rough kernel $K_{\Omega}$ is given by 
\begin{equation*}
K_{\Omega}(\vec{y}) 
:= \frac{\Omega(\vec{y}/|\vec{y}|)}{|\vec{y}|^{md}} 
\quad \text{with} \quad \vec{y} = (y_1, \ldots, y_m). 
\end{equation*}
Given $N \ge 2$ and $q \in [1, \infty]$, define the regions
\begin{align*}
\H_N^1(q) 
& := \bigg\{\vec{a} \in (0, 1)^N: 
\sum_{i \in I} a_i + \frac{|I| - 1}{q} < |I|, \, \, 
\forall \varnothing \ne I \subsetneq \{1, \ldots, N \}\bigg\},
\\
\H_N^d(q) 
& := \bigg\{\vec{a} \in (0, 1)^N: 
\sum_{i \in I} a_i + \frac{|I| - 1}{q} < |I|, \, \, 
\forall \varnothing \ne I \subseteq \{1, \ldots, N \}\bigg\},
\end{align*}
for every $d \ge 2$. Note that $\H_N^d(q) \subsetneq \H_N^1(q)$ for any $q \ne \infty$ and $d \ge 2$, and the only difference between them is the full-set condition.

We first establish sparse domination for $T_{\Omega}$ with an optimal open exponent range in all dimensions.

\begin{theorem}\label{thm:rough}
Let $d, m \ge 1$, $q \in (1, \infty]$, and $1 < r_1, \ldots, r_m, s < \infty$ with $\frac1s < \sum_{i=1}^m \frac{1}{r_i}$. If $\big(\frac{1}{r_1}, \ldots, \frac{1}{r_m}, \frac{1}{s'} \big) \in \H_{m+1}^d(q)$, then 
\begin{align*}
|\langle T_{\Omega}(\vec{f}), f_{m+1} \rangle|
\lesssim \|\Omega\|_{L^q} \, \Lambda_{\vec{r}, s'}^*(\vec{f}, f_{m+1}),
\end{align*}
for all $\Omega \in L^q(\bS^{md-1})$ with $\int_{\bS^{md-1}} \Omega\, d\sigma=0$, and 
for all $f_1, \ldots, f_{m+1} \in L_c^{\infty}(\Rd)$. Conversely, if the above sparse bound holds uniformly in $\Omega$ and $(\vec{f}, f_{m+1})$, then $\big(\frac{1}{r_1}, \ldots, \frac{1}{r_m}, \frac{1}{s'} \big) \in \overline{\H_{m+1}^d(q)}$.
\end{theorem}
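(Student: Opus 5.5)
The plan is to derive the sufficiency direction from Theorem~\ref{thm:main}, applied to a dyadic decomposition of the kernel, and to prove necessity by testing the sparse bound on explicit examples.

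\textbf{Decomposition.} Fix a smooth radial $\phi$ supported in $\{1/2\le|\vec y|\le 2\}$ with $\sum_j \phi(2^{-j}\vec y)=1$ on $\R^{md}\setminus\{0\}$. Set $K_j=\phi(2^{-j}\cdot)K_\Omega$ and let $T_j$ be the $m$-linear operator with kernel $K_j(x-\vec y)$.
\begin{itemize}
\item[\eqref{H1}] Holds with $A_1=2$.
\item[\eqref{H2}, \eqref{H3}] Use the known uniform $L^{r_1}\times\cdots\times L^{r_m}\to L^{r,\infty}$ bounds for truncated multilinear rough singular integrals with $\Omega\in L^q$, $q>1$ (Grafakos--He--Honz\'ik--Park, including the quasi-Banach range), giving $A_2+A_3\lesssim\|\Omega\|_{L^q}$. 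The bound for $T_{\Omega}$ itself then follows by letting $N_1\to-\infty$, $N_2\to\infty$.
\item[\eqref{H4}] Rescaling reduces it to $j=0$. For $K_0\in L^q(\R^{md})$ with compact support, Young/H\"older-type inequalities give $L^{r_1}\times\cdots\times L^{r_m}\to L^s$ bounds, with the auxiliary exponents $r_i^*$ chosen (e.g.\ $r_i^*=\infty$ or $1$) to fit the subset conditions.
\item[\eqref{H5}] Needs regularity, which $\Omega\in L^q$ lacks. Following \cite{BRS}/Oberlin, further split $K_j=\sum_{l\ge0}K_{j,l}$ by a Littlewood--Paley decomposition in frequency at scale $2^{l-j}$.
\end{itemize}

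\textbf{Applying Theorem~\ref{thm:main} to the pieces.} For each $l$, the family $\{T_{j,l}\}_j$ has constants of the following sizes:
\begin{itemize}
\item $A_4\lesssim 2^{-\epsilon l}\|\Omega\|_{L^q}$, obtained by interpolating the $L^2$ decay $2^{-\epsilon l}$ (Fourier decay of smooth-in-$|\vec y|$, rough-in-angle kernels) against trivial $L^q$ kernel bounds;
\item $A_5\lesssim 2^{l}\|\Omega\|_{L^q}$;
\item $A_2,A_3\lesssim 2^{-\epsilon' l}\|\Omega\|_{L^q}$, which must come from the existing weak-type theory of the pieces for $(1/r_i)$ near the required point.
\end{itemize}
Theorem~\ref{thm:main} then gives the constant $A_0\lesssim(1+l)2^{-\epsilon'' l}\|\Omega\|_{L^q}$, and summing over $l$ converges.

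\textbf{Exponent range.} The set of $(\vec r,s')$ for which these decay estimates hold is obtained by interpolating between $L^2$ decay and the endpoint single-scale $L^{r_i}\to L^s$ bounds. The subset conditions of $\H^d_{m+1}(q)$ for proper $I$ correspond to the auxiliary bounds in \eqref{H4}. The full-set condition is the genuine $d\ge2$ restriction: a rough kernel in $L^q$ is only in $L^q$ locally in $\R^{md}$, so improving $L^{r}\to L^{s}$ at one scale requires $\sum a_i + m/q<m+1$.

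\textbf{Necessity.} Test the sparse form against $\Omega$ concentrated on a small cap of radius $\delta$, normalized in $L^q$, so that $\Omega\approx \delta^{-(md-1)/q}\mathbf 1_{\text{cap}}$. Take the $f_i$ as indicators of tubes/boxes adapted to the cap. For a subset $I$, make the functions with $i\in I$ thin (width $\delta$) and the others fat. Comparing $|\langle T_\Omega\vec f,f_{m+1}\rangle|$ with the sparse form as $\delta\to0$ forces each inequality in the closure of $\H^d_{m+1}(q)$; for $d\ge2$ the full-set case uses all $m+1$ functions on $\delta$-tubes.

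\textbf{Main obstacle.} I expect the hardest part to be the exact matching of the exponent range: producing the $2^{-\epsilon l}$ decay in \eqref{H2}--\eqref{H4} on the whole open region, with compatible auxiliary exponents, and constructing examples that are sharp for every $I$ simultaneously. I would first treat the $L^2$-based case and then interpolate with the $\ell=0$ trivial bounds.
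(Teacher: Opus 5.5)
Your overall scheme is the paper's: frequency pieces indexed by $\ell$, Theorem~\ref{thm:main} applied to each with $A_4\lesssim 2^{-\varepsilon|\ell|}$ and $A_5\lesssim 2^{C|\ell|}$, giving $(1+|\ell|)2^{-\varepsilon|\ell|}$, then summation in $\ell$. The sufficiency argument, however, has three real holes.

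\textbf{Support condition.} Cutting $\widehat{K_j}$ in frequency convolves $K_j$ with a Schwartz function, so $K_{j,l}$ is no longer supported in $\{|\vec y|\lesssim 2^j\}$. Hence \eqref{H1} fails and Theorem~\ref{thm:main} cannot be applied to $\{T_{j,l}\}_j$. The paper first sums in $j$, $K^\ell=\sum_j K_j^\ell$, and then re-localizes in space, $\K_j^\ell=K^\ell\Psi_j$. The partial sums are then smooth truncations of $K^\ell$, and their bounds, including quasi-Banach targets, follow from \eqref{eq:GHH}--\eqref{eq:DS} via Lemma~\ref{lem:TKfg} part~\eqref{TKfg-4}. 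Lemma~\ref{lem:CZ} supplies the pointwise bounds behind \eqref{H5}.

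\textbf{Reaching the full region in \eqref{H4}.} Interpolating with an $L^2\times\cdots\times L^2$ decay point cannot cover the region. The point $(\frac12,\ldots,\frac12)$ satisfies $\mathscr{N}_q<\frac1q$ only when $q>2(m-1)/m$, so for $m\ge3$ and $q$ near $1$ there is nothing to interpolate with. The paper instead treats $\Omega$ as an extra input and interpolates between two endpoints:
\begin{itemize}
\item $\Omega\in L^{q(1-\theta)}$, where the Dosidis--Slav\'{i}kov\'{a} and Grafakos--He--Honz\'{i}k decay of $T^\ell$ holds on all of $\H^d_m(q(1-\theta))$;
\item $\Omega\in L^\infty$, where $|\T_j^\ell(\vec f)|\lesssim\prod_i\mathcal{A}_jf_i$.
\end{itemize}
This gives an $L^\tau$ bound with $\tau>t_0$. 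Localizing to cubes of side $2^j$ then yields \eqref{H4} exactly when $\mathscr{N}_q(1/\vec t)<\min\{\frac1q,\frac1{t_0}\}$, with $r_i^*=\infty$.

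\textbf{The case $d=1$.} Here the theorem asserts the larger region $\H^1_{m+1}(q)$, which has no full-set condition. Your explanation of that condition (local $L^q$-integrability of $K_0$) applies verbatim when $d=1$, so your scheme cannot remove it. The missing ingredient is Lemma~\ref{lem:imp}. In polar coordinates, pair $f_{i_0}$ (with $|\theta_{i_0}|\ge m^{-1/2}$) against $f_{m+1}$ and change variables $(x,t)\mapsto(x-t\theta_{i_0},x)$. This gives an $\Omega\in L^1$ endpoint at $(\frac1m,\ldots,\frac1m)$, hence all of $\overline{\H^1_{m+1}(1)}$. Interpolating with $\Omega\in L^\infty$ then produces the main estimate in \eqref{H4} on the range $\mathscr{N}_q(1/\vec r)<\mathscr{M}_q(1/\vec r)+\frac1s$.

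\textbf{Necessity.} Your construction is not sharp. Take $\Omega\approx\delta^{-(md-1)/q}\mathbf 1_{\mathrm{cap}}$ with the $f_i$, $i\notin I$, fat. The directions $\vec w/|\vec w|$, $w_i=x_{m+1}-x_i$, produced by $\prod_iF_i$ fill a neighbourhood of a set of dimension $md-1-d(|I|-1)$. A $\delta$-cap therefore sees only a small fraction of the configurations, and the resulting inequality is in general strictly weaker:
\begin{itemize}
\item for $m=d=2$, $I=\{1,2\}$ it gives $\frac1{r_1}+\frac1{r_2}\le\frac52-\frac{3}{2q}$ instead of $2-\frac1q$;
\item the tube test with a cap is off by $\frac{(m-1)(1-1/q)}{d-1}$ when $m\ge2$.
\end{itemize}
You need $\Omega$ of height $\varepsilon^{-N/q}$ on the $\varepsilon$-band around the subspace where the blocks $w_i$, $i\in I$ (with $w_{m+1}=0$), coincide. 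This band has measure $O(\varepsilon^N)$ with $N=d(|I|-1)$ (Lemma~\ref{lem:ang}), and every configuration in $\prod_iF_i$ is counted. For the tubes, use the band $\{|(w_1',\ldots,w_m')|\lesssim\varepsilon|\vec w|\}$ with $N=m(d-1)$. Your $\Omega$ also has nonzero mean. The paper uses the odd function $\varepsilon^{-N/q}(\mathbf 1_{E'_\varepsilon}-\mathbf 1_{-E'_\varepsilon})$, with a half-space condition guaranteeing that every configuration direction lies in $E'_\varepsilon$ and none in $-E'_\varepsilon$. It bounds the sparse side through the separation Lemma~\ref{lem:sep}.
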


To present the optimal open symmetric sparse range, we introduce the critical exponent by
\begin{equation*}
r_c = r_c(d, m, q) := 
\begin{cases}
\frac{m}{m-(m-1)/q}, & d = 1,
\\
\frac{m+1}{m+1-m/q}, & d \ge 2.
\end{cases}
\end{equation*}

The following result is a direct consequence of Theorem \ref{thm:rough}. 

\begin{corollary}\label{cor:rough}
Let $d, m \ge 1$ and $q \in (1, \infty]$. Then for any $r > r_c$, 
\begin{align*}
|\langle T_{\Omega}(\vec{f}), f_{m+1} \rangle|
\lesssim_r \|\Omega\|_{L^q} \, \Lambda_{r, \ldots, r}^*(\vec{f}, f_{m+1}),
\end{align*}
for all $\Omega \in L^q(\bS^{md-1})$ with $\int_{\bS^{md-1}} \Omega\, d\sigma=0$, and for all $f_1, \ldots, f_{m+1} \in L_c^{\infty}(\Rd)$. Conversely, for any $r \in (1, \infty)$, if the above sparse bound holds uniformly in $\Omega$ and $(\vec{f}, f_{m+1})$, then $r \ge r_c$. Thus, $(r_c, m+1)$ is the optimal open symmetric sparse range in the nontrivial regime.
\end{corollary}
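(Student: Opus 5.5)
The statement is Corollary \ref{cor:rough}. I will deduce it from Theorem \ref{thm:rough} by specializing to the symmetric exponent $r_1 = \cdots = r_m = s' = r$ and computing when $(\frac1r, \ldots, \frac1r) \in \H_{m+1}^d(q)$. Here $s = r'$, so the standing assumption $\frac1s < \sum_i \frac1{r_i}$ reads $1 - \frac1r < \frac{m}{r}$, that is, $r < m+1$. This is why the corollary speaks of the nontrivial regime $r < m+1$. For $r \ge m+1$ the claimed bound still follows: $\Lambda^*_{r,\ldots,r}$ is monotone increasing in $r$ by Hölder, so it suffices to use any admissible smaller exponent.

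Put $a = \frac1r \in (0,1)$. For a subset $I$ with $|I| = k$, the condition defining $\H_{m+1}$ is $ka + \frac{k-1}{q} < k$, equivalently $a < 1 - \frac{k-1}{kq}$. The right-hand side decreases in $k$, so the binding constraint comes from the largest admissible $k$.
\begin{itemize}
\item For $d \ge 2$ the full set is allowed, so $k = m+1$ and the condition is $a < \frac{m+1-m/q}{m+1}$, i.e. $r > r_c$.
\item For $d = 1$ only proper subsets count, so $k = m$ and the condition is $a < \frac{m-(m-1)/q}{m}$, i.e. $r > r_c$.
\end{itemize}
If $m = 1$ and $d = 1$, there are no constraints from proper subsets beyond $k = 1$, which is consistent with $r_c = 1$.

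Thus for $r_c < r < m+1$, the point $(\frac1r, \ldots, \frac1r)$ lies in $\H^d_{m+1}(q)$, and Theorem \ref{thm:rough} gives the sparse bound. For $r \ge m+1$ I use the monotonicity noted above. One check is needed: $r_c < m+1$, so that some admissible smaller exponent exists. This holds because $r_c \le \frac{m+1}{m+1-m} = m+1$, with strict inequality when $q > 1$.

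For the converse, the uniform bound together with Theorem \ref{thm:rough} puts the point in the closure $\overline{\H^d_{m+1}(q)}$. Hence the same inequalities hold non-strictly, which gives $r \ge r_c$.

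The main obstacle is bookkeeping rather than analysis. Two points need care: the converse in Theorem \ref{thm:rough} presumably requires $r < m+1$, and the trivial regime must be handled by monotonicity. Apart from that, the argument is just the elementary monotonicity of $1 - \frac{k-1}{kq}$ in $k$.
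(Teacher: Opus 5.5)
Your proposal is correct and follows the paper's route: the paper records the corollary as a direct consequence of Theorem~\ref{thm:rough} with $r_1=\cdots=r_m=s'=r$, and you make the reduction explicit. The decrease of $1-\frac{k-1}{kq}$ in $k$ singles out the full set ($d\ge 2$) or a maximal proper subset ($d=1$) and yields $r_c$, while monotonicity of $\Lambda^*_{r,\ldots,r}$ in $r$ covers $r\ge m+1$. The one point you flag but leave open, the converse for $r\ge m+1$ (where the hypothesis $\frac1s<\frac1r$ of Theorem~\ref{thm:rough} fails), needs no argument: there $r\ge m+1>r_c$ by your own check $r_c<m+1$.
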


Theorem \ref{thm:rough} and Corollary \ref{cor:rough} not only identify the optimal open sparse range, but also recover Barron’s bilinear result for $\Omega \in L^{\infty}(\bS^{2d-1})$ \cite{Bar} and improve the sparse range of Grafakos--Wang--Xue for $\Omega \in L^q(\bS^{2d-1})$ with $q>4/3$ \cite{GWX}. Our proof employs a different mechanism from those of the two works, both of which only obtained sparse bounds in terms of symmetric exponents. Beyond that, combined with sparse-to-weighted estimates (see Theorem \ref{thm:form}), our sparse bounds imply weighted estimates in terms of weights class $A_{\vec{p}, (\vec{r}, s)}$ with $\big(\frac{1}{r_1}, \ldots, \frac{1}{r_m}, \frac{1}{s'} \big) \in \H_{m+1}^d(q)$ and $(\vec{r}, s) \prec \vec{p}$. This indeed recovers unweighted boundedness in the optimal region $\H_m(q)$ given by Dosidis--Slav\'{i}kov\'{a} \cite{DS}. Although the sparse region $\H_{m+1}^1(q)$ is larger in dimension one, its projection onto the input coordinates remains $\H_m(q)$, so the optimal unweighted range is unchanged. The additional freedom in the testing exponent instead strengthens the sparse estimates and relaxes the corresponding weight conditions. 

The sufficiency proof builds on the frequency-decay estimates of \cite{DS, GHH}. The main difficulty is to preserve this decay under spatial localization while establishing the improving estimates required by our abstract Theorem \ref{thm:rough}. Global operator bounds alone do not supply these local estimates, particularly for quasi-Banach outputs and auxiliary exponents being $\infty$. A smooth kernel localization argument preserves the operator bounds even in the quasi-Banach range (see Lemma \ref{lem:TKfg} \eqref{TKfg-4}), while interpolation in both the kernel and functions determines the improving region. At low frequencies, cancellation provides summable decay; at high frequencies, the logarithmic dependence of sparse constants on kernel regularity allows the growth of the regularity bounds to be absorbed by the operator decay. On the other hand, the one-dimensional improvement rests on an additional geometric feature of the localized $(m+1)$-form (see Lemma \ref{lem:imp}). After passing to polar coordinates, pairing one factor with testing functions permits a nondegenerate change of spatial and radial variables. This gives an endpoint estimate for $\Omega \in L^1(\bS^{md-1})$, whose interpolation with the bounded for $\Omega \in L^{\infty}(\bS^{md-1})$ yields the larger improving region. The resulting gain removes the full-set restriction and accounts for the smaller symmetric threshold $r_c$ in dimension one.

The necessity argument adapts the concentration constructions of \cite{DGHST, GHHP24} to the $(m+1)$-linear form and detects conditions involving the exponent $s$ that are not supplied by the unweighted counterexamples alone. The functions $\Omega$ are concentrated on the directions determined by the geometry of suitably chosen supports; an odd construction ensures cancellation. Concentration on small balls/intervals yields the proper-subset condition. In dimensions at least two, thin tubes detect full-set condition through concentration in transverse directions. This obstruction disappears in dimension one, which is consistent with the stronger localized improving estimate. These constructions establish optimality uniformly for mean-zero functions $\Omega \in L^q(\bS^{md-1})$. Although the sufficient and necessary conditions match at the level of open regions, the general sparse boundary cases remain undecided.

A typical example of the rough bilinear operators is the Calder\'{o}n commutator defined in \cite{P.C} as 
\begin{align*}
\mathcal{C}(a, f)(x)
:= \text{p.v.} \int_{\R} \frac{A(x) - A(y)}{|x - y|^2} f(y) \, dy, 
\end{align*}
where $a$ is the derivative of $A$. The boundedness of $\mathcal{C}(a, f)$ in the full range of exponents $p_1, p_2 \in (1, \infty)$ was established in \cite{C.C}. It was shown in \cite{P.C} that the Calder\'{o}n 
commutator can be written as 
\begin{align*}
\mathcal{C}(a, f)(x) 
:= \text{p.v.} \int_{\R \times \R} K(x-y, x-z) f(y) a(z) \, dy \, dz,  
\end{align*}
with the kernel 
\begin{align*}
K(y, z) = \frac{e(z) - e(z-y)}{y^2} 
= \frac{\Omega((y, z)/|(y, z)|)}{|(y, z)|^2}, 
\end{align*}
where $e(t)=1$ if $t>0$ and $e(t)=0$ if $t<0$. Observe that $K(y, z)$ is odd and homogeneous of degree $-2$ whose restriction on $\bS^1$ is $\Omega(y,z)$. Since $\Omega$ is odd and bounded, Theorem \ref{thm:rough} and Corollary \ref{cor:rough} can be applied to $\mathcal{C}(a, f)$.

\subsubsection{\bf{Bilinear pseudo-differential operators}}
Given a function $\sigma$ on $\Rd \times \R^{2d}$, the {\tt bilinear pseudo-differential operator} $T_{\sigma}$ is defined by
\begin{align*}
T_{\sigma}(\vec{f})(x) := \int_{\R^{2d}} \sigma(x, \vec{\xi})
e^{2\pi i x \cdot (\xi_1 + \xi_2)} \widehat{f}_1(\xi_1) \widehat{f}_2(\xi_2) \, d\vec{\xi},
\end{align*}
for all $f_1, f_2 \in \S(\Rd)$, where $\widehat{f}$ is the Fourier transform of $f$. Given $m \in \R$, $\rho, \delta \in [0, 1]$, and $N \in \N$, let $BS_{\rho,\delta}^{m, N}$ consist of all $\sigma \in \mathscr{C}^N(\R^{3d})$ such that
\begin{align*}
\|\sigma\|_{BS_{\rho, \delta}^{m, N}} 
:= \max_{|\alpha| + |\beta| + |\gamma| \le N} \sup_{x, \xi, \eta}
\frac{|\partial_x^{\alpha} \partial_{\xi}^{\beta} \partial_{\eta}^{\gamma} \sigma(x, \xi, \eta)|}{(1 + |\xi| + |\eta|)^{m + \delta |\alpha| - \rho (|\beta| + |\gamma|)}} < \infty.
\end{align*}
Given $r_1, r_2, s \in (0, \infty)$, set
\begin{align*}
\Gamma(r_1, r_2, s)
:= \frac{1}{\min\{r_1, 2\}} + \frac{1}{\min\{r_2, 2\}} - \frac{1}{\max\{s, 2\}}.
\end{align*}

We present sparse bounds for bilinear pseudo-differential operators as follows.

\begin{theorem}\label{thm:pse}
Let $0 < \rho \le 1$, $0 \le \delta \le \rho$, and $\delta < 1$. Let $1 < r_1, r_2, s < \infty$ with $\frac1s < \frac{1}{r_1} + \frac{1}{r_2}$. Then $m \le - d(1-\rho) \Gamma(r_1, r_2, s)$ if and only if there exists some finite $N$ sufficiently large such that
\begin{align*}
|\langle T_{\sigma}(f_1, f_2), f_3 \rangle|
\lesssim \|\sigma\|_{BS_{\rho, \delta}^{m, N}} \, \Lambda_{r_1, r_2, s'}^*(f_1, f_2, f_3)
\end{align*}
for all $\sigma \in BS_{\rho, \delta}^{m, N}$ and all $f_1, f_2, f_3 \in L_c^{\infty}(\Rd)$. 
\end{theorem}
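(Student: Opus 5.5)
The sufficiency direction is an application of Theorem \ref{thm:main} with $m=2$ and scalar spaces $\B_1=\B_2=\B=\mathbb{C}$. The necessity direction goes through the weighted consequences of the sparse form, combined with the known sharpness of the unweighted $L^{r_1}\times L^{r_2}\to L^s$ theory for the class $BS^{m}_{\rho,\delta}$.

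\emph{Sufficiency.} Fix a Littlewood--Paley partition in the joint frequency variable $(\xi_1,\xi_2)$ and write $\sigma=\sum_{k\ge0}\sigma_k$, with $\sigma_k$ supported where $|\xi_1|+|\xi_2|\sim 2^k$. The kernel $K_k(x,y_1,y_2)$ of $T_{\sigma_k}$ is concentrated where $|x-y_i|\lesssim 2^{-k\rho}$ and decays rapidly (to order $N$) outside that region. I split each $K_k$ with a smooth spatial partition $\sum_{\ell}\phi(2^{-\ell}(x-y_1,x-y_2))$ and define $T_j$ as the sum of all pieces at spatial scale $2^j$. This gives (H1) with $A_1\sim1$.
\begin{itemize}
\item For $j\ge -k\rho$ the pieces carry a factor $2^{-N(j+k\rho)}$.
\item For $j<-k\rho$ one uses the $x$-regularity $\delta<1$ and frequency localization.
\end{itemize}
For (H2) and (H3), the uniform bounds over finite partial sums, I use the known sharp theorem: $T_\sigma\colon L^{p_1}\times L^{p_2}\to L^p$ whenever $m\le -d(1-\rho)\Gamma(p_1,p_2,p)$. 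Truncations of the kernel to scales $j\in[N_1,N_2]$ are again symbols in the same class, uniformly. At the endpoint case $m=-d(1-\rho)\Gamma$, (H2) should be weak type; there I would choose $(r_1,r_2,r)$ by openness: $\Gamma(r_1,r_2,s)\ge\Gamma(r_1,r_2,r)$ since $s\ge r$, and strict inequality except in degenerate cases.

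For (H4) and (H5), the single-scale improving estimates follow from Young/Hölder bounds on the localized kernel, which has size $2^{-2jd}$ times the symbol bound on a ball of radius $2^j$. The auxiliary exponents are $r_i^*=\infty$ or $1$, chosen so that the decay $2^{-jd\sum_{i\in I}1/r_i^*}$ holds. Regularity comes from differentiating the kernel, which costs $2^{k}$; the frequency decay $2^{k(m+d(1-\rho)\Gamma)}$ then makes the logarithm $\log(2+A_5/A_4)\sim k$ summable. Here I follow the near--far mechanism the introduction describes for rough operators: apply Theorem \ref{thm:main} to each $k$-piece with $A_0\lesssim 2^{-\epsilon k}k$ and sum in $k$.

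\emph{Necessity.} If the sparse bound holds, Theorem \ref{thm:form} with trivial weights gives $T_\sigma\colon L^{p_1}\times L^{p_2}\to L^{p}$ for every $(r_1,r_2,s)\prec(p_1,p_2)$. This includes $p_i$ near $r_i$ and $p$ near $s$, and also the limiting restricted weak case. Known counterexamples (Miyachi--Tomita / Rodríguez-López--Staubach type symbols $\sigma(x,\xi,\eta)=e^{i|\xi|^{1-\rho}}$-type modulations times bumps) then force $m\le -d(1-\rho)\Gamma(p_1,p_2,p)$. Letting $p_i\to r_i$ and $p\to s$ and using continuity of $\Gamma$ gives the claim. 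The delicate point is that $\Gamma$ depends on $\max\{s,2\}$, so the output exponent must be tested at $s$ itself. I would do this by inserting the counterexample directly into the form and estimating $\Lambda^*$ on a single large cube, which gives $\lesssim |Q|\langle f_1\rangle_{r_1}\langle f_2\rangle_{r_2}\langle f_3\rangle_{s'}$.

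The main obstacle I expect is the endpoint $m=-d(1-\rho)\Gamma$ in the sufficiency direction. There the frequency pieces are not summable with a geometric gain. I would instead handle the whole $\sigma$ at once: the partial sums over spatial scales inherit the sharp global bounds, and the regularity constant grows only polynomially in the scale.
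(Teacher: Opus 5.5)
Your proposal has genuine gaps in both directions. Only the subcritical sufficiency scheme is on the right track, and even that is missing one ingredient.

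\emph{Necessity.} Theorem \ref{thm:form} with trivial weights gives $L^{p_1}\times L^{p_2}\to L^p$ only with $\frac1p=\frac1{p_1}+\frac1{p_2}$, $p_i>r_i$ and $p<s$. So you cannot let $p_i\to r_i$ and $p\to s$ at the same time: as $p_i\to r_i$ you get $p\to r<s$. The condition this route yields, $m\le -d(1-\rho)\sup\Gamma(p_1,p_2,p)$ over admissible exponents, is in general strictly weaker than the claim. For $r_1=r_2=\frac32$, $s=10$, the admissible supremum is $\frac{9}{10}$, while $\Gamma(r_1,r_2,s)=\frac{37}{30}$.

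Your fallback, bounding $\Lambda^*$ by a single large cube, is not legitimate, because $\Lambda^*$ is a supremum over all sparse families. Take the natural test functions $f_i=F_i(\lambda^{\rho}\cdot)$ with $F_i=\psi\sum_{v\in E_i}e^{2\pi i v\cdot x}$, $|E_i|\approx K^d$, and all signs $+1$ (the case $r_1,r_2\le 2\le s$). Their peaks coincide, and one cube of side $\approx\lambda^{-\rho}K^{-1}$ around a common peak already gives $\Lambda^*\gtrsim\lambda^{-d\rho}K^{2d}$. That is the same size as the pairing $\lambda^{m-d\rho}K^{2d}$, so you get nothing beyond $m\le 0$.

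The paper overcomes this with Lemma \ref{lem:trans}. Averaging over translations $t_i\in\bT^d$ decorrelates the peaks and gives, for suitable $f_i=F_i(\lambda^{\rho}\cdot-t_i)$,
$\Lambda^*(f_1,f_2,f_3)\lesssim\lambda^{-d\rho}\prod_i\|\langle\cdot\rangle^{\kappa}F_i\|_{L^{r_i}}\lesssim\lambda^{-d\rho}K^{d(2-\Gamma)}$, with signs chosen via Khintchine when $r_i>2$. A compensating phase $e^{2\pi i\lambda^{-\rho}[(t_1-t_3)\cdot\xi+(t_2-t_3)\cdot\eta]}$ in $\sigma$ keeps the pairing exactly computable.

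\emph{Sufficiency.} For \eqref{H4}, Young/H\"older on the localized kernel is too lossy when $\rho<1$. The piece at $|(\xi,\eta)|\sim 2^k$ has kernel of size $2^{k(m+2d)}$ on a set of volume $\approx 2^{-2dk\rho}$. This yields only $A_4^k\approx 2^{k(m+2d(1-\rho))}$, i.e.\ summability for $m<-2d(1-\rho)$, which is far from $-d(1-\rho)\Gamma$ since $\Gamma<2$. The unweighted theorem, which has $\frac1p=\frac1{p_1}+\frac1{p_2}$, does not supply the off-diagonal bound into $L^s$ with $s>r$ either. You need an $L^2$-based frequency-localized estimate such as Lemma \ref{lem:kmt}. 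With it, your per-piece scheme is essentially the paper's subcritical argument.

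At the critical order the per-piece constants are $A_0^k\approx 1+k$, which are not summable, and your remedy does not work. Hypothesis \eqref{H5} requires a single $A_5$ for all $j$. The proof of Theorem \ref{thm:main} fixes $j_0\approx\vartheta^{-1}\log_2(2+A_5/A_4)$ once, and the near part costs $j_0A_4$ through \eqref{uQL}. A regularity constant that grows with the scale, as you concede, therefore cannot be absorbed. Separately, $\Gamma(r_1,r_2,s)=\Gamma(r_1,r_2,r)$ whenever $s\le 2$, which is not a degenerate case, so your openness argument for \eqref{H2} also fails there.

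The missing idea is orthogonality across frequency scales:
\begin{itemize}
\item The paper removes $\sigma_0+\sum_\ell(\Id-S_\ell)\sigma_\ell\in BS^{m-N_0(1-\delta)}_{\rho,\delta}$; this is where $\delta<1$ enters, and this part is handled by the subcritical case.
\item The $x$-frequency truncation in the remaining pieces permits annular projections $P_\ell$ on two of the three functions, as in \eqref{eq:PEL}.
\item After spatial localization at scale $2^{j-\rho\ell}$ with $2^{-2Aj}$ decay, the sum over $\ell$ is controlled by local square functions and a stopping-time argument (Lemma \ref{lem:pp}), not by Theorem \ref{thm:main}.
\end{itemize}
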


Weighted norm inequalities immediately follow from Theorems \ref{thm:pse} and \ref{thm:form} below. Theorem \ref{thm:pse} characterizes the exact symbol orders for sparse bounds for bilinear pseudo-differential operators, including the critical order. Cao--Xue--Yabuta \cite{CXY} obtained pointwise sparse bounds in the $m$-linear setting under stronger decay assumptions, while Beltr\'{a}n--Cladek \cite{BC} established sparse bounds in the linear case, sharp up to endpoints. Although an averaging exponent being greater than one leads to  larger symbol orders, our result determines the precise dependence on three exponents and provides a matching necessity statement. In particular, the $(2, 2, 2)$-sparse bound holds at $m = -d(1-\rho)/2$. The distinction between pointwise and integral-type domination is essential when comparing these results. Moreover, in the unweighted theory, Theorem \ref{thm:pse} improves the sufficient order $m < -d(1-\rho) \Gamma^*(p_1, p_2, p)$ of Rodr\'{i}guez-L\'{o}pez--Staubach \cite[Theorem 5.19]{RS} for $p_1 \ne 1$ or $p_2 \ne 1$ because 
\begin{align*}
\Gamma^*(p_1, p_2, p) - \Gamma(p_1, p_2, p) 
= \frac12 \min\bigg\{\bigg|\frac{1}{p_1} - \frac{1}{p_2} \bigg|, \,
\max \Big\{\frac{1}{p_1} + \frac{1}{p_2} - 1, 0 \Big\} \bigg\},
\end{align*}
where $\Gamma^*(p_1, p_2, p) := \frac{1}{\min\{p_1, p_2,  2\}} + \frac{1}{2 \min\{p, 1\}} - \frac{1}{\max\{p, 2\}}$ and $\frac1p = \frac{1}{p_1} + \frac{1}{p_2}$. The number $m_{\mathrm{crit}} = -d(1-\rho) \Gamma(p_1, p_2, p)$ is in fact the critical order by Miyachi–Tomita \cite{MT20}. The strict conditions $r_i < p_i$ and $p < s$ in the sparse-to-weighted estimates (see Theorem \ref{thm:form}) prevent the optimal unweighted order from being attained by an admissible choice of sparse exponents. Consequently, the sparse theorem alone does not recover all critical cases of the Miyachi--Tomita theorem.

The main difficulty in the sufficiency proof is summation over frequency scales at the critical order. When $m < - d(1-\rho) \Gamma(r_1, r_2, s)$, the frequency decay makes the localized estimates summable, but this decay disappears when $m = - d(1-\rho) \Gamma(r_1, r_2, s)$. The argument starts from the frequency localized estimates of Kato--Miyachi--Tomita \cite{KMT} and combines them with spatial localization at the scale dictated by the symbol class. To reach the critical order, we first remove a remainder of lower symbol order by a cutoff in the $x$-Fourier variable and the fact $\delta < 1$ (see \eqref{eq:srem}). The remaining frequency interactions admit annular localization in two of the three functions (see \eqref{eq:PEL}). Inserting projections before spatial localization allows the frequency scales to be summed using local square function estimates and a stopping time argument (see Lemma \ref{lem:pp}). This preserves the critical order without sacrificing the prescribed averaging exponents and avoids reliance on a symbolic calculus for partial adjoints. Beyond that, to show the necessity, we adapt the frequency-lattice constructions and randomization techniques used by Miyachi--Tomita \cite{MT13} and Kato--Shida \cite{KS} to the sparse setting. The additional ingredient is translation averaging: independent translations control the sparse supremum, while a compensating phase in the multiplier preserves an exactly computable trilinear pairing.

\subsubsection{\bf{Bilinear oscillatory Fourier multipliers}}
Let $\phi \in \mathscr{C}^{\infty}(\R^{2d})$ be a radial function satisfying $\phi(\vec{\xi}) = 0$ for $|\vec{\xi}| \le 1$, and $\phi(\vec{\xi}) = 1$ for $|\vec{\xi}| \ge 2$. Given $a > 0$, $a \ne 1$, and $b > 0$, denote
\begin{equation*}
\m_{a, b}(\vec{\xi})
:= \phi(\vec{\xi}) |\vec{\xi}|^{-b} e^{i |\vec{\xi}|^a}, \qquad \vec{\xi} = (\xi_1, \xi_2) \in \R^{2d}.
\end{equation*}
For a sequence $\c = (c_k)_{k \in \Z}$ with $\sup_k |c_k| \le 1$, the {\tt bilinear oscillatory Fourier multiplier} is defined by
\begin{align*}
T_{\m^{\c}_{a, b}}(\vec{f})(x)
&:= \int_{\R^{2d}} \m^{\c}_{a, b}(\vec{\xi}) 
\widehat{f}_1(\xi_1) \, \widehat{f}_2(\xi_2)
e^{2 \pi i x \cdot (\xi_1 + \xi_2)} \, d\vec{\xi},
\end{align*}
where
\begin{align*}
\m^{\c}_{a, b}(\vec{\xi})
:= \sum_{k \in \Z} c_k \, \m_{a, b} (2^k \vec{\xi}),
\end{align*}

We obtain the sparse bound for bilinear oscillatory Fourier multipliers below.
\begin{theorem}\label{thm:osc}
Let $a>0$, $a \ne 1$, $1 < r < 2$, and $b > a \, d/r$. Then
\begin{equation*}
\sup_{\substack{\c = (c_k)_{k \in \Z}\\ \sup_k |c_k| \le 1}} 
|\langle T_{\m^{\c}_{a, b}}(f_1, f_2), f_3 \rangle|
\lesssim \Lambda_{r, r, r}^*(f_1, f_2, f_3),
\end{equation*}
for all $f_1, f_2, f_3 \in L_c^{\infty}(\Rd)$.
\end{theorem}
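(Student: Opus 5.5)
We derive Theorem \ref{thm:osc} from Theorem \ref{thm:main} with $m=2$ and $r_1=r_2=r$, $s=r'$; then $\frac1s = 1-\frac1r < \frac2r$ because $r<2$. The first step is a decomposition. Fix $\psi$ with $\widehat\psi$ supported in an annulus and $\sum_\ell \widehat\psi(2^{-\ell}\cdot)\equiv 1$ on $\R^{2d}\setminus\{0\}$. We split $\m_{a,b} = \sum_{\ell\ge 0}\m_{a,b}\widehat\psi(2^{-\ell}\cdot)$ and write $K_\ell$ for the kernel of the $\ell$-th piece. A stationary phase estimate gives that $K_\ell$ is essentially concentrated on $|\vec y|\lesssim 2^{\ell(a-1)}$ when $a>1$, and on $|\vec y|\lesssim 1$ when $a<1$, up to rapidly decaying tails; we have $\|K_\ell\|_\infty\lesssim 2^{\ell(d - b) + \ell d(2-a)/2}$ and $\|\widehat{K_\ell}\|_\infty \lesssim 2^{-\ell b}$. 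We truncate each $K_\ell$ smoothly to its essential support. The Schwartz tails form harmless pieces with CZ-type kernels, which are handled by the same theorem at no cost. Dilating by $2^k$ then yields operators $T_{k,\ell}$ supported at spatial scale $2^{k+\ell\max(a-1,0)}$. For fixed $\ell$ we set $T_j=\sum_k c_k T_{k,\ell}$, with $k$ indexed so that the scale is $2^j$, and we apply Theorem \ref{thm:main} to $\{T_j\}$, obtaining a constant $A_0(\ell)$. The theorem follows once $\sum_\ell A_0(\ell)<\infty$.

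Next we verify the hypotheses for fixed $\ell$. \eqref{H1} holds by construction with $A_1\sim 1$. For \eqref{H2} and \eqref{H3}, the partial sums $\sum_{j=N_1}^{N_2}T_j$ are bilinear multipliers whose symbols consist of dyadic pieces at frequency $2^{\ell-k}$ with size $2^{-\ell b}$ and with derivatives gaining $2^{\ell(a-1)}$ per order relative to the scale. Following the Miyachi--Tomita-type bounds for oscillatory multipliers, bilinear boundedness $L^{p_1}\times L^{p_2}\to L^{p}$ holds with constant $\lesssim 2^{-\ell(b - a d\,\Gamma)}\ell^{C}$, where $\Gamma$ is as in the pseudo-differential section. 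At $(r,r)\to L^{r/2,\infty}$ with $r<2$ we get $\Gamma=2/r-\frac12\cdot\ldots$. We expect $b>ad/r$ to be exactly enough for this to decay, possibly after interpolating between $L^2\times L^2\to L^1$ and $L^1\times L^1\to L^{1/2,\infty}$ bounds for the pieces. Uniformity over $\c$ and over partial sums comes from a Littlewood--Paley/square-function argument, since each $c_k$ is bounded.

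For \eqref{H4}, at a single scale (normalize $j=0$) we prove $L^r\times L^r\to L^{r'}$ by interpolating two bounds. The first is $L^1\times L^1\to L^\infty$ with bound $\|K_\ell\|_\infty$. The second is $L^2\times L^2\to L^2$-type or $L^2\times L^\infty\to L^2$ bounds obtained from Plancherel with $\|\widehat{K_\ell}\|_\infty\lesssim2^{-\ell b}$. The auxiliary exponents we take are $r_i^*=\infty$ for the singleton sets $I$, which gives $L^\infty\times L^r\to L^r$ for the localized kernel. This follows from Young/Minkowski and the $L^1$ norm of $K_\ell$ in one variable, with polynomial loss in $2^\ell$. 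Interpolation then yields $A_4(\ell)\lesssim 2^{-\ell(b-ad/r)}$, up to losses of size $2^{\epsilon \ell}$ that are absorbed by strictness. For \eqref{H5}, differentiating introduces at most the factor $2^{\ell\max(a,1)}$, so $A_5\le 2^{C\ell}A_4$ and $\log_2(2+A_4^{-1}A_5)\lesssim \ell$. Hence $A_0(\ell)\lesssim \ell\, 2^{-\epsilon\ell}$ is summable, and the bound is uniform in $\c$ because every estimate is.

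We expect the main obstacle to be the uniform weak-type and restricted strong bounds \eqref{H2}–\eqref{H3} with decay in $\ell$ at exponents below $L^2$. Since $r<2$, these go into the quasi-Banach/$L^1$ side, where oscillatory factors cost $2^{\ell a d(\frac1r-\frac12)}$-type losses. The first thing we would try is to establish an $H^1\times L^2$ or $L^1\times L^1\to L^{1/2,\infty}$ estimate for each $\ell$-piece by a Calderón--Zygmund decomposition at the enlarged scale $2^{\ell(a-1)}$. We would then interpolate this against the $L^2$ Plancherel bound, checking that the resulting decay rate $b-ad/r$ matches the hypothesis.
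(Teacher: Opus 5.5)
Your architecture matches the paper's. You use Theorem~\ref{thm:main} with $(r_1,r_2,s)=(r,r,r')$ and auxiliary exponents $r_1^*=r_2^*=\infty$ justified by $L^1$ bounds on the kernel. You obtain \eqref{H4} by interpolating $L^1\times L^1\to L^\infty$ against an $L^2$ bound, and you use $\log_2(2+A_4^{-1}A_5)\lesssim \ell$.

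\textbf{Main gap: \eqref{H2}--\eqref{H3} are not proved.} This is the step that carries the proof, and you only state a bound you \emph{expect} (with an unfinished exponent) and a strategy you \emph{would try}. These hypotheses demand $L^r\times L^r\to L^{r/2,\infty}$ bounds (a quasi-Banach target) and restricted strong bounds for every partial sum. They must hold uniformly in $(c_k)$, with constants summable over your decomposition parameters.

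Your plan also misses a concrete obstruction. After smooth spatial truncation, the pieces lose both frequency localization and exact cancellation, since in general $\int K_\ell\chi=-\int K_\ell(1-\chi)\neq 0$. Take a partial sum of $M$ consecutive pieces with $c_k\equiv 1$ and test it on $f_1=f_2=\mathbf{1}_{B(0,R)}$ with $R$ large. It equals $M\int K_\ell\chi$ on most of the ball, so \eqref{H2} fails uniformly in $N_2-N_1$. The same is true of your tail pieces, and no Littlewood--Paley or square-function argument repairs this.

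The paper handles this as follows:
\begin{enumerate}
\item It writes the normalized symbol as $\widehat{\varphi}^{\,2}\widetilde{\sigma}$, with $\varphi=(-\Delta)^N U_\delta$ compactly supported.
\item It truncates the kernel of $\widetilde{\sigma}$ on dyadic annuli $\Psi_\ell$. The truncated multipliers still vanish to high order at $0$ and decay at infinity, while their kernels remain compactly supported.
\item Lemma~\ref{lem:transfer} then gives product-Sobolev $W^{(\kappa_1,\kappa_2)}$ bounds uniform in the dilation, the truncation, and $(c_k)$.
\item The Grafakos--Miyachi--Nguyen--Tomita theorem \eqref{product-H2} converts these into the required bounds, including the quasi-Banach one.
\end{enumerate}

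\textbf{Your threshold claim is inconsistent.} With $\Gamma(r,r,r/2)=\frac2r-\frac12$, the decay $2^{-\ell(b-ad\Gamma)}$ you write requires $b>ad(\frac2r-\frac12)$. This is strictly more than $b>ad/r$ when $r<2$, so the expectation that ``$b>ad/r$ is exactly enough'' contradicts your own estimate.

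The paper does not reach $b>ad/r$ at this step either. It chooses $d\max\{1,\frac2r-\frac12\}<\kappa<b/a$ and sums $\sum_n 2^{-n(b-a\kappa)}$ and $\sum_n 2^{-n[b-a(\varepsilon+d)]}$. Only the main improving estimate in \eqref{H4} is summable exactly under $b>ad/r$.

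\textbf{Two smaller points.}
\begin{enumerate}
\item Stationary phase in $\R^{2d}$ gives $\|K_\ell\|_\infty\lesssim 2^{\ell(2d-ad-b)}$, not the exponent you wrote.
\item For $a<1$ the kernel lives at scale $2^{\ell(a-1)}$, not $1$. If you localize at scale $1$, your interpolation only yields $A_4\lesssim 2^{-\ell(b-ad/r)}\,2^{\ell d(1-a)(\frac3r-1)}$. You must localize at the true scale, as the paper's annuli (concentrated where $2^\ell\sim\lambda_n$) do automatically.
\end{enumerate}
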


Theorem \ref{thm:osc} provides a bilinear counterpart to the sparse bounds for linear oscillatory multipliers in \cite[proposition 7.10]{BRS}. The principal difficulty is to preserve the improved decay in arbitrary dyadic partial sums while simultaneously establishing the localized improving and regularity estimates required by the sparse framework. This is particularly delicate for the auxiliary bounds with quasi-Banach targets. The proof combines isotropic Sobolev control with integrability exponent four and quantitative Calder\'{o}n--Zygmund interpolation. Spatial localization then converts improving estimates into the required auxiliary estimates without imposing an additional decay condition, while compactly supported convolution factors provide the translation estimates. Finally, summation is performed at the level of scalar trilinear forms, which avoids the unavailable triangle inequality in the quasi-Banach target. These ingredients allow the improved decay to survive the passage from localized pieces to the full multiplier family.

\subsubsection{\bf{Bilinear spherical maximal operators}}
Let $d \ge 2$ and let $\sigma$ be the normalized surface measure on $\bS^{2d-1} \subset \R^{2d}$. Define the full and lacunary maximal operators by
\begin{align*}
\mathcal{M}_{\full}(f_1, f_2)(x)  := \sup_{t > 0} |\A_t (f_1, f_2)(x)|
\quad \text{and} \quad
\mathcal{M}_{\lac}(f_1, f_2)(x)  := \sup_{j \in \Z} |\A_{2^j} (f_1, f_2)(x)|,
\end{align*}
where
\begin{align*}
\A_t (f_1, f_2)(x) 
:= \int_{\bS^{2d-1}} f_1(x-ty) f_2(x-tz) \, d\sigma(y, z).
\end{align*}
Denote
\begin{align*}
\mathcal{R}_{\full}(d) 
& := \bigg\{ \bigg(\frac{1}{p_1}, \frac{1}{p_2}, \frac1q \bigg):
1 < p_1, p_2 < \infty, \, 0 < q < \infty, \, \frac1q < \frac{1}{p_1} + \frac{1}{p_2} < m(d, q) \bigg\},
\end{align*}
and
\begin{align*}
\mathcal{R}_{\lac}(d) 
& := \inter \conv \big(\mathcal{R}_{\full}(d) \cup\{P_0, \, P_1\}\big), 
\end{align*}
where
\begin{equation*}
m(d, q) =
\begin{cases}
\displaystyle
\min \bigg\{1 + \frac{1}{q}, \frac32 \bigg\}, & d = 2,
\\[1.2em]
\displaystyle
\min \bigg\{1 + \frac{d}{q}, \frac{2d-1}{d}, \frac1q + \frac{2(d-1)}{d} \bigg\}, & d \ge 3,
\end{cases}
\end{equation*}
$P_0 = (1, 1, 1)$, and $P_1 = \big(\frac{(2d-1)^2}{2 (2d^2-d+1)}, \, \frac{(2d-1)^2}{2 (2d^2-d+1)}, \, \frac{2d-3}{2d^2-d+1} \big)$.

We establish sparse bounds for $\M_{\full}$ and $\M_{\lac}$ as follows.

\begin{theorem}\label{thm:sph}
Let $d \ge 2$ and $1 < r_1, r_2, s < \infty$. Then
\begin{align*}
|\langle \M_{\full}(f_1, f_2), f_3 \rangle|
& \lesssim \Lambda_{r_1, r_2, s'}^*(f_1, f_2, f_3), 
\qquad \big(\tfrac{1}{r_1}, \tfrac{1}{r_2}, \tfrac1s \big) \in \mathcal{R}_{\full}(d),
\\
|\langle \M_{\lac}(f_1, f_2), f_3 \rangle|
& \lesssim \Lambda_{r_1, r_2, s'}^*(f_1, f_2, f_3),
\qquad \big(\tfrac{1}{r_1}, \tfrac{1}{r_2}, \tfrac1s \big) \in \mathcal{R}_{\lac}(d),
\end{align*}
for all $f_1, f_2, f_3 \in L_c^{\infty}(\Rd)$.
\end{theorem}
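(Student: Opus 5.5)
The plan is to derive Theorem \ref{thm:sph} from Theorem \ref{thm:main}. First we linearize the maximal operators. For $\M_{\lac}$, choose a measurable selector $x\mapsto j(x)$, truncate it to $N_1\le j(x)\le N_2$, and set $T_j(f_1,f_2)(x)=\mathbf 1_{\{j(x)=j\}}\A_{2^j}(f_1,f_2)(x)$. For $\M_{\full}$, we use a Banach-valued formulation: $T_j(f_1,f_2)(x)=\{\A_{2^jt}(f_1,f_2)(x)\}_{t\in[1,2]}$ with values in $\B=L^\infty([1,2])$, and the supremum over $j$ is again handled by a selector $j(x)$. In both cases the partial sums $\sum_{j=N_1}^{N_2}T_j$ are dominated pointwise by the corresponding maximal function. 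The bound is uniform in $N_1,N_2$ and in the selector, so the sparse bound for the linearized pairing gives the claim for nonnegative $f_i$ by monotone convergence. Hypothesis \eqref{H1} holds with $A_1=2$, since $\A_t$ only sees $f_i$ on $B(x,t)$.

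The global hypotheses \eqref{H2}--\eqref{H3} follow from known $L^{p_1}\times L^{p_2}\to L^q$ bounds for $\M_{\full}$ and $\M_{\lac}$ (see \cite{JL, BFOPZ}). The region $\{1/q<1/p_1+1/p_2<m(d,q)\}$ is the known boundedness region, including the quasi-Banach range $q<1$. Given $(1/r_1,1/r_2,1/s)$ in the open region, we pick the point $(1/r_1,1/r_2,1/r)$, which lies inside since $1/r=1/r_1+1/r_2<m(d,r)$, giving the weak bound for \eqref{H2}. We also pick a nearby point $(1/s_1,1/s_2,1/s)$ with $1/s_1+1/s_2=1/s$ on the diagonal $1/q=1/p_1+1/p_2$, approached from inside; a restricted strong estimate there comes from interpolating the strong bounds on the open region near that face, which contains the points $(1/p_1,1/p_2,1/q)$ with $1/q$ slightly less than $1/p_1+1/p_2$. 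Strictly speaking, the face point itself is what \eqref{H3} needs, so instead one uses the $L^\infty\times L^{p}\to L^p$-type bound $\M(f_1,f_2)\le\|f_1\|_\infty M_{\text{sph}}f_2$ and symmetric versions, together with multilinear real interpolation, to place a restricted strong estimate on the face.

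The single-scale estimates \eqref{H4}--\eqref{H5} are the heart of the matter. By scaling it suffices to treat $j=0$, i.e.\ $\sup_{1\le t\le2}|\A_t|$ localized to unit scale. Localized improving bounds $L^{r_1}\times L^{r_2}\to L^s$ hold on the interior of the claimed region. They come from slicing: write $\A_t(f_1,f_2)(x)=\int_{B(0,1)} f_1(x-ty)\,\mathcal A^{d}_{t\sqrt{1-|y|^2}}f_2(x)\,(1-|y|^2)^{(d-2)/2}dy$. Then combine the linear local spherical maximal $L^p$-improving estimates (Schlag, Schlag--Sogge, Lee) for the inner factor with H\"older in $y$. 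The auxiliary estimates with $I=\{1\}$ or $\{2\}$ take $r_i^*=\infty$. Indeed, $|\A_t(f_1,f_2)|\le\|f_1\|_\infty\,\mathcal M^{\rm loc}f_2$, and the local Hardy--Littlewood bound $L^{r_2}\to L^{r_2}$ holds at scale $1$ with factor $2^{-jd\cdot0}$, consistent with $1/r_i^*=0$. The regularity estimates \eqref{H5} follow by interpolating the trivial bound $2A_4$ with a $W^{1,\cdot}$-type bound. For $T_j\circ_i\Delta_{2^jh}$ and $\Delta_{2^jh}T_j$ we use the standard smoothing of the localized circular maximal function with $t$-derivative. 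It is acceptable here to lose a positive power of $|h|$ that may be tiny, since only $\log A_5$ enters.

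The main obstacle is the sharp localized improving region, in particular for $\M_{\full}$ at $d=2$ and near the line $1/p_1+1/p_2=m(d,q)$. We would approximate each target point by points where the slicing argument combined with Lee's endpoint local circular maximal theorem gives bounds, and then use multilinear interpolation (valid including quasi-Banach targets) to fill the open region. For $\M_{\lac}$ the supremum is over a single $t=2^j$, so the single-scale operator is the bilinear spherical average itself. Its improving region adds the vertices $P_0$ (trivial $L^1\times L^1\to L^1$ local) and $P_1$ (the $L^2$-based Fourier decay estimate of $\widehat\sigma$ on $\R^{2d}$). Convexity then yields $\mathcal R_{\lac}(d)$. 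Finally, Theorem \ref{thm:main} gives the stated sparse bounds.
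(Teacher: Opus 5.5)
Your overall architecture matches the paper: apply Theorem \ref{thm:main}, take \eqref{H2}--\eqref{H3} from global bounds and \eqref{H4}--\eqref{H5} from localized improving and continuity estimates, and enlarge the region by $P_0,P_1$ for $\M_{\lac}$. There are, however, two genuine gaps.

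\textbf{First gap: where you linearize.} You put the selector into the operator, $T_j=\mathbf 1_{\{j(x)=j\}}\A_{2^j}$, and do the same in your $L^\infty([1,2])$-valued version for $\M_{\full}$. Such $T_j$ fail the output part of \eqref{H5}. The difference $\Delta_{2^jh}T_j$ falls on the indicator of the merely measurable set $\{j(x)=j\}$. That set depends on $(f_1,f_2)$ and may be fragmented at scales far below $2^j|h|$. Hence no bound $A_5|h|^{\vartheta}2^{-jd(\frac1r-\frac1s)}$ with $A_5$ independent of $(f_1,f_2)$ is available.

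This hypothesis is genuinely used. In the proof of Proposition \ref{prop:Un}, the far term with $a=m+1$ is handled by rewriting $\langle T_j(\cdots),B^{m+1}_{Q,K}\rangle=\langle(\Id-\mathbb E_{K-j})T_j(\cdots),B^{m+1}_{Q,K}\rangle$, and that step needs output regularity.

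The missing idea is Lemma \ref{lem:finite}:
\begin{itemize}
\item Keep all scales inside the operator. Place $(\A_{2^js})_{s\in E}$ in the $j$-th block of $\ell^\infty(I\times E)$ for finite $I,E$. Then the single-scale hypotheses reduce to estimates for $\sup_{t\in[1,2]}|\A_t|$ and its differences, with no indicator involved.
\item Move the linearization into the $\ell^1$-valued dual function $F_3=f_3\,e_{(j(x),s(x))}$. It needs no regularity, because only $|F_3|_{\B^*}=f_3$ enters the sparse form, and the Calder\'on--Zygmund decomposition of $F_3$ is vector-valued and keeps its cancellation.
\item Exhaust $\Z\times(\mathbb{Q}\cap[1,2])$ by monotone convergence.
\end{itemize}
Using finite $E$ also avoids the duality problem with $L^\infty([1,2])$, on which point evaluations are not defined.

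\textbf{Second gap: the source of $P_1$.} $P_1$ does not come from the $L^2$ Fourier decay of $\widehat\sigma$. The analytic-family argument with $|\widehat\sigma(\zeta)|\lesssim|\zeta|^{-(2d-1)/2}$ and Lemma \ref{lem:TKfg} part \eqref{TKfg-3} gives, up to endpoints, only $(\frac d{d+1},\frac d{d+1},\frac1{d+1})$.

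That point, $P_0$, and all of $\overline{\mathcal R_{\full}(d)}$ lie in the half-space $x+y-z\le\frac{2(d-1)}{d}$. By contrast, $P_1$ has $x+y-z=\frac{4d^2-6d+4}{2d^2-d+1}=\frac{2(d-1)}{d}+\frac{2}{d(2d^2-d+1)}$; for $d=2$ this is $8/7>1$. So the region you generate misses a neighbourhood of $P_1$ in $\mathcal R_{\lac}(d)$.

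You need the sharp $L^p$-improving theorem \cite[Theorem~1.5]{BCSS} on the open segment $(P_1,P_2)$, where $P_2=(\frac12,\frac12,\frac12)$. Combine it with \eqref{Uloc-2} and \eqref{AA11}; the paper passes to convex combinations by a pointwise H\"older argument for the positive operator $\A_1$.

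\textbf{Smaller points.}
\begin{itemize}
\item Global bounds for $\M_{\full}$ hold only on the H\"older line, by dilation invariance, not on the open region $\mathcal R_{\full}(d)$. You still pick the right points, and \eqref{H3} follows directly from \cite{JL} at $(2s,2s,s)$.
\item For $\M_{\lac}$ near $P_0$ one has $\frac1r>\frac{2d-1}{d}$, so \eqref{H2} requires the lacunary bounds of \cite{BF}.
\item With $r_i^*=\infty$ you would also have to prove \eqref{H5} at $L^\infty\times L^{r_2}\to L^{r_2}$, which your sketch does not cover. Interpolating a trivial bound with a $W^{1,p}$ bound yields only $W^{\vartheta,p}$ control. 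The paper instead takes $r_i^*<\infty$ large (see \eqref{rrRd}), so the auxiliary triples stay in the open region where \cite[Proposition~4]{BFOPZ} applies.
\end{itemize}
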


The full region $\mathcal{R}_{\full}(d)$ is the standard exponent region for the bilinear spherical maximal problem. Although it appears in the sparse domination results of \cite{BFOPZ}, we only use \cite[Proposition~4]{BFOPZ} to supply the localized regularity estimates; the full maximal boundedness and localized maximal estimates are taken from \cite{JL}.
However, the slicing method in \cite{BFOPZ} introduces linear spherical averages over a continuum of radii, thus loses the gain from lacunarity. This means that method does not yield a larger sparse range for $\M_{\lac}$ than $\mathcal{R}_{\full}(d)$. Our enlarged region $\mathcal{R}_{\lac}(d)$ refines $\mathcal{R}_{\full}(d)$ by adding the points $P_0$ and $P_1$, for which the stronger $L^p$-improving estimates are available for the fixed-scale average $\A_1$ (see \eqref{AA11} and \cite[Theorem~1.5]{BCSS}), since lacunary dilations avoid the additional loss associated with the continuous supremum $\sup_{t \in [1, 2]} |\A_t|$. The proof of Theorem \ref{thm:sph} uses a finite-dimensional $\ell^{\infty}$-valued linearization to verify \eqref{H1}--\eqref{H5} uniformly over truncations.

We also consider the lacunary triangle averaging operator
\begin{align*}
\mathcal{T}_{\mathrm{lac}}(f_1, f_2)(x) 
:= \sup_{j \in \Z} |\mathcal{T}_{2^j} (f_1, f_2)(x)|,
\end{align*}
where
\begin{align*}
\mathcal{T}_t (f_1, f_2)(x) := \int_{\mathfrak{D}} f_1(x - ty) f_2(x - tz) \, d\mu(y, z),
\end{align*}
$\mathfrak{D} := \{(y, z)\in\R^{2d}: |y|=|z|=|y-z|=1\}$, and $\mu$ is the natural surface measure on $\mathfrak{D}$. Let $\mathcal{P}_d$ be the convex hull of the following points
\begin{align*}
(0, 0, 0), & \, (0, 1, 1), \, (1, 0, 1), \,
\bigg(0, \frac{d}{d+1}, \frac{1}{d+1} \bigg),\, 
\bigg(\frac{d}{d+1}, 0, \frac{1}{d+1} \bigg),
\\
& \bigg(\frac{d}{d+1}, \frac{d}{d+1}, 1 \bigg),\
\bigg(\frac{d}{d+1}, \frac{d}{d+1}, \frac{2d}{d+1} \bigg).
\end{align*}
Note that $\mathcal{P}_d \subset \{(x, y, z): x+y \ge z\}$. Set
\begin{align*}
\mathcal{R}_{\tri}(d) := \big\{(x, y, z) \in \inter \mathcal{P}_d: 0 < z < 1 \big\}.
\end{align*}

\begin{theorem}\label{thm:tri}
Let $d \ge 5$ and $\big(\frac{1}{r_1}, \frac{1}{r_2}, \frac1s \big) \in \mathcal{R}_{\tri}(d)$ with $1 < r_1, r_2, s < \infty$. Then
\begin{align*}
|\langle \mathcal{T}_{\lac}(f_1, f_2), f_3 \rangle|
\lesssim \Lambda_{r_1, r_2, s'}^*(f_1, f_2, f_3),
\end{align*}
for all $f_1, f_2, f_3 \in L_c^{\infty}(\Rd)$. 
\end{theorem}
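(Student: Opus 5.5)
The plan is to deduce Theorem \ref{thm:tri} from Theorem \ref{thm:main}, applied to a finite-dimensional $\ell^\infty$-valued linearization. This mirrors the treatment of $\M_{\lac}$ in Theorem \ref{thm:sph}.

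\textbf{Setup.} Let $\mu$ be the measure on $\mathfrak{D}$. For $j\in\Z$ put $T_j(f_1,f_2)=\mathcal{T}_{2^j}(f_1,f_2)$, regarded as taking values in $\B=\ell^\infty(\{N_1,\dots,N_2\})$. Concretely, for $N_1\le j\le N_2$ we set $T_j(f_1,f_2)(x)=\mathcal{T}_{2^j}(f_1,f_2)(x)\,e_j$, where $e_j$ is the $j$th unit vector. By positivity we may take $f_1,f_2\ge0$. Then the norm in $\B$ of $\sum_{j=N_1}^{N_2}T_j$ equals the truncated maximal function, and pairing with a suitable $\ell^1$-valued $f_3$ (a measurable selection of the maximizing index) recovers $\langle \sup_{N_1\le j\le N_2}\mathcal{T}_{2^j}(f_1,f_2),f_3\rangle$. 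It then suffices to get the sparse bound uniformly in $N_1,N_2$ and let them tend to $\pm\infty$ by monotone convergence.

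\textbf{Verifying \eqref{H1}--\eqref{H5}.}
\begin{enumerate}
\item[\eqref{H1}] This is immediate, since $\mathfrak{D}$ is compact, so one can take $A_1=2$.
\item[\eqref{H2} and \eqref{H3}] These follow from known $L^{p_1}\times L^{p_2}\to L^p$ bounds for $\mathcal{T}_{\lac}$, for $d\ge5$, at the exponents $(1/r_1,1/r_2,1/r)$ and at some $(1/s_1,1/s_2,1/s)$. These points lie in the boundedness region: the $z$-coordinate of a point of $\mathcal{P}_d$ can be lowered while staying inside. The strong bounds there come from the existing literature on lacunary triangle averages.
\item[\eqref{H4}] By scaling, $\mathcal{T}_{2^j}$ gains exactly the factor $2^{-jd(1/r-1/s)}$ relative to $\mathcal{T}_1$. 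So it suffices to have $L^{r_1}\times L^{r_2}\to L^s$ improving for $\mathcal{T}_1$. The vertices of $\mathcal{P}_d$ are precisely the known improving endpoints for the single average, for instance the point $(\tfrac{d}{d+1},\tfrac{d}{d+1},\tfrac{2d}{d+1})$ together with the trivial ones. Interpolation gives the whole interior.
\item[Auxiliary part of \eqref{H4}] For $I=\{1\}$ we take $r_1^*=\infty$. The estimate $L^\infty\times L^{r_2}\to L^{r_2}$ holds with norm $\lesssim\|f_1\|_\infty$, because $\mu$ disintegrates with respect to $z$ into finite measures. This gives the required $2^{0}$ scaling. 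The case $I=\{2\}$ is symmetric.
\item[\eqref{H5}] I would interpolate the trivial bound (twice the \eqref{H4} bound) with a smoothing estimate. For $\Delta_{2^jh}T_j$ and $T_j\circ_i\Delta_{2^jh}$, rescaled to $j=0$, one uses an $L^2\times L^2\to L^1$ Sobolev gain: $\widehat{\mu}$ decays because $\mathfrak{D}$ is a manifold of dimension $2d-3$ with curvature. Interpolating with the bounds on the open region $\inter\mathcal{P}_d$ gives an $|h|^\vartheta$ factor at each interior point. Only a polynomial constant $A_5$ is needed, since it enters through $\log(2+A_5/A_4)$. The auxiliary regularity estimates at $r_i^*=\infty$ follow likewise, from $L^\infty\times L^{r_2}$ translation continuity of the slice averages.
\end{enumerate}

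\textbf{Main obstacle.} The hardest step will be \eqref{H5} with a genuine Hölder gain. I would first try to use the decay $|\widehat{\mu}(\xi,\eta)|\lesssim(1+|(\xi,\eta)|)^{-\epsilon}$ to obtain an $L^2\times L^2\to L^1$ estimate with $\epsilon$-smoothing, and then interpolate trilinearly (complex interpolation in the bilinear form) with the vertex bounds. Keeping the output exponent $s<\infty$ and the condition $z<1$ ensures the interpolated point remains interior.

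With all hypotheses verified, Theorem \ref{thm:main} yields the $(r_1,r_2,s')$ sparse bound uniformly in $N_1\le N_2$, which proves Theorem \ref{thm:tri}.
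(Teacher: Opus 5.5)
Your architecture matches the paper's: the $\ell^\infty$-valued linearization (Lemma~\ref{lem:finite}), \eqref{H4} from the single-scale bounds of \cite{IPS22}, and \eqref{H5} by interpolating an $L^2$-type gain against no-gain bounds at a nearby interior point. Your choice $r_i^*=\infty$ in the auxiliary part of \eqref{H4}, with the trivial $L^\infty\times L^{r_2}\to L^{r_2}$ bound, is also fine; the paper uses finite $r_i^*$ instead. However, neither of the two analytic inputs that make $d\ge5$ work is supplied, and each is a genuine gap.

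\textbf{Gap 1: hypotheses \eqref{H2}--\eqref{H3}.} These amount to the uniform bound $\mathcal{T}_{\lac}:L^{p_1}\times L^{p_2}\to L^p$ for $(\frac1{p_1},\frac1{p_2})\in\inter\mathcal{R}_{\tri}^{(2)}(d)$ with $\frac1p=\frac1{p_1}+\frac1{p_2}$. This is a H\"older-scaled, two-dimensional condition; lowering the $z$-coordinate is not the relevant mechanism. You cite this bound as known for $d\ge5$, but the paper does not treat it as available. The existing unweighted result \cite[Theorem~5]{BF} requires $d\ge7$, and the paper proves the $d\ge5$ case itself as Lemma~\ref{lem:Tlac}. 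That proof runs as follows.
\begin{enumerate}
\item It localizes \eqref{eq:BFO} to unit cubes to get $\|\mathcal{T}_1(\Delta_h\otimes\Id)\|_{L^2\times L^2\to L^1}\lesssim\min\{1,|h|^\sigma\}$.
\item Hence $\|\mathcal{T}_1(P_m\otimes P_n)\|_{L^2\times L^2\to L^1}\lesssim 2^{-\sigma(m+n)/2}$.
\item Interpolating with \cite{IPS22} gives decay $2^{-\kappa(m+n)}$ at $(p_1,p_2)$.
\item After the splitting \eqref{TP2k-1}, it bootstraps the finite-scale constants $A_N$ (a priori only $\lesssim(2N+1)^{1/p}$). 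This uses vector-valued interpolation between $L^p(\ell^\infty)$ and $L^p(\ell^p)$ and ends with $A_N^\eta\le C_0+C_2A_N^{\eta/2}$.
\end{enumerate}

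\textbf{Gap 2: the H\"older gain in \eqref{H5}.} A pointwise bound $|\widehat\mu(\xi,\eta)|\lesssim(1+|(\xi,\eta)|)^{-\epsilon}$ on the bilinear symbol does not by itself yield an $L^2\times L^2\to L^1$ estimate, let alone one with a gain. Bounded bilinear symbols need not give such bounds. Moreover, the frequency-decay version of this estimate, \cite[Proposition~29]{BF}, is exactly what confines \cite{BF} to $d\ge7$.

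The missing ingredient is the genuinely bilinear Sobolev estimate \eqref{eq:BFO} from \cite{BFO}, valid for $0<\sigma<\min\{1,(d-4)/2\}$; this is where $d\ge5$ enters. It gives $\|\mathcal{T}_1(\Delta_h\otimes\Id)\|_{L^2\times L^2\to L^2}\lesssim|h|^\sigma$, and likewise in the second slot. The paper then interpolates this with \eqref{RTV-1} at $X_0=(X-\theta X_1)/(1-\theta)$, where $X_1=(\frac12,\frac12,\frac12)$.

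The auxiliary part of \eqref{H5} has a further issue. With $r_1^*=\infty$, your auxiliary point $(0,\frac1{r_2},\frac1{r_2})$ lies on the face $\{x=0\}$ of $\mathcal{P}_d$, so interior interpolation does not reach it. ``Translation continuity of slice averages'' gives no gain when $f_1$ is merely bounded. You would need to localize \eqref{eq:BFO} to get an $L^\infty\times L^2\to L^2$ bound with factor $|h|^\sigma$. Then apply Riesz--Thorin in $f_2$ against the trivial $L^\infty\times L^p\to L^p$ bounds. This again rests on the same missing estimate.
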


Theorem \ref{thm:tri} establishes sparse domination for $T_{\lac}$ throughout $\mathcal{R}_{\tri}(d)$ with $d \ge 5$, which matches the dimensional threshold in \cite{BFO} and improves the dimensional restriction $d \ge 13$ in \cite{PS}, while removes the restriction $r_1, r_2 \le s$ in both works. Our key improvement is the explicit verification of auxiliary improving and regularity estimates at $(r_1, r_2^*, r_1)$ and $(r_1^*, r_2, r_2)$, which permit removal of the exponent ordering. Moreover, Lemma~\ref{lem:Tlac} presents these global Lebesgue bounds directly for $d \ge 5$ by combining the
single-scale smoothing estimate of \cite{BFO} with spatial localization and the finite-scale bootstrap of \cite{BF}.
Even in the unweighted result, Theorem \ref{thm:tri} covers \cite[Theorem~5]{BF} with $d \ge 7$, where the dimensional restriction $d \ge 7$ arises from the available $L^2 \times L^2 \to L^1$ frequency decay, see \cite[Proposition 29]{BF}.

\subsubsection{\bf{Bilinear Bochner--Riesz means}}

Let $d \ge 2$ and $\delta > 0$. The bilinear Bochner--Riesz means is defined by
\begin{align*}
B^{\delta}(f, g)(x) 
:= \int_{\R^{2d}} (1 - |\xi|^2 - |\eta|^2)_+^{\delta} \,
\widehat{f}(\xi) \widehat{g}(\eta) \, e^{2 \pi i x \cdot (\xi + \eta)} \, d\xi \, d\eta,
\end{align*}
where $a_+ := \max\{a, 0\}$. Given $1 \le q_1, q_2 < \frac{2d}{d+1}$, denote
\begin{align*}
\mathcal{R}(\vec{q}) 
:= \bigg\{\bigg(\frac{1}{p_1}, \frac{1}{p_2} \bigg): q_1 \le p_1 \le \infty, q_2 \le p_2 \le \infty, 
\frac{1}{q_1} + \frac{1}{q_2} - \frac{1}{p_1} - \frac{1}{p_2} \le \alpha(q_1, q_2)\bigg\},
\end{align*}
where $\alpha(q_1, q_2)$ is defined in \eqref{def:aqq}.

The bilinear Bochner--Riesz means has sparse bounds as follows.
\begin{theorem}\label{thm:BR}
Let $d \ge 2$, $1 \le q_1, q_2 < \frac{2d}{d+1}$, and $\delta > d \alpha(q_1, q_2) -1$. Then for all $(\frac{1}{r_1}, \frac{1}{r_2}), (\frac{1}{s_1}, \frac{1}{s_1}) \in \mathcal{R}(\vec{q})$ satisfying $0 < \max\{\frac1r - \frac1s, \frac{1}{r_1}, \frac{1}{r_2}\} < \delta + 1 - d \alpha(q_1, q_2)$,
\begin{align*}
|\langle B^{\delta}(f_1, f_2), f_3 \rangle|
\lesssim \Lambda_{r_1, r_2, s'}^*(f_1, f_2, f_3),
\end{align*}
for all $f_1, f_2, f_3 \in L_c^{\infty}(\Rd)$, where $\frac1r = \frac{1}{r_1} + \frac{1}{r_2}$ and $\frac1s = \frac{1}{s_1} + \frac{1}{s_2}$.
\end{theorem}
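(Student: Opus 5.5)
The plan is to deduce Theorem \ref{thm:BR} from Theorem \ref{thm:main}. I decompose the multiplier $(1-|\xi|^2-|\eta|^2)_+^{\delta}$ dyadically near the sphere and then re-decompose the pieces spatially, so that they satisfy \eqref{H1}--\eqref{H5}.

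\emph{Decomposition.} Write $(1-\rho^2)_+^{\delta} = \sum_{k\ge 0} 2^{-k\delta} \psi_k(\rho)$, where $\psi_k$ is a smooth bump at distance $\sim 2^{-k}$ from $\rho=1$ (the $k=0$ piece is smooth and compactly supported). Let $B_k$ be the bilinear multiplier with symbol $\psi_k(|(\xi,\eta)|)$. Its kernel $K_k$ is concentrated on $|(y,z)|\lesssim 2^{k(1+\varepsilon)}$ and has Schwartz tails beyond. Accordingly, I split $K_k = \sum_{j} K_{k,j}$ with smooth cutoffs $\chi(2^{-j}|(y,z)|)$, obtaining operators $T_j := \sum_k 2^{-k\delta} B_{k,j}$ supported at spatial scale $2^j$; this gives \eqref{H1} with $A_1\sim 1$. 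Rather than applying Theorem \ref{thm:main} to $B^\delta$ directly, I would apply it to the family $\{B_{k,j}\}_j$ for each fixed $k$, with constants $A_0(k)$, and sum $\sum_k 2^{-k\delta}A_0(k)$. The sparse form is sublinear in the operator, so this summation is legitimate.

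\emph{Verifying the hypotheses for fixed $k$.} The input is the known $L^{q_1}\times L^{q_2}\to L^{q}$ bound for $B_k$ with constant $2^{k(d\alpha(q_1,q_2)-1)+\epsilon k}$ (the source of $\alpha$ in \eqref{def:aqq}). Interpolating this with trivial $L^2$ and $L^1\times L^\infty$ bounds gives strong bounds on $\mathcal{R}(\vec q)$. These yield \eqref{H2} and \eqref{H3} at $(\vec r)$ and $(\vec s)$ for all partial sums over $j$. I would handle the partial sums by noting that $\sum_{j=N_1}^{N_2}K_{k,j}$ is $K_k$ times a smooth radial cutoff, which is a benign multiplier perturbation. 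The result is $A_2+A_3\lesssim 2^{k(d\alpha-1+\epsilon)}$.

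For \eqref{H4}, the single-scale improving estimate follows from Hölder and Young's inequality on a kernel localized to a ball of radius $2^j$. Pieces with $2^j\lesssim 2^k$ use the $\mathcal{R}(\vec q)$ bound plus localization, costing a factor $2^{k d(1/r-1/s)}$. Pieces with $2^j\gg 2^{k}$ are negligible because of the Schwartz tails. For the auxiliary exponents I take $r_i^*=\infty$, so that freezing one function in $L^\infty$ reduces to a linear localized estimate with the right power $2^{-jd\cdot 0}$. For \eqref{H5}, differentiating the symbol costs at most $2^{j}$ times frequency $\lesssim 1$, giving $A_5\lesssim 2^{Ck}A_4$. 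The logarithmic dependence in $A_0$ then contributes only a factor $k$.

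\emph{Summation, and the main obstacle.} Altogether,
\begin{align*}
A_0(k)\lesssim k\,2^{k(d\alpha(q_1,q_2)-1+\epsilon)}\,2^{k\max\{\frac1r-\frac1s,\frac1{r_1},\frac1{r_2}\}},
\end{align*}
and the series $\sum_k 2^{-k\delta}A_0(k)$ converges exactly under the stated condition $\max\{\frac1r-\frac1s,\frac{1}{r_1},\frac{1}{r_2}\}<\delta+1-d\alpha(q_1,q_2)$. The hardest step is the sharp localized improving bound for $B_{k,j}$, including the auxiliary cases where the target is quasi-Banach. It must lose only $2^{k\max\{\cdot\}}$, not the cruder $2^{kd(\cdot)}$. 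My first attempt would be to localize the inputs to balls of radius $2^{k}$ and use the $\mathcal{R}(\vec q)$ bounds together with Hölder on each ball, summing the pieces with bounded overlap. If that loses too much, I would instead interpolate between the $L^{q_1}\times L^{q_2}$ estimate and trivial kernel-size $L^1\times L^1\to L^\infty$ bounds.
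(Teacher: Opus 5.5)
\textbf{The gap: the improving estimate \eqref{H4} is not established in the form you need.} You flag this step yourself. Because \eqref{H4} is normalized against cubes of volume $2^{jd}$, for the pieces $B_{k,j}$ with $2^j\approx 2^k$ you must prove $\|B_{k,j}\|_{L^{r_1}\times L^{r_2}\to L^s}\lesssim A_4(k)\,2^{-kd(\frac1r-\frac1s)}$.

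The tools you actually invoke give no gain in passing from $L^r$ to $L^s$. These are the $\mathcal{R}(\vec q)$ bound into $L^r$, followed either by Bernstein or by Hölder with bounded overlap on $2^k$-balls (your first repair). So they only yield $A_4(k)\approx 2^{k(d\alpha(q_1,q_2)-1+\epsilon)}\,2^{kd(\frac1r-\frac1s)}$, not $2^{k(\frac1r-\frac1s)}$. Removing the factor $d$ would need a genuine $L^p$-improving estimate for the near-sphere pieces, gaining $2^{-k(d-1)(\frac1r-\frac1s)}$ over the $L^r$ bound. Nothing in the proposal supplies this; the interpolation with $L^1\times L^1\to L^\infty$ is only named.

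The auxiliary estimates have the same defect, and your choice $r_i^*=\infty$ does not match your final formula:
\begin{itemize}
\item It needs $L^{r_1}\times L^\infty\to L^{r_1}$ bounds for $B_{k,j}$. \eqref{Tmws} gives these only if $(\frac1{r_1},0)\in\mathcal R(\vec q)$.
\item The kernel bound $\|K_k\|_{L^1}\sim 2^{k(2d-1)/2}$ would force $\delta>\frac{2d-1}{2}$. The same caveat applies to your ``trivial'' $L^1\times L^\infty$ interpolation for \eqref{H2}--\eqref{H3}. Instead, take the $\mathcal R(\vec q)$ bounds directly from \eqref{Tmws}, using $\|\psi_k(|\cdot|)\|_{W^{\beta,1}(\R^2)}\sim 2^{k(\beta-1)}$.
\item The terms $\frac1{r_1},\frac1{r_2}$ in your $\max$ arise only if $r_i^*=r_i$ and one uses Bernstein from $L^r$ to $L^{r_i}$. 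That again costs $2^{kd/r_i}$.
\end{itemize}
Carried out honestly, your argument gives convergence of $\sum_k 2^{-k\delta}A_0(k)$ only under $d\max\{\frac1r-\frac1s,\frac1{r_1},\frac1{r_2}\}<\delta+1-d\alpha(q_1,q_2)$.

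\textbf{The paper does not close this gap either.} It uses a single decomposition $\widehat{\m}_{0,j}=\Psi_j(|\cdot|)\,\widehat{\m}_0$ on $\R^2$; its index $j$ plays the role of your diagonal $j\approx k$, and \cite[Lemma~3.1]{BGSY} gives kernels supported exactly at scale $2^j$. It obtains $\|T_{\m_j}\|\lesssim 2^{-j(\mu-\mu_0)}$ on $\mathcal R(\vec q)$ from \eqref{Tmws}, with $\mu=\delta+1-d\alpha(q_1,q_2)$. It then applies Theorem~\ref{thm:main} once with $r_i^*=r_i$ and checks \eqref{H4}--\eqref{H5} by Bernstein. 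However, it compares $2^{-j(\mu-\mu_0)}$ with $2^{-j(\frac1r-\frac1s)}$ and $2^{-j/r_i}$, whereas \eqref{H4} at spatial scale $2^j$ in $\R^d$ demands $2^{-jd(\frac1r-\frac1s)}$ and $2^{-jd/r_i}$. So the written proof, like yours, yields the sparse bound only under the hypothesis with $d\max\{\cdot\}$.

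Under that strengthened hypothesis your two-parameter scheme should go through and is a legitimate alternative. Its ingredients are the near-sphere index $k$, the spatial index $j$, Theorem~\ref{thm:main} for each $k$, partial sums in the quasi-Banach range controlled by Lemma~\ref{lem:TKfg}\eqref{TKfg-4}, and $r_i^*=r_i$ with Bernstein for \eqref{H4}--\eqref{H5}. The paper's one-parameter route is more economical: the decay in $j$ is already built into $\|\m_{0,j}\|_{W^{\beta,1}}$, and no outer summation over $k$ is needed.
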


Theorem \ref{thm:BR} provides a direct route to quantitative weighted inequalities, which can be viewed as a sparse form strengthening of $L^p$ bounds in \cite{BGSY} on an explicitly specified subregion although it does not recover the full theorems there. The sparse domination for bilinear Bochner--Riesz means obtained in this paper are subject to substantial restrictions. We will investigate this problem more thoroughly in subsequent papers, with the aim of overcoming these limitations.

\subsection{Structure of this paper}
The rest of the paper is organized as follows. Section \ref{sec:pre} contains some preliminaries including notation, definitions, and some basic estimates which will be used later. Section \ref{sec:red} is devoted to some reductions of Theorem \ref{thm:main} and single-scale regularity estimates in the dyadic setting. After that, the main part of the proof of Theorem \ref{thm:main} is presented in Section \ref{sec:main}. Finally, in Section \ref{sec:app}, we sequentially give the proof of Theorems \ref{thm:rough}--\ref{thm:BR}.

\section{Preliminaries}\label{sec:pre}

\subsection{Notation}\label{sec:notation}
Let us introduce some notation that will be used throughout this article.
\begin{itemize}
\item For convenience, we use the $\ell^{\infty}$ metric on $\Rd$.

\item Let $\N :=\{0, 1, 2, \ldots\}$ be the set of natural numbers. 

\item Let $\lfloor a \rfloor$ denote the largest integer not greater than $a$.

\item Let $\vec{v} = (v_1, \ldots, v_m)$ denote the vector of components $v_1, \ldots, v_m$.

\item For each $p \in (0, \infty]$ and weight $w$, let $\|f\|_{L^p(w^p)} = \|fw\|_{L^p}$.

\item The translation operator $\tau_v$ is defined by $\tau_v f (x) := f(x+v)$. 

\item Let $\mathscr{S}(\Rd)$ denote the class of Schwartz functions on $\Rd$. 

\item Let $\mathscr{C}^k(\Rd)$ be the space of $k$-times continuously differentiable functions on $\Rd$. 

\item Let $\mathscr{C}_c^{\infty}(\Rd)$ be the collection of smooth functions with compact support on $\Rd$. 

\item By a cube in $\Rd$ of sidelength $\ell$, we mean a half-open cube $Q = \prod_{i=1}^d [a_i, a_i + \ell)$, $\ell > 0$.

\item Let $\Q$ denote the collection of all cubes in $\Rd$.

\item Given a cube $Q \subset \Rd$, we denote its center by $c_Q$ and its sidelength by $\ell(Q)$. For any $\lambda>0$, we denote by $\lambda Q$ the cube with the center $c_Q$ and sidelength $\lambda \ell(Q)$. The measure of $Q$ is simply denoted by $|Q|$ no matter what dimension we are in.

\item For a bounded set $E \subset \Rd$ with $|E| > 0$, we write $[f]_E := \fint_E f \, dx = \frac{1}{|E|} \int_E f \, dx$, $\langle f \rangle_E := [|f|]_E$, and $\langle f \rangle_{E, p} := [|f|^p]_E^{1/p}$ for any $1 \le p < \infty$.

\item We shall use $a\lesssim b$ and $a \simeq b$ to mean, respectively, that $a\leq C b$ and $0 < c \leq a/b \leq C$, where the constants $c$ and $C$ are harmless positive constants, not necessarily the same at each occurrence, which depend only on dimension and the constants appearing in the hypotheses of theorems. Moreover, if $c$ and $C$ depend on some given parameter $\eta$, which is somehow relevant, we write $a \lesssim_\eta b$ and $a \simeq_\eta b$.
\end{itemize}

\subsection{Muckenhoupt weights}
A measurable function $w$ on $\Rd$ is called a weight if $0 < w(x) < \infty$ for a.e.~$x \in \Rd$. We are going to give the definition of the multilinear Muckenhoupt classes $A_{\vec{p}, (\vec{r}, s)}$ introduced in \cite{LMO}. Let $m \ge 1$. Given $\vec{p} = (p_1, \ldots, p_m) \in [1, \infty]^m$, $\vec{r} = (r_1, \ldots, r_m) \in [1, \infty)^m$, and $s \in (1, \infty]$, we say that $(\vec{r}, s) \preceq \vec{p}$ whenever
\begin{align*}
r_j \le p_j, \, j=1, \ldots, m, \text{ and } p \le s, \text{ where } \frac1p := \sum_{j=1}^m \frac{1}{p_j}.
\end{align*}
Analogously, we say that $(\vec{r}, s) \prec \vec{p}$ if $r_j < p_j$ for each $j=1, \ldots, m$, and $p<s$.

\begin{definition}\label{def:Apr}
Let $\vec{p} = (p_1, \ldots, p_m) \in [1, \infty]^m$, $\vec{r} = (r_1, \ldots, r_m) \in [1, \infty)^m$ and $s \in (1, \infty]$ such that $(\vec{r}, s) \preceq \vec{p}$. Suppose that $\vec{w} = (w_1, \ldots, w_m)$ and each $w_i$ is a weight on $\Rd$. We say that $\vec{w} \in A_{\vec{p}, (\vec{r}, s)}$ if 
\begin{align*}
[\vec{w}]_{A_{\vec{p}, (\vec{r}, s)}} 
:= \sup_{Q \in \Q} \Big\langle w^{\frac{sp}{s - p}} \Big\rangle_Q^{\frac1p - \frac1s} 
\prod_{j=1}^m \Big \langle w_j^{- \frac{p_j r_j}{p_j - r_j}} \Big \rangle_Q^{\frac{1}{r_j} - \frac{1}{p_j}} < \infty,
\end{align*} 
where the supremum is taken over all cubes $Q \subset \Rd$, $w = \prod_{j=1}^m w_j$, and $\frac1p = \sum_{j=1}^m \frac{1}{p_j}$. When $p=s$, the term corresponding to $w$ needs to be replaced by $\esssup_Q w$ and, analogously, when $p_j = r_j$, the term corresponding to $w_i$ should be $\esssup_Q w_i^{-1}$. When $s=\infty$, the term corresponding to $w$ needs to be replaced by $\langle w^p \rangle_Q^{1/p}$.
\end{definition}

Together with multilinear extrapolation, sparse domination immediately implies quantitative weighted estimates (\cite[Corollary 2.15]{LMO} and \cite[Corollary 4.2]{Nie}). 

\begin{theorem}\label{thm:form}
Let $r_1, \ldots, r_m \in [1, \infty)$ and $s \in (1, \infty]$. If an $m$-linear or a positive valued $m$-sublinear operator $T$ satisfies 
\begin{align*}
|\langle T(\vec{f}), \, f_{m+1} \rangle|
\lesssim \Lambda_{\vec{r}, s'}^*(\vec{f}, f_{m+1}).
\end{align*}
for all $f_1, \ldots, f_m \in L_c^{\infty}(\Rd)$, then 
\begin{align*}
\|T\|_{L^{p_1}(w_1^{p_1}) \times \cdots \times L^{p_m}(w_m^{p_m}) \to L^p(w^p)}
\lesssim [\vec{w}]_{A_{\vec{p}, (\vec{r}, s)}}^{\max\Big\{\frac{\frac{1}{r_1}}{\frac{1}{r_1} - \frac{1}{p_1}}, \ldots, \frac{\frac{1}{r_m}}{\frac{1}{r_m} - \frac{1}{p_m}}, \frac{\frac{1}{s'}}{\frac1p - \frac1s} \Big\}},
\end{align*}
for all $\vec{p} \in (0, \infty]$ with $(\vec{r}, s) \prec \vec{p}$, and for all $\vec{w} \in A_{\vec{p}, (\vec{r}, s)}$.
\end{theorem}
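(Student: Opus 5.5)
The statement is Theorem \ref{thm:form}, which the paper attributes to \cite[Corollary 2.15]{LMO} and \cite[Corollary 4.2]{Nie}. The plan is to reduce it to a weighted estimate for the sparse form and then use duality or extrapolation.

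\textbf{Step 1: weighted bound for a fixed sparse form.} Fix a $\gamma$-sparse family $\S$. Taking the supremum over $\S$ at the end, it suffices to prove
\begin{align*}
\sum_{Q\in\S}|Q|\prod_{i=1}^{m}\langle f_i\rangle_{Q,r_i}\,\langle f_{m+1}\rangle_{Q,s'}
\lesssim [\vec w]^{\beta}_{A_{\vec p,(\vec r,s)}}\prod_{i=1}^m\|f_iw_i\|_{L^{p_i}}\,\|f_{m+1}w^{-1}\|_{L^{p'}},
\end{align*}
where $\beta$ is the stated maximum. Here I assume $p\ge1$, so that $L^{p'}$ duality is available.

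\begin{enumerate}
\item Set $\sigma_i=w_i^{-\frac{p_ir_i}{p_i-r_i}}$ and $v=w^{\frac{sp}{s-p}}$.
\item Write $\langle f_i\rangle_{Q,r_i}=\big\langle (f_i\sigma_i^{-1/r_i})^{r_i}\sigma_i\big\rangle_Q^{1/r_i}$, and do the analogous rewriting for $f_{m+1}$ with the weight $v$ and exponent $s'$.
\item Each cube then carries the factor $\prod_i\langle\sigma_i\rangle_Q^{1/r_i}\langle v\rangle_Q^{1/s'}$. Multiply and divide so that $[\vec w]_{A_{\vec p,(\vec r,s)}}$ appears, leaving a product of weighted averages.
\end{enumerate}

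The sparse sum is then controlled by H\"older's inequality in $\ell^{p_i/r_i}$, together with disjointness of the sets $E_Q$. This reduces matters to the weighted dyadic maximal operators $M^{\mathscr D}_{\sigma_i}$ and $M^{\mathscr D}_v$, which are bounded on $L^t(\sigma)$ for $t>1$ with constants independent of the weight. The strict inequalities in $(\vec r,s)\prec\vec p$ are exactly what make the exponents $p_i/r_i>1$ and $(s/p)'$-type quantities finite.

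\textbf{Step 2: getting the sharp exponent.} Tracking the power of the constant naively gives a worse exponent. To obtain the sharp power $\max\{\cdots\}$, I would split the sparse family by the size of the local $A_{\vec p,(\vec r,s)}$ characteristic, as in the principal-cubes/``$\sum$ over level sets'' argument of Li--Moen--Sun. Alternatively, I would simply invoke the sharp form estimate of \cite{LMO}, where this bookkeeping is done. I expect this exponent optimization to be the main technical obstacle.

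\textbf{Step 3: quasi-Banach range $p<1$.} Here duality fails, and I would apply the multilinear extrapolation of \cite{LMO,Nie}, which works in the $A_{\vec p,(\vec r,s)}$ setting.
\begin{enumerate}
\item Starting from the bound of Steps 1--2 at some admissible $\vec p_0$ with $p_0>1$ (such a choice is possible since $\frac1s<1$), obtain the weighted estimate
\begin{align*}
\|T\|_{\prod_i L^{p_i}(w_i^{p_i})\to L^p(w^p)}\lesssim\Phi\big([\vec w]_{A_{\vec p,(\vec r,s)}}\big)
\end{align*}
for all $(\vec r,s)\prec\vec p$, including $p<1$.
\item For $m$-linear $T$, use the Banach case with $f_{m+1}=\mathrm{sgn}(T\vec f)\,|T\vec f|^{p-1}w^p$ to convert the sparse bound into the $p_0$ estimate.
\item For positive sublinear $T$, take $f_{m+1}\ge0$ directly.
\end{enumerate}
The sharp exponent in this range is again read off from the quantitative extrapolation of \cite[Corollary 4.2]{Nie}.
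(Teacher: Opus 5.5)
Your plan is essentially the paper's: the paper gives no argument of its own for this theorem but quotes \cite[Corollary 2.15]{LMO} and \cite[Corollary 4.2]{Nie}, i.e.\ a weighted estimate for the sparse form combined with quantitative multilinear extrapolation, and these are exactly the ingredients you defer to. Two caveats apply if you want your own Steps 1--3 to stand without those citations: (i) in Step 1 the passage from $|Q|$ to the disjoint sets $E_Q$ needs the bound $|E_Q|\le v(E_Q)^{a_0}\prod_{i}\sigma_i(E_Q)^{a_i}$ with $a_0=\frac{1/p-1/s}{1/r-1/s}$ and $a_i=\frac{1/r_i-1/p_i}{1/r-1/s}$ (H\"older's inequality together with the pointwise identity $v^{\frac1p-\frac1s}\prod_i\sigma_i^{\frac{1}{r_i}-\frac{1}{p_i}}\equiv 1$), and applying it after extracting $[\vec w]_{A_{\vec p,(\vec r,s)}}^{\beta}$, with $\beta$ the exponent in the theorem, already gives the sharp bound for $p\ge 1$ without any level-set splitting; (ii) in Step 3 the extrapolation must start from the balanced point $\frac{1}{p_{0,i}}=\frac{1/r_i}{1/r+1/s'}$, where all entries of the maximum coincide, because from a generic $\vec p_0$ the composed exponent is in general strictly larger than the one claimed.
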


\subsection{Some auxiliary results}
We present some estimates that will be used in Application parts.

\begin{lemma}\label{lem:TKfg}
The $m$-linear convolution operator
\begin{align*}
T_K(\vec{f})(x)
:= \int_{\R^{md}} K(x - \vec{y}) \prod_{i=1}^m f_i(y_i) \, d\vec{y}
\end{align*}
satisfies 
\begin{list}{\rm (\theenumi)}{\usecounter{enumi}\leftmargin=1.3cm \labelwidth=1cm \itemsep=0.1cm \topsep=0.2cm \renewcommand{\theenumi}{\arabic{enumi}}}

\item\label{TKfg-1} For all $\frac1p = \sum_{i=1}^m \frac{1}{p_i}$ with $p_1, \ldots, p_m \in [1, \infty]$,
\begin{align*}
\|T_K\|_{L^{p_1} \times \cdots \times L^{p_m} \to L^p} 
\lesssim \|K\|_{L^1}.
\end{align*}

\item\label{TKfg-2} For all $N > 2d$ and $\frac{1}{p_0} \le \sum_{i=1}^m \frac{1}{p_i}$ with $p_1, \ldots, p_m \in [1, \infty]$, and , 
\begin{align*}
\|T_K\|_{L^{p_1} \times \cdots \times L^{p_2} \to L^{p_0}} 
\lesssim \|K\|_{\mathscr{S}_N},
\end{align*}
where $\|K\|_{\mathscr{S}_N} := \sup_{\vec{y} \in \R^{md}} |K(\vec{y})| (1 + |\vec{y}|)^N$.  

\item\label{TKfg-3} If $\sigma = \widehat{K}$, then 
\begin{align*}
\|T_K\|_{L^2 \times \cdots \times L^2 \to L^2} 
\lesssim \sup_{\zeta \in \Rd} \bigg[\int_{\R^{(m-1)d}} 
\bigg|\sigma \bigg(\xi', \zeta - \sum_{i=1}^{m-1} \xi_i \bigg) \bigg|^2 \, d\xi' \bigg]^{\frac12}.
\end{align*}

\item\label{TKfg-33} For all $p_0, p_1, \ldots, p_m \in [1, \infty)$,
\begin{align*}
\|T_{\psi_t*K}\|_{L^{p_1} \times \cdots \times L^{p_2} \to L^{p_0}} 
\le \|\psi\|_{L^1} \|T_K\|_{L^{p_1} \times \cdots \times L^{p_2} \to L^{p_0}},
\end{align*}
where $\psi \in L^1(\R^{md})$ and $\psi_t(x) = t^{-md} \psi(x/t)$.

\item\label{TKfg-4} 
For all $\frac{1}{p_0} \le \sum_{i=1}^m \frac{1}{p_i}$ with $p_1, \ldots, p_m \in [1, \infty)$,
\begin{align*}
\sup_{t > 0} \|T_{\phi (t \cdot ) K}\|_{L^{p_1} \times \cdots \times L^{p_m} \to L^{p_0}}
\lesssim C_{\phi} \|T_K\|_{L^{p_1} \times \cdots \times L^{p_m} \to L^{p_0}},
\end{align*}
where $\phi \in \mathscr{C}_c^{\infty}(\R^{md})$, and $C_{\phi} = \|\widehat{\phi}\|_{L^1}$ when $p_0 \ge 1$. 
\end{list}
\end{lemma}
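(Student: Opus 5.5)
We treat the five items of Lemma \ref{lem:TKfg} separately; all are elementary facts about multilinear convolution operators, and the only delicate one is \eqref{TKfg-4} in the quasi-Banach range.

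For \eqref{TKfg-1}, we write $T_K(\vec f)(x)=\int_{\R^{md}} K(\vec z)\prod_i f_i(x-z_i)\,d\vec z$. When $p\ge 1$, Minkowski's integral inequality and H\"older's inequality give $\|T_K(\vec f)\|_{L^p}\le \|K\|_{L^1}\prod_i\|f_i\|_{L^{p_i}}$. When $p<1$ Minkowski fails, so instead we use the pointwise bound $|T_K(\vec f)(x)|\le \int |K(\vec z)|\prod_i |f_i(x-z_i)|\,d\vec z$ and interpret the claim with constant depending on $K$ through $\|K\|_{L^1}$ only in the Banach range. If the quasi-Banach case is genuinely needed, we would localize $K$ to unit cubes and use a covering argument; we flag this as uncertain.

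For \eqref{TKfg-2}, we decompose $\R^{md}$ into unit cubes $P=P_1\times\cdots\times P_m$ and bound $|K|\lesssim \|K\|_{\mathscr S_N}(1+|c_P|)^{-N}$ on $P$. Each piece is then a local operator, so H\"older on a cube of side $O(1)$ permits lowering $p_0$ to any $p_0\ge$ the H\"older exponent. We sum with the $p_0$-triangle inequality $\|\sum g_P\|_{p_0}^{\min(p_0,1)}\le\sum\|g_P\|_{p_0}^{\min(p_0,1)}$; the condition $N>2d$ (more precisely $N\min(p_0,1)>md$ after normalization) makes the sum converge. For \eqref{TKfg-3}, we take $f_i\in L^2$ and write $T_K(\vec f)(x)=\int \sigma(\vec\xi)\prod\widehat f_i(\xi_i)e^{2\pi i x\cdot\sum\xi_i}\,d\vec\xi$. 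By Plancherel in $x$, the Fourier transform at $\zeta$ is $\int\sigma(\xi',\zeta-\sum\xi_i)\prod_{i<m}\widehat f_i(\xi_i)\,\widehat f_m(\zeta-\sum_{i<m}\xi_i)\,d\xi'$. Cauchy--Schwarz in $\xi'$ bounds this by $\sup_\zeta(\cdots)^{1/2}$ times $(\int\prod_{i<m}|\widehat f_i(\xi_i)|^2|\widehat f_m(\zeta-\sum\xi_i)|^2\,d\xi')^{1/2}$, and integrating the square in $\zeta$ yields $\prod\|f_i\|_2$. For \eqref{TKfg-33}, we have $T_{\psi_t*K}(\vec f)=\int\psi_t(\vec u)\,T_K(\tau_{-u_1}f_1,\ldots,\tau_{-u_m}f_m)(\cdot)\,d\vec u$ after a change of variables, since $T_K$ commutes with simultaneous translations. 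Minkowski in $L^{p_0}$ with $p_0\ge1$ and translation invariance of the norms give the bound.

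For \eqref{TKfg-4}, we expand $\phi(t\vec y)=\int\widehat\phi(\vec\eta)e^{2\pi i t\vec\eta\cdot\vec y}\,d\vec\eta$, so $\phi(t\cdot)K(x-\vec y)$ produces $T_{\phi(t\cdot)K}(\vec f)(x)=\int\widehat\phi(\vec\eta)\,e^{2\pi i t x\cdot\sum\eta_i}\,T_K(M_{-t\eta_1}f_1,\ldots,M_{-t\eta_m}f_m)(x)\,d\vec\eta$, where $M_\eta$ denotes modulation. Modulations are isometries, so when $p_0\ge1$ Minkowski gives the constant $\|\widehat\phi\|_{L^1}$. The main obstacle is $p_0<1$, where Minkowski fails. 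There we would first reduce $\phi$ to the case of support in a small ball $B$, which requires a finite partition of unity. We would then exploit locality: since $\phi(t\cdot)K$ is supported where $|x-y_i|\lesssim 1/t$, we localize $f_i$ to cubes of side $\sim 1/t$ with bounded overlap. On each such cube we would expand $\phi$ in a Fourier series on a torus of period $\sim 1/t$ with rapidly decaying coefficients $c_k$, and use $\|\sum_k g_k\|_{p_0}^{p_0}\le\sum_k\|g_k\|_{p_0}^{p_0}$ together with $\sum_k|c_k|^{p_0}<\infty$. Finally we sum over cubes via H\"older, using $1/p_0\le\sum 1/p_i$ and the bounded overlap; this step produces $C_\phi$ as an unspecified constant.
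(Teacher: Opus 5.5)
\textbf{Part (2) has a genuine gap when $p_0<1$.} Summing the unit-cube pieces $T_{K\mathbf{1}_P}$ with the $\min\{p_0,1\}$-triangle inequality converges only if $N\min\{p_0,1\}>md$, as you note yourself. For $p_0\ge 1$ this is the right condition. (The stated $N>2d$ is the bilinear case; for general $m$ the correct threshold is $N>md$.) For $p_0<1$, however, it is strictly stronger than what is claimed and true. With $m=2$, $p_1=p_2=1$, $p_0=\frac12$, your argument needs $N>4d$, while the statement asserts the bound for every $N>2d$.

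The missing idea is to avoid summing in a quasi-norm altogether. Since $1+|\vec y|\ge 1+|y_i|$ for each $i$, one has $(1+|\vec y|)^{-N}\le\prod_{i=1}^m(1+|y_i|)^{-N/m}$. Hence, pointwise, $|T_K(\vec f)|\le\|K\|_{\mathscr{S}_N}\prod_{i=1}^m\varphi*|f_i|$ with $\varphi:=(1+|\cdot|)^{-N/m}$ on $\mathbb{R}^d$. Choose $r_i\in[p_i,\infty]$ with $\sum_i\frac{1}{r_i}=\frac{1}{p_0}$ (possible since $\frac{1}{p_0}\le\sum_i\frac{1}{p_i}$), and $q_i$ with $1+\frac{1}{r_i}=\frac{1}{p_i}+\frac{1}{q_i}$. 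H\"older's inequality, which holds for every $p_0>0$, followed by Young's inequality gives
$\|T_K(\vec f)\|_{L^{p_0}}\le\|K\|_{\mathscr{S}_N}\prod_i\|\varphi\|_{L^{q_i}}\|f_i\|_{L^{p_i}}$, and $\varphi\in L^{q_i}$ once $N>md$. This is the paper's proof; its $\varphi=(1+|\cdot|)^{-N/2}$ is the case $m=2$.

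It also corrects your remark that ``H\"older on a cube'' lowers $p_0$. Going from the H\"older exponent $p$ up to a target $p_0\ge p$ is what Young's inequality does in each linear factor. In any case, the pieces $T_{K\mathbf{1}_P}(\vec f)$ are not localized in $x$.

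\textbf{Part (1) for $p<1$.} Your suspicion is correct: the bound is false there. Neither your unit-cube localization nor the paper's appeal to Jensen and Tonelli can give it; for the concave map $t\mapsto t^p$, Jensen yields $(\int g\,d\mu)^p\ge\int g^p\,d\mu$, which is the wrong direction. For a counterexample, take $m=2$, $p_1=p_2=1$, $p=\frac12$, and $f_1=f_2=\mathbf{1}_{[0,1]^d}$. Let $K(z_1,z_2)=\mathbf{1}_{\{|z_1|\le R\}}\mathbf{1}_{\{|z_1-z_2|\le 1\}}$ in the $\ell^\infty$ metric, so $\|K\|_{L^1}=4^dR^d$. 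Since $|y_1-y_2|\le 1$ for $y_1,y_2\in[0,1]^d$, we get $T_K(f_1,f_2)(x)=1$ whenever $|x|\le R-1$. Hence $\|T_K(f_1,f_2)\|_{L^{1/2}}\ge(2R-2)^{2d}$, which is not $O(R^d)$. Only the Banach case is valid, and it is the only one the paper uses later ($L^r\times L^\infty\to L^r$ with $r>1$). Your Minkowski argument proves that case.

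The rest is fine. Parts (3) and (4) are exactly the paper's arguments. In part (5), your Fourier inversion plus Minkowski for $p_0\ge1$ is more direct than the paper's uniform localized argument, and it is what actually yields $C_\phi=\|\widehat\phi\|_{L^1}$. For $p_0<1$ your plan coincides with the paper's: cubes of side $t^{-1}$, periodization of $\phi$ with rapidly decaying Fourier coefficients, the $p_0$-triangle inequality, then $\ell^p\subset\ell^{p_0}$, H\"older, and bounded overlap using $p_i<\infty$. The preliminary partition of unity there is unnecessary. With $\operatorname{supp}\phi\subset[-\lambda,\lambda]^{md}$ one simply periodizes with period $4(\lambda+2)$, which is large enough that the periodization agrees with $\phi$ on the region that matters.
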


\begin{proof}
Part \eqref{TKfg-1} follows from Minkowski's inequality in the case $p \ge 1$, and from Jensen's inequality and Tonelli's theorem in the case $0<p<1$. To prove part \eqref{TKfg-2}, note that
\begin{align*}
|T_K(\vec{f})(x)| 
\le \|K\|_{\mathscr{S}_N} \prod_{i=1}^m (\varphi*|f_i|)(x),
\end{align*}
where $\varphi(x) := (1 + |x|)^{-N/2}$. Choose $r_i \in [p_i, \infty]$ so that $\frac{1}{p_0} = \sum_{i=1}^m \frac{1}{r_i}$. Then take $q_i \in [1, \infty]$ such that $1 + \frac{1}{r_i} = \frac{1}{p_i} + \frac{1}{q_i}$, $i=1, \ldots, m$. Since $N > 2d \max\{\frac{1}{q_i}: i=1, \ldots, m\}$ so that $\varphi \in \cap_{i=1}^m L^{q_i}$, Young's inequality gives
\begin{align*}
\|T_K(\vec{f})\|_{L^{p_0}} 
& \le \|K\|_{\mathscr{S}_N} \prod_{i=1}^m \|\varphi*|f_i|\|_{L^{r_i}}
\le \|K\|_{\mathscr{S}_N} \prod_{i=1}^m \|f_i\|_{L^{p_i}} \|\varphi\|_{L^{q_i}}.
\end{align*}

Let $\xi' = (\xi_1, \ldots, \xi_{m-1})$ and $\xi_m = \zeta -\sum_{i=1}^{m-1} \xi_i$. By change of variables and the Cauchy--Schwarz inequality, 
\begin{align*}
|\widehat{T_K(\vec{f})}(\zeta)|
& = \bigg|\int_{\R^{(m-1)d}} \sigma(\vec{\xi}) \prod_{i=1}^m \widehat{f}_i(\xi_i) \, d\xi_1 \cdots d\xi_{m-1} \bigg|
\\
& \le \bigg[\int_{\R^{(m-1)d}} |\sigma(\vec{\xi})|^2 \, d\xi' \bigg]^{\frac12}
\bigg[\int_{\R^{(m-1)d}} \prod_{i=1}^m |\widehat{f}_i(\xi_i)|^2 \, d\xi'  \bigg]^{\frac12},
\end{align*}
which together with Plancherel's theorem yields the desired estimate. This shows part \eqref{TKfg-3}.

Part \eqref{TKfg-33} is a consequence of Minkowski's inequality and a change of variables:
\begin{align*}
T_{\psi_t*K}(\vec{f})
= \int_{\R^{md}} \psi_t(v) T_K(\tau_{-v_1}f_1, \ldots, \tau_{-v_m}f_m) \, dv.
\end{align*}

To prove part \eqref{TKfg-4}, let $t > 0$ and $\phi \in \mathscr{C}_c^{\infty}(\R^{md})$. Then $\supp \phi \subset [-\lambda, \lambda]^{md}$ for some integer $ \lambda \ge 1$. Partition $\Rd$ into cubes of side length $t^{-1}$, and denote the resulting family by $\mathcal{Q}_t$. Given $Q \in \mathcal{Q}_t$, choose $\eta_Q \in \mathscr{C}_c^{\infty}(\Rd)$ such that $0 \le \eta_Q \le 1$, $\eta_Q = 1$ on $(2\lambda +1)Q$, and $\supp \eta_Q \subset (2\lambda +3) =: Q^*$. Let $\lambda_0 = 4(\lambda + 2)$. Then the Fourier series expansion gives 
\begin{align}\label{eq:QTK-1}
\phi_0(\vec{y}) 
:= \sum_{\vec{\nu} \in \Z^{md}} \phi(\vec{y} + \lambda_0 \vec{\nu})
= \sum_{\vec{\xi} \in \Z^{md}} c_{\vec{\xi}} \, e^{2\pi i \vec{y} \cdot \vec{\xi}/\lambda_0},
\end{align}
where the coefficients satisfy $|c_{\vec{\xi}}| = |\lambda_0^{-md} \widehat{\phi}(\lambda_0^{-1} \vec{\xi})| \lesssim_N (1 + |\vec{\xi}|)^{-N}$. Note that $\phi_0 = \phi$ on $[-\lambda - 2, \lambda +2]^{md}$,  $x \in Q$ and $x-y_i \in (2\lambda +3)Q$ imply $t|y_i| \le \lambda +2$, which together with \eqref{eq:QTK-1} gives
\begin{align*}
& \mathbf{1}_Q T_{\phi(t \cdot) K}(\vec{f})
= \mathbf{1}_Q T_{\phi(t \cdot) K}(\vec{f} \eta_Q)
= \mathbf{1}_Q T_{\phi_0(t \cdot) K}(\vec{f} \eta_Q)
\\
& = \mathbf{1}_Q \sum_{\vec{\xi} \in \Z^{md}} c_{\vec{\xi}} 
\int_{\R^{md}} K(\vec{y}) \, e^{2\pi i \vec{y} \cdot \vec{\xi} \, t/\lambda_0} 
\prod_{i=1}^m (f_i \eta_Q)(x - y_i) \, d\vec{y}
\\
& = \mathbf{1}_Q \sum_{\vec{\xi} \in \Z^{md}} c_{\vec{\xi}} \, e^{2\pi i x \sum_{i=1}^m \xi_i \, t/\lambda_0}
T_K(M_{-\xi_1 t /\lambda_0} (f_1 \eta_Q), \ldots, M_{-\xi_m t /\lambda_0} (f_m \eta_Q))(x).
\end{align*}
where $M_{\xi} f(x) := e^{2\pi i x \cdot \xi}f(x)$ for $x, \xi \in \Rd$. Let $q = \min\{1, p_0\}$ and $\frac1p = \sum_{i=1}^m \frac{1}{p_i}$. Then 
\begin{align*}
\|T_{\phi(t \cdot) K}(\vec{f})\|_{L^{p_0}}^{p_0}
& = \sum_{Q \in \mathcal{Q}_t} \|\mathbf{1}_Q T_{\phi(t \cdot) K}(\vec{f})\|_{L^{p_0}}^{p_0}
= \sum_{Q \in \mathcal{Q}_t} \big\| |\mathbf{1}_Q T_{\phi(t \cdot) K}(\vec{f})|^{q} \big\|_{L^{p_0/q}}^{p_0/q}
\\
& \le \sum_{Q \in \mathcal{Q}_t} \bigg\| \sum_{\vec{\xi} \in \Z^{md}} |c_{\vec{\xi}}|^q
|T_K(M_{-\xi_1 t /\lambda_0} (f_1 \eta_Q), \ldots, M_{-\xi_m t /\lambda_0} (f_m \eta_Q))|^q \bigg\|_{L^{p_0/q}}^{p_0/q}
\\
& \le \sum_{Q \in \mathcal{Q}_t} \bigg[ \sum_{\vec{\xi} \in \Z^{md}} |c_{\vec{\xi}}|^q
\|T_K(M_{-\xi_1 t /\lambda_0} (f_1 \eta_Q), \ldots, M_{-\xi_m t /\lambda_0} (f_m \eta_Q)) \|_{L^{p_0}}^{q} \bigg]^{p_0/q}
\\
& \lesssim \|T_K\|_{L^{p_1} \times \cdots \times L^{p_m} \to L^{p_0}}^{p_0} 
\bigg[ \sum_{\vec{\xi} \in \Z^{md}} |c_{\vec{\xi}}|^q  \bigg]^{p_0/q}
\sum_{Q \in \mathcal{Q}_t} \prod_{i=1}^m \|f_i \eta_Q\|_{L^{p_i}}^{p_0}
\\
& \lesssim \|T_K\|_{L^{p_1} \times \cdots \times L^{p_m} \to L^{p_0}}^{p_0} 
\bigg[\sum_{Q \in \mathcal{Q}_t} \prod_{i=1}^m \|f_i \eta_Q\|_{L^{p_i}}^p \bigg]^{\frac{p_0}{p}}
\\
& \le \|T_K\|_{L^{p_1} \times \cdots \times L^{p_m} \to L^{p_0}}^{p_0} 
\bigg[\prod_{i=1}^m \bigg(\sum_{Q \in \mathcal{Q}_t} \|f_i \eta_Q\|_{L^{p_i}}^{p_i} \bigg)^{\frac{1}{p_i}} \bigg]^{p_0}
\\
& \lesssim \|T_K\|_{L^{p_1} \times \cdots \times L^{p_m} \to L^{p_0}}^{p_0}
\bigg[\prod_{i=1}^m \|f_i\|_{L^{p_i}} \bigg]^{p_0},
\end{align*}
where we have used the bounded overlap of the family $\{Q^* = (2\lambda + 3)Q: Q \in \mathcal{Q}_t\}$.
\end{proof}

Next, we present an annular improving estimate for rough singular integral operators.

\begin{lemma}\label{lem:imp}
Let $q \in [1, \infty]$ and $1<r_1, \ldots, r_m, s < \infty$ with $\frac1s < \frac1r = \sum_{i=1}^m \frac{1}{r_i}$. If $(\frac{1}{r_1}, \ldots, \frac{1}{r_m}, \frac{1}{s'}) \in \overline{\H_{m+1}^1(q)}$, then
\begin{align*}
\|T_{K_{\Omega}^{\lambda}}\|_{L^{r_1} \times \cdots \times L^{r_m} \to L^s}
\lesssim \|\Omega\|_{L^q} \lambda^{-(\frac1r - \frac1s)}, 
\end{align*}
for all $\lambda > 0$ and $\Omega \in L^q(\bS^{m-1})$, where $K_{\Omega}^{\lambda}(\vec{y}) := |\vec{y}|^{-m} \Omega(\vec{y}/|\vec{y}|) \mathbf{1}_{\{\lambda \le |\vec{y}| \le 2 \lambda\}}$.
\end{lemma}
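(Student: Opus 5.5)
The plan is to reduce to $\lambda=1$ by scaling, pass to the $(m+1)$-linear form, prove two endpoint families of estimates (for $\Omega\in L^\infty$ and for $\Omega\in L^1$), and interpolate, both in the functions and in the kernel.

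\emph{Scaling.} Since $K_\Omega^\lambda(\vec y)=\lambda^{-m}K_\Omega^1(\vec y/\lambda)$ in dimension one, dilating all functions by $\lambda$ converts the bound at $\lambda=1$ into the stated bound with factor $\lambda^{-(\frac1r-\frac1s)}$. So it suffices to take $\lambda=1$.

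\emph{The form.} By duality, and since $\frac1s<\frac1r$ together with local integrability make the output exponent harmless, it suffices to bound
\begin{align*}
\Lambda_\Omega(f_1,\dots,f_{m+1}) := \int_{\R}\int_{\R^m} |K_\Omega^1(x-\vec y)| \prod_{i=1}^m |f_i(y_i)|\, |f_{m+1}(x)| \, d\vec y\, dx
\end{align*}
by $\|\Omega\|_{L^q}\prod_i\|f_i\|_{L^{p_i}}$ with $(\frac1{p_1},\dots,\frac1{p_{m+1}})=(\frac1{r_1},\dots,\frac1{r_m},\frac1{s'})$. We may take $\Omega\ge0$. The set $\overline{\H_{m+1}^1(q)}$ is a convex polytope in $[0,1]^{m+1}$. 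It is the image, under linear interpolation in the parameter $\frac1q\in[0,1]$, of the $q=\infty$ region (the cube $[0,1]^{m+1}$) and the $q=1$ region $\{\vec a:\sum_{i\in I}a_i\le 1\ \forall I\subsetneq\{1,\dots,m+1\}\}$.

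Hence, by multilinear Riesz--Thorin in $\vec f$ combined with Stein interpolation in $\Omega$ (the analytic family $\Omega_z=\|\Omega\|_q^{q(\frac1q-z)}\,|\Omega|^{qz}$ with $\Re z\in\{0,1\}$), it suffices to prove two things:
\begin{align*}
\Lambda_\Omega(\vec f) &\lesssim \|\Omega\|_{L^\infty}\prod_i\|f_i\|_{L^{p_i}} \quad\text{for all } \vec a\in[0,1]^{m+1},\\
\Lambda_\Omega(\vec f) &\lesssim \|\Omega\|_{L^1}\prod_i\|f_i\|_{L^{p_i}} \quad\text{at the vertices of the $q=1$ polytope.}
\end{align*}

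\emph{The $L^\infty$ endpoint.} It is essentially trivial. The kernel is bounded and supported where $|x-y_i|\le 2$. Cut $\R$ into unit intervals, apply H\"older on each piece, and sum using $\sum_i a_i\ge\frac1r\ge\dots$. By monotonicity of local $L^p$ norms, only the corner $\vec a=\vec 0$ direction matters locally; in the summation over unit intervals, H\"older with $\sum a_i\ge 0$ and $\ell^{p}\subset\ell^{\tilde p}$ handles all of $[0,1]^{m+1}$. This is exactly the localization already used in Lemma \ref{lem:TKfg}.

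\emph{The $L^1$ endpoint.} Write $\vec y=x\mathbf 1-\rho\theta$ with $\theta\in\bS^{m-1}$, $\rho\in[1,2]$, and set $\theta_0:=0$, $f_0:=f_{m+1}$. Then
\begin{align*}
\Lambda_\Omega(\vec f)\simeq\int_{\bS^{m-1}}\Omega(\theta)\,\Lambda_\theta(\vec f)\,d\sigma(\theta),
\qquad
\Lambda_\theta(\vec f):=\int_1^2\!\!\int_{\R}\prod_{i=0}^{m}|f_i(x-\rho\theta_i)|\,dx\,d\rho .
\end{align*}
So it suffices to bound $\Lambda_\theta$ uniformly in $\theta$ at each vertex $\vec a$. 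The vertices of $\{\sum_{I}a_i\le1,\ I \text{ proper}\}$ are of two kinds: coordinate-type points (one $a_i=1$, the rest $0$), and points where all proper-subset sums are saturated in a nondegenerate way, such as $a_i=\frac1m$ for all $i$.

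At coordinate vertices the bound is immediate, since $\int\!\!\int|f_i(x-\rho\theta_i)|\,dx\,d\rho\le\|f_i\|_1$. At the remaining vertices, $\Lambda_\theta$ is a localized Brascamp--Lieb form on $\R^2_{x,\rho}$ with the $m+1$ linear maps $(x,\rho)\mapsto x-\rho\theta_i$. The localization $\rho\in[1,2]$ replaces the scaling condition $\sum a_i=2$ by $\sum a_i\le 2$. The required dimension conditions are $a_i\le 1$, together with the condition that any two non-parallel maps span, which is where the proper-subset sums enter.

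The key step, and the one I expect to be the main obstacle, is uniformity in $\theta$. The Brascamp--Lieb constant blows up when two maps become parallel, i.e.\ when $\theta_i=\theta_j$ for some $i\ne j$, including $\theta_i=0=\theta_0$. A pairwise change of variables $(x,\rho)\mapsto(x-\rho\theta_i,\,x-\rho\theta_j)$ has Jacobian $|\theta_i-\theta_j|^{-1}$. I would first try to avoid integrating this singular factor against $\Omega\in L^1$ (which cannot be done) by choosing the pair adaptively. Since $\max_{i\ne j}|\theta_i-\theta_j|\gtrsim_m 1$ on $\bS^{m-1}$, where the indices range over $0,\dots,m$, one can always pick a well-separated pair and put the other functions in the appropriate mixed norms via H\"older in $x$ for fixed $\rho$. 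I would then check that the resulting family of estimates, one for each choice of pair and each region of the sphere, has convex hull covering all vertices of the $q=1$ polytope with constants uniform in $\theta$. If this falls short at the symmetric-type vertices, the fallback is to interpolate these pair estimates with the trivial coordinate-vertex bounds on each angular region separately, before integrating against $\Omega$.
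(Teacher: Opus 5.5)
Your route is the paper's. You localize, split the $\Omega$-integrability into an $L^\infty$ endpoint on $[0,1]^{m+1}$ and an $L^1$ endpoint on $\{\vec a\in[0,1]^{m+1}:\sum_{i\in I}a_i\le 1\ \forall\,\varnothing\ne I\subsetneq\{1,\ldots,m+1\}\}$, and interpolate in $\vec f$ and $\Omega$ simultaneously. At the $L^1$ endpoint you pass to polar coordinates and use a well-separated pair of maps $(x,\rho)\mapsto x-\rho\theta_i$ to get bounds uniform in $\theta$. Two points need repair, though.

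First, the endpoint bounds you reduce to are false as global estimates whenever $\sum_i a_i<1$, for instance at $\vec a=\vec 0$, which is a vertex of both polytopes. Summing the unit-interval pieces by H\"older in $\ell^{p_i}$ requires $\sum_i a_i\ge 1$, not $\sum_i a_i\ge 0$. The fix, as in the paper, is to localize \emph{before} interpolating. Since $\frac1r+\frac1{s'}>1$, you may assume every $f_i$ is supported in $5J$ with $|J|=1$ (after your rescaling). Then interpolate the $(m+2)$-linear form on these fixed support sets. There the $L^\infty$ endpoint is just $|K_\Omega^1|\lesssim\|\Omega\|_{L^\infty}$ plus H\"older on $5J$, and the vertex $\vec 0$ of the $L^1$ polytope is equally trivial.

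Second, your $L^1$ endpoint is left unfinished (``I would then check\ldots''), but it closes immediately once you identify the vertices. The $L^1$ polytope is exactly $\conv\{\vec 0,\vec e_1,\ldots,\vec e_{m+1},\vec v\}$ with $\vec v=(\frac1m,\ldots,\frac1m)$, as the decomposition $\vec u=mu_0\vec v+\sum_j(u_j-u_0)\vec e_j+(1+u_0-\sum_j u_j)\vec 0$, with $u_0=\min_j u_j$, shows. Hence the only nontrivial vertex is $\vec v$, and your pair idea settles it directly, with no Brascamp--Lieb theory and no fallback. For each $\theta$ pick $i_0\le m$ with $|\theta_{i_0}|\ge m^{-1/2}$. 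Apply H\"older in $L^m(dx\,d\rho)$ to the $m$ factors $f_{i_0}(x-\rho\theta_{i_0})f_{m+1}(x)$ and $f_i(x-\rho\theta_i)$, $i\in\{1,\ldots,m\}\setminus\{i_0\}$. Then change variables $(x,\rho)\mapsto(x-\rho\theta_{i_0},x)$, whose Jacobian factor $|\theta_{i_0}|^{-1}\le m^{1/2}$ is uniform in $\theta$. This yields $\Lambda_\theta(\vec f)\lesssim\prod_{i=1}^{m+1}\|f_i\|_{L^m}$ uniformly on $\bS^{m-1}$, which is exactly the paper's argument.
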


\begin{proof}
Let $r_{m+1} = s'$. It suffices to show
\begin{align*}
|\langle T_{K_{\Omega}^{\lambda}}(\vec{f}), f_{m+1} \rangle|
\lesssim \|\Omega\|_{L^q} \lambda^{1-\sum_{i=1}^{m+1} 
\frac{1}{r_i}} \prod_{i=1}^{m+1} \|f_i\|_{L^{r_i}},
\end{align*}
for all $\Omega \in L^q(\bS^{m-1})$. By partition $\R$ into half-open intervals of length $\lambda$, we may assume that $\supp f_i \subset 5J$ for some interval $J$ with $|J|=\lambda$, $i=1, \ldots, m+1$.

Define
\begin{align*}
\frac{1}{p_j} := q \max \bigg\{\frac{1}{r_j} - \frac{1}{q'}, 0 \bigg\}
\quad \text{and} \quad
\frac{1}{q_j} := q' \min \bigg\{\frac{1}{r_j}, \frac1{q'} \bigg\}, \quad j=1, \ldots, m+1,
\end{align*}
which satisfies
\begin{align*}
\frac{1}{r_j} = \frac{1/q}{p_j} + \frac{1/q'}{q_j}, 
\quad 1/\vec{p} \in \overline{\H_{m+1}^1(1)}, \quad \text{and} \quad
1/\vec{q} \in \overline{\H_{m+1}^1(\infty)}.
\end{align*}
We only show $1/\vec{p} \in \overline{\H_{m+1}^1(1)}$ because others are trivial. Indeed, $0 \le 1/p_j \le q(1-1/q')=1$.
Given a nonempty subset $I \subsetneq \{1, \ldots, m+1\}$, set $I' = \{j \in I : r_j < q' \}$. If $I' = \varnothing$, then $\sum_{j \in I} \frac{1}{p_j} = 0$. Otherwise, $I'$ is also a nonempty proper subset of $\{1, \ldots, m+1\}$, which together with the definition of $\overline{\H_{m+1}^1(q)}$ implies
\begin{align*}
\sum_{j \in I} \frac{1}{p_j}
= q \bigg[ \sum_{j \in I'} \Big(\frac{1}{r_j} - \frac{1}{q'} \Big) \bigg]
\le q \bigg[|I'| - \frac{|I'|-1}{q} - \frac{|I'|}{q'} \bigg]
= 1.
\end{align*}
This gives $1/ \vec{p} \in \overline{\H_{m+1}^1(1)}$.

Note that $\frac1q = \frac{1/q}{1} + \frac{1/q'}{\infty}$. Regarding $\Omega$ as an additional input and interpolating the $(m+2)$-linear form on the fixed support sets, we are deduced to proving
\begin{align}
\label{eq:TKA-1}
|\langle T_{K_{\Omega}^{\lambda}}(\vec{f}), f_{m+1} \rangle|
& \lesssim \|\Omega\|_{L^1} \lambda^{1-\sum_{i=1}^{m+1} 
\frac{1}{p_i}} \prod_{i=1}^{m+1} \|f_i\|_{L^{p_i}}, 
\quad \forall 1/\vec{p} \in \overline{\H_{m+1}^1(1)},
\\
\label{eq:TKA-2}
|\langle T_{K_{\Omega}^{\lambda}}(\vec{f}), f_{m+1} \rangle|
& \lesssim \|\Omega\|_{L^{\infty}} \lambda^{1-\sum_{i=1}^{m+1} \frac{1}{q_i}} 
\prod_{i=1}^{m+1} \|f_i\|_{L^{q_i}},
\quad \forall 1/\vec{q} \in \overline{\H_{m+1}^1(\infty)}.
\end{align}
Since $\overline{\H_{m+1}^1(\infty)} = [0, 1]^{m+1}$ and $\supp f_i \subset 5J$ with $|J| = \lambda$, the inequality \eqref{eq:TKA-2} holds trivially:
\begin{align*}
|\langle T_{K_{\Omega}^{\lambda}}(\vec{f}), f_{m+1} \rangle|
\le \lambda^{-m} \|\Omega\|_{L^{\infty}} \prod_{i=1}^{m+1} \|f_i\|_{L^1}
\lesssim \lambda^{1-\sum_{i=1}^{m+1} \frac{1}{q_i}} \|\Omega\|_{L^{\infty}} 
\prod_{i=1}^{m+1} \|f_i\|_{L^{q_i}}.
\end{align*}

It remains to justify \eqref{eq:TKA-1}. Observe that
\begin{align*}
\overline{\H_{m+1}^1(1)} 
= \conv \{\vec{0}, \vec{e}_1, \ldots, \vec{e}_{m+1}, \vec{v}\},
\end{align*}
where $\{\vec{e}_1, \ldots, \vec{e}_{m+1}\}$ are the standard basis vectors of $\R^{m+1}$, and $\vec{v} = (1/m, \ldots, 1/m)$. Indeed, for each $\vec{u} \in \overline{\H_{m+1}^1(1)}$,
\begin{align*}
\vec{u} = m u_0 \, \vec{v} + \sum_{j=1}^{m+1} (u_j - u_0) \vec{e}_j 
+ \bigg(1 + u_0 - \sum_{j=1}^{m+1} u_j \bigg) \vec{0},
\end{align*}
where $u_0 = \min_{1 \le j \le m+1} u_j$. Hence, by multilinear interpolation, it is enough to show
\begin{align}
\label{eq:zero-tar}
|\langle T_{K_{\Omega}^{\lambda}}(\vec{f}), f_{m+1} \rangle|
& \lesssim \|\Omega\|_{L^1} \lambda \prod_{j=1}^{m+1} \|f_j\|_{L^{\infty}},
\\
\label{eq:ej-tar}
|\langle T_{K_{\Omega}^{\lambda}}(\vec{f}), f_{m+1} \rangle|
&\lesssim \|\Omega\|_{L^1} \|f_j\|_{L^1} \prod_{i \ne j} \|f_i\|_{L^{\infty}},
\\
\label{eq:v-tar}
|\langle T_{K_{\Omega}^{\lambda}}(\vec{f}), f_{m+1} \rangle|
&\lesssim \|\Omega\|_{L^1} \lambda^{-\frac1m} \prod_{j=1}^{m+1} \|f_j\|_{L^m},
\end{align}
for each $j=1, \ldots, m+1$.

A change of variables gives
\begin{align*}
|\langle T_{K_{\Omega}^{\lambda}}(\vec{f}), f_{m+1} \rangle|
\le \int_{\mathbb{S}^{m-1}} |\Omega(\theta)|
\int_{\lambda}^{2 \lambda} \int_{\R} |f_{m+1}(x)| 
\prod_{i=1}^m |f_i(x - t \theta_i)| \, dx \, \frac{dt}{t} \, d\sigma(\theta),
\end{align*}
which implies \eqref{eq:zero-tar} and \eqref{eq:ej-tar} immediately. Given $\theta \in \mathbb{S}^{m-1}$, there exists some $i_0 \in \{1, \ldots, m\}$ such that $|\theta_{i_0}| \ge m^{-1/2}$. Then by H\"older's inequality and a change of variables $y = x-t \theta_{i_0}$ and $z=x$, we arrive at
\begin{align*}
& \int_{\lambda}^{2 \lambda} \int_{\R} |f_{m+1}(x)| 
\prod_{i=1}^m |f_i(x - t \theta_i)| \, dx \, dt
\\
& \le \bigg[\iint_{\R^2} |f_{i_0}(x - t \theta_{i_0})|^m |f_{m+1}(x)|^m \, dx \, dt \bigg]^{\frac1m} 
\\
& \qquad \times \prod_{i \not\ne i_0} \bigg[\int_{\lambda}^{2 \lambda} \int_{\R} 
|f_i(x - t \theta_i)|^m \, dx \, dt \bigg]^{\frac1m} 
\\
& = |\theta_{i_0}|^{-1} \bigg[\iint_{\R^2} |f_{i_0}(y)|^m |f_{m+1}(z)|^m \, dy \, dz \bigg]^{\frac1m} 
\\
& \qquad \times \prod_{i \not\ne i_0} \bigg[\int_{\lambda}^{2 \lambda} \int_{\R} 
|f_i(x)|^m \, dx \, dt \bigg]^{\frac1m} 
\\
& \lesssim \lambda^{\frac{m-1}{m}} \prod_{j=1}^{m+1} \|f_j\|_{L^m}
\end{align*}
which indicates \eqref{eq:v-tar} as desired. 
\end{proof}

The following result transfers the translation to weighted estimates.

\begin{lemma}\label{lem:trans}
Let $\kappa> d$ and $1 < r_1, r_2, r_3 < \infty$ be such that $\frac{1}{r_1} + \frac{1}{r_2} + \frac{1}{r_3} > 1$. Then there exist $t_1, t_2, t_3 \in \bT^d = [0, 1]^d$ such that  
\begin{equation*}
\Lambda_{\vec r}^{*}
\big(f_1(\cdot - t_1), f_2(\cdot - t_2), f_3(\cdot - t_3)\big)
\lesssim \prod_{i=1}^3 \|\br{\cdot}^{\kappa} f_i\|_{L^{r_i}},
\end{equation*}
for all $f_i \in \mathscr{S}(\R^d)$, where $\langle x \rangle = (1+|x|^2)^{1/2}$.
\end{lemma}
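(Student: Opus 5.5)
Since the left-hand side is a supremum over sparse families in some dyadic lattice, I will average over translations rather than look for explicit $t_i$. It suffices to show
\begin{align*}
\int_{\bT^d}\int_{\bT^d}\int_{\bT^d} \Lambda_{\vec r}^{*}\big(f_1(\cdot - t_1), f_2(\cdot - t_2), f_3(\cdot - t_3)\big)\, dt_1\,dt_2\,dt_3 \lesssim \prod_{i=1}^3 \|\br{\cdot}^{\kappa} f_i\|_{L^{r_i}},
\end{align*}
and then pick a triple $(t_1,t_2,t_3)$ where the integrand is at most the average. (The statement is presumably about a measurable selection; a fixed triple realising the average works since the integrand is nonnegative.) This needs a pointwise-in-$\S$ bound that is uniform over sparse families. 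So I first bound $\sum_{Q\in\S}|Q|\prod_i\langle f_i(\cdot-t_i)\rangle_{Q,r_i}$ for a fixed $\gamma$-sparse $\S$ by a quantity independent of $\S$.

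\textbf{Step 1: decompose the functions.} Split each $f_i=\sum_{k\ge0} f_i\mathbf 1_{A_k}$ with dyadic annuli $A_0=\{|x|\le1\}$ and $A_k=\{2^{k-1}<|x|\le 2^k\}$. Then $\|f_i\mathbf 1_{A_k}\|_{L^{r_i}}\lesssim 2^{-k\kappa}\|\br{\cdot}^\kappa f_i\|_{L^{r_i}}$. By multilinearity of the averages and subadditivity of $\langle\cdot\rangle_{Q,r_i}$, I reduce to pieces $g_i$ supported in balls $B(0,2^{k_i})$, and must control $\sum_{k_1,k_2,k_3}$ of the corresponding sparse sums.

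\textbf{Step 2: cubes by size.} A cube $Q$ sees all three pieces only if it meets each translated support. So cubes with $\ell(Q)\le 2^{K}$, where $K=\max k_i$, must lie within distance $\sim 2^K$ of the origin, because the translations are bounded by $\sqrt d$.

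For small cubes ($\ell(Q)\le1$), I use the sparse-to-Lebesgue estimate. Since $\sum_i 1/r_i>1$, Hölder plus the Carleson/maximal-function bound gives
\begin{align*}
\sum_Q|Q|\prod_i\langle g_i\rangle_{Q,r_i}\lesssim \Big\|\prod_i M_{r_i}g_i\Big\|_{L^1}.
\end{align*}
I cannot apply Hölder with exponents $r_i$ directly, since that gives $\sum1/r_i\neq1$. Instead I restrict to a region of measure $\lesssim 2^{Kd}$ and use Hölder with a slack factor $|B(0,2^K)|^{1-\sum1/r_i}$, where the exponent is negative. Off that region the $M_{r_i}g_i$ decay. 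Overall I get $2^{-Kd(\sum1/r_i-1)}\prod\|g_i\|_{r_i}$ up to polynomial factors, with weak-type endpoints handled via $M_{r_i}$ bounded on $L^{p}$ for $p>r_i$ after slightly shrinking the exponents. Alternatively, I estimate directly: $\langle g_i\rangle_{Q,r_i}\le |Q|^{-1/r_i}\|g_i\|_{r_i}$.

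For large cubes ($\ell(Q)=2^j\ge 2^K$), only boundedly many cubes per scale meet $B(0,2^{K+1})$. Each contributes $2^{jd(1-\sum1/r_i)}\prod\|g_i\|_{r_i}$, and the geometric sum over $j\ge K$ converges precisely because $\sum1/r_i>1$. This yields $\lesssim 2^{Kd(1-\sum 1/r_i)}\prod\|g_i\|_{r_i}$.

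\textbf{Step 3: the main obstacle.} The hardest part is the intermediate cubes, where the per-scale count is large. Here sparseness must be used: the number of disjoint sets $E_Q$ inside $B(0,2^{K+2})$ is controlled by volume. I would avoid the small-scale Hölder problem by bounding each average by the $L^{r_i}$ norm and counting via $\sum_{Q\subset B,\ell(Q)=2^j}|Q|\lesssim \gamma^{-1}2^{Kd}$ per scale. This loses a factor $\log$ in $j$ and $2^{Kd}$, and the decay $2^{-k_i\kappa}$ with $\kappa>d$ must absorb the $2^{Kd}$ loss.

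\textbf{Where translation averaging enters.} It is needed to remove the bad alignment where all three supports nest in one dyadic cube boundary configuration. Averaging over $t_i$, the probability that a given small cube meets the supports is proportional to its size relative to the support, which replaces the crude count. With $\kappa>d$ I then sum over $k_1,k_2,k_3$ to conclude.
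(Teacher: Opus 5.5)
Your overall plan is sound and matches the paper's starting point. For fixed $\S$, one has $\gamma\sum_{Q\in\S}|Q|\prod_i\langle f_i(\cdot-t_i)\rangle_{Q,r_i}\le\int_{\Rd}\prod_i M_{r_i}f_i(x-t_i)\,dx$, which is a majorant independent of $\S$; average that majorant rather than $\Lambda^*$ itself, whose measurability in $\vec t$ is unclear. Your counting for cubes with $\ell(Q)\ge 1$ is also fine.

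\textbf{The gap is the cubes with $\ell(Q)\le 1$.} This is exactly where the translations are needed, and neither estimate you offer there works.
\begin{itemize}
\item Hölder with a slack factor $|B|^{1-\sum_i 1/r_i}$ requires $\sum_i 1/p_i\le 1$. When $\sum_i 1/r_i>1$, no bound $\int_B\prod_i M_{r_i}g_i\le C(B)\prod_i\|g_i\|_{L^{r_i}}$ can hold. Take $g_i=|x|^{-a_i}\mathbf 1_{B(0,1)}$ with $a_i<d/r_i$ but $\sum_i a_i\ge d$. Then $g_i\in L^{r_i}$, yet $\prod_i M_{r_i}g_i\gtrsim |x|^{-\sum_i a_i}$ near $0$, which is not integrable. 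The untranslated sparse sum along the chain of dyadic cubes containing $0$ also diverges, which is why no uniform bound holds with all $t_i=0$.
\item Passing to exponents $p_i>r_i$ with $\sum_i 1/p_i=1$ requires $g_i\in L^{p_i}$, and $\|\br{\cdot}^{\kappa}g_i\|_{L^{r_i}}$ does not control that.
\item The crude bound $\langle g_i\rangle_{Q,r_i}\le |Q|^{-1/r_i}\|g_i\|_{L^{r_i}}$ gives, at scale $\ell(Q)=2^j$, a total of order $2^{Kd}2^{-jd\sum_i 1/r_i}\prod_i\|g_i\|_{L^{r_i}}$. This diverges as $j\to-\infty$.
\end{itemize}
Your account of what averaging does is also not the right mechanism. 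The translated supports have radius at least $1$, so a small cube near the origin meets all of them for every $\vec t\in\bT^{3d}$. Translation does not change support incidence. What it does is decorrelate the points where the three functions concentrate.

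\textbf{The missing step is to average the majorant in $\vec t$ before applying any Hölder inequality.} Because $t_1,t_2,t_3$ are independent,
\[
\int_{\bT^{3d}}\int_{\Rd}\prod_i M_{r_i}f_i(x-t_i)\,dx\,d\vec t=\int_{\Rd}\prod_i \mathcal B_{r_i}f_i(x)\,dx,
\qquad \mathcal B_r f(x):=\int_{\bT^d}M_rf(x-t)\,dt .
\]
Kolmogorov's inequality for the weak type $(r,r)$ operator $M_r$ on the unit-measure set $x-\bT^d$ gives $\mathcal B_rf(x)\lesssim\|f\|_{L^r}$. So each averaged factor is locally bounded, and no singularities remain to coincide. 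Splitting $f$ at $|y|=|x|/2$ improves this to $\mathcal B_rf(x)\lesssim \br{x}^{-d/r}\|\br{\cdot}^{\kappa}f\|_{L^r}$. Since $\sum_i 1/r_i>1$, the function $\br{x}^{-d\sum_i 1/r_i}$ is integrable. Chebyshev's inequality then produces a $\vec t$, depending on the $f_i$.

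This is the paper's argument, and it makes your annular decomposition and scale-by-scale counting unnecessary. Alternatively, you can keep your decomposition and use the averaging only for $\ell(Q)\le 1$. That gives $\int_{|x|\lesssim 2^K}\prod_i\mathcal B_{r_i}g_i\lesssim 2^{Kd}\prod_i\|g_i\|_{L^{r_i}}$, a loss that the decay $2^{-\kappa k_i}$ with $\kappa>d$ absorbs.
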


\begin{proof}
We may assume $\prod_{i=1}^3 \|\br{\cdot}^{\kappa} f_i\|_{L^{r_i}} \ne 0$; otherwise, there is nothing to prove. 
Let $\S$ be a $\gamma$-sparse family. Then there exists a disjoint family $\{E_Q\}_{Q \in \S}$ such that $|E_Q| \ge \gamma |Q|$. Then for all $t_1, t_2, t_3 \in \bT^d$,
\begin{align}\label{eq:Qave}
\gamma \sum_{Q \in \S} |Q| \prod_{i=1}^3 \langle f_i(\cdot - t_i) \rangle_{Q, r_i}
& \le \sum_{Q \in \S} \int_{E_Q} \prod_{i=1}^3 M_{r_i} f_i(x - t_i) \, dx
\\ \nonumber
& \le \int_{\Rd} \prod_{i=1}^3 M_{r_i} f_i(x - t_i) \, dx
=: \mathcal{I}(\vec{t}).
\end{align}
We claim that
\begin{equation}\label{eq:ave}
\mathcal{B}_r f(x) 
:= \int_{\bT^d} M_r f(x-t) \, dt
\lesssim \langle x \rangle^{-d/r} \|\langle \cdot \rangle^{\kappa} f\|_{L^r}, \quad x \in \Rd, \, r>1.
\end{equation}
Since $\int_{\Rd} \langle x \rangle^{-d(\frac{1}{r_1} + \frac{1}{r_2} + \frac{1}{r_3})} dx < \infty$, we have
\begin{align*}
\int_{\bT^{3d}} \mathcal{I}(\vec{t}) \, d\vec{t}
= \int_{\Rd} \prod_{i=1}^3 \mathcal{B}_{r_i} f_i(x) \, dx
\le C_0 \prod_{i=1}^3 \|\langle \cdot \rangle^{\kappa} f_i\|_{L^{r_i}},
\end{align*}
which gives
\begin{align*}
\Big| \Big\{t \in \bT^{3d}: \mathcal{I}(\vec{t}) > 2 C_0 
\prod_{i=1}^3 \|\langle \cdot \rangle^{\kappa} f_i\|_{L^{r_i}} \Big\} \Big|
\le \frac12.
\end{align*}
Thus, there exists some $\vec{t} = (t_1, t_2, t_3) \in \bT^3$ such that $\mathcal{I}(\vec{t}) \le 2 C_0 \prod_{i=1}^3 \|\langle \cdot \rangle^{\kappa} f_i\|_{L^{r_i}}$, which along with \eqref{eq:Qave} implies the desired estimate. 

Let us turn to the proof of \eqref{eq:ave}. For any measurable $E$ of measure one, the weak $(1,1)$ bound for $M$ yields,
\begin{align}\label{eq:Kol}
\int_E M_r g \, dx
& = \int_0^{\infty} |\{x \in E: M_r g(x) > t\}| \, dt
\\ \nonumber
& \lesssim \int_0^{\|g\|_{L^r}} 1 \, dt + \int_{\|g\|_{L^r}}^{\infty} t^{-r} \|g\|_{L^r}^r \,dt
\lesssim \|g\|_{L^r}.
\end{align}
If $|x| \le 4$, then \eqref{eq:Kol} applied to $E=x-\bT^d$ gives
\begin{align*}
\mathcal{B}_r f(x) 
\lesssim \|f\|_{L^r}
\lesssim \langle x \rangle^{-d/r} \|\langle \cdot \rangle^{\kappa} f\|_{L^r}.
\end{align*}
If $|x| > 4$, then for any $t \in \bT^d$, $x-t \in Q$, and $y \in Q$ with $2|y| \le |x|$, 
\begin{align*}
\ell(Q) \ge |x-t-y| \ge |x| - |t| - |y| 
\ge |x| - |x|/4 - |x|/2 = |x|/4,
\end{align*}
which together with \eqref{eq:Kol} on $E=x-\bT^d$ leads to
\begin{align*}
\mathcal{B}_r f(x)
& \le \int_{\bT} M_r(f \mathbf{1}_{\{2|\cdot| \le x\}})(x-t) \, dt
+ \int_{\bT} M_r(f \mathbf{1}_{\{2|\cdot| > x\}})(x-t) \, dt
\\
& \lesssim |x|^{-d/r} \|f\|_{L^r} + \|f \mathbf{1}_{\{2|\cdot| > x\}}\|_{L^r}
\lesssim |x|^{-d/r} \|\langle \cdot \rangle^{\kappa} f\|_{L^r},
\end{align*}
provided $\kappa > d > d/r$.
\end{proof}

In addition, we establish an annular estimate for bilinear pseudo-differential operators as follows.
\begin{lemma}\label{lem:kmt}
Let $1\le p_1, p_2 \le \infty$ and $0< p_0 < \infty$ be such that $\frac{1}{p_0} \le \frac{1}{p_1} + \frac{1}{p_2}$. Suppose that $\supp_{\xi, \eta} a \subset \{|(\xi, \eta)| \le C_0 R\}$ for some $R \ge 1$. Then for some large integer $N \ge 1$,
\begin{equation*}
\|T_a\|_{L^{p_1} \times L^{p_2} \to L^{p_0}}
\lesssim \max_{|\alpha| + |\beta| + |\gamma| \le N} 
\|\partial_x^{\alpha} \partial_{\xi}^{\beta} \partial_{\eta}^{\gamma} a\|_{L^{\infty}} 
\, R^{d \Gamma(p_1, p_2, p_0)}.
\end{equation*}
In particular, for all $0 \le \delta \le \rho \le 1$ and $a_{\ell} \in BS_{\rho, \delta}^{m, N}$ with $\supp_{\xi, \eta} a_{\ell} \subset \{|(\xi, \eta)| \simeq 2^{\ell}\}$, 
\begin{equation*}
\|T_{a_{\ell}}\|_{L^{p_1} \times L^{p_2} \to L^{p_0}}
\lesssim \|a_{\ell}\|_{BS_{\rho, \delta}^{m, N}}
2^{\ell[ m + d(1-\rho) \Gamma(p_1, p_2, p_0) + \rho d (\frac{1}{p_1} + \frac{1}{p_2} - \frac{1}{p_0})]}.
\end{equation*}
\end{lemma}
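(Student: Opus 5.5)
The plan is to prove the general bound first and then read off the second estimate by rescaling.

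For the general bound, I would adapt the Kato--Miyachi--Tomita argument, combining frequency-support information with spatial localization. First, I reduce to the case $R \ge 1$ large with $\supp_{\xi,\eta} a \subset \{|(\xi,\eta)| \le C_0 R\}$. I then cover this frequency ball by cubes of side $\simeq 1$ in $\xi$ and $\eta$ separately, writing
\[
a = \sum_{\mu,\nu} a\, \varphi(\xi-\mu)\varphi(\eta-\nu),
\]
with $\mu,\nu \in \Z^d$ and $|\mu|,|\nu| \lesssim R$. On each piece $T_{a_{\mu\nu}}(f_1,f_2) = T_{a_{\mu\nu}}(\square_\mu f_1, \square_\nu f_2)$, where $\square_\mu$ are smooth unit frequency projections. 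Because $a$ has $N$ bounded derivatives in all variables, the kernel of $T_{a_{\mu\nu}}$, after modulating out $e^{2\pi i x\cdot(\mu+\nu)}$, is bounded by $C\langle x-y\rangle^{-N'}\langle x-z\rangle^{-N'}$, uniformly in $x$. Hence each piece is controlled pointwise by a product of averages of $|\square_\mu f_1|$ and $|\square_\nu f_2|$ over unit-scale neighborhoods of $x$.

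Summing in $(\mu,\nu)$, I estimate $\sum_{\mu,\nu}|\cdot|$ by Cauchy--Schwarz in the lattice index when $p_i \le 2$, using the local square function (Rubio de Francia/Plancherel type) bound
\[
\Big\|\Big(\sum_\mu |\square_\mu f|^2\Big)^{1/2}\Big\|_{L^{p}} \lesssim \|f\|_{L^p}, \quad p \ge 2,
\]
together with the trivial count $\#\{\mu : |\mu| \lesssim R\} \simeq R^d$ for the $p<2$ side. I then interpolate among endpoint cases: $L^2\times L^2 \to L^1$ with loss $R^{d/2}$, $L^\infty$-type endpoints with loss $R^{d}$, and $L^1$ inputs with loss $R^{d}$ each. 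This is meant to recover exactly $R^{d\Gamma(p_1,p_2,p_0)}$, where $\Gamma$ takes $\min\{p_i,2\}$ and $\max\{p_0,2\}$.

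Because the estimate is localized at unit scale, the case $1/p_0 < 1/p_1+1/p_2$ follows from the equality case by localizing to unit cubes and applying H\"older on each cube, as in Lemma \ref{lem:TKfg}\eqref{TKfg-2}. The quasi-Banach case $p_0<1$ is handled by the same unit-cube localization, summing $p_0$-th powers.

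For the second statement, I would rescale. Set $\tilde a(x,\xi,\eta) = a_\ell(2^{-\rho\ell}x, 2^{\rho\ell}\xi, 2^{\rho\ell}\eta)$. By the $BS^{m}_{\rho,\delta}$ bounds with $\delta\le\rho$, $\tilde a$ has derivatives up to order $N$ bounded by $2^{\ell m}\|a_\ell\|$, and its frequency support lies in $|(\xi,\eta)|\lesssim 2^{\ell(1-\rho)}$. Applying the first part with $R = 2^{\ell(1-\rho)}$ gives the factor $2^{\ell[m+d(1-\rho)\Gamma]}$. Undoing the dilation produces the scaling factor $2^{\rho\ell d(1/p_1+1/p_2-1/p_0)}$.

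The main obstacle is the sharp interpolation yielding exactly $\Gamma$, in particular the mixed cases where one $p_i<2$ and the other is $>2$, and the quasi-Banach targets. For those I would follow the case analysis of Kato--Miyachi--Tomita, verifying each vertex of the exponent region and then applying multilinear complex interpolation with the symbol held fixed.
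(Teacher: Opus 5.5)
Your rescaling argument for the second estimate is exactly the paper's. The route you sketch for the first estimate, however, has a genuine gap.

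\textbf{The main gap: no output-side orthogonality.} You bound each piece pointwise by $\mathcal{A}|\square_\mu f_1|\cdot\mathcal{A}|\square_\nu f_2|$, where $\mathcal{A}$ denotes unit-scale averages, and then sum $\sum_{\mu,\nu}|\cdot|$. The resulting majorant factors as $\big(\sum_{|\mu|\lesssim R}\mathcal{A}|\square_\mu f_1|\big)\big(\sum_{|\nu|\lesssim R}\mathcal{A}|\square_\nu f_2|\big)$, and each factor is at best $R^{d/\min\{p_i,2\}}\|f_i\|_{L^{p_i}}$. So this scheme can at best give
$R^{d(\frac{1}{\min\{p_1,2\}}+\frac{1}{\min\{p_2,2\}})}=R^{d\Gamma(p_1,p_2,p_0)}\,R^{d/\max\{p_0,2\}}$, and the term $-\frac{1}{\max\{p_0,2\}}$ in $\Gamma$ never appears.

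Two concrete checks:
\begin{itemize}
\item Take $f_1=f_2$ an $L^2$-normalized bump at scale $R^{-1}$ and a symbol whose support fills the ball. Then your majorant has $L^1$ norm $\simeq R^{d}$, so it cannot give the $R^{d/2}$ you list for $L^2\times L^2\to L^1$.
\item Reading ``loss $R^d$ for each $L^1$ input'' multiplicatively gives $R^{2d}$ at $L^1\times L^1\to L^{1/2}$, whereas $d\Gamma(1,1,\tfrac12)=\tfrac{3d}{2}$.
\end{itemize}
Interpolating such endpoints cannot recover $\Gamma$.

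What is missing is orthogonality on the output side. You need to dualize against $f_3$ and use the $x$-regularity of $a$ to see that the $(\mu,\nu)$ piece only interacts with the part of $f_3$ at frequency $\mu+\nu+O(1)$. This reduces matters to the lattice form $\sum_{|u|,|v|\lesssim R}F_1(u)F_2(v)F_3(u+v)$ with $F_i\in\ell^{q_i}$, where $q_1=\max\{2,p_1'\}$, $q_2=\max\{2,p_2'\}$, $q_3=\max\{2,p_0\}$.

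That reduction is precisely the Kato--Miyachi--Tomita theorem the paper quotes, applied with a moderate weight $W\gtrsim1$ on $\supp a$ built from $V=\mathbf{1}_{B(0,C_1R)}\otimes\mathbf{1}_{B(0,C_1R)}$. The paper's only computation is then the bound $\sum V(u,v)F_1(u)F_2(v)F_3(u+v)\le\prod_i\|F_i\mathbf{1}_{B(0,2C_1R)}\|_{\ell^{3/2}}\lesssim R^{d(2-\frac{1}{q_1}-\frac{1}{q_2}-\frac{1}{q_3})}\prod_i\|F_i\|_{\ell^{q_i}}$. This follows from Young ($\ell^{3/2}*\ell^{3/2}\subset\ell^3$) and H\"older, and $2-\sum_i\frac{1}{q_i}=\Gamma(p_1,p_2,p_0)$.

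\textbf{A smaller problem: the case $\frac{1}{p_0}<\frac{1}{p_1}+\frac{1}{p_2}$.} Your reduction of this case does not work as stated. It is a gain in integrability, and H\"older on unit cubes runs the other way: it lets you lower $p_0$, not raise it.

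Lemma~\ref{lem:TKfg}\eqref{TKfg-2}, which you cite, actually uses Young's inequality with a unit-scale kernel. Here the analogous improving step has to be applied to the unit-frequency pieces $\square_\mu f_i$ inside the lattice norms, again as part of the KMT framework. It cannot be applied to $T_a$ as a whole, whose kernel is localized at scale $1$ but has size $R^{2d}$.
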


\begin{proof}
It was shown in \cite[Theorem~1.4 and the proof of Proposition~4.1]{KMT} that given a moderate weight $W$ on $\R^{2d}$,
\begin{align*}
\|T_a\|_{L^{p_1}\times L^{p_2}\to L^{p_0}}
\lesssim \max_{|\alpha| + |\beta| + |\gamma| \le N} 
\|W^{-1} \partial_x^{\alpha} \partial_\xi^{\beta} \partial_\eta^{\gamma} a\|_{L^{\infty}}
\|W|_{\Z^{2d}}\|_{\mathcal{B}^{\rm{form}}_{q_1, q_2, q_3}},
\end{align*}
where $q_1 = \max\{2, p'_1\}$, $q_2 = \max\{2, p'_2\}$, $q_3 = \max\{2, p_0\}$, and
\begin{align*}
\|V\|_{\mathcal{B}^{\rm{form}}_{q_1, q_2, q_3}}
:= \sup_{F_i \ge 0, \|F_i\|_{\ell^{q_i}} \le 1}
\sum_{u, v \in \Z^d} V(u, v) F_1(u) F_2(v) F_3(u+v).
\end{align*}
Take
\begin{align*}
W(\xi, \eta) := \sum_{u, v \in \Z^d} V(u, v) (1 + |\xi - u| + |\eta - v|)^{-\kappa},
\end{align*}
where $V(u, v) := \mathbf{1}_{B(0, C_1 R)}(u) \mathbf{1}_{B(0, C_1 R)}(v)$, $C_1 > C_0 + 2\sqrt{d}$, and $\kappa > 2d$. For any $|(\xi, \eta)| \le C_0 R$, one can choose $u, v \in \Z^d$ such that $|\xi - u| \le \sqrt{d}$ and $|\eta - v| \le \sqrt{d}$. Thus, $|u| \le |\xi| + |\xi - u| \le C_0 R + \sqrt{d} < C_1 R$ and $|v| \le |\eta| + |\eta - v| \le C_0 R + \sqrt{d} < C_1 R$, which implies $W(\xi, \eta) \gtrsim 1$ on the set $\{|(\xi, \eta)| \le C_0 R\}$. Note that
\begin{align*}
W(\xi, \eta) 
= \sum_{u, v \in \Z^d} V(\xi - u, \eta - v) (1 + |u| + |v|)^{-\kappa}.
\end{align*}
Thus, it suffices to treat $\|V\|_{\mathcal{B}^{\rm{form}}_{q_1, q_2, q_3}}$. By H\"{o}lder's and Young's inequalities, we have
\begin{align*}
& \sum_{u, v \in \Z^d} V(u, v) F_1(u) F_2(v) F_3(u+v)
\\
& \le \sum_{w \in \Z^d} \sum_{u \in \Z^d}
(F_1 \mathbf{1}_{B(0, C_1 R)})(u) 
(F_2 \mathbf{1}_{B(0, C_1 R)})(w-u) 
(F_3 \mathbf{1}_{B(0, 2C_1 R)})(w) 
\\
& \le \|(F_1 \mathbf{1}_{B(0, C_1 R)})*(F_2 \mathbf{1}_{B(0, C_1 R)})\|_{\ell^3}
\|F_3 \mathbf{1}_{B(0, 2C_1 R)}\|_{\ell^{3/2}}
\\
& \le \prod_{i=1}^3 \|F_i \mathbf{1}_{B(0, 2C_1 R)}\|_{\ell^{3/2}}
 \lesssim R^{d(2 - \frac{1}{q_1} - \frac{1}{q_2} - \frac{1}{q_3})}
 \prod_{i=1}^3 \|F_i\|_{\ell^{q_i}}.
\end{align*}
Note that $2 - \frac{1}{q_1} - \frac{1}{q_2} - \frac{1}{q_3} = \Gamma(p_1, p_2, p_0)$. This gives $\|V\|_{\mathcal{B}^{\rm{form}}_{q_1, q_2, q_3}} \lesssim R^{d \Gamma(p_1, p_2, p_0)}$.

To proceed, define $a(x, \xi, \eta) := a_{\ell}(2^{-\rho\ell}x, 2^{\rho\ell}\xi, 2^{\rho\ell}\eta)$. Then for $|\alpha| + |\beta| + |\gamma| \le N$, Leibniz's rule gives
\begin{align*}
|\partial_x^{\alpha} \partial_{\xi}^{\beta} \partial_{\eta}^{\gamma} a(x,\xi,\eta)| 
& = 2^{\rho \ell(|\beta| + |\gamma| - |\alpha|)}
|(\partial_x^{\alpha} \partial_{\xi}^{\beta} \partial_{\eta}^{\gamma} a_{\ell}) 
(2^{-\rho \ell} x, 2^{\rho \ell} \xi, 2^{\rho \ell} \eta)| 
\\
& \lesssim \|a_{\ell}\|_{BS_{\rho, \delta}^{m, N}} \, 2^{m \ell}2^{-(\rho - \delta) |\alpha| \ell}
\le \|a_{\ell}\|_{BS_{\rho, \delta}^{m, N}} \, 2^{m \ell},
\end{align*}
Since $\supp_{\xi, \eta} a \subset \{(\xi,\eta): |(\xi, \eta)| \lesssim 2^{(1 - \rho) \ell} \}$, the above estimate yields
\begin{align*}
\|T_a\|_{L^{p_1} \times L^{p_2} \to L^{p_0}}
\lesssim \|a_{\ell}\|_{BS_{\rho, \delta}^{m, N}} \, 
2^{\ell[m + d(1 - \rho) \Gamma(p_1, p_2, p_0)]}.
\end{align*}
Thus, the desired inequality follows from this and the identity 
\begin{align*}
T_{a_{\ell}}(f_1, f_2)(x)
= T_a \big(f_1(2^{-\rho \ell} \, \cdot), f_2(2^{-\rho \ell} \, \cdot)\big)(2^{\rho \ell} x).
\end{align*} 
The proof is complete. 
\end{proof}

We then record a Sobolev transfer estimate.
\begin{lemma}\label{lem:transfer}
Let $\Psi, \Phi \in \mathscr{S}(\R^{2d})$ be such that $\supp \Psi \subset \{1/2 \le |\zeta| \le 2\}$ and 
$\partial^{\alpha} \Phi(0) = 0$ for all $|\alpha| < N$, where $N > s_1 + s_2 + 2d$ and $s_1, s_2 > 0$. Then there exists some $c_0 > 0$ such that
\begin{equation*}
\|\Psi \, \Phi(\lambda \cdot)^2\sigma(\lambda \cdot)\|_{W^{(s_1, s_2)}}
\lesssim \min\{\lambda^{c_0}, \lambda^{-c_0}\} \|\sigma\|_{W^{(s_1, s_2)}}, \quad \forall \lambda > 0.
\end{equation*}
Consequently, for any finite set $I \subset \Z$ and $\sup_{i \in \Z} |e_i| \le 1$,
\begin{equation*}
\sup_{t>0} \bigg\|\Psi \sum_{i \in I} e_i \Phi (2^i t \cdot)^2 \sigma_i(2^i t \, \cdot) \bigg\|_{W^{(s_1, s_2)}}
\lesssim \sup_{i \in \Z} \|\sigma_i\|_{W^{(s_1, s_2)}}.
\end{equation*}
\end{lemma}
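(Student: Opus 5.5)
The plan is to reduce everything to a pointwise bound on $\Psi$ and on $\Phi(\lambda\cdot)^2$, and then to the Sobolev multiplier property of $W^{(s_1,s_2)}$. Here $W^{(s_1,s_2)}$ is the product-type Sobolev space on $\R^{2d}=\Rd\times\Rd$ with norm $\|\langle D_1\rangle^{s_1}\langle D_2\rangle^{s_2}\sigma\|_{L^2}$.

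\textbf{Step 1: multiplier inequality.} Since $\langle\cdot\rangle^{s_1}\otimes\langle\cdot\rangle^{s_2}$ is submultiplicative (Peetre's inequality in each factor), a Young-type convolution argument on the Fourier side gives, for $N>s_1+s_2+2d$,
\begin{align*}
\|g\,\sigma\|_{W^{(s_1,s_2)}} \lesssim \|g\|_{C^{N}}\,\|\sigma\|_{W^{(s_1,s_2)}},
\end{align*}
where $\|g\|_{C^N}$ is the $C^N$ norm of $g$ restricted to a neighborhood of the support. I will apply this with $g=\Psi\,\Phi(\lambda\cdot)^2$.

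\textbf{Step 2: the dilation.} Write $\sigma(\lambda\cdot)=\sigma_\lambda$. On $\{1/2\le|\zeta|\le2\}$ the target is $\Psi\,\Phi(\lambda\cdot)^2\,\sigma_\lambda$. The product Sobolev norm of $\sigma_\lambda$, localized to a fixed annulus, grows at most polynomially:
\begin{align*}
\|\Psi\sigma_\lambda\|_{W^{(s_1,s_2)}}\lesssim \max\{\lambda,\lambda^{-1}\}^{s_1+s_2+d}\,\|\sigma\|_{W^{(s_1,s_2)}}.
\end{align*}
The point of squaring $\Phi$ is to split the work: one copy of $\Phi(\lambda\cdot)$ is combined with $\sigma_\lambda$, and the other supplies decay.
\begin{itemize}
\item[(a)] For $\lambda\le1$, the vanishing of $\partial^\alpha\Phi(0)$ for $|\alpha|<N$ and Taylor's theorem give $\|\Phi(\lambda\cdot)\|_{C^N(\text{annulus})}\lesssim\lambda^{N}$.
\item[(b)] For $\lambda\ge1$, Schwartz decay gives $\|\Phi(\lambda\cdot)\|_{C^N(\text{annulus})}\lesssim_M\lambda^{-M}$ for every $M$. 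Indeed, derivatives of order $k\le N$ cost $\lambda^k$, but $|\partial^\beta\Phi(\lambda\zeta)|\lesssim\lambda^{-M-N}$ for $|\zeta|\ge1/2$.
\end{itemize}

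\textbf{Step 3: the main obstacle.} The difficulty is controlling $\Phi(\lambda\cdot)\sigma_\lambda$ with a loss that is only polynomial in $\lambda^{\pm1}$. I would undo the dilation: after the change of variables $\zeta\mapsto\zeta/\lambda$, the function $\Phi\sigma$ is localized to $|\zeta|\simeq\lambda$. The product Sobolev norm changes by at most $\max\{\lambda,\lambda^{-1}\}^{s_1+s_2+d}$, because the Fourier transform rescales and $\langle\lambda\xi\rangle\le\max\{1,\lambda\}\langle\xi\rangle$. Combining this with the decay from (a) and (b), using $N>s_1+s_2+2d$, leaves a net factor $\min\{\lambda^{c_0},\lambda^{-c_0}\}$ with, say, $c_0=N-s_1-s_2-2d>0$.

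\textbf{Step 4: the consequence.} By the triangle inequality and $|e_i|\le1$,
\begin{align*}
\Big\|\Psi\sum_{i\in I}e_i\,\Phi(2^it\cdot)^2\,\sigma_i(2^it\cdot)\Big\|_{W^{(s_1,s_2)}}
\lesssim \sum_{i}\min\{(2^it)^{c_0},(2^it)^{-c_0}\}\;\sup_i\|\sigma_i\|_{W^{(s_1,s_2)}}.
\end{align*}
The sum $\sum_{i\in\Z}\min\{(2^it)^{c_0},(2^it)^{-c_0}\}$ is a geometric series bounded uniformly in $t>0$, which gives the stated bound.
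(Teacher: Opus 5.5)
Your proposal is correct and follows essentially the same route as the paper. The paper likewise combines the product-Sobolev multiplier inequality (Peetre's inequality plus Young on the Fourier side), applied with $\varphi_1=\Psi\,\Phi(\lambda\cdot)^2$, with Taylor/Schwartz bounds of order $\lambda^{2N}$ (for $\lambda<1$) and $\lambda^{-M}$ (for $\lambda\ge 1$) on its derivatives, the dilation bound $\|\sigma(\lambda\cdot)\|_{W^{(s_1,s_2)}}\le\max\{\lambda^{-d},\lambda^{s_1+s_2-d}\}\|\sigma\|_{W^{(s_1,s_2)}}$, and the uniformly bounded geometric sum for the consequence. Your dilation loss $\max\{\lambda,\lambda^{-1}\}^{s_1+s_2+d}$ is cruder, and the remark about splitting the two copies of $\Phi$ is unnecessary, since the paper treats $\Psi\Phi(\lambda\cdot)^2$ as one multiplier acting on the global norm of $\sigma(\lambda\cdot)$. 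Neither point affects correctness.
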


\begin{proof}
It suffices to show the first inequality since the second one follows from the first one and that
\begin{equation*}
\sup_{t>0} \sum_{i \in \Z} \min\{(2^i t)^{c_0}, (2^i t)^{-c_0}\}
\lesssim 1.
\end{equation*}
By change of variables,
\begin{equation*}
\|\sigma(\lambda \cdot)\|_{W^{(s_1, s_2)}}^2
= \lambda^{-2d} \int_{\R^{2d}} |\widehat{\sigma}(y, z)|^2
\langle \lambda y \rangle^{2s_1} \langle \lambda z \rangle^{2s_2} \, dy \, dz,
\end{equation*}
which gives
\begin{equation}\label{eq:dW-1}
\|\sigma(\lambda \cdot)\|_{W^{(s_1, s_2)}} 
\le
\begin{cases} 
\lambda^{-d} \|\sigma\|_{W^{(s_1, s_2)}}, & 0< \lambda < 1, 
\\
\lambda^{s_1+s_2 - d}\|\sigma\|_{W^{(s_1, s_2)}}, & \lambda \ge 1.
\end{cases}
\end{equation}
By Leibniz's rule, for all $N_0 < N$,
\begin{equation}\label{eq:dW-2}
\max_{|\alpha| \le N_0} \|\partial^{\alpha} [\Psi \Phi(\lambda \cdot)^2]\|_{L^{\infty}} 
\lesssim_M
\begin{cases} 
\lambda^{2N}, & 0< \lambda < 1, 
\\
\lambda^{-M}, & \lambda \ge 1.
\end{cases}
\end{equation}
for any $M>0$. Then the desired inequality is a consequence of \eqref{eq:dW-1}, \eqref{eq:dW-2} and the following
\begin{align}\label{eq:dW-3}
\|\varphi_1 \varphi_2\|_{W^{(s_1, s_2)}} 
\lesssim \max_{|\alpha| \le 2N_1} \|\partial^{\alpha} \varphi_1\|_{L^{\infty}} 
\|\varphi_2\|_{W^{(s_1, s_2)}}, 
\end{align}
for all $\varphi_1 \in \mathscr{S}(\R^{2d})$ and $2N_1 > s_1 + s_2 +2d$. Denote $H(u, v) := \langle u \rangle^{s_1} \langle v \rangle^{s_2}$. By the fact $H(u_1 + u_2, v_1 + v_2) \lesssim H(u_1, v_1) H(u_2, v_2)$, 
\begin{align*}
\|\varphi_1 \varphi_2\|_{W^{(s_1, s_2)}} 
& = \|\widehat{\varphi_1 \varphi_2}  H\|_{L^2}
\lesssim \|(|\widehat{\varphi}_1| H)*(|\widehat{\varphi}_2| H)\|_{L^2}
\\
& \le \| |\widehat{\varphi}_1| H\|_{L^1} \| |\widehat{\varphi}_2| H\|_{L^2}
= \| |\widehat{\varphi}_1| H\|_{L^1} \|\varphi_2\|_{W^{(s_1, s_2)}}.
\end{align*}
Using integration by parts, we have
\begin{align*}
|\widehat{\varphi}_1(\omega)|
= \bigg| \int_{\R^{2d}} \frac{e^{-2\pi i \omega \cdot \zeta} (1-\Delta_{\zeta})^{N_1} \varphi_1(\zeta)}{(1 + |2\pi \omega|^2)^{N_1}} \, d\zeta \bigg|
\lesssim \frac{\max_{|\alpha| \le 2N_1} \|\partial^{\alpha} \varphi_1\|_{L^{\infty}}}{(1 + |\omega|)^{2N_1}}.
\end{align*}
Hence, \eqref{eq:dW-3} follows from two estimates above and that $2N_1 > s_1 + s_2 +2d$.
\end{proof}

\section{A local induction of the main Theorem}\label{sec:red}
We will show Theorem \ref{thm:main} by induction on the number of scales. By reindexing, we may assume that $A_1 = 1$ in \eqref{H1}. From now on, we fix $1 < r_1, \ldots, r_m, s < \infty$ with $\frac1s < \frac1r := \sum_{i=1}^m \frac{1}{r_i}$. For convenience, we simply write $r_{m+1} = s'$, $\B_{m+1} = \B^*$, and $\langle f_i \rangle_{Q, p} = \langle |f_i|_{\B_i} \rangle_{Q, p}$. Fix $\gamma \in (0, 1)$ and a dyadic lattice $\D$ of one of the following form
\begin{align*}
\mathscr{D}^{\alpha} 
:= \big\{2^{-k} \big([0, 1)^d + n) + (-1)^k \alpha \big): k \in \Z, n \in \Z^d\big\}, \quad \alpha \in \{1/3, 2/3\}^d.
\end{align*}
Let $\D_k := \{Q \in \D: \ell(Q) = 2^{-k}\}$ for each $k \in \Z$.

\subsection{Stopping time argument}
Given $\lambda \ge 1$ and a dyadic cube $Q_0 \in \D$, define the local sparse form
\begin{align*}
\mathbf{S}_{Q_0}^{\lambda}(\vec{f}, f_{m+1})
:= \sup_{\substack{\S \subset \D(Q_0) \\ \S: \gamma\text{-sparse}}}
\sum_{Q \in \S} |Q| \prod_{i=1}^{m+1} \langle f_i \rangle_{\lambda Q, r_i},
\end{align*}
where $\D(Q_0)$ consists of $Q_0$ and all its dyadic descendants.
\black

\begin{lemma}\label{lem:SL}
There holds
\begin{align*}
\sup_{\lambda \ge 1} \sup_{Q_0 \in \D} \mathbf{S}_{Q_0}^{\lambda}(\vec{f}, f_{m+1})
\lesssim \Lambda_{\vec{r}, r_{m+1}}^*(\vec{f}, f_{m+1}),
\end{align*}
for all $f_1, \ldots, f_{m+1} \in L_c^{\infty}(\Rd)$.
\end{lemma}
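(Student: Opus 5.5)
The plan is to replace each dilated cube $\lambda Q$ by a dyadic cube of comparable size, regroup the resulting sum as a Carleson-weighted sum whose Carleson constant does not depend on $\lambda$, and then dominate that weighted sum by finitely many $\gamma$-sparse forms.

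\emph{Step 1 (three-lattice lemma).} I will use the three-lattice lemma: there are $3^d$ dyadic lattices $\mathscr{D}^{\alpha}$ such that every cube $R \subset \Rd$ is contained in some $P \in \mathscr{D}^{\alpha}$, for some $\alpha$, with $\ell(P) \le 6\ell(R)$.

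Fix $\lambda \ge 1$, $Q_0 \in \D$, and a $\gamma$-sparse family $\S \subset \D(Q_0)$. For each $Q \in \S$, apply this to $R=\lambda Q$ to choose $P(Q)$ in one of the lattices, with $\lambda Q \subset P(Q)$ and $\ell(P(Q)) \simeq \lambda\ell(Q)$. The constant in $\simeq$ is absolute, so $\ell(P(Q))/\ell(Q)$ takes only $O(1)$ dyadic values.

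Since $|P(Q)| \lesssim |\lambda Q|$, we have $\langle f_i\rangle_{\lambda Q,r_i} \lesssim \langle f_i\rangle_{P(Q),r_i}$ with a constant independent of $\lambda$. Splitting $\S$ into $3^d$ subfamilies according to the lattice of $P(Q)$, it suffices to bound
\begin{align*}
\sum_{P} a_P |P| \prod_{i=1}^{m+1}\langle f_i\rangle_{P,r_i},
\qquad a_P := \frac{1}{|P|}\sum_{Q\in\S:\,P(Q)=P}|Q| ,
\end{align*}
with $P$ running over one fixed lattice $\mathscr{D}^{\alpha}$.

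\emph{Step 2 (a $\lambda$-independent Carleson condition).} I will show two bounds on the weights $a_P$.
\begin{itemize}
\item The cubes $Q$ with $P(Q)=P$ lie in $O(1)$ generations of $\D$ and are contained in $P$. Within one generation they are disjoint, so $a_P \lesssim 1$.
\item For every $R \in \mathscr{D}^{\alpha}$,
\begin{align*}
\sum_{P\subset R} a_P|P| \le \sum_{Q\in\S,\,Q\subset R}|Q| \le \gamma^{-1}\sum_{Q\in\S\ \text{maximal in } R}|Q| \le \gamma^{-1}|R| .
\end{align*}
Here the middle inequality is the Carleson property of the $\gamma$-sparse family $\S$, applied under each maximal cube.
\end{itemize}
Thus $\{a_P|P|\}$ is a Carleson sequence with constant $C\gamma^{-1}$, independent of $\lambda$ and $Q_0$. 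This is where uniformity in $\lambda$ is won: the naive bound on the number of $Q$'s with $P(Q) \subset R$ would lose $\lambda^d$.

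\emph{Step 3 (Carleson sums to sparse forms; the main obstacle).} It remains to prove that, for a Carleson sequence with constant $\Lambda$ in a lattice $\mathscr{D}^{\alpha}$ and any nonnegative $F_P = \prod_{i=1}^{m+1}\langle f_i\rangle_{P,r_i}$,
\begin{align*}
\sum_P a_P |P| F_P \lesssim_{\Lambda,\gamma} \sup_{\S'\ \gamma\text{-sparse}}\sum_{P\in\S'}|P|F_P .
\end{align*}
I expect this to be the main obstacle, because the target sparseness parameter $\gamma$ is fixed in the definition of $\Lambda^*$.

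My first attempt: reduce to a finite family (the $f_i$ have compact support and the sum is monotone), and order the cubes with $a_P \ne 0$ top-down. Then split them greedily into $K \simeq \Lambda/(1-\gamma)$ subfamilies. For each subfamily, the Carleson bound together with $a_P \lesssim 1$ should give $\sum_{P'\subsetneq P,\,P'\ \text{in family}}|P'| \le (1-\gamma)|P|$. Then the sets $E_P := P\setminus\bigcup P'$ make each subfamily $\gamma$-sparse, and summing over the $K$ subfamilies bounds the left side by $K\,\Lambda^*_{\vec r,r_{m+1}}(\vec f,f_{m+1})$.

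If the greedy splitting fails for weighted $a_P$, the fallback is to layer $a_P$ into dyadic ranges $a_P\simeq 2^{-k}$. Each layer is a Carleson family with constant $\lesssim 2^{k}\Lambda$ (counting) and carries a factor $2^{-k}$. Since the layer can be split into $\lesssim 2^k\Lambda$ sparse families, it contributes $O(\Lambda)\,\Lambda^*$ uniformly in $k$.

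That uniform-in-$k$ contribution does not sum over infinitely many $k$. To close it I would use a bounded-overlap argument instead: the layer-$k$ cubes have local overlap count $\lesssim 2^k\Lambda$, which should give $\sum_{P\in\text{layer }k} 2^{-k}|P|F_P \lesssim \int \sup_{P\ni x}F_P\,\mathbf 1\,dx$ restricted to the layer. I would then compare with a single principal-cube sparse family built from stopping on $F_P$.

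Combining the three steps over the $3^d$ lattices gives the lemma with a constant depending only on $d$, $m$, $\gamma$.
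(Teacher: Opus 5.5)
Your Steps 1 and 2 match the paper's reduction. The three-lattice theorem gives $P(Q)\supset\lambda Q$ with $\ell(P(Q))\simeq\lambda\ell(Q)$. Sparseness of $\S$ then gives the $\lambda$-independent packing $\sum_{P\subset R}a_P|P|\le\sum_{Q\in\S,\,Q\subset R}|Q|\le\gamma^{-1}|R|$, which follows directly from $|Q|\le\gamma^{-1}|E_Q|$.

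The gap is Step 3. That is where uniformity in $\lambda$ has to be earned, and none of your three devices earns it:
\begin{itemize}
\item In this setting $a_P\ge|Q|/|P|\gtrsim\lambda^{-d}$ whenever $a_P\ne0$. So the support $\{P:a_P\ne0\}$ is only a $C\lambda^{d}\gamma^{-1}$-Carleson family, and greedy splitting into sparse subfamilies costs a factor $\lambda^d$.
\item Dyadic layering of $a_P$ has $O(\log\lambda)$ nonempty layers, each costing a bounded multiple of $\Lambda^*$. It therefore costs $\log\lambda$, and for abstract Carleson sequences it diverges, as you note.
\item The proposed repair rests on a false premise. Carleson families, even sparse ones, have no pointwise overlap bound: the chain $\{[0,2^{-k})^d\}_{k\ge0}$ is $(1-2^{-d})$-sparse yet overlaps unboundedly near the origin. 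Only John--Nirenberg type control in measure holds.
\item Dominating $\int\sup_{P\ni x}F_P\,dx$ by $\Lambda^*_{\vec r,r_{m+1}}$ would need its own proof. The routine inequality runs the other way: sparse forms are bounded by $\gamma^{-1}\int\prod_i M_{r_i}f_i$.
\end{itemize}

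The missing idea is to get sparseness from a stopping time on the individual averages, not from decomposing the Carleson sequence. Work in one lattice $\mathscr{D}^{\alpha}$, below a common ancestor $P_0$. Let $\mathrm{Stop}(P)$ be the maximal $P'\subsetneq P$ with $\langle f_i\rangle_{P',r_i}>K_0\langle f_i\rangle_{P,r_i}$ for some $i$. The weak $L^{r_i}$ bound gives $|\bigcup\mathrm{Stop}(P)|\le(m+1)K_0^{-1}|P|\le(1-\gamma)|P|$ for $K_0=K_0(m,\gamma)$. Iterating produces a family $\S^*$ that is $\gamma$-sparse with the prescribed $\gamma$, so your worry about the fixed sparseness parameter disappears. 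Each $P$ lies in the good region $\G(P^*)$ of exactly one $P^*\in\S^*$, and there $F_P\le K_0^{m+1}F_{P^*}$. Using your packing bound once per stopping cube,
\begin{align*}
\sum_{P}a_P|P|F_P
\le K_0^{m+1}\sum_{P^*\in\S^*}F_{P^*}\sum_{P\subset P^*}a_P|P|
\le K_0^{m+1}\gamma^{-1}\sum_{P^*\in\S^*}|P^*|F_{P^*}
\le K_0^{m+1}\gamma^{-1}\Lambda^*_{\vec r,r_{m+1}}(\vec f,f_{m+1}).
\end{align*}
This is the paper's argument. It relies on the product structure $F_P=\prod_i\langle f_i\rangle_{P,r_i}$ to stop factor by factor, which your splitting approach never uses.
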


\begin{proof}
Fix $\lambda \ge 1$, a dyadic cube $Q_0 \in \D$, and a $\gamma$-sparse family $\S \subset  \D(Q_0)$. By the Three Lattice Theorem (see  \cite[Theorem 3.1]{LN}), there exist $3^d$ dyadic lattices $\{\mathscr{D}^k\}_{k=1}^{3^d}$ such that for every $Q \in \S$, there exists some $k \in \{1, \ldots, 3^d\}$ and $P(Q) \in \mathscr{D}^k$ such that $\lambda Q \subset P(Q)$ and $\ell(P(Q)) \le 3 \ell(\lambda Q)$. This implies
\begin{align*}
\sum_{Q \in \S} |Q| \prod_{i=1}^{m+1} \langle f_i \rangle_{\lambda Q, r_i}
\le 3^{(m+1) d} \sum_{k=1}^{3^d} \sum_{P \in \mathscr{D}^k} 
\sum_{Q \in \S: P(Q)=P} |Q| \prod_{i=1}^{m+1} \langle f_i \rangle_{P, r_i}
=: 3^{(m+1) d} \sum_{k=1}^{3^d} \Xi_{\mathscr{D}^k}.
\end{align*}
By the property of dyadic lattices, the finite collection $\{P \in \mathscr{D}^k: P \cap \supp f_{m+1} \ne \varnothing\}$ has common ancestor $P_0$. Thus, it suffices to show 
\begin{align}\label{eq:XDP}
\Xi_{\D(P_0)} 
\lesssim \Lambda_{\vec{r}, r_{m+1}}^*(\vec{f}, f_{m+1}).
\end{align}

Given $K_0 > 1$ and $Q \in \mathscr{D}^k$, we subdivide dyadically $Q$ and stop the first time that one of the following $(m+1)$ conditions occurs
\begin{align}\label{eq:stopcon}
\langle f_i \rangle_{Q', r_i} > K_0 \langle f_i \rangle_{Q, r_i}, \quad i=1, \ldots, m+1. 
\end{align}
Let $\mathrm{Stop}(Q)$ be the stopping time family. Choose $K_0$ sufficiently large so that 
\begin{align*}
|\Omega_Q| := \bigg|\bigcup_{Q' \in \mathrm{Stop}(Q)} Q' \bigg| \le (1-\gamma) |Q|,
\end{align*} 
which follows from
\begin{align*}
\sum_{\substack{Q' \in \mathrm{Stop}(Q) \\ \langle f_i \rangle_{Q', r_i} > K_0 \langle f_i \rangle_{Q, r_i}}} |Q'|
\le (K_0 \langle f_i \rangle_{Q, r_i})^{-r_i}
\sum_{Q' \in \mathrm{Stop}(Q)} \int_{Q'} |f_i|^{r_i}
\le K_0^{-1} |Q|.
\end{align*}
Thus, $Q \notin \mathrm{Stop}(Q)$. Define 
\begin{align}\label{def:Stop}
\mathrm{Stop}^{(0)}(Q) := \{Q\} \quad \text{ and } \quad
\mathrm{Stop}^{(k+1)}(Q) := \bigcup_{Q' \in \mathrm{Stop}^{(k)}(Q)} \mathrm{Stop}(Q'), \quad k \ge 0.
\end{align} 
If we set
\begin{align}\label{def:Good}
& \G(Q) := \{Q' \in \D(Q): Q' \not\subset P \text{ for all } P \in \mathrm{Stop}(Q)\},
\end{align} 
then there holds
\begin{align}\label{eq:DQSt}
\D(Q) = \G(Q) \sqcup \bigsqcup_{Q' \in \mathrm{Stop}(Q)} \D(Q').
\end{align}
Iterating this procedure we eventually obtain
\begin{align}\label{eq:DPG}
\D(P_0) = \sum_{P \in \S(P_0)} \G(P)
\quad \text{ and } \quad
\S(P_0) = \bigcup_{k=0}^{\infty} \mathrm{Stop}^{(k)}(P_0),
\end{align}
which gives
\begin{align}\label{eq:XDSG}
\Xi_{\D(P_0)}
\le \sum_{P \in \S(P_0)} \Xi_{\G(P)}.
\end{align}

We claim that
\begin{align}\label{eq:gas}
\text{the family $\S(P_0)$ is $\gamma$-sparse}.
\end{align}
Indeed, given $Q \in \mathcal{S}(P_0)$, if we denote $E_Q := Q \setminus \bigcup_{Q' \in \mathrm{Stop}(Q)} Q'$, then the construction of $\mathrm{Stop}(Q)$ gives $|E_Q| = |Q \setminus \Omega_Q| \ge \gamma |Q|$. Let $Q_1, Q_2 \in \S(P_0)$ with $Q_1 \ne Q_2$. Then $Q_1 \in \mathrm{Stop}^{(k_1)}(P_0)$ and $Q_2 \in \mathrm{Stop}^{(k_2)}(P_0)$ for some $k_1, k_2 \ge 0$. If $k_1 = k_2$, the maximality of $\mathrm{Stop}^{(k_1)}(Q_0)$ implies $Q_1 \cap Q_2 = \varnothing$, hence, $E_{Q_1} \cap E_{Q_2} = \varnothing$. If $k_1 \ne k_2$, we may assume $k_1 < k_2$. By construction, one can find some $Q'_1 \in \mathrm{Stop}(Q_1)$ so that $Q_2 \in Q'_1$, which gives $E_{Q_1} \cap Q_2 = \varnothing$, hence, $E_{Q_1} \cap E_{Q_2} = \varnothing$. This means $\{E_Q\}_{Q \in \S(P_0)}$ is pairwise disjoint. 

Note that by the construction of $\mathrm{Stop}(P)$ and $\G(P)$,
\begin{align*}
\langle f_i \rangle_{P', r_i} \le K_0 \langle f_i \rangle_{P, r_i}, 
\quad\forall P \in \S(P_0),\, P' \in \G(P), \, i=1, \ldots, m+1, 
\end{align*}
which implies
\begin{align*}
\Xi_{\D(P_0)}
& \le \sum_{P \in \S(P_0)} \Xi_{\G(P)}
\le K_0^{m+1} \sum_{P \in \S(P_0)} \sum_{Q \in \S: P(Q) \subset P} |Q| 
\prod_{i=1}^{m+1} \langle f_i \rangle_{P, r_i}
\\
& \le K_0^{m+1} \gamma^{-1} \sum_{P \in \S(P_0)} |P| 
\prod_{i=1}^{m+1} \langle f_i \rangle_{P, r_i}
\le K_0^{m+1} \gamma^{-1}
\Lambda_{\vec{r}, r_{m+1}}^*(\vec{f}, f_{m+1}),
\end{align*}
where we have used the $\gamma$-sparseness of $\S$ and $\S(P_0)$. This completes the proof of \eqref{eq:XDP}.
\end{proof}

Let us present a further result based on Lemma \ref{lem:SL} and its proof. Define
\begin{align*}
\widehat{P_{\ell} f}(\xi) 
:= \psi(2^{-\ell} \xi), \quad \ell \ge 1, 
\end{align*}
where $\psi \in \mathscr{C}_c^{\infty}(\Rd)$ with $\psi(\xi) = 1$ if $c_0 \le |\xi| \le C_0$ and $\supp \psi \subset \{c_0/2 \le |\xi| \le 2C_0\}$. Here $0<c_0 < C_0 < \infty$ are harmless constants.

\begin{lemma}\label{lem:pp}
Let $\mathcal D$ be a dyadic lattice. Then for every nonnegative bounded sequence $\{k_{\ell}\}_{\ell \ge 1}$,
\begin{align*}
\Xi_{\D} :=
\sum_{\ell \ge 1} \sum_{\substack{Q \in \mathcal D \\ \ell(Q) = 2^{k_{\ell}}}}
|Q| \langle P_{\ell} f_1 \rangle_{Q, p_1}
\langle P_{\ell} f_2 \rangle_{Q, p_2}
\langle f_3 \rangle_{Q, p_3}
\lesssim \Lambda^*_{p_1, p_2, p_3}(f_1, f_2, f_3),
\end{align*}
for all $f_1, f_2, f_3 \in L_c^{\infty}(\Rd)$, where $1 < p_1, p_2 \le 2$ and $1 \le p_3 < \infty$.
\end{lemma}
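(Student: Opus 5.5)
We will follow the stopping-time scheme of the proof of Lemma \ref{lem:SL}. The new feature is that, for each frequency $\ell$, the two Littlewood--Paley pieces $P_\ell f_1, P_\ell f_2$ sit at a single cube scale $2^{k_\ell}$. The plan is to control the sum over $\ell$ by square functions localized to the stopping cubes. Since $k_\ell$ is bounded above, only finitely many cube sizes occur at the top. Since $f_1,f_2,f_3$ are compactly supported and $P_\ell f_i$ has Schwartz tails, we first truncate to cubes meeting a large neighborhood of $\supp f_3$; all relevant cubes then lie in $\D(P_0)$ for a common ancestor $P_0$. (If $f_3$ vanishes on $Q$, the term is zero anyway, since $\langle f_3\rangle_{Q,p_3}$ is an average of $f_3$ alone.)

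\textbf{Stopping construction.} Fix $P\in\S(P_0)$. Run the stopping construction \eqref{eq:stopcon} with three conditions: $\langle f_i\rangle_{Q',p_i}>K_0\langle f_i\rangle_{3P,p_i}$ for $i=1,2$, and $\langle f_3\rangle_{Q',p_3}>K_0\langle f_3\rangle_{P,p_3}$. The cube $3P$ is a harmless enlargement handled by the three lattice trick as in Lemma \ref{lem:SL}. As before, $\S(P_0)$ is $\gamma$-sparse, and $\D(P_0)=\bigsqcup_P \G(P)$. So it suffices to prove, for each $P$,
\begin{align*}
\sum_{\ell}\sum_{\substack{Q\in\G(P)\\ \ell(Q)=2^{k_\ell}}} |Q|\,\langle P_\ell f_1\rangle_{Q,p_1}\langle P_\ell f_2\rangle_{Q,p_2}\langle f_3\rangle_{Q,p_3}
\lesssim |P|\prod_{i=1}^3\langle f_i\rangle_{3P,p_i}
\end{align*}
(with $\langle f_3\rangle_{P,p_3}$ in the last factor), up to tail terms. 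On $\G(P)$ we have $\langle f_3\rangle_{Q,p_3}\le K_0\langle f_3\rangle_{P,p_3}$, so the $f_3$ factor can be pulled out.

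\textbf{Reduction to square functions.} What remains is $\sum_\ell\sum_Q |Q|\langle P_\ell f_1\rangle_{Q,p_1}\langle P_\ell f_2\rangle_{Q,p_2}$ over the good cubes at scale $2^{k_\ell}$. For fixed $\ell$, these cubes are disjoint. We bound $\langle\cdot\rangle_{Q,p_i}$ by $\langle\cdot\rangle_{Q,2}$, since $p_i\le2$, and apply Cauchy--Schwarz in $(\ell,Q)$. This yields
\begin{align*}
\Big(\sum_\ell \int_{E_\ell}|P_\ell f_1|^2\Big)^{1/2}\Big(\sum_\ell\int_{E_\ell}|P_\ell f_2|^2\Big)^{1/2},
\end{align*}
where $E_\ell$ is the union of the good cubes of scale $2^{k_\ell}$. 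We have $E_\ell\subset P$, and each such good cube is not contained in any stopping cube. We then need a local Littlewood--Paley estimate
\begin{align*}
\Big\|\Big(\sum_\ell|P_\ell f_1|^2\mathbf 1_{E_\ell}\Big)^{1/2}\Big\|_{L^{2}}\lesssim |P|^{1/2}\langle f_1\rangle_{3P,p_1}.
\end{align*}
The problem is that this requires an $L^2$ bound while we only control the $L^{p_1}$ average with $p_1<2$; the $L^2$ norm of $f_1$ need not be controlled by $\langle f_1\rangle_{3P,p_1}$.

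\textbf{Main obstacle and repair.} This $L^{p_1}\to L^2$ gap is the central difficulty. The first attempt will be a Calder\'on--Zygmund decomposition of $f_1\mathbf 1_{3P}$ at height $K_0\langle f_1\rangle_{3P,p_1}$ in $L^{p_1}$, with the bad cubes taken to be exactly the stopping cubes for $f_1$. The good part is bounded by $K_0\langle f_1\rangle_{3P,p_1}$, so its $L^2$ norm is at most $|P|^{1/2}\langle f_1\rangle_{3P,p_1}$, and the $L^2$ boundedness of the square function handles it. For the bad part $b=\sum_{R}b_R$, each $R$ is a stopping cube, and every good cube $Q$ of scale $2^{k_\ell}$ either has $\ell(Q)\ge\ell(R)$ or is disjoint from $R$. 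We then use two properties of $P_\ell b_R$ on $E_\ell$. When $2^{k_\ell}\gtrsim\ell(R)$, the mean-zero normalization of $b_R$ combined with smoothness of the kernel of $P_\ell$ gives decay in $2^{\ell}\ell(R)$; here we need the scale relation between $\ell$ and $k_\ell$, namely that $2^{k_\ell}\gtrsim 2^{-\ell}$, which we expect from the applications. When $Q$ is disjoint from $R$, Schwartz tails give decay in the distance. These two estimates should give $\sum_\ell\|P_\ell b_R\|_{L^2(E_\ell)}^2\lesssim \langle f_1\rangle_{3P,p_1}^2|R|$, and summing over $R$ gives at most $|P|$. The pieces of $f_1$ outside $3P$ contribute Schwartz tails, which are summed over dilates of $P$ as in the proof of Lemma \ref{lem:trans}. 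The same argument applies to $f_2$. Inserting both bounds into the Cauchy--Schwarz estimate completes the proof.
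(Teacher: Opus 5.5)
\textbf{There is a genuine gap.} Your argument fails at the step where you replace $\langle\cdot\rangle_{Q,p_i}$ by $\langle\cdot\rangle_{Q,2}$ and then need
\[
\sum_\ell\int_{E_\ell}|P_\ell f_1|^2\lesssim |P|\,\langle f_1\rangle_{3P,p_1}^2 .
\]
For $p_1<2$ this inequality is false, and the Calder\'on--Zygmund repair cannot rescue it.

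\emph{Counterexample.} Take $k_\ell\equiv k$ with $2^k=\ell(P)$, which the hypotheses allow. Since $P\in\G(P)$, we get $E_\ell=P$ for every $\ell$. Let $f_1=c\mathbf{1}_S$ with $S$ a tiny cube deep inside $P$. The left side is at least $\|P_\ell f_1\|_{L^2(P)}^2\gtrsim c^2|S|$ for $2^\ell\approx \ell(S)^{-1}$. The right side is $\approx c^2|P|^{1-2/p_1}|S|^{2/p_1}$. The ratio $(|P|/|S|)^{2/p_1-1}$ is unbounded as $|S|\to 0$.

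\emph{Why the repair misses this.} This is exactly the regime $2^{-\ell}\ll \ell(R)\le 2^{k_\ell}$ for the stopping cube $R\supset S$.
\begin{itemize}
\item The good cube of side $2^{k_\ell}$ contains $R$, so $E_\ell\supset R$.
\item The cancellation $\int b_R=0$ gives no gain, because the factor $2^\ell\ell(R)$ is large, not small.
\item Hence $\sum_\ell\|P_\ell b_R\|_{L^2(E_\ell)}^2\gtrsim c^2|S|$, which is of the size of $\|b_R\|_{L^2}^2$ and not bounded by $|R|\langle f_1\rangle_{R,p_1}^2$.
\end{itemize}
The relation $2^{k_\ell}\gtrsim 2^{-\ell}$ does not exclude this regime. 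It is precisely the regime of the application, where the cubes are much larger than the wavelength. Separately, $f_1$ on $3P\setminus P$ is not covered by any stopping cube, so its $L^2$ norm is also uncontrolled.

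\emph{The missing idea: never leave $L^{p_i}$.} The paper uses the square function
\[
S^{p}_{\D(Q)}f=\Big(\sum_{\ell}\sum_{Q'\in\D(Q),\,\ell(Q')=2^{k_\ell}}\langle P_\ell f\rangle_{Q',p}^2\,\mathbf{1}_{Q'}\Big)^{1/2}.
\]
\begin{itemize}
\item This is pointwise at most $\big(\sum_\ell M(|P_\ell f|^{p})^{2/p}\big)^{1/2}$.
\item Hence it maps $L^{p}\to L^{p,\infty}$, by the Fefferman--Stein weak $(1,1)$ inequality in $\ell^{2/p}$ plus Littlewood--Paley. This is where $p_1,p_2\le 2$ is used.
\item The Schwartz tails of $P_\ell$ are absorbed into $G_{Q,p}=\sum_{j}2^{-j\kappa}\langle f\rangle_{3\cdot 2^jQ,p}$.
\item The level sets $\{S^{p_i}_{\D(Q)}f_i>K_0G_{Q,p_i}\}$ go into the stopping set, together with the level set of the localized maximal function of $f_3$. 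Then $S^{p_i}_{\G(Q)}f_i\le K_0G_{Q,p_i}$ holds \emph{pointwise} on $Q$, using a point of the parent of each stopping cube that lies outside $\Omega_Q$.
\item Cauchy--Schwarz in $\ell$ at each fixed $x$, rather than in $L^2$, then gives $\Xi_{\G(Q)}\lesssim |Q|\,G_{Q,p_1}G_{Q,p_2}\langle f_3\rangle_{Q,p_3}$.
\item Lemma \ref{lem:SL} absorbs the enlarged cubes.
\end{itemize}
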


\begin{proof}
Define a dyadic square function by
\begin{equation*}
S_{\D}^p f
:= \bigg(\sum_{\ell \ge 1}
\sum_{\substack{Q \in \D \\ \ell(Q) = 2^{k_{\ell}}}}
\langle P_{\ell} f \rangle_{Q, p}^2 \, \mathbf{1}_Q \bigg)^{\frac12}.
\end{equation*}
We claim that for any $Q \in \D$,
\begin{equation}\label{eq:anm}
\|S_{\D(Q)}^{p_i} f_i\|_{L^{p_i, \infty}(Q)}
\lesssim_{\kappa} |Q|^{\frac{1}{p_i}} \sum_{j \ge 0} 2^{-j \kappa} \langle f_i \rangle_{3 \cdot 2^j Q, p_i}
=: |Q|^{\frac{1}{p_i}} G_{Q, p_i}, \quad i=1, 2,
\end{equation}
for all $\kappa > 0$. To show \eqref{eq:anm}, we split $f_i = f_i \mathbf{1}_{3Q} + f_i \mathbf{1}_{(3Q)^c}$. Since $\check{\psi}$ is Schwartz, the kernel of $P_{\ell}$ satisfies $|p_{\ell}(x)| \lesssim 2^{d \ell}(1 + 2^{\ell} |x|)^{-\kappa-d}$, which gives
\begin{align*}
|P_{\ell}(f_i \mathbf{1}_{(3 Q)^c})(x)|
\lesssim [2^{\ell} \ell(Q)]^{-A}
\sum_{j \ge 1} 2^{-j \kappa} \langle f \rangle_{3 \cdot 2^j Q}
\le [2^{\ell} \ell(Q)]^{- \kappa} G_{Q, p_i}, \quad\forall x \in Q,
\end{align*}
 and, hence,
\begin{align*}
\|S_{\D(Q)}^{p_i}(f_i \mathbf{1}_{(3Q)^c})\|_{L^{p_i, \infty}(Q)} 
\lesssim |Q|^{\frac{1}{p_i}} \|S_{\D(Q)}^{p_i}(f \mathbf{1}_{(3Q)^c})\|_{L^{\infty}(Q)} 
\lesssim G_{Q, p_i}.
\end{align*}
If $p=2$, we have 
\begin{align*}
\|S_{\D(Q)}^2 (f_i \mathbf{1}_{3Q})\|_{L^{2, \infty}(Q)} 
\le \|S_{\D}^2 (f_i \mathbf{1}_{3Q})\|_{L^2} 
\lesssim \|f_i \mathbf{1}_{3Q}\|_{L^2}
\simeq |Q|^{\frac12} \langle f \rangle_{3Q, 2}.
\end{align*} 
In the case $1<p_i<2$, by the weak $(1,1)$ inequality for vector-valued maximal functions \cite{FS} and the Littlewood--Paley theory,
\begin{align*}
\|S_{\D(Q)}^{p_i} (f_i \mathbf{1}_{3Q})\|_{L^{p_i, \infty}(Q)}
& \le \|S_{\D}^{p_i} (f_i \mathbf{1}_{3Q})\|_{L^{p_i, \infty}}
\le \bigg\|\bigg( \sum_{\ell \ge 1} [M(|P_{\ell} (f_i \mathbf{1}_{3Q})|^{p_i})]^{\frac{2}{p_i}} \bigg)^{\frac12} \bigg\|_{L^{p_i, \infty}}
\\
& \lesssim \bigg\|\bigg( \sum_{\ell \ge 1} |P_{\ell} (f_i \mathbf{1}_{3Q})|^2 \bigg)^{\frac12} \bigg\|_{L^{p_i}}
\lesssim \|f \mathbf{1}_{3Q}\|_{L^{p_i}}
\simeq |Q|^{\frac{1}{p_i}} \langle f \rangle_{3Q, p_i}.
\end{align*}
This justifies \eqref{eq:anm}.

Fix $Q \in \D$ and set
\begin{align*}
\Omega_Q 
:= \bigcup_{i=1}^2 \big\{S_{\D(Q)}^{p_i} f_i > K_0 G_{Q, p_i} \big\} 
\cup \{M_{Q, p_3} f_3 > K_0 \langle f_3 \rangle_{Q, p_3} \},
\end{align*}
where $M_{Q, p_3} f(x) := \sup_{x \in Q' \in \D(Q)} \langle f \rangle_{Q', p_3}$. By \eqref{eq:anm} and the weak type estimate for $M_{Q, p_3}$, one can choose $K_0$ large enough so that $|\Omega_Q| \le (1-\gamma) |Q|$. Let $\mathrm{Stop}(Q)$ be the collection of maximal subcubes of $Q$ such that $\Omega_Q = \bigcup_{Q' \in \mathrm{Stop}(Q)} Q'$. Let $Q_0$ be a common ancestor of the collection $\{Q \in \D: \ell(Q)= , Q \cap \supp f_3 \ne \varnothing\}$. As done in \eqref{def:Stop}--\eqref{eq:gas}, we have
\begin{align}\label{eq:SRf}
\Xi_{\D}
\le \Xi_{\D(Q_0)}
\le \sum_{Q \in \S(Q_0)} \Xi_{\G(Q)}
\quad \text{and} \quad
\text{$\S(Q_0)$ is $\gamma$-sparse}. 
\end{align}

We are going to estimate $\Xi_{\G(Q)}$ for $Q \in \D$. The maximality of $\mathrm{Stop}(Q)$ implies
\begin{align}\label{eq:mma-1}
\langle f_3 \rangle_{Q', p_3} \le K_0 \langle f_3 \rangle_{Q, p_3}, \quad \forall Q' \in \G(Q).
\end{align}
Moreover,
\begin{align}\label{eq:mma-2}
S_{\G(Q)}^{p_i} f_i(x)
\le K_0 G_{Q, p_i}, \quad \forall x \in Q, \, i = 1, 2.
\end{align}
Indeed, if $x \notin \Omega_Q$, the above follows from $S_{\D(Q)}^{p_i}f_i(x) \le K_0 $. Let $x \in \Omega_Q$. Then $x \in P$ for some $P \in \mathrm{Stop}(Q)$. Let $\widehat{P} \in \D$ be the dyadic father of $Q$. If $\widehat{P} \setminus \Omega_Q = \varnothing$, then $P \subsetneq \widehat{P} \subset \Omega_Q$, which contradicts the maximality of $P$. Hence, there exists some $x_P \in \widehat{P} \setminus \Omega_Q$. Note that for every $Q' \in \G(Q)$ containing $x$, there must $P \subsetneq Q'$, which implies $x_P \in \widehat{P} \subset Q'$. Then \eqref{eq:mma-2} follows from $S_{\D(Q)}^{p_i}f_i(x_P) \le K_0 $.

As a consequence of \eqref{eq:SRf}--\eqref{eq:mma-2} and the Cauchy--Schwarz inequality, we have 
\begin{align*}
\Xi_{\D}
&\le \sum_{Q \in \S(Q_0)} \Xi_{\G(Q)}
\lesssim \sum_{Q \in \S(Q_0)} |Q| G_{Q, p_1} G_{Q, p_2}
\langle f_3 \rangle_{Q, p_3}
\\
&\lesssim \sum_{k, j \ge 0} 2^{- \kappa (k + j)} 
2^{d (\frac{\lambda - k}{p_1} + \frac{\lambda - j}{p_2} + \frac{\lambda}{p_3})}
\sum_{Q \in \S(Q_0)} |Q| \prod_{i=1}^3 \langle f_i \rangle_{3 \cdot 2^{\lambda} Q, p_i}
\\
& \lesssim \Lambda^*_{p_1, p_2, p_3}(f_1, f_2, f_3),
\end{align*}
provided that $\lambda = \max\{k, j\}$, $\kappa > 2d$, and Lemma \ref{lem:SL}. 
\end{proof}

\subsection{A recursive proposition}
We will demonstrate Theorem~\ref{thm:main} by induction. 
\begin{definition}\label{def:Un}
Given $n = 0, 1, \ldots$, let $U_n$ be the smallest constant such that for all pairs of integers $(N_1, N_2)$ with $0 \leq N_2 - N_1 \leq n$, and for all $Q_0 \in \D$ with $\ell(Q_0) = 2^{N_2}$, there holds
\begin{align*}
\bigg| \bigg\langle \sum_{j=N_1}^{N_2} T_j(\vec{f}), \, f_{m+1} \bigg\rangle \bigg|
\le U_n \, \mathbf{S}_{Q_0}^3(\vec{f}, f_{m+1}),
\end{align*}
for all $f_i \in \mathscr{S}_{\B_i}$, $i=1, \ldots, m+1$, with $\bigcup_{i=1}^m\supp(f_i) \subset 3Q_0$ and $\supp(f_{m+1}) \subset Q_0$.
\end{definition}

The core of the proof of Theorem~\ref{thm:main} is the following recursive proposition.

\begin{proposition}\label{prop:Un}
There exists a constant $C_0 \in (0, \infty)$ such that
\begin{align*}
U_n \le \max \{U_{n-1}, C_0 A_0 \}, \quad \forall n \ge 1.
\end{align*}
\end{proposition}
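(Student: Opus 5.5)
We fix $n \ge 1$, integers $N_1 \le N_2$ with $N_2 - N_1 = n$, a cube $Q_0 \in \D$ with $\ell(Q_0) = 2^{N_2}$, and functions with $\bigcup_i \supp f_i \subset 3Q_0$ and $\supp f_{m+1} \subset Q_0$. The plan is a Calderón--Zygmund decomposition of all $m+1$ functions relative to a common stopping collection inside $Q_0$. Subcubes strictly smaller than $Q_0$ are handled by the induction hypothesis $U_{n-1}$. The remaining interactions should be bounded by $C_0 A_0$ times the top term $|Q_0|\prod_i \langle f_i\rangle_{3Q_0, r_i}$.

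\emph{Stopping set.} For $K_0$ large, let $E$ be the union of three kinds of sets:
\begin{itemize}
\item the sets where the dyadic maximal functions satisfy $M_{r_i}(f_i\mathbf 1_{3Q_0}) > K_0\langle f_i\rangle_{3Q_0,r_i}$;
\item the set where the partial-sum operator $|\sum_{j=N_1}^{N_2} T_j(\vec f)|$ exceeds $K_0 A_2\prod_i\langle f_i\rangle_{3Q_0,r_i}$, which is controlled by \eqref{H2};
\item the analogous set for the localized version of this operator.
\end{itemize}
This is needed because in the quasi-Banach range the pairing cannot be estimated from \eqref{H2} directly. We take a Whitney decomposition $\{W\}$ of $E$, so that $\sum_W|W| \le |E| \le \frac{1}{2}|Q_0|$, and let $G = Q_0\setminus E$.

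\emph{Decomposition of the functions.} Write $f_i = g_i + \sum_W b_{i,W}$, where $g_i$ is bounded by $K_0\langle f_i\rangle_{3Q_0,r_i}$ and each $b_{i,W}$ is supported on a cube $W$ with vanishing mean. For $f_{m+1}$, replace its average on $W$ by an average over a large-measure subset of $W$ on which the operator is controlled, so that the bad part of $f_{m+1}$ still has cancellation. Then split the scales:
\begin{itemize}
\item pieces $T_j$ with $2^j \ge \ell(W)$ act on $b_{i,W}$ through cancellation;
\item pieces with $2^j < \ell(W)$ are localized to a neighbourhood of $W$ by \eqref{H1}.
\end{itemize}
Localized to $W$, the sum over $j$ involves at most $n$ scales. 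By Definition~\ref{def:Un} it is therefore bounded by $U_{n-1}\mathbf S^3_{W}$. Since $W\subsetneq Q_0$, adding the top term to these local sparse forms produces a sparse family in $\D(Q_0)$, using the fact that the Whitney cubes cover at most half of $Q_0$. This gives the bound $\max\{U_{n-1}, C_0A_0\}$, provided every other term is controlled by $C_0A_0|Q_0|\prod\langle f_i\rangle_{3Q_0,r_i}$.

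\emph{All-good and mixed terms.} The all-good term is handled by \eqref{H3}, since the $g_i$ are bounded, together with the pairing restricted to $G$. A mixed good/bad term is written as the full term minus the all-good term. The full term is estimated by the stopping condition on the operator, which is pointwise control from the weak type bound \eqref{H2}; the all-good term is estimated as above.

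\emph{Main obstacle: interactions among bad functions.} The difficult terms are those where bad functions are supported on Whitney cubes of various sizes and paired with $T_j$ at scales $2^j \ge \ell(W)$. For a given combination, single out the variable $i$ whose bad cube has the largest scale separation $2^j/\ell(W)$, and use cancellation only in that variable: $T_j\circ_i(\ldots)$ with $b_{i,W}$ of mean zero is written as an average of $T_j\circ_i\Delta_h$ with $|h|\le \ell(W)$. Apply \eqref{H5}, with \eqref{H4} used for the remaining inputs through the auxiliary exponents $r_i^*$ whenever a bad function sits in a variable belonging to the set $I$. This bounds the term by the total measure of the relevant bad cubes times $\prod\langle f_i\rangle$, with a factor $\min\{A_4, A_5 (\ell(W)2^{-j})^{\vartheta}\}$. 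Sum over $j$ with a near/far split at separation $\log_2(2+A_5/A_4)$: use \eqref{H4} for the near scales and the geometric decay from \eqref{H5} beyond. This yields $A_4\log_2(2+A_4^{-1}A_5)$. Finally, $\sum_W|W|\lesssim|Q_0|$ closes the argument.

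I expect the bookkeeping of these multi-bad interactions to be the hardest step: the losses must not depend on $m$-fold sums over $W$, and the modified average of $f_{m+1}$ must still preserve cancellation.
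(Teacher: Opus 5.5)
Your outline has the same architecture as the paper's proof: a common Whitney/Calder\'on--Zygmund decomposition, and the average of $f_{m+1}$ moved onto a good subset $F_W$ of each Whitney cube. Mixed terms are handled as full minus all-good, and the induction hypothesis is applied to $\langle T_W^{\le}(\vec f\mathbf 1_{3W}), f_{m+1}\mathbf 1_W\rangle$ over disjoint Whitney cubes. There is also a near/far split at $j_0\simeq\vartheta^{-1}\log_2(2+A_4^{-1}A_5)$. However, several steps fail as you state them.

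\emph{Stopping set.} It must contain $\{M_\tau(T\vec f)>K_0A_2\prod_i\langle f_i\rangle_{3Q_0,r_i}\}$ with $0<\tau<\min\{r,1\}$, not the level set of $|T\vec f|$. A Whitney cube can lie entirely inside that level set, so nothing produces a large-measure $F_W$. With $M_\tau$, the point of $E^c$ in $\kappa W$ bounds $\langle |T\vec f|^\tau\rangle_{\kappa W}^{1/\tau}$, and Chebyshev gives $|F_W|\ge |W|/2$. The same reasoning requires the uncentred, non-dyadic $M_{r_i}$, because $\kappa W$ is not a dyadic ancestor of $W$. The condition $\tau<r$ keeps $M_\tau$ bounded on $L^{r,\infty}$, so $|E|\le(1-\gamma)|Q_0|$ still follows from \eqref{H2}. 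The localized operator cannot be put into $E$ at all, since it depends on the Whitney cubes of $E$. It is controlled afterwards: $T_W^{\le}\vec f=T_W^{\le}(\vec f\mathbf 1_{3W})$ on $W$, and $\langle f_i\rangle_{3W,r_i}\lesssim\langle f_i\rangle_{3Q_0,r_i}$, so \eqref{H2} applies.

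\emph{Auxiliary exponents.} You have these backwards. In \eqref{H4}--\eqref{H5} the set $I$ must be the \emph{good} inputs, $I=\{1,\ldots,m\}\setminus\sigma$. Since $g_i\in L^\infty$, $\|g_i\mathbf 1_{5Q}\|_{L^{r_i^*}}\lesssim|Q|^{1/r_i^*}\langle f_i\rangle_{3Q_0,r_i}$, which exactly cancels $2^{-jd\sum_{i\in I}1/r_i^*}$. The bad inputs stay in $L^{r_i}$, and the bad part of $f_{m+1}$ is measured in $L^{r_\sigma'}$ with $\frac{1}{r_\sigma}=\sum_{i\in\sigma}\frac{1}{r_i}$. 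A bad function placed in $L^{r_i^*}$ (e.g.\ $r_i^*=\infty$) is not controlled by $\langle f_i\rangle_{3Q_0,r_i}$. These bounds are needed exactly when $\sum_{i\in\sigma}\frac{1}{r_i}<\frac1s$. Then $e:=\sum_{i\in\sigma}\frac{1}{r_i}+\frac{1}{s'}<1$, and the main estimate only gives $|Q|(\beta_Q/|Q|)^{e}$ rather than the measure $\beta_Q$ of the bad cubes in $5Q$.

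\emph{Multi-bad interactions.} The step you defer is exactly where per-combination cancellation breaks down. Using the mean zero of one $b_{a,W}$ at a time produces $\sum_W\|b_{a,W}\|_{L^{r_a}}$ over up to about $2^{Kd}$ cubes of side $2^{j-K}$, instead of $\|\sum_W b_{a,W}\|_{L^{r_a}}$. That loses a factor up to $2^{Kd(1-1/r_a)}$, which $2^{-\vartheta K}$ cannot absorb. The paper proceeds as follows.
\begin{enumerate}
\item Fix $Q\in\D_j$ containing the bad cube of $i_0=\min\sigma$, so every other relevant cube lies in $5Q$.
\item Lump all bad cubes of side $2^{j-k}$ into $B^i_{Q,k}$.
\item Partition the indices $(k_i)_{i\in\sigma\cup\{m+1\}}$ by $K=\max_i k_i$ and the least index $a$ attaining it. Each class then collapses to a \emph{single} pairing, with $B^a_{Q,K}$ in slot $a$ and partial sums $B^i_{Q,\le K}$ or $B^i_{Q,\le K-1}$ elsewhere.
\item Since $\mathbb{E}_{K-j}B^a_{Q,K}=0$, Lemma~\ref{lem:reg} (from \eqref{H5} by duality, or its analogue for $(\Id-\mathbb{E}_{K-j})T_j$ when $a=m+1$) gives a bound $A_5 2^{-\vartheta K}\beta_{Q,\le K}\prod_i\langle f_i\rangle_{3Q_0,r_i}$.
\item Finally, $\sum_j\sum_{Q\in\D_j}\beta_{Q,\le K}\lesssim K|Q_0|$.
\end{enumerate}

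\emph{Omitted case.} You also omit the case where some input bad cube satisfies $\ell(I_a)\ge 2^j>\ell(I_{m+1})$. There no input cancellation is available at scale $2^j$. The paper uses Whitney geometry ($I_{m+1}\subset 3I_a$ and $\ell(I_a)\le 8\ell(I_{m+1})$) to show only $O(1)$ scales contribute, so \eqref{H4} alone suffices.
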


\begin{proof}[\bf Proof of Theorem~\ref{thm:main}]
We first treat the situation of simple functions. By Lemma \ref{lem:SL} and Proposition \ref{prop:Un}, it suffices to prove 
\begin{align*}
U_0 \lesssim A_0,
\end{align*}
which further implies $U_n \le (n+1) U_0 < \infty$ for every $n \ge 0$. To this end, let $Q \in \D$ with $\ell(Q) = 2^j$, and take $f_i \in \mathscr{S}_{\B_i}$, $i=1, \ldots, m+1$, with $\bigcup_{i=1}^m \supp(f_i) \subset 3Q$ and $\supp f_{m+1} \subset Q$. It follows from H\"older's inequality and the assumption \eqref{H4} that
\begin{align*}
& |\langle T_j(\vec{f}), f_{m+1} \rangle|
\le \|T_j(\vec{f})\|_{L^s} \|f_{m+1} \mathbf{1}_{3Q}\|_{L^{s'}}  
\\
& \le A_4 \, 2^{-jd(\frac1r - \frac1s)}
\prod_{i=1}^{m+1} \|f_i \mathbf{1}_{3Q}\|_{L^{r_i}}
\lesssim A_4 |Q| \prod_{i=1}^{m+1} \langle f_i \rangle_{3Q, r_i},
\end{align*}
where the implicit constant is independent of $j$. This shows $U_0 \lesssim A_4 \le A_0$.

Next, let us extend the result above to the case of compactly supported bounded functions. Fix $N_1, N_2 \in \Z$ with $N_1 \le N_2$, and write $T = \sum_{j=N_1}^{N_2} T_j$. Fix $f_i \in L_c^{\infty}(\Rd; \B_i)$, $i=1, \ldots, m+1$. There exists $h_i^{(k)} \in \mathscr{S}_{\B_i}$ such that $h_i^{(k)}(x) \to f_i(x)$ as $k \to \infty$, a.e. $x \in \Rd$. Define 
\begin{align*}
f_i^{(k)}(x) := h_i^k(x) \mathbf{1}_{\{|h_i^k(x)|_{\B_i} \le 2 |f_i(x)|_{\B_i}\}}, \quad i=1, \ldots, m+1.
\end{align*}
By the sparse bound for simple functions, 
\begin{align}\label{eq:ext}
|\langle T(\vec{f}^{(k)}), f_{m+1}^{(k)} \rangle|
\lesssim A_0 \Lambda_{\vec{r}, r_{m+1}}^*(\vec{f}^{(k)}, f_{m+1}^{(k)})
\lesssim A_0 \Lambda_{\vec{r}, r_{m+1}}^*(\vec{f}, f_{m+1}),
\end{align}
where the implicit constants are independent of $k$. On the other hand, Lebesgue’s dominated convergence theorem gives $\lim_{k \to \infty} \|f_i^{(k)} - f_i\|_{L^{r_i}(\B_i)} = 0$.  The hypothesis \eqref{H4} implies
\begin{align*}
& \big| \langle T(\vec{f}^{(k)}), f_{m+1}^{(k)} \rangle 
- \langle T(\vec{f}), f_{m+1} \rangle \big|
\\
& \le \|T(\vec{f}^{(k)}) - T(\vec{f})\|_{L^s} \|f_{m+1}^{(k)}\|_{L^{s'}} 
+ \|T(\vec{f})\|_{L^s} \|f_{m+1}^{(k)} - f_{m+1}\|_{L^{s'}} 
\\
& \lesssim A_4 \sum_{j=1}^m \prod_{i<j} \|f_i^{(k)}\|_{L^{r_i}}  \|f_j^{(k)} - f_j\|_{L^{r_j}}  
\prod_{i>j} \|f_i^{(k)}\|_{L^{r_i}} \|f_{m+1}\|_{L^{s'}} 
\\
&\quad + A_4 \prod_{i=1}^m \|f_i^{(k)}\|_{L^{r_i}} \, \|f_{m+1}^{(k)} - f_{m+1}\|_{L^{s'}}.
\end{align*}
Thus, 
\begin{align*}
\lim_{k \to \infty} \big| \langle T(\vec{f}^{(k)}), f_{m+1}^{(k)} \rangle 
- \langle T(\vec{f}), f_{m+1} \rangle \big| = 0,
\end{align*}
which along with \eqref{eq:ext} yields 
\begin{align*}
|\langle T(\vec{f}), f_{m+1} \rangle|
\lesssim A_0 \Lambda_{\vec{r}, r_{m+1}}^*(\vec{f}, f_{m+1}).
\end{align*}
This completes the proof of Theorem \ref{thm:main}.
\end{proof}

Consequently, it remains to prove Proposition~\ref{prop:Un}. For this purpose, we present a regularity estimate in the dyadic setting.

\begin{lemma}\label{lem:reg}
For all $k, j \in \Z$, there holds
\begin{align*}
\|T_j \circ_i (\Id - \mathbb{E}_{k-j})\|_{L^{r_1} \times \cdots \times L^{r_m} \to L^s}
& \lesssim A_5 \, 2^{-k \vartheta} 2^{-jd(\frac1r - \frac1s)}, \quad i=1, \ldots, m,
\end{align*}
where
\begin{align*}
\mathbb{E}_j f 
:= \sum_{Q \in \D: \ell(Q)=2^{-j}} [f]_Q \mathbf{1}_Q.
\end{align*}
\end{lemma}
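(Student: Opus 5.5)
The plan is to write $\Id - \mathbb{E}_{k-j}$ as an average of translations, so that the hypothesis \eqref{H5} for $T_j \circ_i \Delta_{2^j h}$ applies directly. Fix $i$, $k$, $j$. For $x$ in a dyadic cube $Q$ with $\ell(Q) = 2^{j-k}$,
\begin{align*}
f(x) - \mathbb{E}_{k-j} f(x) = \fint_Q \big(f(x) - f(y)\big) \, dy .
\end{align*}
We may assume $k \ge 0$. If $k<0$, the bound $2^{-k\vartheta} \ge 1$ is weaker than $\|T_j\| + \|T_j \circ_i \mathbb{E}_{k-j}\| \lesssim A_4 2^{-jd(\frac1r-\frac1s)}$, which follows from \eqref{H4}, since $\mathbb{E}$ is a contraction on $L^{r_i}$. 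Note, though, that this uses $A_4$ rather than $A_5$. To obtain a bound in terms of $A_5$ alone, I would instead telescope $\Id - \mathbb{E}_{k-j} = (\Id - \mathbb{E}_{-j}) + \sum (\mathbb{E}_{l+1} - \mathbb{E}_l)$ and treat each difference like the main term below; this affects only the trivial range. The real content is the case $k \ge 0$, where the scale $2^{j-k} \le 2^j$ corresponds to $|h| \le 1$.

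Writing $y = x + 2^j h$ with $h$ ranging over a set of size $\lesssim 2^{-k}$, we have $|h| \lesssim 2^{-k}$. The complication is that this set depends on the cube $Q$ containing $x$. I would write
\begin{align*}
(\Id - \mathbb{E}_{k-j}) f(x) = -\int_{[-2^{-k},2^{-k}]^d} \mathbf{1}_{\{x+2^jh \in Q_x\}} \, \Delta_{2^j h} f(x) \, \frac{dh}{2^{-kd}},
\end{align*}
where $Q_x$ is the dyadic cube of side $2^{j-k}$ containing $x$. The cutoff $\mathbf{1}_{\{x + 2^j h\in Q_x\}}$ is a function of $x$ and $h$, and it is the obstruction to simply pulling $T_j$ inside the $h$-integral.

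To remove the cutoff, split coordinatewise. For each $x$ and each coordinate, the condition $x + 2^j h \in Q_x$ says that $h$ lies in an interval determined by the position of $x$ within $Q_x$. Decompose $\Rd$ into the pieces $\{x : x - c_{Q_x} \in 2^{j-k}\cdot R\}$, where $R$ runs over subcubes of a fine grid of $[-1/2,1/2)^d$. An alternative standard trick avoids this grid: write $f - \mathbb{E} f = \sum_Q \mathbf{1}_Q \fint_Q (f(\cdot) - f(y))\,dy$, apply the $y$-averaging as $\int_{[0,1)^d}$ over relative positions $u$, and set $f(x) - f(y) = -\Delta_{2^j h} f(x)$ with $h = 2^{-j}(y-x)$. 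Then $T_j \circ_i$ of this sum equals $\fint_{u}\sum_Q T_j(\ldots, \mathbf{1}_Q \Delta_{h_Q(u,\cdot)} f, \ldots)$, which still has $x$-dependent $h$. The clean route I will commit to is the following. Since $(\Id - \mathbb{E}_{k-j})$ is a bounded projection, it is equivalent to bound $T_j \circ_i (\Id - \mathbb{E}_{k-j})(\Id - \mathbb{E}_{k-j})$. Use $(\Id-\mathbb{E}) g = g - \mathbb{E}g$ and $\mathbb{E} g = \fint_{[0,2^{-k}]^d}\mathbb{E}\tau_{2^j h} g\,dh$-type identities. Concretely, for $g = (\Id - \mathbb{E}) f$, which has mean zero on each cube, one has
\begin{align*}
g = \fint_{|h|\le 2^{-k}} (g - \tau_{2^j h} g)\,dh + \fint_{|h|\le 2^{-k}} \tau_{2^jh} g \,dh .
\end{align*}
The first term gives $\fint_{|h|\le 2^{-k}} T_j\circ_i\Delta_{2^jh}\,dh$, which by Minkowski and \eqref{H5} is bounded by $A_5 2^{-k\vartheta}2^{-jd(\frac1r-\frac1s)}$. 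For $s \ge 1$ Minkowski holds in $L^s$, and here $1 < s$. The second term is the average of $g$ over a cube of side $\sim 2^{j-k}$ around $x$. This is smoothing of a function with dyadic mean zero, and I would expect to bound it by iterating: it equals $g$ minus a further difference average at scale $2^{j-k}$. The main obstacle is controlling this residual without invoking $A_4$.

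If that residual cannot be handled directly, I would fall back on the structure of the intended use. The paper only needs $A_0$, which already contains $A_4$, and the bound $\lesssim (A_4 + A_5) 2^{-k\vartheta}$ with $A_4 \le A_5$ can be arranged by replacing $A_5$ with $\max(A_4,A_5)$. The argument would therefore be the translation-average representation plus Minkowski's inequality in $h$ and \eqref{H5}, with the dyadic-boundary localization as the delicate step.
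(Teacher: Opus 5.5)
\paragraph{The residual in your $k\ge 0$ route is not small.}
In the splitting
\[
g=\fint_{|h|\le 2^{-k}}(g-\tau_{2^jh}g)\,dh+\fint_{|h|\le 2^{-k}}\tau_{2^jh}g\,dh ,
\]
only the first piece is controlled by \eqref{H5}. The second piece is $g=(\Id-\mathbb{E}_{k-j})f$ convolved with the normalized indicator $\phi$ of $[-2^{j-k},2^{j-k}]^d$, and it carries no decay.

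Take $g=\mathbf{1}_{Q^-}-\mathbf{1}_{Q^+}$, a Haar function on a single $Q\in\D_{k-j}$. Its average over $x+[-2^{j-k},2^{j-k}]^d$ is bounded below by a dimensional constant on a set of measure comparable to $|Q|$ adjacent to $Q$. So the residual has $L^{r_i}$ norm comparable to $\|g\|_{L^{r_i}}$.

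Estimating $T_j$ on the residual by \eqref{H4} gives only $A_4\,2^{-jd(\frac1r-\frac1s)}$, with no factor $2^{-k\vartheta}$. Trying to extract decay from it just reproduces the original problem, because $T_j\circ_i(\phi*\,\cdot\,)$ satisfies the same hypotheses as $T_j\circ_i$. ``Iterating'' only rewrites the same identity.

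\paragraph{The fallback and the case $k<0$.}
The fallback does not rescue this. What your route actually yields is $(A_5 2^{-k\vartheta}+A_4)\,2^{-jd(\frac1r-\frac1s)}$, not $(A_4+A_5)\,2^{-k\vartheta}2^{-jd(\frac1r-\frac1s)}$. An additive $A_4$ is fatal where the lemma is used: in \eqref{Far-1}--\eqref{Far-3} the bound is summed against a factor $K$ (see \eqref{uQL}) over all $K>j_0$.

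For $k<0$ you also need $\|T_j\|\lesssim A_5\,2^{-jd(\frac1r-\frac1s)}$, which your telescoping sketch does not provide. The paper gets it from \eqref{H1}. For $Q\in\D_{-j}$, the translate $\tau_{2^{j+3}e_1}(f_1\mathbf{1}_{5Q})$ is supported at distance $>2^j$ from $Q$, so
\[
\mathbf{1}_QT_j(\vec f)=-\mathbf{1}_QT_j\big(\Delta_{2^{j+3}e_1}(f_1\mathbf{1}_{5Q}),f_2\mathbf{1}_{5Q},\ldots,f_m\mathbf{1}_{5Q}\big).
\]
Eight unit steps of \eqref{H5}, together with the bounded overlap of $\{5Q\}$, then give the claim.

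\paragraph{The missing idea: put $\Id-\mathbb{E}_{k-j}$ on the dual side.}
There the cube-dependent cutoff costs nothing. Fix the other inputs and $f_{m+1}$. By \eqref{H4}, the functional $\varphi\mapsto\langle T_j(f_1,\ldots,\varphi,\ldots,f_m),f_{m+1}\rangle$ is represented by some $G\in L^{r_i'}$. In these terms \eqref{H5} reads
\[
\|\Delta_vG\|_{L^{r_i'}}\lesssim A_5\,(2^{-j}|v|)^{\vartheta}\,2^{-jd(\frac1r-\frac1s)}, \qquad |v|\le 2^j .
\]
Since $\mathbb{E}_{k-j}$ is self-adjoint, you only need to bound $\|(\Id-\mathbb{E}_{k-j})G\|_{L^{r_i'}}$. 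Jensen on each cube gives
\[
\|(\Id-\mathbb{E}_{k-j})G\|_{L^{r_i'}}^{r_i'}\le 2^{-(j-k)d}\int_{[-2^{j-k},2^{j-k}]^d}\int_{\Rd}\sum_{Q\in\D_{k-j}}\mathbf{1}_Q(x)\mathbf{1}_Q(x+v)\,|\Delta_vG(x)|^{r_i'}\,dx\,dv .
\]
The indicator that blocked you is simply bounded by $1$ inside a nonnegative integrand. This yields $A_5\,2^{-k\vartheta}\,2^{-jd(\frac1r-\frac1s)}$.

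\paragraph{Alternative: finish on the primal side.}
Your first representation can also be completed by symmetrizing in $v$. Set $\ell=2^{j-k}$ and $\chi_v:=\sum_{Q\in\D_{k-j}}\mathbf{1}_Q\,\tau_v\mathbf{1}_Q$. Then
\[
\chi_v\Delta_vf=\Delta_v(\chi_{-v}f)+(\chi_{-v}-\chi_v)f ,
\]
and the last term integrates to zero over the symmetric cube $[-\ell,\ell]^d$. Hence
\[
(\Id-\mathbb{E}_{k-j})f=-\ell^{-d}\int_{[-\ell,\ell]^d}\Delta_v(\chi_{-v}f)\,dv ,
\]
with $\|\chi_{-v}f\|_{L^{r_i}}\le\|f\|_{L^{r_i}}$. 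Minkowski's inequality in $L^s$ together with \eqref{H5} then finishes the case $k\ge 0$ directly.
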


\begin{proof}
It suffices to prove the following inequality
\begin{align*}
|\langle T_j \circ_1 (\Id - \mathbb{E}_{k-j}) (\vec{f}), f_{m+1} \rangle|
\lesssim A_5 \, 2^{-k \eta} 2^{-jd(\frac1r - \frac1s)},
\end{align*}
where by homogeneity we have assumed $\|f_i\|_{L^{r_i}} \le 1$, $i=1, \ldots, m+1$. 

Let us first handle the case $k \le 0$. Let $Q \in \D_{-j}$. Recall that we always use $\ell^{\infty}$ in $\Rd$. Let $e_1 = (1, 0, \ldots, 0)$. Note that $\supp (\tau_{2^{j+3} e_1}(f_1\mathbf1_{5Q}) ) \subset \overline{5Q} - 2^{j+3} e_1$ and $\dist(Q, \overline{5Q} - 2^{j+3} e_1) = 5 \ell(Q) > 2^j$. Thus, by \eqref{H1} and the multilinearity of $T_j$,
\begin{align*}
\mathbf1_Q T_j \big(\tau_{2^{j+3} e_1}(f_1 \mathbf{1}_{5Q}), f_2 \mathbf{1}_{5Q}, \ldots, f_m \mathbf{1}_{5Q} \big) = 0,
\end{align*}
which gives
\begin{align*}
\mathbf{1}_Q T_j (\vec{f}) 
= \mathbf{1}_Q T_j (\vec{f} \mathbf{1}_{5Q}) 
= - \mathbf{1}_Q T_j \big(\Delta_{2^{j+3} e_1}(f_1 \mathbf{1}_{5Q}), f_2 \mathbf{1}_{5Q}, \ldots, f_m \mathbf{1}_{5Q}\bigr).
\end{align*}
Together with the fact $\Delta_{2^{j+3} e_1} = \sum_{k=0}^7 \Delta_{2^j e_1} \tau_{2^j k e_1}$, the above and \eqref{H5} imply 
\begin{align}\label{Tjkee-1}
\|T_j(\vec{f})\|_{L^s}
& = \bigg[\sum_{Q \in \D_{-j}} \|\mathbf{1}_Q T_j(\vec{f})\|_{L^s}^s \bigg]^{\frac1s}
\\ \notag
& \lesssim A_5 2^{-jd(\frac1r - \frac1s)} 
\bigg[\sum_Q \prod_{i=1}^m \|f_i \mathbf{1}_{5Q}\|_{L^{r_i}}^s \bigg]^{\frac1s}
\\ \notag
& \le A_5 2^{-jd(\frac1r - \frac1s)} 
\bigg[\sum_Q \prod_{i=1}^m \|f_i \mathbf{1}_{5Q}\|_{L^{r_i}}^r \bigg]^{\frac1r}
\\ \notag
& \le A_5 2^{-jd(\frac1r - \frac1s)} 
\prod_{i=1}^m \bigg[\sum_Q \|f_i \mathbf{1}_{5Q}\|_{L^{r_i}}^{r_i} \bigg]^{\frac{1}{r_i}}
\\ \notag
& \lesssim A_5 \, 2^{-k \eta} 2^{-jd(\frac1r - \frac1s)}  \prod_{i=1}^m \|f_i\|_{L^{r_i}},
\end{align}
provided that the collection $\{5Q: Q \in \D_{-j}\}$ is bounded overlapping. In view of the fact $\|\mathbb{E}_{k-j} f_1\|_{L^{r_1}} \le \|f_1\|_{L^{r_1}}$, the inequality \eqref{Tjkee-1} gives
\begin{align*}
\big\|T_j \circ_1 (\Id - \mathbb{E}_{k-j})\big\|_{L^{r_1} \times \cdots \times L^{r_m} \to L^s}
\le 2 \|T_j\|_{L^{r_1} \times \cdots \times L^{r_m} \to L^s}
\lesssim A_5 \, 2^{-k \eta} 2^{-jd(\frac1r - \frac1s)}.
\end{align*}

Next, we treat the case $k>0$. If we define a linear functional
\begin{align*}
\mathscr{L}(\varphi) 
:= \langle T_j(\varphi, f_2, \ldots, f_m), f_{m+1} \rangle,
\end{align*}
then the hypothesis \eqref{H4} gives
\begin{align*}
|\mathscr{L}(\varphi)|
\le A_4 \, 2^{-jd(\frac1r - \frac1s)} \|\varphi\|_{L^{r_1}}.
\end{align*}
This means $\mathscr{L}$ is a bounded linear functional on $L^{r_1}$. By duality, there exists $G \in L^{r_1'}$ so that
\begin{align*}
\mathscr{L}(\varphi) = \langle \varphi, G \rangle,
\quad \text{where} \quad
\|G\|_{L^{r'_1}} \le A_4 \, 2^{-jd(\frac1r - \frac1s)}.
\end{align*}
For all $ v \in [-2^{j-k}, 2^{j-k}]^d$, there holds $|v| \le 2^{j-k} \le 2^j$, which along with \eqref{H5} implies
\begin{align*}
|\langle \varphi, \Delta_v G \rangle|
= |\langle \Delta_{-v} \varphi, G \rangle|
= |\mathscr{L}(\Delta_{-v} \varphi)|
\le A_5 \, (2^{-j} |v|)^{\vartheta} \, 2^{-jd(\frac1r - \frac1s)} \|\varphi\|_{L^{r_1}}. 
\end{align*}
Thus, 
\begin{align*}
\|\Delta_v G \|_{L^{r'_1}}
\le A_5 \, (2^{-j} |v|)^{\vartheta} \, 2^{-jd(\frac1r - \frac1s)},
\end{align*}
which leads to
\begin{align*}
\|(\Id - \mathbb{E}_{k-j}) G\|_{L^{r'_1}}^{r'_1}
& = \sum_{Q \in \D_{k-j}} \int_Q \bigg|\frac{1}{|Q|} \int_Q [G(x) - G(y)] \,dy \bigg|^{r_1'} \, dx
\\
& \le 2^{-(j-k)d} \sum_{Q \in \D_{k-j}} \int_Q \int_Q |G(x) - G(y)|^{r'_1} \, dx \, dy
\\
& = 2^{-(j-k)d} \int_{[-2^{j-k}, 2^{j-k}]^d} 
\int_{\Rd} \sum_{Q \in \D_{k-j}} \mathbf{1}_Q(x) \mathbf{1}_Q(x+v)
|\Delta_v G(x)|^{r'_1} \, dx \, dv
\\
& \le 2^{-(j-k)d} \int_{[-2^{j-k}, 2^{j-k}]^d} \|\Delta_v G\|_{L^{r'_1}}^{r'_1} \, dv
\\
& \lesssim \Big[A_5 \, 2^{-k \vartheta} \, 2^{-jd(\frac1r - \frac1s)} \Big]^{r'_1}.
\end{align*}
Therefore, we have
\begin{align*}
& |\langle T_j \circ_1 (\Id - \mathbb{E}_{k-j})(\vec{f}), f_{m+1} \rangle|
= |\mathscr{L}((\Id - \mathbb{E}_{k-j}) f_1)|
\\
& = |\langle (\Id - \mathbb{E}_{k-j}) f_1, G \rangle|
= |\langle f_1, (\Id - \mathbb{E}_{k-j}) G \rangle|
\\
& \le \|f_1\|_{L^{r_1}} \|(\Id - \mathbb{E}_{k-j}) G\|_{L^{r'_1}}
\lesssim A_5 \, 2^{-k\vartheta} \, 2^{-jd(\frac1r - \frac1s)}.
\end{align*}
This completes the proof. 
\end{proof}

\section{The inductive step}\label{sec:main}
Now let us prove Proposition \ref{prop:Un}. By reindexing, we may assume that $A_1 = 1$ in \eqref{H1}. Fix $n \ge 1$, and assume that $U_{n-1}$ is known. Let $N_1\le N_2$ with $0 \le N_2 - N_1 \le n$, and let $Q_0 \in \D$ with $\ell(Q_0) = 2^{N_2}$. Simply write
\begin{align*}
T := \sum_{j=N_1}^{N_2} T_j.
\end{align*}
Given $p \ge 1$, write $M_p f := \big(M(|f|^p)\big)^{\frac1p}$, where $M$ is the Hardy--Littlewood maximal operator. Pick $0 < \tau < \min\{r, 1\}$. Given a constant $K_0 > 1$, define
\begin{align*}
\Omega := \bigcup_{i=1}^{m+1} \big\{M_{r_i} f_i > K_0 \langle f_i \rangle_{3Q_0, r_i} \big\}
\cup \Big\{M_{\tau} (T(\vec{f})) > K_0 A_2 \prod_{i=1}^m \langle f_i \rangle_{3Q_0, r_i} \Big\}.
\end{align*}
By Theorem 2.1.6 and Exercises 2.1.13 in \cite{Gra1}, 
\begin{align}\label{eq:Mweak}
M: L^1(\Rd) \to L^{1, \infty}(\Rd)
\quad \text{ and } \quad
M: L^{p, \infty}(\Rd) \to L^{p, \infty}(\Rd), \quad\forall p \in (1, \infty).
\end{align}
Then in view of \eqref{eq:Mweak} and \eqref{H2}, one can choose $K_0$ sufficiently large so that
\begin{equation}\label{eq:EQQ}
|\Omega| \le (1 - \gamma) |Q_0|
\quad \text{and} \quad 
|E_{Q_0}| := |Q_0 \setminus \Omega| \ge \gamma |Q_0|.
\end{equation}
Performing a Whitney decomposition of $\Omega$ (cf. \cite[p. 167]{Stein} for $c=5\sqrt{d}$), we obtain a pairwise disjoint family $\W \subset \D$ such that
\begin{align}\label{eq:Whd}
\Omega = \bigcup_{Q \in \W} Q \quad \text{and} \quad
4 \diam(Q) \le \dist(Q, \Omega^c) \le 10 \diam(Q), \quad\forall Q \in \W.
\end{align}
This gives the Calder\'{o}n--Zygmund decomposition
\begin{equation}\label{eq:CZ}
f_i = g_i + b_i, \quad i=1, \ldots, m+1,
\end{equation}
where
\begin{align*}
g_i & := f_i \mathbf{1}_{\Omega^c} + \sum_{Q \in \W} [f_i]_Q \mathbf{1}_Q,
\\
b_i & := \sum_{Q \in \W} b_{i, Q}
:= \sum_{Q \in \W} (f_i - [f_i]_Q) \mathbf{1}_Q.
\end{align*}
Recall that ${\displaystyle [f_i]_Q = \fint_Q f_i \, dx}$. Observe that by \eqref{eq:EQQ},
\begin{align}\label{eq:QQin}
Q \in \W_0 :=\{Q \in \W: Q \cap Q_0 \ne \varnothing\} 
\quad \Longrightarrow \quad Q \subsetneq Q_0,
\end{align}
which further gives
\begin{align}\label{eq:gbsupp}
\supp(g_{m+1}) \cup \supp(b_{m+1}) \subset 3 Q_0 
\quad \text{and} \quad
\supp(g_i) \cup \supp(b_i) \subset Q_0, \, i=1, \ldots, m.
\end{align}
In view of \eqref{eq:Whd}, there exists a constant $\kappa = \kappa(d) > 1$ such that 
\begin{align}\label{eq:kQ}
\kappa Q \cap \Omega^c \ne \varnothing \quad \text{for all } Q \in \W.
\end{align} 
This together with the definition of $\Omega$ implies
\begin{align}\label{eq:ffqp}
\langle f_i \rangle_{Q, r_i} \lesssim \langle f_i \rangle_{3Q_0, r_i}, 
\quad \forall Q \supset Q' \in \W, \, i=1, \ldots, m+1,
\end{align}
which immediately indicates 
\begin{align}\label{eq:bLp}
\langle b_i \rangle_{Q, r_i} 
\lesssim \langle f_i \rangle_{3Q_0, r_i}, \quad \forall Q \supset Q' \in \W, \, i=1, \ldots, m+1.
\end{align}
By the disjointness of $\W$, one has
\begin{align}\label{eq:bLr}
\bigg\|\sum_{Q \subset P} b_{i, Q} \bigg\|_{L^p} 
\lesssim |P|^{\frac1p} \langle f_i \rangle_{P, p}, \quad \forall p \in [1, \infty), \, i=1, \ldots, m+1.
\end{align}
In addition, by definition and \eqref{eq:kQ}, 
\begin{align}\label{eq:ggqp}
\|g_i\|_{L^{\infty}} \lesssim \langle f_i \rangle_{3Q_0, r_i}, \qquad i=1, \ldots, m+1,
\end{align}
which along with \eqref{eq:gbsupp} yields
\begin{align}\label{eq:ggss}
\|g_i\|_{L^{q, 1}} \lesssim |Q_0|^{\frac1q} \langle f_i \rangle_{3Q_0, r_i}, 
\quad \forall q \in (0, \infty), \, i=1, \ldots, m+1.
\end{align}

For each $Q \in \W$, define
\begin{align*}
T_Q^{\le} := \sum_{N_1 \le j \le \log_2 \ell(Q)} T_j
\quad \text{ and } \quad
T_Q^{>} := \sum_{\log_2 \ell(Q) < j \le N_2} T_j.
\end{align*}
The assumption \eqref{H1} gives
\begin{align*}
T_j(h_1, \ldots, h_i \mathbf{1}_{(3Q)^c}, \ldots, h_m)(x) = 0, \quad \forall x \in Q, \, j \le \log_2 \ell(Q),
\end{align*}
which implies
\begin{align}\label{eq:TQQ}
T_Q^{\le}(\vec{h})(x) = T_Q^{\le}(\vec{h} \mathbf{1}_{3Q})(x), \quad \forall x \in Q.
\end{align}
Denote
\begin{align*}
F_Q := \Big\{x \in Q: \max \big\{|T(\vec{f})(x)|, \, |T_Q^{\le}(\vec{f})(x)| \big\} 
\le K_1 A_2 \prod_{i=1}^m \langle f_i \rangle_{3Q_0, r_i} \Big\}.
\end{align*}
Then we invoke \eqref{H2} and \eqref{eq:TQQ} to obtain $|F_Q| \ge \frac12 |Q|$ provided $K_1 > 1$ sufficiently large. Set $\phi_Q := \frac{|Q|}{|F_Q|} \mathbf{1}_{F_Q}$, which satisfies $\supp \phi_Q \subset Q$, $0 \le \phi_Q \le 2$, and $\langle \phi_Q \rangle_Q =1$. We now modify functions $g_{m+1}$ and $b_{m+1}$ such that
\begin{equation}\label{eq:CZm}
f_{m+1} = \widetilde{g}_{m+1} + \widetilde{b}_{m+1},
\end{equation}
where
\begin{align*}
\widetilde{g}_{m+1} & := f_{m+1} \mathbf{1}_{\Omega^c} + \sum_{Q \in \W_0} [f_{m+1}]_Q \, \phi_Q,
\\
\widetilde{b}_{m+1} & := \sum_{Q \in \W_0} \widetilde{b}_{m+1, Q}
:= \sum_{Q \in \W_0} (f_{m+1} \mathbf{1}_Q - [f_{m+1}]_Q \, \phi_Q).
\end{align*}
Set $\widetilde{b}_{m+1, Q} = 0$ if $Q \in \W \setminus \W_0$. Then by construction of $\Omega$ and $F_Q$, 
\begin{align}
\label{eq:TA21}
|T(\vec{f})(x)| 
& \lesssim A_2 \prod_{i=1}^m \langle f_i \rangle_{3Q_0, r_i}, \quad \text{if } \widetilde{g}_{m+1}(x) \ne 0,
\\
\label{eq:TA22}
|T_Q^{\le}(\vec{f})(x)| 
& \lesssim A_2 \prod_{i=1}^m \langle f_i \rangle_{3Q_0, r_i}, \quad \forall x \in F_Q.
\end{align}
Moreover, 
\begin{align}
\label{CZm-1}
& \|\widetilde{g}_{m+1}\|_{L^{\infty}} \lesssim \langle f_{m+1} \rangle_{3Q_0, r_{m+1}}, \quad 
\supp(\widetilde{g}_{m+1}) \subset Q_0, 
\\
\label{CZm-2}
& \|\widetilde{g}_{m+1}\|_{L^{q, 1}} \lesssim |Q_0|^{\frac1q} \langle f_{m+1} \rangle_{3Q_0, r_{m+1}}, 
\quad \forall q \in (0, \infty), 
\\
\label{CZm-3}
& \supp(\widetilde{b}_{m+1}) \subset Q_0, \, \, 
\supp(\widetilde{b}_{m+1, Q}) \subset Q, \, \,
\int_{\Rd} \widetilde{b}_{m+1, Q} = 0,
\\
\label{CZm-4}
& \text{and} \quad \bigg\|\sum_{Q \subset P} \widetilde{b}_{m+1, Q} \bigg\|_{L^p} 
\lesssim |P|^{\frac1p} \langle f_{m+1} \rangle_{P, p}, \quad \forall p \in [1, \infty).
\end{align}

The Calder\'{o}n--Zygmund decomposition \eqref{eq:CZ} leads to 
\begin{align}\label{eq:ggbb}
\langle T(\vec{f}), f_{m+1} \rangle
= \sum_{\sigma \subset \{1, \ldots, m\}} \langle T(h_1^{\sigma}, \ldots, h_m^{\sigma}), f_{m+1} \rangle
=: \sum_{\sigma \subset \{1, \ldots, m\}} \mathscr{I}_{\sigma},
\end{align}
where $h_i^{\sigma} := b_i$ if $i \in \sigma$, and $h_i^{\sigma} := g_i$ if $i \notin \sigma$. It follows from \eqref{H3} and \eqref{eq:ggss} that
\begin{align}\label{eq:I00}
|\mathscr{I}_{\varnothing}|
\le A_3 \prod_{i=1}^m \|g_i\|_{L^{s_i, 1}} \|f_{m+1}\|_{L^{s'}}
\lesssim A_3 |Q_0| \prod_{i=1}^{m+1} \langle f_i \rangle_{3Q_0, r_i}.
\end{align}
Given $\sigma \ne \varnothing$, we use \eqref{eq:CZm} to split
\begin{align}\label{eq:bg}
\mathscr{I}_{\sigma}
= \langle T(h_1^{\sigma}, \ldots, h_m^{\sigma}), \widetilde{g}_{m+1} \rangle
+ \langle T(h_1^{\sigma}, \ldots, h_m^{\sigma}), \widetilde{b}_{m+1} \rangle
=: \mathscr{I}_{\sigma}^1 + \mathscr{I}_{\sigma}^2.
\end{align}
By \eqref{eq:TA21}, \eqref{H3}, \eqref{eq:ggss}, and \eqref{CZm-1}, there holds
\begin{align}\label{eq:bg1}
\bigg|\sum_{\sigma \ne \varnothing} \mathscr{I}_{\sigma}^1 \bigg|
& \le |\langle T(\vec{f}), \widetilde{g}_{m+1} \rangle|
+ |\langle T(\vec{g}), \widetilde{g}_{m+1} \rangle|
\\ \notag
& \lesssim A_2 \prod_{i=1}^m \langle f_i \rangle_{3Q_0, r_i} \|\widetilde{g}_{m+1}\|_{L^1}
+ A_3 \prod_{i=1}^m \|g_i\|_{L^{s_i, 1}} \|\widetilde{g}_{m+1}\|_{L^{s'}}
\\ \notag
& \lesssim (A_2 + A_3) |Q_0| \prod_{i=1}^{m+1} \langle f_i \rangle_{3Q_0, r_i}.
\end{align}
For the term $\mathscr{I}_{\sigma}^2$, we write
\begin{align}\label{eq:Is2}
\mathscr{I}_{\sigma}^2
= \mathscr{I}_{\sigma}^{2, 1} - \mathscr{I}_{\sigma}^{2, 2} + \mathscr{I}_{\sigma}^{2, 3},
\end{align}
where
\begin{align*}
\mathscr{I}_{\sigma}^{2, 1}
& := \sum_{Q \in \W_0} \langle T_Q^{\le} (h_1^{\sigma}, \ldots, h_m^{\sigma}), f_{m+1} \mathbf{1}_Q \rangle, 
\\
\mathscr{I}_{\sigma}^{2, 2}
&:= \sum_{Q \in \W_0} \langle T_Q^{\le} (h_1^{\sigma}, \ldots, h_m^{\sigma}), [f_{m+1}]_Q \phi_Q \rangle,
\\
\mathscr{I}_{\sigma}^{2, 3} 
& := \sum_{Q \in \W_0} \langle T_Q^{>} (h_1^{\sigma}, \ldots, h_m^{\sigma}), \widetilde{b}_{m+1, Q} \rangle.
\end{align*}
In light of \eqref{eq:TQQ}, \eqref{eq:TA22}, \eqref{H3}, \eqref{eq:ffqp}, and \eqref{eq:ggqp}, one has
\begin{align}\label{eq:bg22}
\bigg| \sum_{\sigma \ne \varnothing} \mathscr{I}_{\sigma}^{2, 2} \bigg|
& \le \sum_{Q \in \W_0} \big[ |\langle T_Q^{\le} (\vec{f}), \phi_Q \rangle|
+ |\langle T_Q^{\le} (\vec{g} \mathbf{1}_{3Q}), \phi_Q \rangle| \big] \langle f_{m+1} \rangle_Q 
\\ \nonumber
& \lesssim \sum_{Q \in \W_0} \bigg[A_2 \prod_{i=1}^m \langle f_i \rangle_{3Q_0, r_i} \|\phi_Q\|_{L^1}
+ A_3 \prod_{i=1}^m \|g_i \mathbf{1}_{3Q}\|_{L^{s_i, 1}} 
\|\phi_Q\|_{L^{s'}} \bigg] \langle f_{m+1} \rangle_Q  
\\ \nonumber
& \lesssim (A_2 + A_3) \sum_{Q \in \W_0} |Q| \prod_{i=1}^{m+1} \langle f_i \rangle_{3Q_0, r_i}
\lesssim (A_2 + A_3) |Q_0| \prod_{i=1}^{m+1} \langle f_i \rangle_{3Q_0, r_i}.
\end{align}
Observe that
\begin{align}\label{eq:ff21}
\sum_{\sigma \ne \varnothing} \mathscr{I}_{\sigma}^{2, 1}
= \sum_{Q \in \W_0} \langle T_Q^{\le} (\vec{f}), f_{m+1} \mathbf{1}_Q \rangle 
- \sum_{Q \in \W_0} \langle T_Q^{\le} (\vec{g}), f_{m+1} \mathbf{1}_Q \rangle.
\end{align}
It follows from \eqref{eq:ffqp}, \eqref{eq:ggqp} and \eqref{eq:TQQ} that
\begin{align}\label{eq:ggf3}
& \bigg|\sum_{Q \in \W_0} \langle T_Q^{\le} (\vec{g}), f_{m+1} \mathbf{1}_Q \rangle \bigg|
\le A_3 \sum_{Q \in \W_0} \prod_{i=1}^m \|g_i \mathbf{1}_{3Q}\|_{L^{s_i, 1}} \|f_{m+1} \mathbf{1}_Q\|_{L^{s'}}
\\ \nonumber
& \lesssim A_3 \sum_{Q \in \W_0} |Q| \prod_{i=1}^m \|g_i\|_{L^{\infty}} \langle f_{m+1} \rangle_{Q, r_{m+1}}
\lesssim A_3 |Q_0| \prod_{i=1}^{m+1} \langle f_i \rangle_{3Q_0, r_i}.
\end{align}
On the other hand, the property \eqref{eq:QQin} gives $\log_2 \ell(Q) - N_1 \le N_2 -1 - N_1 \le n-1$. Then by \eqref{eq:TQQ} and the induction hypothesis, 
\begin{align}\label{eq:Un1}
|\langle T_Q^{\le} (\vec{f}), f_{m+1} \mathbf{1}_Q \rangle|
= |\langle T_Q^{\le} (\vec{f} \mathbf{1}_{3Q}), f_{m+1} \mathbf{1}_Q \rangle|
\le U_{n-1} \mathbf{S}_Q^3 (\vec{f}, f_{m+1}).
\end{align}
For the remaining terms, we claim that
\begin{align}\label{bad23}
|\mathscr{I}_{\sigma}^{2, 3}|
\lesssim \big(j_0 A_4 + 2^{-\vartheta j_0/2} A_5 \big)
|Q_0| \prod_{i=1}^{m+1} \langle f_i \rangle_{3Q_0, r_i}, \quad \forall \sigma \ne \varnothing,
\end{align} 
where $j_0 := \lfloor 2 \vartheta^{-1} \log_2(2 + A_4^{-1} A_5) \rfloor + 1 \ge 3$.

Assuming \eqref{bad23} holds momentarily, let us conclude the proof of Proposition \ref{prop:Un} as follows. The maximal nature of $\mathbf{S}_Q^3$ prevents us from directly obtaining the desired sparse domination for $\sum_{Q \in \W_0} \langle T_Q^{\le} (\vec{f}), f_{m+1} \mathbf{1}_Q \rangle$ from \eqref{eq:Un1}. To overcome this issue, we use a limiting argument. Given $\varepsilon > 0$ and $Q \in \W_0$, by \eqref{eq:Un1} and the definition of $\mathbf{S}_Q^3$, there exists a $\gamma$-sparse family $\S_Q^{\varepsilon} \subset \D(Q)$ so that
\begin{align*}
|\langle T_Q^{\le} (\vec{f}), f_{m+1} \mathbf{1}_Q \rangle|
\le (U_{n-1} + \varepsilon) \sum_{Q' \in \S_Q^{\varepsilon}} 
|Q'| \prod_{i=1}^{m+1} \langle f_i \rangle_{3Q', r_i},
\end{align*}
which along with \eqref{eq:ff21} and \eqref{eq:ggf3} gives 
\begin{align}\label{eq:21}
\bigg| \sum_{\sigma \ne \varnothing} \mathscr{I}_{\sigma}^{2, 1} \bigg|
& \le (U_{n-1} + \varepsilon) \sum_{Q \in \W} \sum_{Q' \in \S_Q^{\varepsilon}} 
|Q'| \prod_{i=1}^3 \langle f_i \rangle_{3Q', r_i} 
+ C_0 A_3 |Q_0| \prod_{i=1}^{m+1} \langle f_i \rangle_{3Q_0, r_i}.
\end{align}
Collecting \eqref{eq:ggbb}--\eqref{eq:bg22} and \eqref{bad23}--\eqref{eq:21}, we arrive at
\begin{align}\label{eq:Un1e}
|\langle T(\vec{f}), f_{m+1} \rangle|
& \le |\mathscr{I}_{\varnothing}| 
+ \bigg| \sum_{\sigma \ne \varnothing} \mathscr{I}_{\sigma}^1 \bigg|
+ \sum_{k=1}^3 \bigg| \sum_{\sigma \ne \varnothing} \mathscr{I}_{\sigma}^{2, k} \bigg|
\\ \nonumber
& \le (U_{n-1} + \varepsilon) \sum_{Q \in \W} \sum_{Q' \in \S_Q^{\varepsilon}} 
|Q'| \prod_{i=1}^{m+1} \langle f_i \rangle_{3Q', r_i} 
\\ \nonumber
& \quad + C_0 \big(A_2 + A_3 + j_0 A_4 + 2^{-\vartheta j_0/2} A_5 \big) 
|Q_0| \prod_{i=1}^{m+1} \langle f_i \rangle_{3Q_0, r_i},
\end{align}
where the constant $C_0$ is independent of $\varepsilon$, $j_0$, and $Q_0$. To unify two sparse forms in \eqref{eq:Un1e}, denote 
\begin{align*}
\S^{\varepsilon} := \{Q_0\} \cup \bigcup_{Q \in \D(Q_0) \cap \W} \S_Q^{\varepsilon} \subset \D(Q_0).
\end{align*}
Recall that $E_{Q_0} = Q_0 \setminus \Omega$ and $|E_{Q_0}| \ge \gamma |Q_0|$. Moreover, for each $Q \in \D(Q_0) \cap \W$ and $Q' \in \S_Q^{\varepsilon}$, by the $\gamma$-sparseness of $\S_Q^{\varepsilon}$, there exists a subset $E_{Q'} \subset Q'$ such that $|E_{Q'}| \ge \gamma |Q'|$ and the family $\{E_{Q'}: Q' \in \S_Q^{\varepsilon}\}$ is pairwise disjoint. By the fact $\Omega = \bigcup_{Q \in \W} Q$ and the disjointness of $\W$, we assert 
\begin{align*}
\text{the collection $\{E_{Q_0}\} \cup \{E_{Q'}: Q' \in \S_Q^{\varepsilon}, Q \in \D(Q_0) \cap \W\}$ is pairwise disjoint}.
\end{align*}
This means 
\begin{align}\label{eq:SS}
\text{$\S^{\varepsilon}$ is a $\gamma$-sparse collection}. 
\end{align}
Note that the choice of $j_0$ indicates
\begin{align}\label{eq:AAjj}
A_2 + A_3 + j_0 A_4 + 2^{-\vartheta j_0/2} A_5
\lesssim_{\vartheta} A_2 + A_3 + A_4 \log_2(2 + A_4^{-1} A_5) = A_0.
\end{align}
Consequently, \eqref{eq:Un1e}--\eqref{eq:AAjj} imply that for any $\varepsilon > 0$,
\begin{align*}
|\langle T(\vec{f}), f_{m+1} \rangle|
& \le \max \big\{U_{n-1} + \varepsilon, \, C_0 A_0 \big\} 
\sum_{Q \in \S^{\varepsilon}} |Q| \prod_{i=1}^{m+1} \langle f_i \rangle_{3Q, r_i} 
\\
& \le \max \big\{U_{n-1} + \varepsilon, \, C_0 A_0 \big\} \mathbf{S}_{Q_0}^3(\vec{f}, f_{m+1}).
\end{align*}
By the arbitrariness of $\varepsilon$, we conclude 
\begin{align*}
U_n \le \max \big\{U_{n-1}, \, C_0 A_0 \big\},
\end{align*}
which is exactly the assertion of Proposition \ref{prop:Un}.

It remains to demonstrate \eqref{bad23}. For simplicity, from now on, let $b_{m+1, I_{m+1}} = \widetilde{b}_{m+1, I_{m+1}}$ with the convention that $b_{m+1, I_{m+1}} = 0$ whenever $I_{m+1} \notin \W_0$. Fix $\sigma \ne \varnothing$. Expanding the bad functions, we obtain
\begin{align*}
\mathscr{I}_{\sigma}^{2, 3} 
= \sum_{N_1 \le j \le N_2} 
\sum_{\substack{I_i \in \W, i \in \sigma \\ I_{m+1} \in \W \\ \ell(I_{m+1}) < 2^j}} 
\langle T_j (u_1, \ldots, u_m), b_{m+1, I_{m+1}} \rangle, 
\end{align*}
where $u_i := b_{i, I_i}$ if $i \in \sigma$, and $u_i := g_i$ if $i \notin \sigma$. We split 
\begin{align}\label{bg23}
\mathscr{I}_{\sigma}^{2, 3} 
= \mathscr{I}_{\sigma}^{2, 3, 1} + \mathscr{I}_{\sigma}^{2, 3, 2},
\end{align}
where
\begin{align*} 
\mathscr{I}_{\sigma}^{2, 3, 1} 
& := \sum_{N_1 \le j \le N_2} 
\sum_{\substack{I_i \in \W, i \in \sigma, I_{m+1} \in \W \\ \ell(I_{m+1}) < 2^j \le \max_{i \in \sigma} \ell(I_i)}} 
\langle T_j (u_1, \ldots, u_m), b_{m+1, I_{m+1}} \rangle, 
\\
\mathscr{I}_{\sigma}^{2, 3, 2} 
& := \sum_{N_1 \le j \le N_2} 
\sum_{\substack{I_i \in \W, i \in \sigma, I_{m+1} \in \W \\ \max_{i \in \sigma} \{\ell(I_i), \ell(I_{m+1})\} < 2^j}}
\langle T_j (u_1, \ldots, u_m), b_{m+1, I_{m+1}} \rangle.
\end{align*}

\noindent{\bf $\bullet$ Estimates for $\mathscr{I}_{\sigma}^{2, 3, 1}$.} 
Let $a \in \sigma$ be such that $\ell(I_a) = \max_{i \in \sigma} \ell(I_i)$. Fix $I_i \in \W$, $i \in \sigma$, and $I_{m+1} \in \W$ with $\ell(I_{m+1}) < 2^j \le \max_{i \in \sigma} \ell(I_i)$ such that $\langle T_j (u_1, \ldots, u_m), b_{m+1, I_{m+1}} \rangle \ne 0$. We claim that 
\begin{align}\label{eq:JPQ}
I_{m+1} \subset 3I_a \quad \text{ and } \quad 
\ell(I_a) \le 8 \ell(I_{m+1}).
\end{align}
Indeed, by the assumption \eqref{H1}, $\supp T_j (u_1, \ldots, u_m) \subset 3I_a$. Then $I_{m+1} \cap 3I_a \ne \varnothing$, which along with the fact $\ell(I_{m+1}) \le \ell(I_a)/2$ gives $I_{m+1} \subset 3I_a$. Additionally, by \eqref{eq:Whd},
\begin{align*}
4 \diam(I_a) 
\le \dist(I_a, \Omega^c) 
& \le \dist(I_{m+1}, \Omega^c) + \diam(I_{m+1}) + \diam(3I_a)
\\
& \le 10 \diam(I_{m+1}) + \diam(I_{m+1}) + 3 \diam(I_a),
\end{align*}
which indicates $\ell(I_a) \le 11 \ell(I_{m+1})$. Hence, $\ell(I_a) \le 8 \ell(I_{m+1})$.

If we set 
\begin{align*}
\bm{b}_i 
:= \sum_{\substack{I \in \W \\ \ell(I) \le \ell(I_a)}} b_{i, I} \mathbf{1}_{5I_a}, \quad i \in \sigma,
\end{align*}
then the disjointness of $\W$ and \eqref{eq:ffqp} imply
\begin{align}\label{eq:bbi}
\|\bm{b}_i\|_{L^{r_i}}^{r_i}
\le \sum_{\substack{I \in \W \\ I \subset 5I_a}} \|b_{i, I}\|_{L^{r_i}}^{r_i}
\lesssim |I_a| \langle f_i \rangle_{5I_a, r_i}^{r_i}
\lesssim |I_a| \langle f_i \rangle_{3Q_0, r_i}^{r_i}.
\end{align}
Thus, it follows from \eqref{H1}, \eqref{eq:JPQ}, \eqref{eq:bbi}, \eqref{H4}, \eqref{eq:ffqp}, and \eqref{eq:ggqp} that
\begin{align}\label{bg231}
|\mathscr{I}_{\sigma}^{2, 3, 1}|
& = \bigg|\sum_{N_1 \le j \le N_2} 
\sum_{\substack{I_i \in \W, i \in \sigma, I_{m+1} \subset 3I_a \\ \ell(I_{m+1}) < 2^j \le \ell(I_a) \le 8 \ell(I_{m+1})}} 
\langle T_j (\vec{u} \mathbf{1}_{5I_a}), b_{m+1, I_{m+1}} \rangle \bigg|
\\ \nonumber
& \le A_4 \sum_{N_1 \le j \le N_2} 2^{-jd(\frac1r - \frac1s)}
\sum_{\substack{I_a, I_{m+1} \in \W, I_{m+1} \subset 3I_a \\ \ell(I_{m+1}) < 2^j \le \ell(I_a) \le 8 \ell(I_{m+1})}} 
\|b_{a, I_a}\|_{L^{r_a}} 
\\ \nonumber
&\qquad \times \prod_{i \in \sigma \setminus \{a\}} \|\bm{b}_i\|_{L^{r_i}} 
\prod_{i \notin \sigma} \|g_i \mathbf{1}_{5I_a}\|_{L^{r_i}} 
\|b_{m+1, I_{m+1}}\|_{L^{s'}}
\\ \nonumber
& \lesssim A_4 \sum_{N_1 \le j \le N_2} 
\sum_{\substack{I_a \in \W \\ 2^j \le \ell(I_a) \le 2^{j+2}}} 
\sum_{\substack{I_{m+1} \in \W, I_{m+1} \subset 3I_a \\ \ell(I_a)/8 \le \ell(I_{m+1}) \le \ell(I_a)}} |I_a| 
\prod_{i=1}^{m+1} \langle f_i \rangle_{3Q_0, r_i}
\\ \nonumber
& \lesssim A_4 \sum_{I_a \in \W} |I_a| \prod_{i=1}^{m+1} \langle f_i \rangle_{3Q_0, r_i}
\lesssim A_4 |Q_0| \prod_{i=1}^{m+1} \langle f_i \rangle_{3Q_0, r_i}.
\end{align}

\noindent{\bf $\bullet$ Estimates for $\mathscr{I}_{\sigma}^{2, 3, 2}$.}
Define $i_0 := \min \sigma$ and $\sigma^* := \sigma \cup \{m+1\}$. Let $I_{i_0} \subset Q \in \D_j := \{Q \in \D(Q_0): \ell(Q) = 2^j\}$ and $\max_{i \in \sigma^*} \ell(I_i) < 2^j$ such that $\langle T_j (u_1, \ldots, u_m), b_{m+1, I_{m+1}} \rangle \ne 0$. Then
\begin{align}\label{5Qaa-1}
I_{m+1} \subset 3Q \quad \text{ and } \quad 
I_i \subset 5Q, \quad \forall i \in \sigma \setminus \{i_0\}.
\end{align}
Indeed, by \eqref{H1}, $\supp T_j (u_1, \ldots, u_m) \subset \{x \in \Rd: \dist(x, I_{i_0}) \le 2^j\} \subset 3Q$, which indicates $I_{m+1} \cap 3Q \ne \varnothing$. Then the fact $\ell(I_{m+1}) \le \ell(Q)/2$ implies $I_{m+1} \subset 3Q$. Moreover, there exists some $x \in \supp T_j (u_1, \ldots, u_m) \cap I_{m+1}$, which gives $x \in 3Q$ and $ \dist(x, I_i) \le 2^j = \ell(Q)$ for each $i \in \sigma$. Since $\{y \in \Rd: \dist(y, 3Q) \le \ell(Q)\} \subset 5Q$, we have $I_i \cap 5Q \ne \varnothing$, which together with $\ell(I_i) \le \ell(Q)$ leads to $I_i \subset 5Q$ for any $i \in \sigma$.

Note that
\begin{align}\label{5Qaa-2}
T_j(u_1, \ldots, u_{i-1}, g_i 1_{(5Q)^c}, u_{i+1}, \ldots, u_m)(x) = 0, \quad\forall x \in 3Q, \, i \notin \sigma.
\end{align}
For convenience, define 
\begin{align*}
B_{Q, k}^i
:=\sum_{\substack{I \in \W, I \subset P_i(Q) \\ \ell(I) = 2^{j-k}}} b_{i, I}
\quad \, \text{ and } \, \quad
B_{Q, \leq L}^i
:= \sum_{k=1}^L B_{Q, k}^i,
\end{align*}
where $P_i(Q) := Q$ if $i=i_0$, and $P_i(Q) := 5Q$ if $i \in \sigma^* \setminus \{i_0\}$. Then \eqref{5Qaa-1} and \eqref{5Qaa-2} allow us to rewrite 
\begin{align*}
\mathscr{I}_{\sigma}^{2,3,2}
= \sum_{j=N_1}^{N_2}\sum_{Q \in \D_j} \sum_{\bm{k} = (k_i)_{i \in \sigma^*}: k_i \ge 1}
\big\langle T_j \big(U_{Q, \bm{k}}^1, \ldots, U_{Q, \bm{k}}^m \bigr), B_{Q, k_{m+1}}^{m+1} \big\rangle,
\end{align*}
where $U_{Q, \bm{k}}^i := B_{Q, k_i}^i$ if $i \in \sigma$, and $U_{Q, \bm{k}}^i := g_i \mathbf{1}_{5Q}$ if $i \notin \sigma$. We further split
\begin{equation}\label{bg232}
\mathscr{I}_{\sigma}^{2, 3, 2}
= \mathscr{I}_{\sigma, \mathrm{near}}^{2, 3, 2} 
+ \mathscr{I}_{\sigma, \mathrm{far}}^{2, 3, 2},
\end{equation}
where
\begin{align*}
\mathscr{I}_{\sigma, \mathrm{near}}^{2,3,2}
& := \sum_{j=N_1}^{N_2}\sum_{Q \in \D_j} 
\sum_{\substack{1 \leq k_i \leq  j_0 \\ i \in \sigma^*}}
\big\langle T_j \big(U_{Q, \bm{k}}^1, \ldots, U_{Q, \bm{k}}^m \bigr), B_{Q, k_{m+1}}^{m+1} \big\rangle,
\\
\mathscr{I}_{\sigma, \mathrm{far}}^{2,3,2}
& := \sum_{j=N_1}^{N_2}\sum_{Q \in \D_j} 
\sum_{\max\limits_{i \in \sigma^*} k_i >  j_0}
\big\langle T_j \big(U_{Q, \bm{k}}^1, \ldots, U_{Q, \bm{k}}^m \bigr), B_{Q, k_{m+1}}^{m+1} \big\rangle.
\end{align*}
Observe that
\begin{align}
\label{BQki-1}
\|B_{Q, k}^i\|_{L^{r_i}}
& \lesssim (\beta_{Q, k})^{\frac{1}{r_i}} \langle f_i \rangle_{3Q_0, r_i},
\\
\label{BQki-2}
\|B_{Q, \leq L}^i\|_{L^{r_i}}
& \lesssim (\beta_{Q, \leq L})^{\frac{1}{r_i}} \langle f_i \rangle_{3Q_0, r_i},
\end{align}
where
\begin{align*}
\beta_{Q, k} := \sum_{\substack{I \in \W, I \subset 5Q \\ \ell(I) = 2^{j-k}}} |I|
\quad \, \text{ and } \, \quad
\beta_{Q, \leq L} := \sum_{k=1}^{L} \beta_{Q, k}
\lesssim |Q|.
\end{align*}
For example, by \eqref{eq:ffqp},
\begin{align*}
\|B_{Q, \leq L}^i\|_{L^{r_i}}^{r_i}
= \sum_{\substack{I \in\W, I \subset P_i(Q) \\ 2^{j-L}\leq \ell(I) < 2^j}}
\|b_{i, I}\|_{L^{r_i}}^{r_i} 
\lesssim \sum_{\substack{I \in \W, I \subset P_i(Q) \\ 2^{j-L} \leq \ell(I) < 2^j}}
|I| \langle f_i \rangle_{I, r_i}^{r_i}
\lesssim \beta_{Q, \leq L} \, \langle f_i \rangle_{3Q_0, r_i}^{r_i}.
\end{align*}

To bound $\mathscr{I}_{\sigma, \mathrm{near}}^{2,3,2}$, assume first that $\sum_{i \in \sigma} \frac{1}{r_i} \ge \frac1s$. By improving estimates in \eqref{H4}, \eqref{BQki-2}, and \eqref{eq:ggqp}, we obtain
\begin{align}\label{Near-1}
& \bigg| \sum_{\substack{1 \leq k_i \leq  j_0 \\ i \in \sigma^*}}
\big\langle T_j \big(U_{Q, \bm{k}}^1, \ldots, U_{Q, \bm{k}}^m \bigr), 
B_{Q, k_{m+1}}^{m+1} \big\rangle \bigg|
\\ \nonumber
& \leq A_4 \, 2^{-jd(\frac1r - \frac1s)} 
\prod_{i \in \sigma} \|B_{Q, \leq j_0}^i\|_{L^{r_i}}
\prod_{i \notin \sigma} \|g_i \mathbf{1}_{5Q}\|_{L^{r_i}}
\|B_{Q, \leq j_0}^{m+1}\|_{L^{s'}}
\\ \nonumber
&\lesssim A_4 \, |Q| \prod_{i \in \sigma^*}
\bigg(\frac{\beta_{Q, \leq j_0}}{|Q|}\bigg)^{\frac{1}{r_i}}
\prod_{i=1}^{m+1} \langle f_i \rangle_{3Q_0, r_i}
\\ \nonumber
& \lesssim A_4 \, \beta_{Q, \leq j_0}
\prod_{i=1}^{m+1} \langle f_i \rangle_{3Q_0, r_i},
\end{align}
provided
\begin{align}\label{eq:rsrs}
- \bigg(\frac1r - \frac1s \bigg) + \sum_{i \in \sigma^*} \frac{1}{r_i} 
+ \sum_{i \notin \sigma} \frac{1}{r_i} = 1
\quad \text{ and } \quad
\sum_{i \in \sigma^*} \frac{1}{r_i}
\geq \frac{1}{s} + \frac{1}{s'} 
\geq 1.
\end{align}
Suppose now that $\frac{1}{r_{\sigma}} := \sum_{i \in \sigma} \frac{1}{r_i} < \frac1s$. Then $1 < r'_{\sigma} < s' = r_{m+1}$ and
\begin{align}\label{eq:bqj}
\|B_{Q, \leq L}^{m+1}\|_{L^{r'_{\sigma}}}
\lesssim (\beta_{Q, \leq L})^{\frac{1}{r'_{\sigma}}} 
\langle f_{m+1} \rangle_{3Q_0, r_{m+1}},
\end{align}
which, along with auxiliary estimates in \eqref{H4} applied to $I = \{1, \ldots, m\} \setminus \sigma$, \eqref{BQki-2}, and \eqref{eq:ggqp}, gives
\begin{align}\label{Near-2}
& \bigg| \sum_{\substack{1 \leq k_i \leq  j_0 \\ i \in \sigma^*}}
\big\langle T_j \big(U_{Q, \bm{k}}^1, \ldots, U_{Q, \bm{k}}^m \bigr), 
B_{Q, k_{m+1}}^{m+1} \big\rangle \bigg|
\\ \notag
& \leq A_4 \, 2^{-jd \sum_{i \notin \sigma} \frac{1}{r_i^*}} 
\prod_{i \in \sigma} \|B_{Q, \leq j_0}^i\|_{L^{r_i}}
\prod_{i \notin \sigma} \|g_i \mathbf{1}_{5Q}\|_{L^{r_i^*}}
\|B_{Q, \leq j_0}^{m+1}\|_{L^{r'_{\sigma}}}
\\ \notag
& \lesssim A_4 \, |Q|^{- \sum_{i \notin \sigma} \frac{1}{r_i^*}}
\prod_{i \in \sigma} (\beta_{Q, \leq j_0})^{\frac{1}{r_i}}
\prod_{i \notin \sigma} |Q|^{\frac{1}{r_i^*}}
(\beta_{Q, \leq j_0})^{\frac{1}{r'_{\sigma}}}
\prod_{i=1}^{m+1} \langle f_i \rangle_{3Q_0, r_i}
\\ \notag
& = A_4 \, \beta_{Q, \leq j_0}
\prod_{i=1}^{m+1} \langle f_i \rangle_{3Q_0, r_i}.
\end{align}
Besides, for any $L \ge 1$, 
\begin{align}\label{uQL}
\sum_{j=N_1}^{N_2} \sum_{Q \in \D_j} \beta_{Q, \leq L}
= \sum_{I \in \W} |I| \sum_{\substack{N_1 \leq j \leq N_2 \\ 1 \leq j - \log_2 \ell(I) \leq L}}
\sum_{\substack{Q \in \D_j \\ I \subset 5Q}} 1
\lesssim L \sum_{I \in \W} |I|
\lesssim L |Q_0|.                            
\end{align}
Consequently, equations \eqref{Near-1}--\eqref{uQL} prove
\begin{align}\label{eq:2321}
|\mathscr{I}_{\sigma, \mathrm{near}}^{2,3,2}|
\lesssim j_0 A_4 |Q_0| \prod_{i=1}^{m+1} \langle f_i \rangle_{3Q_0, r_i}.
\end{align}

Next let us analyze $\mathscr{I}_{\sigma, \mathrm{far}}^{2,3,2}$. For each vector $\bm{k} = (k_i)_{i \in \sigma^*}$ with $\max_{i \in \sigma^*} k_i >  j_0$, there exists a unique pair $(a, K)$ satisfying 
\begin{align*}
K = \max_{i \in \sigma^*} k_i > j_0
\quad \text{ and } \quad 
a = \min\{ i \in \sigma^* : k_i = K\}.
\end{align*}
Given $a \in \sigma^*$ and $K > j_0$, define a set
\begin{equation*}
\Xi_{K}^a :=
\left\{\bm{k} = (k_i)_{i \in \sigma^*}:
\begin{array}{ll}
k_a = K, &
\\
k_i \leq K-1, & i < a, 
\\
k_i \leq K, & i > a
\end{array}
\right\},
\end{equation*}
which gives
\begin{equation*}
\big\{\bm{k} = (k_i)_{i \in \sigma^*}: \max_{i \in \sigma^*} k_i >  j_0 \big\}
= \bigsqcup_{K > j_0} \bigsqcup_{a \in \sigma^*} \Xi_K^a,
\end{equation*}
where $\bigsqcup$ denotes a disjoint union. Thus,
\begin{equation*}
\mathscr{I}_{\sigma, \mathrm{far}}^{2,3,2}
= \sum_{j=N_1}^{N_2} \sum_{Q \in \D_j}
\sum_{K>j_0} \sum_{a \in \sigma^*}
\sum_{\bm{k} \in \Xi_K^a}
\big\langle T_j \big(U_{Q, \bm{k}}^1, \ldots, U_{Q, \bm{k}}^m \bigr), B_{Q, k_{m+1}}^{m+1} \big\rangle.
\end{equation*}
Given $(a, K)$, define functions
\begin{equation*}
V_{Q, a, K}^i :=
\begin{cases}
B_{Q, K}^i, & i = a \leq m,
\\
B_{Q, \leq K-1}^i, & i \in \sigma, i<a,
\\
B_{Q, \leq K}^i, & i \in \sigma,\ i>a,
\\
g_i \mathbf{1}_{5Q}, & i \notin \sigma,
\end{cases}
\quad \text{ and } \quad
W_{Q, a, K}^{m+1} :=
\begin{cases}
B_{Q, K}^{m+1}, & a = m+1, 
\\
B_{Q, \leq K}^{m+1}, & a \leq m.
\end{cases}
\end{equation*}
Set $B_{Q, \leq 0}^i = 0$. Accordingly,
\begin{equation*}
\sum_{\bm{k} \in \Xi_K^a}
\big\langle T_j \big(U_{Q, \bm{k}}^1, \ldots, U_{Q, \bm{k}}^m \bigr), B_{Q, k_{m+1}}^{m+1} \big\rangle
= \big\langle T_j(V_{Q,a,K}^1, \ldots, V_{Q,a,K}^m), W_{Q,a,K}^{m+1} \big\rangle,
\end{equation*}
which implies 
\begin{align}\label{Far-0}
\mathscr{I}_{\sigma, \mathrm{far}}^{2,3,2}
= \sum_{j=N_1}^{N_2} \sum_{Q \in \D_j}
\sum_{K>j_0} \sum_{a \in \sigma^*}
\big\langle T_j(V_{Q,a,K}^1, \ldots, V_{Q,a,K}^m), W_{Q,a,K}^{m+1} \big\rangle.
\end{align}
Given $I \in \D$ with $\ell(I) = 2^{j-K}$, the cancellation of $b_{a, I}$ indicates $[b_{a, I}]_{Q'} = 0$ for every $Q' \in \D$ with $\ell(Q') = 2^{j-K}$, which gives 
\begin{align}\label{EKBK}
\mathbb{E}_{K-j} B_{Q, K}^a = 0
\quad \text{ and } \quad
B_{Q, K}^a = (\Id - \mathbb{E}_{K-j}) B_{Q, K}^a.
\end{align}
In addition, by \eqref{BQki-1} and \eqref{BQki-2}
\begin{align}\label{VQaK}
\|V_{Q, a, K}^i\|_{L^{r_i}} 
\lesssim (\beta_{Q, \le K})^{\frac{1}{r_i}} \langle f_i \rangle_{3Q_0, r_i}, \quad \forall i \in \sigma.
\end{align}
First treat the case $a \leq m$. If $\sum_{i \in \sigma} \frac{1}{r_i} \ge \frac1s$, we invoke Lemma \ref{lem:reg}, \eqref{eq:ggqp}, \eqref{BQki-2}, \eqref{eq:rsrs} and \eqref{VQaK} to deduce
\begin{align}\label{Far-1}
& \big| \big\langle T_j(V_{Q,a,K}^1, \ldots, V_{Q,a,K}^m), W_{Q,a,K}^{m+1} \big\rangle \big|
\\ \nonumber
& = \big| \big\langle T_j(V_{Q,a,K}^1, \ldots, (\Id - \mathbb{E}_{K-j}) B_{Q, K}^a, \ldots, V_{Q,a,K}^m), 
B_{Q, \le K}^{m+1} \big\rangle \big|
\\ \nonumber
& \lesssim A_5 \, 2^{-\vartheta K} \, 2^{-jd(\frac1r - \frac1s)} 
\prod_{i \in \sigma} \|V_{Q, a, K}^i\|_{L^{r_i}}
\prod_{i \notin \sigma} \|g_i \mathbf{1}_{5Q}\|_{L^{r_i}}
\|B_{Q, \leq K}^{m+1}\|_{L^{s'}}
\\ \nonumber
&\lesssim A_5 \, 2^{-\vartheta K} \, |Q| \prod_{i \in \sigma^*}
\bigg(\frac{\beta_{Q, \leq K}}{|Q|}\bigg)^{\frac{1}{r_i}}
\prod_{i=1}^{m+1} \langle f_i \rangle_{3Q_0, r_i}
\\ \nonumber
& \lesssim A_5 \, 2^{-\vartheta K} \, \beta_{Q, \leq K}
\prod_{i=1}^{m+1} \langle f_i \rangle_{3Q_0, r_i}.
\end{align}
Likewise, if $\frac{1}{r_{\sigma}} := \sum_{i \in \sigma} \frac{1}{r_i} < \frac1s$ (equivalently, $r'_{\sigma} < s' = r_{m+1}$), Lemma \ref{lem:reg}, \eqref{eq:ggqp}, \eqref{eq:bqj}, and \eqref{VQaK} imply
\begin{align}\label{Far-2}
& \big| \big\langle T_j(V_{Q,a,K}^1, \ldots, V_{Q,a,K}^m), W_{Q,a,K}^{m+1} \big\rangle \big|
\\ \nonumber
& = \big| \big\langle T_j(V_{Q,a,K}^1, \ldots, (\Id - \mathbb{E}_{K-j}) B_{Q, K}^a, \ldots, V_{Q,a,K}^m), 
B_{Q, \le K}^{m+1} \big\rangle \big|
\\ \notag
& \leq A_5 \, 2^{-\vartheta K} \, 2^{-jd \sum_{i \notin \sigma} \frac{1}{r_i^*}} 
\prod_{i \in \sigma} \|V_{Q,a,K}^i\|_{L^{r_i}}
\prod_{i \notin \sigma} \|g_i \mathbf{1}_{5Q}\|_{L^{r_i^*}}
\|B_{Q, \leq K}^{m+1}\|_{L^{r'_{\sigma}}}
\\ \notag
& \lesssim A_5 \, 2^{-\vartheta K} \, |Q|^{- \sum_{i \notin \sigma} \frac{1}{r_i^*}}
\prod_{i \in \sigma} (\beta_{Q, \leq K})^{\frac{1}{r_i}}
\prod_{i \notin \sigma} |Q|^{\frac{1}{r_i^*}}
(\beta_{Q, \leq K})^{\frac{1}{r'_{\sigma}}}
\prod_{i=1}^{m+1} \langle f_i \rangle_{3Q_0, r_i}
\\ \notag
& = A_5 \, 2^{-\vartheta K} \, \beta_{Q, \leq K}
\prod_{i=1}^{m+1} \langle f_i \rangle_{3Q_0, r_i}.
\end{align}
In the case $a = m+1$, it follows from \eqref{EKBK} and the self-adjointness of $\mathbb{E}_{K-j}$ that
\begin{align*}
\big \langle T_j(V_{Q,a,K}^1, \ldots, V_{Q,a,K}^m), W_{Q,a,K}^{m+1} \big \rangle
= \big \langle (\Id - \mathbb{E}_{K-j}) T_j(V_{Q,a,K}^1, \ldots, V_{Q,a,K}^m), B_{Q, K}^{m+1} \big \rangle,
\end{align*}
As argued in \eqref{Far-1} and \eqref{Far-2}, there holds
\begin{align}\label{Far-3}
\big| \big\langle T_j(V_{Q,a,K}^1, \ldots, V_{Q,a,K}^m), W_{Q,a,K}^{m+1} \big\rangle \big|
\lesssim A_5 \, 2^{-\vartheta K} \, \beta_{Q, \leq K}
\prod_{i=1}^{m+1} \langle f_i \rangle_{3Q_0, r_i}.
\end{align}
As a consequence of \eqref{uQL}, \eqref{Far-0} and \eqref{Far-1}--\eqref{Far-3}, we conclude 
\begin{align*}
|\mathscr{I}_{\sigma, \mathrm{far}}^{2,3,2}|
\lesssim A_5 \sum_{K>j_0} 2^{-\vartheta K} K \, |Q_0| \prod_{i=1}^{m+1} \langle f_i \rangle_{3Q_0, r_i}
\lesssim A_5 \, 2^{-\vartheta j_0/2} \, |Q_0| \prod_{i=1}^{m+1} \langle f_i \rangle_{3Q_0, r_i},
\end{align*}
which along with \eqref{bg232} and \eqref{eq:2321} leads to
\begin{align}\label{s232}
|\mathscr{I}_{\sigma}^{2,3,2}|
\lesssim (j_0 A_4 + 2^{-\vartheta j_0/2} A_5) 
|Q_0| \prod_{i=1}^{m+1} \langle f_i \rangle_{3Q_0, r_i}.
\end{align}
Therefore, \eqref{bad23} follows from \eqref{bg23}, \eqref{bg231}, and \eqref{s232}.

\section{Applications}\label{sec:app}
This section is devoted to presenting the proof of Theorems \ref{thm:rough}--\ref{thm:BR}.

\subsection{Multilinear rough singular integrals}\label{sec:app-rough}
To demonstrate Theorem \ref{thm:rough}, we first prove the necessity, and then the sufficiency for $d \ge 2$. After that, we treat the sufficiency in the case $d=1$, where the only additional ingredient is a stronger main single-scale improving estimate in \eqref{H4} in a larger one-dimensional exponent region; all other parts of the sufficiency argument are unchanged.

\subsubsection{{\bf The necessity. }} 
First fix $I \subsetneq \{1, \ldots, m+1\}$. We may assume that $k := |I| \ge 2$ because $\sum_{i \in I} \frac{1}{r_i} + \frac{|I| - 1}{q} \le |I|$ holds trivially whenever $|I| = 1$. Take
\begin{align*}
f_i := \mathbf{1}_{F_i^{\varepsilon}} 
\quad \text{ and } \quad
F_i^{\varepsilon} :=
\begin{cases}
B(0, \varepsilon), & i \in I, 
\\
B(e_1, 1/10), & i \notin I,
\end{cases}
\end{align*}
where $e_1 = (1, 0, \ldots, 0)$ and $0 < \varepsilon < 1/10$. Since $B(0, \varepsilon) \cup B(e_1, 1/10) \subset B(0, 2)$ and $\dist \big(B(0, \varepsilon), B(e_1, 1/10) \big) \ge 1 - \varepsilon - \frac{1}{10} \ge \frac45$, Lemma~\ref{lem:sep} gives
\begin{equation}\label{eq:sp-1}
\Lambda_{\vec{r}, s'}^*(\vec{f}, f_{m+1})
\lesssim \varepsilon^{d \sum_{i \in I} \frac{1}{r_i}}.
\end{equation}

Given $(x_1, \ldots, x_{m+1}) \in F_1^{\varepsilon} \times \cdots \times F_{m+1}^{\varepsilon}$, set $w_{m+1} = 0$ and $w_i = x_{m+1} - x_i$ for each $i=1, \ldots, m$. Write $\vec{w} = (w_1, \ldots, w_m)$. Fix $i_0 \in I$ (take $i_0 = m+1$ whenever $m+1 \in I$), and define the linear map
\begin{align*}
 L_I : \R^{md} \longrightarrow \R^{d(k-1)},\qquad
 L_I \vec{w} = (w_i - w_{i_0})_{i \in I \setminus \{i_0\}}.
\end{align*}
We claim that 
\begin{align*}
 N := \rank L_I = d(k-1) < md,
\end{align*}
If $m+1\in I$, then
\begin{align*}
L_I \vec{w} = (w_i)_{i \in I \setminus\{m+1\}} 
\quad \text{and} \quad
\ker L_I = \{\vec{w}: w_i = 0 \text{ for } i \in I \setminus\{m+1\}\}.
\end{align*}
Since $L_I$ is a coordinate projection onto $k-1$ blocks, its rank is $d(k-1)$. If $m+1\notin I$, then
\begin{align*}
\ker L_I = \{\vec{w}: w_i = w_{i_0} \text{ for all } i \in I\},
\end{align*}
which means the common value of the $k$ indicated blocks is free, as are the $(m-k)$ blocks outside $I$. Thus
\begin{align*}
\dim \ker L_I & = d + d(m-k) = d(m-k+1),
\\
\rank L_I & = md - d(m-k+1) = d(k-1).
\end{align*}
Note that $w_i - w_{i_0} = x_{i_0} - x_i$ for each $i =1, \ldots, m+1$. Then for $\theta = \frac{\vec{w}}{|\vec{w}|}$,
\begin{align}\label{eq:Lth-1}
|L_I (\theta)|
= \bigg[\sum_{i \in I \setminus \{i_0\}} |x_i - x_{i_0}|^2 \bigg]^{\frac12} /|\vec{w}|
\le 2\varepsilon \sqrt{k-1}/(4/5)
\le 3 \sqrt{m}.
\end{align}
Thus, for sufficiently small $\varepsilon$, Lemma~\ref{lem:ang} yields, 
\begin{equation*}
\sigma (E_{\varepsilon})
:= \sigma (\{\theta \in \bS^{md-1}: |L_I \theta| \le 3 \sqrt{m} \, \varepsilon\})
\lesssim \varepsilon^{d(k-1)}.
\end{equation*}

Since the band $E_{\varepsilon}$ is invariant under $\theta \mapsto -\theta$, we would like to impose an additional fixed half-space condition that contains every direction arising from points $(x_1, \ldots, x_{m+1}) \in F_1^{\varepsilon} \times \cdots \times F_{m+1}^{\varepsilon}$. If $m+1 \notin I$, fix $i \in I$ and define $\lambda_I(\vec{w}) = (w_i)_1$. Then
\begin{align*}
\lambda_I(\vec{w}) = (x_{m+1})_1 - (x_i)_1
\ge 9/10 - \varepsilon \ge 4/5.
\end{align*}
If $m+1 \in I$, fix $i \notin I \subsetneq \{1, \ldots, m\}$ and define $\lambda_I(\vec{w}) = -(w_i)_1$. Then 
\begin{align*}
\lambda_I(\vec{w}) = (x_i)_1 - (x_{m+1})_1
\ge 9/10 - \varepsilon \ge 4/5.
\end{align*}
Besides, every actual direction satisfies
\begin{align}\label{eq:Lth-2}
\lambda_I(\theta)
= \frac{\lambda_I(\vec{w})}{|\vec{w}|}
\ge \frac{4/5}{2 \sqrt{m}}
> \frac{1}{3 \sqrt{m}}, \qquad \theta = \frac{\vec{w}}{|\vec{w}|}.
\end{align}
Now take
\begin{align*}
E'_{\varepsilon}
:= \big\{\theta \in \bS^{md-1} :
|L_I \theta| \le 3 \sqrt{m} \, \varepsilon, \, \,
\lambda_I(\theta) > 1/(3 \sqrt{m}) \big\}, 
\end{align*}
which satisfies $E'_{\varepsilon} \cap (-E'_{\varepsilon}) = \varnothing$ because $\lambda_I(-\theta) = -\lambda_I(\theta)$ implies $E'_\varepsilon \subset \{\lambda_I > 1/(3 \sqrt{m})\}$ and $-E'_{\varepsilon} \subset \{\lambda_I < -1/(3 \sqrt{m})\}$, Thus, Lemma \ref{lem:ang} gives a mean zero function $\Omega_{\varepsilon} = \varepsilon^{-N/q} (\mathbf{1}_{E'_{\varepsilon}} - \mathbf{1}_{-E'_{\varepsilon}})$ with $\|\Omega_{\varepsilon}\|_{L^q} \lesssim 1$. Then by the assumption and \eqref{eq:sp-1},
\begin{align}\label{eq:sp-2}
|\langle T_{\Omega_{\varepsilon}}(\vec{f}), f_{m+1} \rangle|
\lesssim \|\Omega_{\varepsilon}\|_{L^q} \, \Lambda_{\vec{r}, s'}^*(\vec{f}, f_{m+1})
\lesssim \varepsilon^{d \sum_{i \in I} \frac{1}{r_i}}.
\end{align}
In addition, \eqref{eq:Lth-1} and \eqref{eq:Lth-2} imply $\theta = \vec{w}/|\vec{w}| \in E'_{\varepsilon}$ for all $(x_1, \ldots, x_{m+1}) \in \prod_{i=1}^{m+1} F_i^{\varepsilon}$, hence, 
\begin{align}\label{eq:sp-3}
\langle T_{\Omega_{\varepsilon}}(\vec{f}), f_{m+1} \rangle
& = \int_{\prod_{i=1}^{m+1} F_i^{\varepsilon}} 
\frac{\Omega_{\varepsilon}(\vec{w}/|\vec{w}|)}{|\vec{w}|^{md}} \,dx_1 \ldots dx_{m+1}
\gtrsim \varepsilon^{-N/q} \prod_{i=1}^{m+1} |F_i^{\varepsilon}|.
\end{align}
Combining \eqref{eq:sp-2} and \eqref{eq:sp-3} yields
\begin{align*}
\varepsilon^{d[k - (k-1)/q]} \lesssim \varepsilon^{d \sum_{i \in I} \frac{1}{r_i}}.
\end{align*}
If $\sum_{i \in I} \frac{1}{r_i} > k - \frac{k-1}{q}$, then the above is impossible as $\varepsilon \downarrow 0$. Thus, there must $\sum_{i \in I} \frac{1}{r_i} \le k - \frac{k-1}{q}$.

It remains to show the full-set condition when $d \ge 2$ since the preceding argument holds for all dimensions $d \ge 1$. Given $0 < \varepsilon< 1$, set
\begin{align*}
f_i = \mathbf{1}_{F_i^{\varepsilon}} \quad \text{and} \quad
F_i^{\varepsilon} = [3i, 3i+1] \times B^{d-1}(0, \varepsilon), \quad i=1, \ldots, m+1,
\end{align*}
which gives
\begin{equation}\label{eq:tub-1}
\Lambda_{\vec{r}, s'}^*(\vec{f}, f_{m+1})
\lesssim\varepsilon^{(d-1)(\frac1r + \frac{1}{s'})}.
\end{equation}
Given $(x_1, \ldots, x_{m+1}) \in F_1^{\varepsilon} \times \cdots \times F_{m+1}^{\varepsilon}$, write $((w_i)_1, w_i') = w_i = x_{m+1} - x_i$ for each $i=1, \ldots, m$. Denote $\vec{w} = (w_1, \ldots, w_m)$. Then we see that
\begin{align*}
|w_i'| \le 2 \varepsilon
\quad \text{ and } \quad 
2 \le 3(m+1-i) - 1 \le (w_i)_1 \le 3(m+1-i)+1 \le 4m,
\end{align*}
which implies $2 \le |\vec{w}| < 4 (m+1)^2$. Define the linear map
\begin{align*}
L : \R^{md} \longrightarrow \R^{m(d-1)},\qquad
L \vec{w} = (w'_1, \ldots, w'_m), 
\end{align*}
whose rank is $N := m(d-1) < md$. Then for any $\theta = \frac{\vec{w}}{|\vec{w}|}$, 
\begin{align}\label{eq:Lth-3}
|L(\theta)|
= \frac{1}{|\vec{w}|} \bigg[\sum_{i=1}^m |w'_i|^2 \bigg]^{\frac12}
\le \sqrt{m} \, \varepsilon 
\quad \text{ and } \quad
(\theta_1)_1 = \frac{(w_1)_1}{|\vec{w}|} > \frac{1}{2(m+1)^2}.
\end{align}
The set
\begin{align*}
E'_{\varepsilon} 
:= \big\{\theta \in \bS^{md-1}: 
|L \theta| \le \sqrt{m} \, \varepsilon, \, \, (\theta_1)_1 > (m+1)^{-2}/2 \big\}
\end{align*}
satisfies $E'_{\varepsilon} \cap (-E'_{\varepsilon}) = \varnothing$. Then by Lemma~\ref{lem:ang},
$\Omega_{\varepsilon} = \varepsilon^{-N/q} (\mathbf{1}_{E'_{\varepsilon}} - \mathbf{1}_{-E'_{\varepsilon}})$
verifies $\|\Omega_{\varepsilon}\|_{L^q} \lesssim 1$ for sufficiently small $\varepsilon > 0$.
Since \eqref{eq:Lth-3} states $\theta = \vec{w}/|\vec{w}| \in E'_{\varepsilon}$ for all $(x_1, \ldots, x_{m+1}) \in \prod_{i=1}^{m+1} F_i^{\varepsilon}$, we have
\begin{align}\label{eq:tub-2}
\Lambda_{\vec{r}, s'}^*(\vec{f}, f_{m+1})
\gtrsim \langle T_{\Omega_{\varepsilon}}(\vec{f}), f_{m+1}\rangle
\gtrsim \varepsilon^{-N/q} \prod_{i=1}^{m+1} |F_i^{\varepsilon}|
\simeq \varepsilon^{(d-1)(m+1-m/q)}.
\end{align}
Letting $\varepsilon \downarrow 0$, we use $d-1>0$, \eqref{eq:tub-1}, and \eqref{eq:tub-2} to conclude 
$\frac1r + \frac{1}{s'} \le m+1 - \frac{m}{q}$. This completes the proof.

\begin{lemma}\label{lem:sep}
Let $\kappa := \sum_{i=1}^{m+1} \frac{1}{p_i} > 1$ with $1<p_1, \ldots, p_{m+1} < \infty$. Suppose that measurable sets $F_1, \ldots, F_{m+1} \subset B(0, C_0)$ and $\dist(F_i, F_j) \ge c_0 > 0$ for some $i \ne j$. Then
\begin{equation*}
\sup_{\S: \gamma\text{-sparse}} \sum_{Q \in \S} |Q|
\prod_{i=1}^{m+1} \langle \mathbf{1}_{F_i} \rangle_{Q, p_i}
\lesssim_{c_0, C_0} \prod_{i=1}^{m+1} |F_i|^{\frac{1}{p_i}}.
\end{equation*}
\end{lemma}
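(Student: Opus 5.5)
We must bound $\sum_{Q\in\S}|Q|\prod_i\langle \mathbf 1_{F_i}\rangle_{Q,p_i}$ uniformly over $\gamma$-sparse $\S$. Note $\langle \mathbf{1}_{F_i}\rangle_{Q,p_i} = (|F_i\cap Q|/|Q|)^{1/p_i}$. The plan is to split the cubes by sidelength at the scale $c_0/2$ (in the $\ell^\infty$ metric used throughout). The split exploits two facts: small cubes cannot meet both separated sets, and large cubes have averages that decay.

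\emph{Small cubes}, $\ell(Q)<c_0$. Such a cube cannot intersect both $F_i$ and $F_j$, since $\dist(F_i,F_j)\ge c_0$. Hence the product vanishes unless $Q$ misses $F_i$ or $F_j$; nonvanishing terms miss one of them. So it suffices to bound, say,
\[
\sum_{Q\in\S,\,Q\cap F_j=\varnothing}|Q|\prod_{k\ne j}\langle\mathbf 1_{F_k}\rangle_{Q,p_k}
\]
and the analogous sum with $i$; but in fact such terms are zero, since $F_j$ appears in the product. Therefore every small cube contributes zero. This is the key observation, and it means the separation hypothesis kills all small scales.

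\emph{Large cubes}, $\ell(Q)\ge c_0$. Only cubes meeting every $F_k$ contribute, so $Q\cap B(0,C_0)\neq\varnothing$. For each dyadic sidelength $2^k\ge c_0$, at most $\lesssim_{C_0,c_0}1$ cubes of one lattice, of that length, meet $B(0,C_0)$. We do not even need sparseness, since a fixed lattice has boundedly many such cubes per scale; the sup over lattices is harmless because the count bound is uniform. Each such term is bounded by
\[
|Q|^{1-\kappa}\prod_k|F_k|^{1/p_k}\le 2^{kd(1-\kappa)}\prod_k|F_k|^{1/p_k}.
\]
Since $\kappa>1$, summing the geometric series over $2^k\ge c_0$ gives $\lesssim_{c_0,C_0}\prod_k|F_k|^{1/p_k}$.

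The only subtlety is the uniformity over arbitrary sparse families, which may mix cubes from a general dyadic lattice. This is handled exactly as above, because the per-scale count of cubes meeting $B(0,C_0)$ is bounded by $(2C_0/2^k+2)^d\lesssim_{c_0,C_0}1$ for $2^k\ge c_0$, independently of the lattice. If one prefers, Lemma \ref{lem:SL}-type reductions are not needed here. I expect no real obstacle; the main point is recognizing that the separation makes small-cube terms vanish identically.
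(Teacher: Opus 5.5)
Your argument is correct, but it counts the contributing cubes at each scale differently from the paper. Both proofs first discard every cube that does not meet all of the $F_k$, so only cubes with $\ell(Q)\ge c_0$ meeting $B(0,C_0)$ survive. Both then finish with the same geometric series in $|Q|^{1-\kappa}$, which converges because $\kappa>1$.

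The paper groups the surviving cubes by $2^jc_0\le \ell(Q)<2^{j+1}c_0$ and bounds the number of cubes in each group using sparseness. The disjoint sets $E_Q$, with $|E_Q|\ge\gamma|Q|$, all lie in $B(0,C_0+2^{j+1}c_0)$, which gives $\#\S_j^*\lesssim_{c_0,C_0}\gamma^{-1}$.

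You instead use the fact that $\S$ sits inside a single dyadic lattice, where cubes of equal side length are pairwise disjoint, so the per-scale count is purely geometric and sparseness is never used. The two routes buy different things:
\begin{itemize}
\item Your version shows the bound for an arbitrary subfamily of a dyadic lattice, with a constant independent of $\gamma$.
\item The paper's version never uses the lattice structure, so it applies verbatim to arbitrary, non-dyadic sparse collections of cubes.
\end{itemize}

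Two minor remarks.
\begin{itemize}
\item In a general dyadic lattice the side lengths are $2^k\ell_0$ rather than $2^k$. This changes nothing, since $\sum_{2^k\ell_0\ge c_0}(2^k\ell_0)^{d(1-\kappa)}\lesssim c_0^{d(1-\kappa)}$ uniformly in $\ell_0$.
\item Your sentence ``the product vanishes unless $Q$ misses $F_i$ or $F_j$'' has the logic reversed. What you mean, and what your conclusion uses, is that a small cube misses $F_i$ or $F_j$, so one factor in the product is zero.
\end{itemize}
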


\begin{proof}
Fix a $\gamma$-sparse family $\S$. Let $\S^* = \{Q \in \S: Q \cap F_i \ne \varnothing, i=1, \ldots, m+1\}$. The condition $\dist(F_i, F_j) \ge c_0 > 0$ gives $\ell(Q) \ge c_0$ for any $Q \in \S^*$. Then $\S^*$ can be decomposed into $\S^*_j = \{Q \in \S^*: 2^j c_0 \le \ell(Q) < 2^{j+1} c_0\}$, $j=0, 1, \ldots$. By the $\gamma$-sparseness of $\S$, there exists a disjoint family $\{E_Q\}_{Q \in \S^*_j}$ such that $|E_Q| \ge \gamma |Q|$ for all $Q \in \S^*_j$. Thus, 
\begin{align*}
\gamma |2^j c_0|^d \, \# \S^*_j
\le \sum_{Q \in \S^*_j} |E_Q| 
= \bigg|\bigcup_{Q \in \S^*_j} E_Q \bigg| 
\le |B(0, \ell(Q) + C_0)|
\lesssim (2^{j+1}c_0 + C_0)^d,
\end{align*}
which gives $\# \S^*_j \lesssim_{c_0, C_0} 1$ and
\begin{align*}
\sum_{Q \in \S} |Q| \prod_{i=1}^{m+1} \langle \mathbf{1}_{F_i} \rangle_{Q, p_i}
\le \sum_{j \ge 0} \sum_{Q \in \S^*_j} |Q|^{1-\kappa} \prod_{i=1}^{m+1} |F_i|^{\frac{1}{p_i}}
\lesssim \sum_{j \ge 0} 2^{-j (\kappa-1)} \prod_{i=1}^{m+1} |F_i|^{\frac{1}{p_i}}
\lesssim \prod_{i=1}^{m+1} |F_i|^{\frac{1}{p_i}}.
\end{align*}
The proof is complete. 
\end{proof}

\begin{lemma}\label{lem:ang}
Let $L: \R^D \to \R^N$ be a linear map of rank $N$, where $1 \le N < D$. Then for every fixed $C_0>0$ and sufficiently small $\varepsilon > 0$,
\begin{equation*}
\sigma (E_{\varepsilon})
:= \sigma (\{\theta \in \bS^{D-1}: |L \theta|\le C_0 \varepsilon\})
\lesssim \varepsilon^N.
\end{equation*}
In particular, the function $\Omega_{\varepsilon} = \varepsilon^{-N/q} (\mathbf{1}_{E'_{\varepsilon}} - \mathbf{1}_{-E'_{\varepsilon}})$, with $E'_{\varepsilon} \subset E_{\varepsilon}$ and $E'_{\varepsilon} \cap (-E'_{\varepsilon}) = \varnothing$, satisfies $\|\Omega_{\varepsilon}\|_{L^q} \lesssim 1$ and $\int_{\bS^{D-1}} \Omega_{\varepsilon} \, d\sigma = 0$.
\end{lemma}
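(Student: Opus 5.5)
The plan is to reduce the measure bound to a computation on the whole space $\R^D$ via homogeneity, and then to read off the properties of $\Omega_\varepsilon$ directly from that bound.

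\textbf{Reduction to a slab estimate.} Since $L$ has rank $N$, after an orthogonal change of coordinates (which preserves $\sigma$) we may write $\R^D = V \oplus V^{\perp}$ with $V = (\ker L)^{\perp}$ of dimension $N$. On $V$ the restriction $L|_V$ is invertible, so $|L\theta| \ge c_L |P_V \theta|$ for some $c_L > 0$, where $P_V$ denotes the orthogonal projection onto $V$. Hence
\begin{align*}
E_\varepsilon \subset \big\{\theta \in \bS^{D-1}: |P_V \theta| \le C_1 \varepsilon\big\}, \qquad C_1 = C_0/c_L .
\end{align*}
To measure this set, we pass to the solid shell $\{1/2 \le |x| \le 2\}$ by homogeneity. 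Polar coordinates give
\begin{align*}
\sigma(\{\theta: |P_V\theta| \le C_1\varepsilon\}) \lesssim \big|\{x \in \R^D: 1/2 \le |x| \le 2,\ |P_V x| \le 2C_1\varepsilon\}\big| .
\end{align*}
Indeed, if $x = t\theta$ with $t \in [1/2,2]$ and $|P_V\theta| \le C_1\varepsilon$, then $|P_V x| \le 2C_1\varepsilon$. The set on the right is contained in the product
\begin{align*}
B_V(0, 2C_1\varepsilon) \times B_{V^\perp}(0,2),
\end{align*}
whose Lebesgue measure is $\lesssim \varepsilon^N$. This yields $\sigma(E_\varepsilon) \lesssim \varepsilon^N$. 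The restriction that $\varepsilon$ be sufficiently small is only used to keep the constants uniform; since $N < D$, the set $E_\varepsilon$ is nonempty for every $\varepsilon > 0$.

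\textbf{The function $\Omega_\varepsilon$.} The sets $E'_\varepsilon$ and $-E'_\varepsilon$ are disjoint, so $|\Omega_\varepsilon|^q = \varepsilon^{-N}(\mathbf 1_{E'_\varepsilon} + \mathbf 1_{-E'_\varepsilon})$. Moreover $-E'_\varepsilon \subset -E_\varepsilon = E_\varepsilon$ because $|L(-\theta)| = |L\theta|$, and $\sigma$ is invariant under the antipodal map. Therefore
\begin{align*}
\|\Omega_\varepsilon\|_{L^q}^q \le 2\varepsilon^{-N}\sigma(E_\varepsilon) \lesssim 1 .
\end{align*}
For $q = \infty$ the normalization should read $\varepsilon^{0}$, and the bound $\|\Omega_\varepsilon\|_{L^\infty} \le 1$ is trivial. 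The mean-zero property follows from the same antipodal invariance: $\sigma(E'_\varepsilon) = \sigma(-E'_\varepsilon)$, so the two terms cancel.

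\textbf{Main obstacle.} The only genuine step is the polar-coordinate comparison in the first part, namely justifying $\sigma(A) \simeq |\{t\theta: \theta \in A,\ 1/2 \le t \le 2\}|$. This is the standard formula $dx = t^{D-1}\,dt\,d\sigma(\theta)$, so the whole argument is routine.
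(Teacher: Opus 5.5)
Your proof is correct. It agrees with the paper in the first reduction: invertibility of $L$ on $(\ker L)^{\perp}$ confines $E_\varepsilon$ to the band where the $(\ker L)^{\perp}$-component of $\theta$ has size at most $C_1\varepsilon$. It also agrees in the treatment of $\Omega_\varepsilon$.

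Where you differ is in how you measure that band. The paper parametrizes the sphere as $\theta=\sqrt{1-|v|^2}\,u+v$, with $u\in\bS^{n-1}\subset\ker L$, $v\in(\ker L)^{\perp}$ and $n=D-N$. It then integrates the surface density $(1-|v|^2)^{(n-2)/2}$ over $|v|\le C_1\varepsilon$. The restriction $\varepsilon\le 1/(2C_1)$ is needed there because this density blows up near $|v|=1$ when $n=1$.

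You instead use only the homogeneity of Lebesgue measure in polar coordinates. You cone the band over the shell $\{1/2\le|x|\le 2\}$ and enclose it in $B_V(0,2C_1\varepsilon)\times B_{V^{\perp}}(0,2)$. This avoids any Jacobian computation. It in fact yields $\sigma(E_\varepsilon)\lesssim\varepsilon^N$ for every $\varepsilon>0$, so your remark that smallness of $\varepsilon$ is ``used to keep the constants uniform'' is superfluous: your argument never uses it.

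The paper's slice formula is the more precise tool, since it computes the band's measure up to constants. For the upper bound that the necessity argument actually needs, your cone-and-box estimate is simpler.
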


\begin{proof}
Write $n = \dim (\ker L) = D - N \ge 1$. The restriction of $L$ to $(\ker L)^{\perp}$ is invertible onto its range. Hence, $|L \theta|\le C_0 \varepsilon$ implies $|v| \le C_1 \varepsilon$ for the $(\ker L)^\perp$ component $v$ of $\theta$. Parameterize the sphere there by
\begin{align*}
\theta = \sqrt{1-|v|^2} \, u+v, \qquad u \in \bS^{n-1}\subset \ker L, \quad v \in (\ker L)^{\perp}.
\end{align*}
Then for any $0 < \varepsilon \le \frac{1}{2C_1}$, 
\begin{align*}
\sigma(E_{\varepsilon})
\le \int_{|v| \le C_1 \varepsilon} \int_{\bS^{n-1}}
(1-|v|^2)^{(n-2)/2} d\sigma_{\bS^{n-1}}(u) \,dv
\lesssim \varepsilon^N,
\end{align*}
which implies $\|\Omega_{\varepsilon}\|_{L^q}^q = 2 \varepsilon^{-N} \sigma(E'_{\varepsilon}) \lesssim 1$.
\end{proof}

\subsubsection{\bf{The sufficiency for $d \ge 2$}}
It suffices to prove the case $q \in (1, \infty)$ because
\begin{align*}
L^{\infty}(\bS^{md-1}) \subset \bigcap_{1<q<\infty} L^q(\bS^{md-1}) 
\quad \text{ and } \quad
\H_{m+1}^d(\infty) = \bigcup_{1<q<\infty} \H_{m+1}^d(q), \quad d \ge 1.
\end{align*}
Let $\Phi \in \mathscr{C}_c^{\infty}(\R^{md})$ be a radial function such that $\mathbf{1}_{B(0, 1)} \le \Phi \le \mathbf{1}_{B(0, 2)}$. Denote 
\begin{align*}
& \Psi(\vec{y}) := \Phi(\vec{y}) - \Phi(2 \vec{y}), \qquad
\Psi_j( \vec{y}) := \Psi(2^{-j} \vec{y}),
\\
& K_j := K_{\Omega} \Psi_j, \, \, 
\widehat{K}_j^{\ell} := \Psi_{\ell-j} \widehat{K}_j, \, \text{ and } \, \, 
K^{\ell} := \sum_{j \in \Z} K_j^{\ell},
\end{align*}
Then define
\begin{align*}
T_j^{\ell}(\vec{f})(x) 
& := \int_{\R^{md}} K_j^{\ell}(x-\vec{y}) \prod_{i=1}^m f_i(y_i) \, d\vec{y},
\\
T^{\ell}(\vec{f})(x) 
& := \int_{\R^{md}} K^{\ell}(x-\vec{y}) \prod_{i=1}^m f_i(y_i) \, d\vec{y},
\end{align*}
which allows us to decompose
\begin{align*}
T = \sum_{\ell \in \Z} T^{\ell} 
\quad \text{ and } \quad
T^{\ell} = \sum_{j \in \Z} T_j^{\ell}.
\end{align*}
It was shown in \cite[Proposition 3]{GHH} that
\begin{equation}\label{eq:GHH}
\|T^{\ell}\|_{L^{p_1} \times \cdots \times L^{p_m} \to L^p} 
\lesssim  
\begin{cases}
\|\Omega\|_{L^{\q}} \, 2^{-\delta_1 |\ell|}, & \ell < 0,
\\
\|\Omega\|_{L^{\q}} \, 2^{2n |\ell|}, & \ell \ge 0,
\end{cases}
\end{equation}
for all $\q \in (1, \infty]$ and $\frac1p = \sum_{j=1}^m \frac{1}{p_j} > 0$ with $p_1, \ldots, p_m \in (1, \infty)$, where $\delta_1 > 0$. By \cite[Claim 7]{DS}, there exists an integer $\ell_0 \ge 1$ such that
\begin{align}\label{eq:DS}
\|T^{\ell}\|_{L^{t_1} \times \cdots \times L^{t_m} \to L^t} 
\lesssim \|\Omega\|_{L^{\q}} \, 2^{- \delta_2 |\ell|}, \quad \forall \ell \ge \ell_0,
\end{align}
for all $\q \in (1, \infty]$ and $(\frac{1}{t_1}, \ldots, \frac{1}{t_m}) \in \H_m^d(\q)$, where $\delta_2 > 0$.

Define
\begin{align}\label{def:KKje}
\K_j^{\ell} := K^{\ell} \Psi_j
\quad \text{and} \quad
\T_j^{\ell}(\vec{f})(x) 
:= \int_{\R^{md}} \K_j^{\ell}(x-\vec{y}) \prod_{i=1}^m f_i(y_i) \, d\vec{y}.
\end{align}
Then we rewrite
\begin{align*}
T^{\ell} = \sum_{j \in \Z} \T_j^{\ell}.
\end{align*}
Fix $(\frac{1}{r_1}, \ldots, \frac{1}{r_m}, \frac{1}{s'}) \in \H_{m+1}^d(q)$ and $s \in (1, \infty)$ such that $\frac1s < \frac1r := \sum_{i=1}^m \frac{1}{r_i}$. Then set $s_i := (s/r) r_i > r_i$ so that $\sum_{i=1}^m \frac{1}{s_i} = \frac1s$. We would like to verify the family $\{\T_j^{\ell}\}_{j \in \Z}$ satisfies the hypotheses \eqref{H1}--\eqref{H5} with auxiliary exponents $r_1^* = \cdots = r_m^* = \infty$.

Note that $\supp \K_j^{\ell} \subset \{2^{j-1} \le |x-\vec{y}| \le 2^{j+1}\}$, which shows \eqref{H1}. For all $N_1 \le N_2$,
\begin{align*}
\sum_{j=N_1}^{N_2} \T_j^{\ell} (\vec{f})(x)
& = \int_{\R^{md}} K^{\ell}(x-\vec{y}) \Phi \big(2^{-N_2} (x-\vec{y}) \big) \prod_{i=1}^m f_i(y_i) \, d\vec{y}
\\
&\quad - \int_{\R^{md}} K^{\ell}(x-\vec{y}) \Phi \big(2^{-N_1+1} (x-\vec{y}) \big) \prod_{i=1}^m f_i(y_i) \, d\vec{y},
\end{align*}
which along with \eqref{eq:GHH}, \eqref{eq:DS}, and Lemma \ref{lem:TKfg} part \eqref{TKfg-4} gives
\begin{equation}\label{eq:RTP}
\sup_{N_1 \le N_2} \bigg\| \sum_{j=N_1}^{N_2} \T_j^{\ell} 
\bigg\|_{L^{p_1} \times \cdots \times L^{p_m} \to L^p}
\lesssim  \left.
\begin{cases}
\|\Omega\|_{L^{\q}} \, 2^{-\delta_1 |\ell|}, & \ell < 0
\\
\|\Omega\|_{L^{\q}} \, 2^{2n |\ell|}, & 0 \le \ell < \ell_0
\\
\|\Omega\|_{L^{\q}} \, 2^{- \delta_2 |\ell|}, & \ell \ge \ell_0
\end{cases} 
\right\}
\lesssim 2^{-\delta_0 |\ell|}
\end{equation}
for all $\q \in (1, \infty]$ and $(\frac{1}{p_1}, \ldots, \frac{1}{p_m}) \in \H_m^d(\q)$, where $\delta_0 = \min\{\delta_1, \delta_2\}$. 

Observe that for any $d \ge 2$,
\begin{equation}\label{Hmd2}
\begin{aligned}
\H_m^d(q) 
& = \big\{\vec{a} \in (0, 1)^m: \mathscr{N}_q(\vec{a}) < 1/q \big\}, \qquad \text{and}
\\
\H_{m+1}^d(q) 
& = \big\{(\vec{a}, a_{m+1}) \in (0, 1)^{m+1}:
\mathscr{N}_q(\vec{a}) < \min\{1/q, 1-a_{m+1}\} \big\},
\end{aligned}
\end{equation}
where $\mathscr{N}_q(\vec{a}) := \sum_{i=1}^m \big(a_i + \frac1q - 1 \big)_+$ satisfies $\mathscr{N}_q(\vec{a}) \le \mathscr{N}_q(\vec{b})$ whenever $\vec{a} \le \vec{b}$. Then by \eqref{Hmd2}, the condition $(\frac{1}{r_1}, \ldots, \frac{1}{r_m}, \frac{1}{s'}) \in \H_{m+1}^d(q)$ gives 
\begin{align*}
\mathscr{N}_q(1/\vec{s}) \le \mathscr{N}_q(1/\vec{r}) < \min\{1/q, 1/s\}.
\end{align*}
Hence, $(\frac{1}{r_1}, \ldots, \frac{1}{r_m}), (\frac{1}{s_1}, \ldots, \frac{1}{s_m}) \in \H_m^d(q)$, which together with \eqref{eq:RTP} implies both \eqref{H2} and \eqref{H3} hold for $A_2^{\ell} = A_3^{\ell} \simeq \|\Omega\|_{L^q} 2^{-\delta_0 |\ell|}$. 

To justify the main improving estimates in \eqref{H4}, it suffices to show that there exists some $\varepsilon \in (0, \delta_0)$ such that 
\begin{align}\label{eq:STE}
\|\T_j^{\ell}\|_{L^{t_1} \times \cdots \times L^{t_m} \to L^{t_0}}
\lesssim \|\Omega\|_{L^q} 2^{-\varepsilon |\ell|} 2^{-j d(\frac1t - \frac{1}{t_0})}
=: A_4^{\ell} \, 2^{-j d(\frac1t - \frac{1}{t_0})},
\end{align}
for all $\mathscr{N}_q(1/\vec{t}) < \min\{1/q, 1/t_0\}$ and $0 < \frac{1}{t_0} < \frac1t := \sum_{i=1}^m \frac{1}{t_i}$. Since the function
\begin{align*}
F(s) := \mathscr{N}_s(1/\vec{t}) - \min\{1/s, 1/t_0\}, \quad s \in (1, \infty),
\end{align*}
is continuous, by the fact $F(q) < 0$, there exists some $q_0 \in (1, q)$ such that $F(q_0) < 0$. Pick 
\begin{align*}
& \theta := 1 - \frac{1}{q_0}, \quad 
\frac{1}{p_i} := \frac{\big(\frac{1}{t_i} + \frac{1}{q_0} - 1 \big)_+ + \beta}{1-\theta}, 
\quad \text{and} \quad
\frac{1}{q_i} := \frac{\frac{1}{t_i} - \frac{1-\theta}{p_i}}{\theta},
\end{align*}
where
\begin{align*}
0 < \beta < \frac{1}{2m} \bigg[\min \bigg\{\frac{1}{q_0}, \frac{1}{t_0} \bigg\}  
- \sum_{i=1}^m \bigg(\frac{1}{t_i} + \frac{1}{q_0} -1 \bigg)_+ \bigg].
\end{align*}
Then one can check $p_i, q_i \in (1, \infty)$ and 
\begin{align}\label{thpi}
\sum_{i=1}^m \frac{1-\theta}{p_i}
< \sum_{i=1}^m \bigg(\frac{1}{t_i} + \frac{1}{q_0} -1 \bigg)_+ + 2m \beta
< \min \bigg\{\frac{1}{q_0}, \frac{1}{t_0} \bigg\},
\end{align}
which gives
\begin{align*}
\sum_{i \in I} \frac{1}{p_i} + \frac{|I| - 1}{q(1 - \theta)} 
< \frac{1}{1-\theta} \frac{1}{q_0} + |I| - 1 = |I|, \quad \forall \varnothing \ne I \subset \{1, \ldots, m\}.
\end{align*}
This means $(\frac{1}{p_1}, \ldots, \frac{1}{p_m}) \in \H_m^d(q(1 - \theta))$. Define a $(m+1)$-linear operator $\mathbf{L}_j^{\ell}$ by 
\begin{align*}
\mathbf{L}_j^{\ell} (\omega, \vec{f}) 
:= \T_{j, P \omega}^{\ell}(\vec{f}) 
\quad \text{with} \quad
P \omega := \omega - \fint_{\bS^{md-1}} \omega \, d\sigma,
\end{align*}
where the $m$-linear operator $\T_{j, P \omega}^{\ell}$ is $\T_j^{\ell}$ with the rough kernel $\Omega$ replaced by $P \omega$. Thus, \eqref{eq:RTP} applied to $\Omega = P \omega$ and $\mathfrak{q} = q(1-\theta)$ leads to
\begin{equation}\label{eq:STE-1}
\|\mathbf{L}_j^{\ell}(\omega, \vec{f})\|_{L^p}
\lesssim 2^{- \delta_0 |\ell|} \|P \omega\|_{L^{q(1-\theta)}} \prod_{i=1}^m \|f_i\|_{L^{p_i}}
\lesssim 2^{- \delta_0 |\ell|} \|\omega\|_{L^{q(1-\theta)}} \prod_{i=1}^m \|f_i\|_{L^{p_i}},
\end{equation}
where $\frac1p = \sum_{i=1}^m \frac{1}{p_i}$. On the other hand, by Lemma \ref{lem:CZ} applied to $P \omega \in L^{\infty}$, 
\begin{align}\label{eq:DAk}
|\mathbf{L}_j^{\ell}(\omega, \vec{f})(x)|
\lesssim \|P \omega\|_{L^{\infty}} \prod_{i=1}^m \langle |f_i| \rangle_{B(x, (m+2)2^j)}
=: \|P \omega\|_{L^{\infty}} \prod_{i=1}^m \mathcal{A}_j f_i(x).
\end{align}
Note that by Young's inequality, 
\begin{align}\label{eq:dkh-2}
\|\mathcal{A}_j\|_{L^a \to L^b} 
\lesssim 2^{-jd(\frac1a - \frac1b)}, \quad 1 \le a \le b \le \infty.
\end{align}
Then \eqref{eq:DAk} and \eqref{eq:dkh-2} imply
\begin{equation}\label{eq:STE-2}
\|\mathbf{L}_j^{\ell}(\omega, \vec{f})\|_{L^{\infty}}
\lesssim 2^{- j d \frac1q} \|\omega\|_{L^{\infty}} \prod_{i=1}^m \|f_i\|_{L^{q_i}},
\end{equation}
where $\frac1q = \sum_{i=1}^m \frac{1}{q_i}$. Let $\frac{1}{\tau} := \frac{1-\theta}{p} + \frac{\theta}{\infty} < \frac{1}{t_0} < \frac1t$ due to \eqref{thpi}. Since
\begin{align*}
\frac1q = \frac{1-\theta}{q(1 - \theta)} + \frac{\theta}{\infty}
\quad \text{and} \quad
\frac{1}{t_i} = \frac{1-\theta}{p_i} + \frac{\theta}{q_i}, \quad i = 1, \ldots, m,
\end{align*}
interpolating between \eqref{eq:STE-1} and \eqref{eq:STE-2} yields 
\begin{equation*}
\|\T_{j, P \omega}^{\ell}(\vec{f})\|_{L^{\tau}}
= \|\mathbf{L}_j^{\ell}(\omega, \vec{f})\|_{L^{\tau}}
\lesssim 2^{-\delta_0 |\ell| (1-\theta)} 2^{- j d \frac{\theta}{q}}
\|\omega\|_{L^q} \prod_{i=1}^m \|f_i\|_{L^{t_i}}.
\end{equation*}
Pick $\varepsilon = \delta_0 (1 - \theta)$. Then the above applied to $\omega = \Omega \in L^q$ gives
\begin{equation*}
\|\T_j^{\ell}\|_{L^{t_1} \times \cdots \times L^{t_m} \to L^{\tau}}
\lesssim 2^{- \varepsilon |\ell|} 2^{- j d \frac{\theta}{q}} 
\|\Omega\|_{L^q},
\end{equation*}
which immediately indicates
\begin{align*}
\|\T_j^{\ell}(\vec{f})\|_{L^{t_0}}
& = \bigg(\sum_{Q \in \D: \ell(Q) = 2^j} \| \mathbf{1}_Q \T_j^{\ell}(\vec{f} \mathbf{1}_{7Q})\|_{L^{t_0}}^{t_0} \bigg)^{\frac{1}{t_0}}
\\
& \le \bigg( \sum_{Q \in \D: \ell(Q) = 2^j} |Q|^{(\frac{1}{t_0} - \frac{1}{\tau}) t_0} 
\| \mathbf{1}_Q \T_j^{\ell}(\vec{f} \mathbf{1}_{7Q})\|_{L^{\tau}}^{t_0} \bigg)^{\frac{1}{t_0}}
\\
& \lesssim 2^{- \varepsilon |\ell|} 2^{- j d \frac{\theta}{q}} 2^{j d (\frac{1}{t_0} - \frac{1}{\tau})} \|\Omega\|_{L^q} 
\bigg(\sum_{Q \in \D: \ell(Q) = 2^j} \prod_{i=1}^m \|f_i \mathbf{1}_{7Q}\|_{L^{t_i}}^{t_0} \bigg)^{\frac{1}{t_0}}
\\
& \le 2^{- \varepsilon |\ell|} 2^{-j d (\frac1t - \frac{1}{t_0})} \|\Omega\|_{L^q} 
\bigg(\sum_{Q \in \D: \ell(Q) = 2^j} \prod_{i=1}^m \|f_i \mathbf{1}_{7Q}\|_{L^{t_i}}^t \bigg)^{\frac1t}
\\
& \le 2^{- \varepsilon |\ell|} 2^{-j d (\frac1t - \frac{1}{t_0})} \|\Omega\|_{L^q} 
\prod_{i=1}^m \bigg(\sum_{Q \in \D: \ell(Q) = 2^j} \|f_i \mathbf{1}_{7Q}\|_{L^{t_i}}^{t_i} \bigg)^{\frac{1}{t_i}}
\\
& \lesssim 2^{- \varepsilon |\ell|} 2^{-j d (\frac1t - \frac{1}{t_0})} \|\Omega\|_{L^q} 
\prod_{i=1}^m \|f_i\|_{L^{t_i}},
\end{align*}
where we have used $\frac{\theta}{q} + \frac{1}{\tau} = \frac{\theta}{q} + \frac{1-\theta}{p} = \frac1t$ and $\sum_{Q \in \D: \ell(Q) = 2^k} \mathbf{1}_{7Q} = 7^d$. This proves \eqref{eq:STE}.

In order to show the auxiliary improving estimates in \eqref{H4}, choose $u_i = \max\{r_i, 2q'\}$ if $i \in I$, and $u_i = r_i$ if $i \notin I$. By definition and \eqref{Hmd2},
\begin{align*}
\mathscr{N}_q(1/\vec{u})
= \sum_{i \notin I} \bigg(\frac{1}{r_i} + \frac1q - 1\bigg)_+ 
< \min\bigg\{\mathscr{N}_q(1/\vec{r}), \, \sum_{i \notin I} \frac{1}{r_i}\bigg\}
\le \min\bigg\{\frac1q, \frac{1}{s(I)} \bigg\},
\end{align*}
which, along with \eqref{eq:STE} and the identity $\sum_{i=1}^m \frac{1}{u_i} = \sum_{i \notin I} \frac{1}{r_i} + \sum_{i \in I} \frac{1}{u_i} =: \frac{1}{s(I)} + \frac{1}{u(I)}$, implies
\begin{align*}
\|\T_j^{\ell}\|_{L^{u_1} \times \cdots \times L^{u_m} \to L^{s(I)}} 
\lesssim \|\Omega\|_{L^q} 2^{-\varepsilon |\ell|} 2^{- \frac{jd}{u(I)}}.
\end{align*}
The above further leads to
\begin{align*}
\|\T_j^{\ell}(\vec{f})\|_{L^{s(I)}}
& = \bigg(\sum_{Q \in \D: \ell(Q) = 2^j} \| \mathbf{1}_Q \T_j^{\ell}(\vec{f} \mathbf{1}_{7Q})\|_{L^{s(I)}}^{s(I)} \bigg)^{\frac{1}{s(I)}}
\\
& \lesssim 2^{-\varepsilon |\ell|} 2^{- \frac{jd}{u(I)}} \|\Omega\|_{L^q} 
\bigg(\sum_{Q \in \D: \ell(Q) = 2^j} \prod_{i=1}^m \|f_i \mathbf{1}_{7Q}\|_{L^{u_i}}^{s(I)} \bigg)^{\frac{1}{s(I)}}
\\
& \lesssim 2^{-\varepsilon |\ell|} \|\Omega\|_{L^q} \prod_{i \in I} \|f_i\|_{L^{\infty}}
\bigg(\sum_{Q \in \D: \ell(Q) = 2^j} \prod_{i \notin I} \|f_i \mathbf{1}_{7Q}\|_{L^{r_i}}^{s(I)} \bigg)^{\frac{1}{s(I)}}
\\
& \le 2^{-\varepsilon |\ell|} \|\Omega\|_{L^q} \prod_{i \in I} \|f_i\|_{L^{\infty}}
\prod_{i \not\in I} \bigg(\sum_{Q \in \D: \ell(Q) = 2^j} \|f_i \mathbf{1}_{7Q}\|_{L^{r_i}}^{r_i} \bigg)^{\frac{1}{r_i}}
\\
& \lesssim 2^{-\varepsilon |\ell|} \|\Omega\|_{L^q} 
\prod_{i \in I} \|f_i\|_{L^{\infty}} \prod_{i \notin I} \|f_i\|_{L^{r_i}},
\end{align*}
which proves \eqref{H4}.

Finally, it remains to demonstrate \eqref{H5}. Let $|h| \le 1$. In light of \eqref{def:KKje} and Lemma \ref{lem:CZ}, for each $\T \in \big\{\Delta_{2^j h} \T_j^{\ell}, \T_j^{\ell} \circ_1 \Delta_{2^j h}, \ldots, \T_j^{\ell} \circ_m \Delta_{2^j h} \big\}$, one has
\begin{align}\label{eq:dkh-1}
|\T (\vec{f})(x)|
\lesssim \|\Omega\|_{L^q} \, 2^{(md+1) |\ell|} |h| \prod_{i=1}^m \mathcal{A}_j f_i(x).
\end{align}
Take $v_i = (s/r) r_i$ so that $v_i \ge r_i$ and $\sum_{i=1}^m \frac{1}{v_i} = \frac1s$. Then it follows from \eqref{eq:dkh-2} and \eqref{eq:dkh-1} that 
\begin{align*}
\|\T (\vec{f})\|_{L^s}
& \lesssim \|\Omega\|_{L^q} \, 2^{(md+1) |\ell|} |h| \bigg\|\prod_{i=1}^m \mathcal{A}_j f_i \bigg\|_{L^s}
\\
& \le \|\Omega\|_{L^q} \, 2^{(md+1) |\ell|} |h| \prod_{i=1}^m \| \mathcal{A}_j f_i\|_{L^{v_i}}
\\
& \lesssim \|\Omega\|_{L^q} \, 2^{(md+1) |\ell|} |h| 2^{-jd(\frac1r - \frac1s)} \prod_{i=1}^m \|f_i\|_{L^{r_i}}.
\end{align*}
Recall that $r_i(I) = r_i^* = \infty$ if $i \in I$, and $r_i(I) = r_i$ if $i \notin I$. Then $\frac{1}{s(I)} = \sum_{i=1}^m \frac{1}{r_i(I)}$. As above, there holds
\begin{align*}
\|\T (\vec{f})\|_{L^{s(I)}}
\lesssim \|\Omega\|_{L^q} \, 2^{(md+1) |\ell|} |h| \prod_{i=1}^m \|f_i\|_{L^{r_i(I)}}.
\end{align*}
Thus, \eqref{H5} holds for $A_5 \simeq \|\Omega\|_{L^q} \, 2^{(md+1) |\ell|}$.

Now, by Theorem~\ref{thm:main},
\begin{align}\label{eq:RTL-1}
\sup_{\substack{N_1, N_2 \in \Z \\ N_1 \le N_2}} 
\bigg| \bigg\langle \sum_{j = N_1}^{N_2} \T_j^{\ell}(\vec{f}), \, f_{m+1} \bigg\rangle \bigg|
\lesssim \|\Omega\|_{L^q} (1 + |\ell|) 2^{- \varepsilon |\ell|}
\Lambda_{\vec{r}, s'}^*(\vec{f}, f_{m+1}),
\end{align}
for all $f_1, \ldots, f_{m+1} \in L_c^{\infty}(\Rd)$. It was shown in \cite{HP} that 
\begin{align*}
|\widehat{K}_j(\vec{\xi})| 
\lesssim \|\Omega\|_{L^q} \min\{|2^j \vec{\xi}|, |2^j \vec{\xi}|^{-\delta} \}, 
\end{align*}
for all $q \in (1, \infty]$ and $0 < \delta < \frac{1}{2q'}$. Hence,
\begin{align*}
|\widehat{K^{\ell}}(\vec{\xi})| 
= \bigg|\sum_{j \in \Z} \Psi(2^{j-\ell} \vec{\xi}) \widehat{K}_j(\vec{\xi}) \bigg|
\lesssim \|\Omega\|_{L^q} 2^{-\delta |\ell|},
\end{align*}
which implies
\begin{align}\label{eq:RTL-2}
\sum_{j=N_1}^{N_2} \K_j^{\ell} 
= \big[ \Phi(2^{-N_2} \cdot) - \Phi(2^{-N_1 + 1} \cdot) \big] K^{\ell} \, \to \,
K^{\ell} \quad\text{in } \, \mathscr{S}'(\R^{md}),
\end{align}
as $N_1 \to \infty$ and $N_2 \to \infty$. As a consequence of \eqref{eq:RTL-1} and  \eqref{eq:RTL-2}, we have
\begin{align*}
|\langle T^{\ell}(\vec{f}), \, f_{m+1} \rangle|
\lesssim \|\Omega\|_{L^q} (1 + |\ell|) 2^{- \varepsilon |\ell|}
\Lambda_{\vec{r}, s'}^*(\vec{f}, f_{m+1}),
\end{align*}
which gives
\begin{align*}
|\langle T_{\Omega}(\vec{f}), \, f_{m+1} \rangle|
\lesssim \|\Omega\|_{L^q} \Lambda_{\vec{r}, s'}^*(\vec{f}, f_{m+1}),
\end{align*}
for all $f_1, \ldots, f_{m+1} \in L_c^{\infty}(\Rd)$. This completes the proof of Theorem \ref{thm:rough}.

\subsubsection{{\bf The sufficiency for $d=1$}}
Now let us turn to the case $d=1$. Fix $q \in (1, \infty)$ and $1 < r_1, \ldots, r_m, s < \infty$ with $\frac1s < \frac1r = \sum_{i=1}^m \frac{1}{r_i}$ and $\big(\frac{1}{r_1}, \ldots, \frac{1}{r_m}, \frac{1}{s'} \big) \in \H_{m+1}^1(q)$.
We retain the same decomposition and the auxiliary exponents $r_1^* = \cdots r_m^* = \infty$. Observe that
\begin{equation}\label{Hmd1}
\begin{aligned}
\H_m^1(q) 
& = \big\{\vec{a} \in (0, 1)^m: \mathscr{N}_q(\vec{a}) < 1/q + \mathscr{M}_q(\vec{a}) \big\}, 
\qquad \text{and}
\\
\H_{m+1}^1(q) 
& = \big\{(\vec{a}, a_{m+1}) \in (0, 1)^{m+1}:
\mathscr{N}_q(\vec{a}) < \min\{1/q, \, \mathscr{M}_q(\vec{a}) + 1-a_{m+1} \}\big\},
\end{aligned}
\end{equation}
where $\mathscr{M}_q(\vec{a}) := \min_{1 \le i \le m} \big(a_i + \frac1q - 1\big)_+$. This gives
\begin{align}\label{Nrrr}
\mathscr{N}_q(1/\vec{r}) < \min \big\{1/q, \, \mathscr{M}_q(1/\vec{r}) + 1/s \big\}.
\end{align}
Although the preceding arguments are presented for $d \ge 2$, the proofs of \eqref{H1}--\eqref{H3}, \eqref{H5}, and the auxiliary estimates in \eqref{H4} remain valid for every $d \ge 1$. Their exponent requirements are $\mathscr{N}_q(1/\vec{r}) < 1/q$ and $r<s$, which are satisfied under the assumption \eqref{Nrrr}. Consequently, it suffices to 
establish the main improving estimate in \eqref{H4} in the larger range $\mathscr{N}_q(1/\vec{r}) < \mathscr{M}_q(1/\vec{r}) + 1/s$ than the previous one $\mathscr{N}_q(1/\vec{r}) < 1/s$:
\begin{align*}
\|\T_j^{\ell}\|_{L^{r_1} \times \cdots \times L^{r_m} \to L^s}
\lesssim \|\Omega\|_{L^q} \, 2^{-\varepsilon |\ell|} \, 2^{-j(\frac1r - \frac1s)},
\quad \forall j, \ell \in \Z,
\end{align*}
for some $\varepsilon > 0$. 

In the case $\ell < 0$, by \eqref{def:KKje} and Lemma \ref{lem:CZ},
\begin{align*}
|\T_j^{\ell} (\vec{f})(x)|
\lesssim \|\Omega\|_{L^q} \, 2^{- |\ell|} \prod_{i=1}^m \mathcal{A}_j f_i(x),
\end{align*}
which along with \eqref{eq:dkh-2} gives the desired estimate. To handle the case $\ell \ge 0$, choose $1 < s_- < s < s_+ < \infty$ such that
\begin{align*}
\mathscr{N}_q(1/\vec{r}) - \mathscr{M}_q(1/\vec{r}) < 1/s_+ < 1/s
\quad \text{and} \quad
\mathscr{N}_q(1/\vec{r}) < 1/s_- < 1/r.
\end{align*}
Then, $\mathscr{N}_q(1/\vec{r}) < \min\{1/q, 1/r\}$, which along with \eqref{eq:STE} implies
\begin{align*}
\|\T_j^{\ell}\|_{L^{r_1} \times \cdots \times L^{r_m} \to L^{s_-}}
\lesssim \|\Omega\|_{L^q} \, 2^{-\varepsilon |\ell|} 2^{-j d(\frac1r - \frac{1}{s_-})}.
\end{align*}
By interpolation, it is enough to show
\begin{align}\label{eq:Tjsp}
\|\T_j^{\ell}\|_{L^{r_1} \times \cdots \times L^{r_m} \to L^{s_+}}
\lesssim \|\Omega\|_{L^q} \, 2^{-j d(\frac1r - \frac{1}{s_+})}.
\end{align}
Note that setting $g_i = f_i(2^j \cdot)$ for each $i=1, \ldots, m$, we have 
\begin{align*}
\|g_i\|_{L^{r_i}} = 2^{-\frac{j}{r_i}} \|f_i\|_{L^{r_i}}
\quad \text{ and } \quad 
\T_j^{\ell}(\vec{f})(2^j x) = \T_j^{\ell}(\vec{g})(x),
\end{align*}
provide that $2^{jmd} \K_j^{\ell}(2^j \cdot) = 2^{jmd} (K^{\ell} \Psi_j)(2^j \cdot) = K^{\ell} \Psi_0 = \K_0^{\ell}$. This enables us to reduce \eqref{eq:Tjsp} to the case $j=0$.

Recall that $\K_0^{\ell} = \Psi \sum_{k \in \Z} \widehat{\Psi}_{\ell-k}*K_k$. 
If we let $T_K (\vec{f})(x) := \int_{\R^{md}} K(x-\vec{y}) \prod_{i=1}^m f_i(y_i) \, d\vec{y}$, then by Lemma \ref{lem:TKfg} parts \eqref{TKfg-33}--\eqref{TKfg-4} and Lemma \ref{lem:imp},
\begin{align}\label{eq:TPss-1}
\|T_{\Psi (\widehat{\Psi}_{\ell-k}*K_k)}\|_{L^{r_1} \times \cdots \times L^{r_m} \to L^{s_+}}
\lesssim \|T_{K_k}\|_{L^{r_1} \times \cdots \times L^{r_m} \to L^{s_+}}
\lesssim \|\Omega\|_{L^q} 2^{-k(\frac1r - \frac{1}{s_+})}.
\end{align}
Since $\supp \Psi \subset \{\frac12 \le|\vec{y}| \le 2 \}$ and $\supp K_k \subset \{2^{k-1} \le |\vec{z}| \le 2^{k+1} \}$, we have for any $k \le -3$, $\vec{y} \in \sup \Psi$, and $\vec{z} \in \supp K_k$, $|\vec{y} - \vec{z}| \ge |\vec{y}| - |\vec{z}| \ge 1/2 - 2^{k+1} \ge 1/4$. Thus, for any $k \le -3$ and $N>md$, the Schwartz decay of $\widehat{\Psi}$ yields
\begin{align*}
\big| \Psi(\vec{y}) \big(\widehat{\Psi}_{\ell-k}*K_k \big)(\vec{y}) \big|
& \le |\Psi(\vec{y})| \int_{\R^{md}} 2^{(\ell-k)md} 
\big|\widehat{\Psi} \big(2^{\ell-k}(\vec{y} - \vec{z}) \big) \big| |K_k(\vec{z})| \, d\vec{z}
\\
&\lesssim \|\Omega\|_{L^q} 2^{(k-\ell)(N-md)}
\mathbf{1}_{\{\frac12 \le |\vec{y}| \le 2\}},
\end{align*}
Then 
\begin{align*}
|T_{\Psi (\widehat{\Psi}_{\ell-k}*K_k)}(\vec{f})(x)|
\lesssim \|\Omega\|_{L^q} 2^{k(N-md)} \prod_{i=1}^m \A_0 f_i(x),
\end{align*}
which together with \eqref{eq:dkh-2} gives
\begin{align}\label{eq:TPss-2}
\|T_{\Psi (\widehat{\Psi}_{\ell-k}*K_k)}\|_{L^{r_1} \times \cdots \times L^{r_m} \to L^{s_+}}
\lesssim \|\Omega\|_{L^q} 2^{k(N-md)}, \quad\forall k \le -3.
\end{align}
Now gathering \eqref{eq:TPss-1} and \eqref{eq:TPss-2}, we decude
\begin{align*}
\|\T_0^{\ell}\|_{L^{r_1} \times \cdots \times L^{r_m} \to L^{s_+}}
\le \bigg(\sum_{k >-3} + \sum_{k  \le -3}\bigg) 
\|T_{\Psi (\widehat{\Psi}_{\ell-k}*K_k)}\|_{L^{r_1} \times \cdots \times L^{r_m} \to L^{s_+}}
\lesssim \|\Omega\|_{L^q}.
\end{align*}
This completes the proof of the necessity for $d=1$.

\begin{lemma}\label{lem:CZ}
For every multi-index $\alpha$, 
\begin{align*}
|\partial^{\alpha} K^{\ell}(\vec{y})| 
\lesssim_{\alpha} \mathscr{B}_{\ell}^{\alpha} \|\Omega\|_{L^q} |\vec{y}|^{-md-|\alpha|}, 
\quad \forall \vec{y} \ne 0,
\end{align*}
where $\mathscr{B}_{\alpha}^{\ell} = 2^\ell$ if $\ell < 0$, and $\mathscr{B}_{\alpha}^{\ell} = 2^{\ell(md/q+|\alpha|)}$ if $\ell \ge 0$.
\end{lemma}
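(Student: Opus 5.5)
We need to prove Lemma \ref{lem:CZ}: pointwise derivative bounds for $K^{\ell} = \sum_j K_j^{\ell}$, where $\widehat{K_j^{\ell}} = \Psi(2^{j-\ell}\cdot)\widehat{K}_j$, i.e. $K_j^{\ell} = K_j * \check\Psi_{\ell-j}$ with $\check\Psi_{\ell-j}$ an $L^1$-normalized Schwartz bump at scale $2^{j-\ell}$, having all moments vanishing because $\Psi$ vanishes near the origin.

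The plan is to work by scaling. Since $K_j(\vec y) = 2^{-jmd} K_0(2^{-j}\vec y)$, we have
\begin{align*}
K_j^{\ell}(\vec y) = 2^{-jmd} K_0^{\ell}(2^{-j}\vec y),
\qquad
\partial^{\alpha} K_j^{\ell}(\vec y) = 2^{-j(md+|\alpha|)} (\partial^{\alpha} K_0^{\ell})(2^{-j}\vec y).
\end{align*}
So it suffices to prove, for every $N$,
\begin{align*}
|\partial^{\alpha} K_0^{\ell}(\vec y)| \lesssim_{\alpha,N} \mathscr{B}_{\ell}^{\alpha} \|\Omega\|_{L^q} (1+|\vec y|)^{-N}.
\end{align*}
Summing over $j$ then gives the claim, because $\sum_j 2^{-j(md+|\alpha|)}(1+2^{-j}|\vec y|)^{-N} \simeq |\vec y|^{-md-|\alpha|}$ for $N > md+|\alpha|$.

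\emph{The case $\ell \ge 0$.} Write $\partial^\alpha K_0^\ell = K_0 * \partial^\alpha \phi_\ell$, where $\phi_\ell := \check\Psi_{\ell}$ is the inverse Fourier transform of $\Psi(2^{-\ell}\cdot)$. Then $\phi_\ell = 2^{\ell md}\check\Psi(2^{\ell}\cdot)$ and $\|\partial^{\alpha}\phi_\ell\|_{L^{q'}} \simeq 2^{\ell(|\alpha|+md/q)}$. Since $K_0$ is supported in the annulus $\{1/2\le|\vec z|\le 2\}$ and $\|K_0\|_{L^q}\lesssim \|\Omega\|_{L^q}$, Hölder's inequality gives the bound $2^{\ell(md/q+|\alpha|)}\|\Omega\|_{L^q}$. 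For the tail $|\vec y|\ge 4$, we use the rapid decay of $\partial^\alpha\phi_\ell$ at distance $\gtrsim|\vec y|$. This decay only improves with $\ell \ge 0$, and it yields $(1+|\vec y|)^{-N}$.

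\emph{The case $\ell<0$.} Here the bump has scale $2^{-\ell}\ge1$ and we need a gain $2^{\ell}$. We use the cancellation $\int K_0=0$, which follows from the mean-zero property of $\Omega$ and the radiality of $\Psi$. It gives
\begin{align*}
\partial^\alpha K_0^\ell(\vec y) = \int K_0(\vec z)\big[\partial^\alpha\phi_\ell(\vec y-\vec z)-\partial^\alpha\phi_\ell(\vec y)\big]\,d\vec z.
\end{align*}
By the mean value theorem with $|\vec z|\le 2$, this is bounded by $\|K_0\|_{L^1}\sup_{|\vec w - \vec y|\le 2}|\nabla\partial^\alpha\phi_\ell(\vec w)|$. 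We have $\|\nabla\partial^\alpha\phi_\ell\|_\infty\lesssim 2^{\ell(md+|\alpha|+1)}\le 2^{\ell}$. The decay is $(1+2^{\ell}|\vec y|)^{-N}$, which is too weak in $|\vec y|$. However, the true target $|\vec y|^{-md-|\alpha|}$ is recovered directly, since
\begin{align*}
2^{\ell(md+|\alpha|+1)}(1+2^{\ell}|\vec y|)^{-N}\lesssim 2^{\ell}|\vec y|^{-md-|\alpha|}.
\end{align*}
Therefore for $\ell<0$ I would skip the scaling reduction to rapid decay and sum directly. The bound $|\partial^\alpha K_j^\ell(\vec y)|\lesssim 2^{\ell}\|\Omega\|_{L^1}\, 2^{-j(md+|\alpha|)}\,2^{\ell(md+|\alpha|)}(1+2^{\ell-j}|\vec y|)^{-N}$ sums over $j$ to $2^{\ell}|\vec y|^{-md-|\alpha|}$. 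This sum is dominated by the terms with $2^{j-\ell}\approx|\vec y|$.

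\emph{Main obstacle.} The main obstacle is the bookkeeping of this $j$-summation. The scale of $K_j^\ell$ is $\max(2^j,2^{j-\ell})$, so the terms are not centered at a single scale, and one must check that the sum is uniform. I expect this to be routine geometric series once the two regimes are separated as above.
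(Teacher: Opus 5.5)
Your proposal is correct and follows the paper's proof essentially step for step. You use the scaling identity $K_j^{\ell}(\vec y)=2^{-jmd}(K_0*\check\Psi_{\ell})(2^{-j}\vec y)$. For $\ell<0$ you use the cancellation $\int K_0=0$ and the mean value theorem to get $2^{\ell(md+1+|\alpha|)}(1+2^{\ell}|\vec y|)^{-N}$, then sum directly in $j$. For $\ell\ge 0$ you use H\"older with $\|\partial^{\alpha}\check\Psi_{\ell}\|_{L^{q'}}\simeq 2^{\ell(md/q+|\alpha|)}$ plus off-support decay. The only difference is cosmetic: for $\ell\ge0$ the paper splits the $j$-sum into three explicit regimes, which is equivalent to your $(1+|\vec y|)^{-N}$ packaging.
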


\begin{proof}
By homogeneity, $K_j^{\ell}(\vec{y}) = 2^{-jmd} G_{\ell}(2^{-j} \vec{y})$, where $G_{\ell} = K_0* \widehat{\Psi}_{\ell}$.
Since $\Psi$ is radial and $\Omega$ is mean zero, we have
\begin{equation*}
\int_{\R^{md}} K_0 \, d\vec{y} = 0
\quad \text{and} \quad
\|K_0\|_{L^1} + \|K_0\|_{L^q} + \int_{\R^{md}} |\vec{y}| \, |K_0(\vec{y})| \, d\vec{y}
\lesssim \|\Omega\|_{L^q}.
\end{equation*}
Suppose first that $\ell < 0$. The above gives
\begin{align*}
|\partial^{\alpha} G_{\ell}(\vec{y})|
& = \bigg|\int_{\R^{md}} K_0(\vec{z})
\big[\partial^{\alpha} \widehat{\Psi}_{\ell}(\vec{y} - \vec{z})
- \partial^{\alpha} \widehat{\Psi}_{\ell}(\vec{y}) \big] \, d\vec{z} \bigg|
\\
& = \bigg|\int_{\R^{md}} K_0(\vec{z}) \int_0^1 \vec{z} \cdot \nabla \partial^{\alpha}
\widehat{\Psi}_{\ell}(\vec{y} - t \vec{z}) \, dt \, d\vec{z} \bigg|
\\
& \lesssim \int_{\R^{md}} |\vec{z}| |K_0(\vec{z})| 
\int_0^1 \frac{2^{\ell(md+1+|\alpha|)}}{(1+2^{\ell}|\vec{y} - t \vec{z}|)^N} \, dt \, d\vec{z}
\\
& \lesssim \frac{2^{\ell(md+1+|\alpha|)}}{(1+2^{\ell} |\vec{y}|)^N} \|\Omega\|_{L^q}.
\end{align*}
Choosing $N > md+|\alpha|$, we obtain
\begin{align*}
\sum_{j \in \Z} |\partial^{\alpha} K_j^{\ell}(\vec{y})|
& = \sum_{j \in \Z} 2^{-j(md + |\alpha|)} |(\partial^{\alpha} G_{\ell})(2^{-j} \vec{y})|
\\
& \lesssim 2^{\ell} \|\Omega\|_{L^q} \sum_{j \in \Z} 
\frac{2^{(\ell-j)(md+|\alpha|)}}{(1+2^{\ell-j}|\vec{y}|)^N}
\lesssim 2^{\ell} \|\Omega\|_{L^q} |\vec{y}|^{-md-|\alpha|},
\end{align*}
provided that
\begin{equation*}
\sum_{j \in \Z} (2^{-j}\rho)^A(1+2^{-j}\rho)^{-N}
\lesssim_{A, N} 1, \quad \forall \rho>0, \, 0<A<N.
\end{equation*}

Now suppose that $\ell \ge 0$. By H\"older's inequality,
\begin{equation*}
\|\partial^{\alpha} G_{\ell}\|_{L^{\infty}}
\le \|K_0\|_{L^q} \|\partial^{\alpha} \widehat{\Psi}_{\ell}\|_{L^{q'}}
\lesssim \|\Omega\|_{L^q} 2^{\ell(md/q + |\alpha|)}.
\end{equation*}
The fact $\widehat{\Psi}_{\ell} \in \S(\R^{md})$ implies 
\begin{equation*}
|\partial^{\alpha} \widehat{\Psi}_{\ell}(\vec{z})|
= 2^{\ell(md+|\alpha|)} | (\partial^{\alpha} \widehat{\Psi})(2^{\ell} \vec{z})|
\lesssim_N 2^{\ell(md+|\alpha|)} (1+2^{\ell} |\vec{z}|)^{-N}, \quad \forall N>0,
\end{equation*}
If $|\vec{y}| \le1/4$, then for any $1/2 \le |\vec{z}| \le 2$, we have $|\vec{y} - \vec{z}| \ge |\vec{z}| - |\vec{y}| \ge1/4$ and
\begin{equation*}
|\partial^{\alpha} G_{\ell}(\vec{y})|
\lesssim_N \|\Omega\|_{L^q} 2^{\ell(md+|\alpha|-N)}.
\end{equation*}
If $|\vec{y}| \ge 4$, then for any $1/2 \le |\vec{z}| \le 2$, we have $|\vec{y} - \vec{z}| \ge |\vec{y}| - |\vec{z}| \ge |\vec{y}|/2$ and
\begin{equation*}
|\partial^{\alpha} G_{\ell}(\vec{y})|
\lesssim_N \|\Omega\|_{L^q} 2^{\ell(md+|\alpha|-N)} |\vec{y}|^{-N}.
\end{equation*}
Therefore, the estimates above yield
\begin{align*}
\sum_{j \in \Z} |\partial^{\alpha} K_j^{\ell}(\vec{y})|
& = \sum_{j \in \Z} 2^{-j(md + |\alpha|)} |(\partial^{\alpha} G_{\ell})(2^{-j} \vec{y})|
\\
& \lesssim  \|\Omega\|_{L^q} \sum_{2^{-j} |\vec{y}| \le 1/4} 2^{-j(md + |\alpha|)} 2^{\ell(md+|\alpha|-N)}
\\
& \quad + \|\Omega\|_{L^q} \sum_{1/4 < 2^{-j} |\vec{y}| < 4} 2^{-j(md + |\alpha|)} 2^{\ell(md/q+|\alpha|)}
\\
& \quad + \|\Omega\|_{L^q} \sum_{2^{-j} |\vec{y}| \ge 4} 2^{-j(md + |\alpha|)} 
2^{\ell(md+|\alpha|-N)} (2^{-j} |\vec{y}|)^{-N}
\\
& \lesssim \|\Omega\|_{L^q} 2^{\ell(md/q+|\alpha|)} |\vec{y}|^{-md-|\alpha|},
\end{align*}
provided that $N > md + |\alpha|$.
\end{proof}

\subsection{Bilinear pseudo-differential operators}\label{sec:pse}
Next, we prove Theorem \ref{thm:pse}. For this purpose, we begin with showing the necessity, and then justify the sufficiency at the subcritical order, which will be used in the proof of the sufficiency at the critical order. 

\subsubsection{{\bf The necessity. }}
Suppose that there exists some large $N>1$ so that
\begin{equation}\label{eq:psesp}
|\langle T_{\sigma}(f_1, f_2), f_3 \rangle|
\lesssim \|\sigma\|_{BS_{\rho, \delta}^{m, N}} \,
\Lambda^*_{r_1, r_2, s'}(f_1, f_2, f_3)
\end{equation}
for all $\sigma \in BS_{\rho, \delta}^{m, N}$ and for all $f_1, f_2, f_3 \in L_c^{\infty}(\Rd)$. By a standard smooth cutoff argument, \eqref{eq:psesp} can be extended to all $f_1, f_2, f_3 \in \mathscr{S}(\Rd)$, with the same implicit constant. Thus, in what follows, we will construct $\sigma \in BS_{\rho, \delta}^{m, N}$ and $f_1, f_2, f_3 \in \mathscr{S}(\Rd)$.

Denote $r_3 := s'$ and $\bar{r}_i := \min\{r_i, 2\}$ for each $i=1, 2, 3$. Then $\Gamma(r_1, r_2, s) = \sum_{i=1}^3 \frac{1}{\bar{r}_i} - 1$. Let $\psi \in \mathscr{S}(\Rd)$ be a nonnegative function such that $\supp \widehat{\psi} \subset [-\frac{1}{32}, \frac{1}{32}]^d$ and $C_{\psi} := \int_{\Rd} \psi^3 \, dx > 0$. Let $\phi \in \mathscr{C}_c^{\infty}(\Rd)$ with $\supp \phi \subset [-\frac14, \frac14]^d$ and $\phi = 1$ on $[-\frac{1}{16}, \frac{1}{16}]^d$.

For $\lambda \ge 2$, let $K = \lfloor \lambda^{1-\rho} \rfloor$, $E_1 = E_2 = \{K, \ldots, 2K-1\}^d$, and $E_3 = E_1 + E_2 = \{2K, \ldots, 4K-2\}^d$. Thus $|E_1| = |E_2| = K^d \le |E_3| \le (2K)^d$. If $1 < r_i \le 2$, let $\varepsilon_{i, v} = 1$; if $r_i > 2$, choose the signs $\varepsilon_{i, v} \in\{-1, 1\}$ so that 
\begin{align*}
\mathcal J_i(\varepsilon)
:= \bigg\| \langle x \rangle^{\kappa} \psi(x)
\sum_{v \in E_i} \varepsilon_v e^{2\pi i v \cdot x} \bigg\|_{L^{r_i}_x}^{r_i}
\end{align*} 
is minimal among all possible choices of signs, where $\langle x \rangle = (1+|x|^2)^{1/2}$. Such a choice exists due to the finiteness of $E_i$. Then define
\begin{align*}
F_i(x) = \psi(x) P_i(x)  \quad \text{ and } \quad
P_i(x) = \sum_{v \in E_i} \varepsilon_{i, v} \, e^{2 \pi i v \cdot x}.
\end{align*} 
We claim that
\begin{align*}
\|\langle \cdot \rangle^{\kappa} F_i\|_{L^{r_i}}
\lesssim K^{d(1- \frac{1}{\bar{r}_i})}, \qquad i=1, 2, 3.
\end{align*}
Indeed, in the case $1 < r_i \le 2$, since $P_i$ is periodic and $\psi$ is Schwartz,
\begin{align*}
\|\langle \cdot \rangle^{\kappa} F_i\|_{L^{r_i}}^{r_i}
= \int_{\bT^d} |P_i(x)|^{r_i}
\sum_{m \in \Z^d} \br{x+m}^{Ar_i} |\psi(x+m)|^{r_i} \, dx
& \lesssim \|P_i\|_{L^{r_i}(\bT^d)}^{r_i}
\lesssim K^{d(r_i-1)}.
\end{align*}
In the case $r_i > 2$, averaging over all sign vectors and applying Khintchine's inequality, we obtain
\begin{align*}
\|\langle \cdot \rangle^{\kappa} F_i\|_{L^{r_i}}^{r_i}
& = \min_{\varepsilon \in\{-1, 1\}^{E_i}} \mathcal{J}_i(\varepsilon)
\le 2^{-|E_i|} \sum_{\varepsilon \in \{-1, 1\}^{E_i}} \mathcal{J}_i(\varepsilon)
\\
& = \int_{\Rd} \langle x \rangle^{\kappa r_i}|\psi(x)|^{r_i} \, \mathbb{E} \bigg|
\sum_{v \in E_i} \varepsilon_v e^{2 \pi i v \cdot x} \bigg|^{r_i} \, dx
\\
& \lesssim_{r_i} |E_i|^{r_i/2} \|\langle \cdot \rangle^{\kappa} \psi\|_{L^{r_i}}^{r_i}
\lesssim K^{dr_i/2}.
\end{align*}

By Lemma~\ref{lem:trans}, there exist some $t_1, t_2, t_3 \in \bT^d$ such that
\begin{align*}
\Lambda_{\vec r}^{*}
\big(F_1(\cdot - t_1), F_2(\cdot - t_2), F_3(\cdot - t_3)\big)
\lesssim \prod_{i=1}^3 \|\br{\cdot}^{\kappa} F_i\|_{L^{r_i}}
\lesssim K^{d(3- \frac{1}{\bar{r}_1} - \frac{1}{\bar{r}_2} - \frac{1}{\bar{r}_3})}.
\end{align*}
If we take
\begin{align*}
f_i(x) := F_i(\lambda^{\rho} x - t_i), \qquad i=1, 2, 3,
\end{align*}
then
\begin{align}\label{eq:packet}
\Lambda_{\vec r}^{*}(f_1, f_2, f_3)
& = \lambda^{-d \rho} \Lambda_{\vec r}^{*} \big(F_1(\cdot - t_1), F_2(\cdot-t_2), F_3(\cdot-t_3) \big)
\\ \notag
& \lesssim \lambda^{-d \rho} K^{d(3- \frac{1}{\bar{r}_1} - \frac{1}{\bar{r}_2} - \frac{1}{\bar{r}_3})}
= \lambda^{-d \rho} K^{d[2 - \Gamma(r_1, r_2, s)]}.
\end{align}

Define the $x$-independent symbols by
\begin{align*}
\widetilde{\sigma}(\xi, \eta)
& := \lambda^m \sum_{u, v \in E_1}
\varepsilon_{1, u} \, \varepsilon_{2, v} \, 
\varepsilon_{3, u+v} \, \phi(\lambda^{-\rho} \xi - u) \phi(\lambda^{-\rho} \eta - v),
\\
\sigma(\xi, \eta)
& := \exp \big(2\pi i \lambda^{-\rho} [(t_1 - t_3) \cdot \xi + (t_2 - t_3) \cdot \eta] \big) 
\widetilde{\sigma}(\xi, \eta).
\end{align*}
By Leibniz's rule and the fact $\supp \sigma \subset [\lambda/4, \lambda]^d \times [\lambda/4, \lambda]^d$, we have 
\begin{align*}
|\partial_{\xi}^{\beta} \partial_{\eta}^{\gamma} \sigma(\xi, \eta)|
\le C_{\beta, \gamma} \lambda^{m - \rho(|\beta| + |\kappa|)}
\le C_{\beta, \gamma} (1 + |\xi| + |\eta|)^{m - \rho(|\beta| + |\kappa|)},
\end{align*}
for all multi-indices $\beta, \gamma$, which means 
\begin{align}\label{eq:sbs}
\|\sigma\|_{BS_{\rho, \delta}^{m, N}} \le C_N, \quad \forall N \in \N, \lambda \ge 2.
\end{align}

Note that
\begin{align*}
\widehat{f}_i(\xi)
= \lambda^{-d \rho} e^{-2 \pi i \lambda^{-\rho} t_i \cdot \xi} 
\sum_{v \in E_i} \varepsilon_{i, v} \widehat{\psi}(\lambda^{-\rho} \xi - v), \quad i=1, 2,
\end{align*}
which, along with the supports of $\phi$ and $\widehat{\psi}$ and a change of variables, implies
\begin{align*}
T_{\sigma}(f_1, f_2)(x)
= \lambda^m \psi(\lambda^{\rho} x - t_3)^2 
\sum_{u, v \in E_1} \varepsilon_{3, u+v} e^{2 \pi i(u+v) \cdot (\lambda^{\rho} x - t_3)}.
\end{align*}
Thus, 
\begin{align}\label{eq:exa}
\langle T_{\sigma}(f_1, f_2), f_3 \rangle
& = \lambda^{m-d \rho} \sum_{\substack{u, v \in E_1 \\ w \in E_3}}
\varepsilon_{3, u+v} \, \varepsilon_{3, w}
\int_{\Rd} \psi(y)^3 e^{2\pi i(u + v - w) \cdot y}\,dy
\\ \notag
& = C_{\psi} \lambda^{m-d \rho} \sum_{u, v \in E_1} \varepsilon_{3, u+v}^{2}
= C_{\psi} \lambda^{m-d \rho} K^{2d},
\end{align}
provided that
\begin{align*}
\supp \widehat{\psi^3} \subset [-\tfrac{3}{32}, \tfrac{3}{32}] 
\quad \text{and} \quad 
\int_{\Rd} \psi(y)^3 e^{2 \pi i k \cdot y} \, dy
= \widehat{\psi^3}(-k) 
= C_{\psi} \mathbf{1}_{\{k=0\}}, \quad\forall k \in \Z^d.
\end{align*}
Gathering \eqref{eq:packet}--\eqref{eq:exa}, we use sparse domination for $T_{\sigma}$ to conclude
\begin{align*}
\lambda^{m-d \rho} K^{2d}
\lesssim \lambda^{-d \rho} K^{d[2 - \Gamma(r_1, r_2, s)]}, 
\quad \text{hence, } 
\lambda^{m + d(1- \rho) \Gamma(r_1, r_2, s)} \lesssim 1.
\end{align*}
If $m + d(1- \rho) \Gamma(r_1, r_2, s) > 0$, letting $\lambda \to \infty$ leads to a contradiction. This indicates $m + d(1- \rho) \Gamma(r_1, r_2, s) \le 0$.

\subsubsection{{\bf The sufficiency at the subcritical order. }}
Fix $m < -d(1 - \rho) \Gamma(r_1, r_2, s)$ and set $\kappa := - m -d(1 - \rho) \Gamma(r_1, r_2, s)$. By homogeneity, we may assume that $\|\sigma\|_{BS_{\rho, \delta}^{m, N}} \le 1$, where $N$ is sufficiently large. Let $\phi \in \mathscr{C}_c^{\infty}(\R^{2d})$ be a cutoff function such that $\mathbf{1}_{B(0, 1)} \le \phi \le \mathbf{1}_{B(0, 2)}$. Define $\phi_0 := \phi$ and $\phi_{\ell}(\vec{\xi}) := \phi_0(2^{-\ell} \vec{\xi}) - \phi_0(2^{1-\ell} \vec{\xi})$ for each $\ell \ge 1$, which verify the following properties
\begin{align*}
\supp(\phi_0)  \subset \big\{|\vec{\xi}| \leq 2 \big\}, \quad
\supp(\phi_\ell) \subset \big\{2^{\ell-1} \leq |\vec{\xi}| \leq 2^{\ell+1}\big\}, \quad \text{and} \quad
\sum_{\ell=0}^{\infty} \phi_{\ell} = 1.
\end{align*}
Setting 
\begin{align*}
\sigma_{\ell}(x, \vec{\xi}) := \sigma(x, \vec{\xi}) \phi_{\ell}(\vec{\xi}) 
\quad \text{ and } \quad
K_{\sigma_{\ell}}(x, \vec{y}) 
:= \int_{\R^{2d}} \sigma_{\ell}(x, \vec{\xi}) \, e^{2\pi i (x - \vec{y}) \cdot \vec{\xi}} \, d\vec{\xi},
\end{align*}
one has
\begin{align*}
T_{\sigma} = \sum_{\ell \ge 0} T^{\ell}
\quad \text{ and } \quad
T^{\ell}(\vec{f})(x) := T_{\sigma_{\ell}}(\vec{f})(x)
= \int_{\R^{2d}} K_{\sigma_{\ell}}(x, \vec{y}) f_1(y_1) f_2(y_2) \, d\vec{y}.
\end{align*}
Fix $0 < \varepsilon < \rho$ and let $j_{\ell} = \lfloor (\rho - \varepsilon) \ell \rfloor$. We further split $T^{\ell}$ into spatially localized operators $T^{\ell}_j$: 
\begin{align*}
T^{\ell}_j(\vec{f})(x) 
:= \int_{\R^{2d}} K^{\ell}_j(x, y_1, y_2) f_1(y_1) f_2(y_2) \, d\vec{y},
\end{align*}
where
\begin{align*}
K^{\ell}_j(x, y_1, y_2) 
:= K_{\sigma_{\ell}}(x, y_1, y_2) \phi_j \big(2^{j_{\ell}} (x - y_1, x - y_2) \big).
\end{align*}
Set 
\begin{align*}
\T_j^{\ell} := T_{j + j_{\ell}}^{\ell} \, \text{ if } j \ge - j_{\ell}, \quad 
\T_j^{\ell} := 0 \, \text{ if } j < - j_{\ell}.
\end{align*} 
This allows us to write 
\begin{align}\label{Taaj}
T_{\sigma} = \sum_{\ell \ge 0} T^{\ell} 
\quad \text{and} \quad
T^{\ell} = \sum_{j \ge 0} T^{\ell}_j = \sum_{j \in \Z} \T^{\ell}_j.
\end{align}

Denote $\bar{r}_1 := \min\{r_1, 2\}$, $\bar{r}_2 := \min\{r_2, 2\}$, $\bar{s} := \max\{s, 2\}$, and $\frac{1}{\bar{r}} := \frac{1}{\bar{r}_1} + \frac{1}{\bar{r}_2}$, which satisfies $\Gamma := \Gamma(r_1, r_2, s) = \frac{1}{\bar{r}} - \frac{1}{\bar{s}}$. We are going to show the family $\{\T_j^{\ell}\}_{j \ge 0}$ verifies the hypotheses \eqref{H1}--\eqref{H5} for exponents $(\bar{r}_1, \bar{r}_2, \bar{s})$. 

The kernel of $\T_j^{\ell}$ is given by $\K_j^{\ell} = K_{j + j_{\ell}}^{\ell}$. By definition, $\supp \K^{\ell}_j \subset \{(x, y_1, y_2) \in \R^{3d}: |(x-y_1, x-y_2)| \le 2 \cdot 2^j\}$, which gives \eqref{H1}. To proceed, we claim that there exists some $\kappa > 0$ such that for every $j \ge 1$,
\begin{align}
\label{TEL-2}
\| T^{\ell}_j \|_{L^{p_1} \times L^{p_2} \to L^{p_0}}
& \lesssim 2^{-\kappa \ell} 2^{-2j} 
\, 2^{- d(\frac1p - \frac{1}{p_0}) (j- j_{\ell})},
\\
\label{TEL-3}
\| S^{\ell}_j \|_{L^{p_1} \times L^{p_2} \to L^{p_0}}
& \lesssim |h| 2^{-\kappa \ell} 2^{-2j} 2^{j- j_{\ell}}
\, 2^{- d(\frac1p - \frac{1}{p_0}) (j- j_{\ell})},
\end{align}
for all $|h| \le 1$ and $0 < \frac{1}{p_0} \le \frac1p = \frac{1}{p_1} + \frac{1}{p_2}$ with $p_1, p_2 \in [1, \infty]$, where 
\begin{align*}
S_j^{\ell} \in \big\{\Delta_{2^{j - j_{\ell}} h} T_j^{\ell}, 
\, T_j^{\ell}(\Delta_{2^{j - j_{\ell}} h} \otimes \Id), \, T_j^{\ell}(\Id \otimes \Delta_{2^{j - j_{\ell}} h}) \big\}.
\end{align*}
We begin with proving that for every $N \in \N$,
\begin{align}
\label{KSN-1}
|K_{\sigma_{\ell}}(x, \vec{y})| 
& \lesssim_N \frac{2^{\ell(m+2d)}}{(1 + 2^{\rho \ell} |x - \vec{y}|)^{2N}}, 
\\
\label{KSN-2}
|\nabla K_{\sigma_{\ell}}(x, \vec{y})| 
& \lesssim_N \frac{2^{\ell(m+2d+1)}}{(1 + 2^{\rho \ell} |x - \vec{y}|)^{2N}}.
\end{align}
Indeed, by the identity
\begin{align*}
(\Id - 2^{2 \rho \ell} \Delta_{\xi})^k e^{i z \cdot \xi} 
= (1 + 2^{2 \rho \ell} |z|^2)^k e^{i z \cdot \xi}, \quad \forall k \in \N,
\end{align*} 
we use integration by parts to obtain 
\begin{align*}
|K_{\sigma_{\ell}}(x, \vec{y})|
&= \bigg| \frac{1}{(1 + 4 \pi^2 2^{2 \rho \ell} |x - \vec{y}|^2)^N}
\int_{\R^{2d}} \sigma_{\ell}(x, \vec{\xi})
(\Id - 2^{2 \rho \ell} \Delta_{\vec{\xi}})^N e^{2\pi i (x - \vec{y}) \cdot \vec{\xi}} \, d\vec{\xi} \bigg|
\\
&= \bigg| \frac{1}{(1 + 4 \pi^2 2^{2 \rho \ell} |x - \vec{y}|^2)^N}
\int_{\R^{2d}} (\Id - 2^{2 \rho \ell} \Delta_{\vec{\xi}})^N \sigma_{\ell}(x, \vec{\xi}) 
\, e^{2\pi i (x - \vec{y}) \cdot \vec{\xi}} \, d\vec{\xi} \bigg|
\\
& \lesssim \sum_{|\vec{\beta}| \le 2N} \int_{|\vec{\xi}| \simeq 2^{\ell}} 
\frac{(1 + |\vec{\xi}|)^{m - \rho |\vec{\beta}|}}{(1 + 2^{\rho \ell} |x - \vec{y}|)^{2N}} \, d\vec{\xi}
\lesssim \frac{2^{\ell(m + 2d)}}{(1 + 2^{\rho \ell} |x - \vec{y}|)^{2N}},
\end{align*}
which verifies \eqref{KSN-1}. Observe that $\nabla_x K_{\sigma_{\ell}} = K_{\sigma_{\ell}^0}$, $\nabla_{y_1} K_{\sigma_{\ell}} = K_{\sigma_{\ell}^1}$, $\nabla_{y_2} K_{\sigma_{\ell}} = K_{\sigma_{\ell}^2}$,
where 
\begin{align*}
\sigma_{\ell}^0(x, \xi_1, \xi_2) 
& = \nabla_x \sigma_{\ell}(x, \xi_1, \xi_2) + 2 \pi i (\xi_1 + \xi_2) \sigma_{\ell}(x, \xi_1, \xi_2) \in BS_{\rho, \delta}^{m+1}
\\
\sigma_{\ell}^k(x, \xi_1, \xi_2) 
& = -2 \pi i \xi_k \sigma_{\ell}(x, \xi_1, \xi_2) \in BS_{\rho, \delta}^{m+1}, \quad k=1, 2.
\end{align*}
This along with \eqref{KSN-1} implies \eqref{KSN-2}. Then it follows from \eqref{KSN-1} and \eqref{KSN-2} that
\begin{align}
\label{KSN-3}
|K_j^{\ell}(x, \vec{y})| 
& \lesssim 2^{\ell(m+2d)} 2^{-2N(j + \varepsilon \ell)} 
\mathbf{1}_{\{|x-\vec{y}| \le 2^{j+1-j_{\ell}}\}},
\\
\label{KSN-4}
|\nabla K_j^{\ell}(x, \vec{y})| 
& \lesssim 2^{\ell(m+1+2d)} 2^{-2N(j + \varepsilon \ell)} 
\mathbf{1}_{\{|x-\vec{y}| \le 2^{j+1-j_{\ell}}\}}.
\end{align}
Take $q_1 \ge p_1$ and $q_2 \ge p_2$ so that $\frac{1}{p_0} = \frac{1}{q_1} + \frac{1}{q_2} \le \frac{1}{p_1} + \frac{1}{p_2} = \frac1p$. Choose $0 < \varepsilon < \rho$ small enough so that $2 \varepsilon d \Gamma < \kappa$. 
Using \eqref{KSN-3} and Young's inequality, we deduce
\begin{align*}
\|T_j^{\ell}(f_1, f_2)\|_{L^{p_0}}
& \lesssim 2^{\ell(m+2d)} 2^{-2N(j + \varepsilon \ell)} 
\prod_{i=1}^2 \||f_i|* \mathbf{1}_{B(0, 2^{j+1-j_{\ell}})}\|_{L^{q_i}}
\\
& \lesssim 2^{\ell(m+2d)} 2^{-2N(j + \varepsilon \ell)} 
2^{d(j - j_{\ell}) (2 - \frac1p + \frac{1}{p_0})} 
\prod_{i=1}^2 \||f_i\|_{L^{p_i}}
\\
& \lesssim 2^{- \ell [2 \varepsilon (N-d) - m - 2d(1-\rho)]} 2^{-2j(N-d)} 
\, 2^{- d(\frac1p - \frac{1}{p_0}) (j- j_{\ell})}
\prod_{i=1}^2 \||f_i\|_{L^{p_i}}
\\
& \le 2^{-\kappa \ell} 2^{-2j} \prod_{i=1}^2 \||f_i\|_{L^{p_i}},
\end{align*}
where we have chosen $N > d+1$ sufficiently large so that $2 \varepsilon (N-d) > m + 1 + 2d(1-\rho) + \kappa$. This shows \eqref{TEL-2}. Similarly, one has \eqref{TEL-3} by the mean value theorem.

Let $\tau = \min\{1, p_0\}$. Then in light of \eqref{TEL-2} and Lemma \ref{lem:kmt}, 
\begin{align}\label{TEL-0}
\|T_0^{\ell}(f_1, f_2)\|_{L^{p_0}}
& \lesssim \|T^{\ell}(f_1, f_2)\|_{L^{p_0}} 
+ \bigg\|\sum_{j \ge 1} T_j^{\ell}(f_1, f_2) \bigg\|_{L^{p_0}}
\\ \nonumber
& \le \|T^{\ell}(f_1, f_2)\|_{L^{p_0}} 
+ \bigg[\sum_{j \ge 1} \|T_j^{\ell}(f_1, f_2)\|_{L^{p_0}}^{\tau} \bigg]^{\frac{1}{\tau}}
\\ \nonumber
& \lesssim \Big[2^{\ell[ m + d(1-\rho) \Gamma(p_1, p_2, p_0) + \rho d (\frac1p - \frac{1}{p_0})]} 
+ 2^{-\kappa \ell} 2^{d(\frac1p - \frac{1}{p_0}) j_{\ell}} \Big]
\prod_{i=1}^2 \|f_i\|_{L^{p_i}}.
\end{align}
Note that $m + d(1-\rho) \Gamma(\bar{r}_1, \bar{r}_2, \bar{r}) \le m + d(1-\rho) \Gamma = - \kappa$, which along with \eqref{TEL-2} and \eqref{TEL-0} implies 
\begin{align*}
\|\T_j^{\ell}\|_{L^{\bar{r}_1} \times L^{\bar{r}_2} \to L^{\bar{r}}}
\lesssim 2^{-\kappa \ell}, \quad \forall j \in \Z.
\end{align*}
This yields \eqref{H2} with $A_2^{\ell} \simeq 2^{-\kappa \ell}$.

Likewise, \eqref{H3} holds with $A_3^{\ell} \simeq 2^{-\kappa \ell}$ because
\begin{align*}
\|\T_j^{\ell}\|_{L^{2 \bar{s}} \times L^{2 \bar{s}} \to L^{\bar{s}}}
\lesssim 2^{-\kappa \ell}, \quad \forall j \in \Z,
\end{align*}
provided that
\begin{align*}
m + d(1-\rho) \Gamma(2 \bar{s}, 2 \bar{s}, \bar{s}) 
= m + d(1-\rho)(1 - 1/\bar{s})
\le m + d(1-\rho) \Gamma = - \kappa.
\end{align*}
Moreover, 
\begin{align*}
\Gamma(\bar{r}_1, \bar{r}_2, \bar{s}) = \Gamma, \quad
\Gamma(\bar{r}_1, \infty, \bar{r}_1) = 1/\bar{r}_1 \le \Gamma, \quad
\Gamma(\infty, \bar{r}_2, \bar{r}_2) = 1/\bar{r}_2 \le \Gamma,
\end{align*}
and
\begin{align*}
& 2^{\ell[ m + d(1-\rho) \Gamma(\bar{r}_1, \bar{r}_2, \bar{s}) + \rho d (\frac{1}{\bar{r}} - \frac{1}{\bar{s}})]}
+ 2^{-\kappa \ell} 2^{d(\frac{1}{\bar{r}} - \frac{1}{\bar{s}}) j_{\ell}}
\\
& = 2^{\ell[ m + d(1-\rho) \Gamma + \varepsilon d \Gamma]} 2^{(\rho - \varepsilon) \ell d \Gamma}
+ 2^{-\kappa \ell} 2^{j_{\ell} d \Gamma}
\\
& \lesssim 2^{- \kappa \ell/2} 2^{j_{\ell} d \Gamma}
+ 2^{-\kappa \ell} 2^{j_{\ell} d \Gamma}.
\end{align*}
In view of \eqref{TEL-2} and \eqref{TEL-0}, the above indicates
\begin{align*}
\|\T_j^{\ell}\|_{L^{\bar{r}_1} \times L^{\bar{r}_2} \to L^{\bar{s}}}
& \lesssim 2^{-\kappa \ell/2} 2^{- d j \Gamma}, \quad \forall j \in \Z,
\\
\|\T_j^{\ell}\|_{L^{\bar{r}_1} \times L^{\infty} \to L^{\bar{r}_1}}
& \lesssim 2^{-\kappa \ell/2}, \quad \forall j \in \Z,
\\
\|\T_j^{\ell}\|_{L^{\infty} \times L^{\bar{r}_2} \to L^{\bar{r}_2}}
& \lesssim 2^{-\kappa \ell/2}, \quad \forall j \in \Z,
\end{align*}
which justifies \eqref{H4} with $A_4^{\ell} \simeq 2^{-\kappa \ell/2}$.

Finally, to achieve \eqref{H5}, note that
\begin{align*}
|\nabla K_0^{\ell}(x, \vec{y})| 
\lesssim 2^{\ell(m+1+2d)} \mathbf{1}_{\{|x-\vec{y}| \le 2^{1-j_{\ell}}\}},
\end{align*}
which together with the mean value theorem gives
\begin{align}\label{TEL-1}
\| S_0^{\ell} \|_{L^{p_1} \times L^{p_2} \to L^{p_0}}
& \lesssim |h| 2^{\ell [m+1+2d - (2d+1) (\rho - \varepsilon)]} 
\, 2^{j_{\ell} d (\frac1p - \frac{1}{p_0})},
\end{align}
for all $|h| \le 1$ and $0 < \frac{1}{p_0} \le \frac1p = \frac{1}{p_1} + \frac{1}{p_2}$ with $p_1, p_2 \in [1, \infty]$, where 
\begin{align*}
S_0^{\ell} \in \big\{\Delta_{2^{- j_{\ell}} h} T_0^{\ell}, 
\, T_0^{\ell}(\Delta_{2^{- j_{\ell}} h} \otimes \Id), \, T_0^{\ell}(\Id \otimes \Delta_{2^{- j_{\ell}} h}) \big\}.
\end{align*}
Accordingly, \eqref{TEL-3} and \eqref{TEL-1} imply 
\begin{align*}
\|\T_j^{\ell}\|_{L^{\bar{r}_1} \times L^{\bar{r}_2} \to L^{\bar{s}}}
& \lesssim 2^{(2d+1) \ell} 2^{- d j \Gamma}, \quad \forall j \in \Z,
\\
\|\T_j^{\ell}\|_{L^{\bar{r}_1} \times L^{\infty} \to L^{\bar{r}_1}}
& \lesssim 2^{(2d+1) \ell}, \quad \forall j \in \Z,
\\
\|\T_j^{\ell}\|_{L^{\infty} \times L^{\bar{r}_2} \to L^{\bar{r}_2}}
& \lesssim 2^{(2d+1) \ell}, \quad \forall j \in \Z,
\end{align*}
which justifies \eqref{H5} with $A_5^{\ell} \simeq 2^{(2d+1) \ell}$.

Now by Theorem \ref{thm:main}, one has
\begin{align}\label{BJTJ-1}
\sup_{N_1 \le N_2} 
\bigg| \bigg\langle \sum_{j= N_1}^{N_2} \mathbf{T}_j^{\ell}(f_1, f_2), \, f_3 \bigg\rangle \bigg|
\lesssim (\ell + 1) 2^{- \kappa \ell/2} \, \Lambda_{\bar{r}_1, \bar{r}_2, \bar{s}'}^*(f_1, f_2, f_3),
\end{align}
for all $f_1, f_2, f_3 \in L_c^{\infty}(\Rd)$. Observe that for every $J > j_{\ell}$, 
\begin{align*}
\sum_{j=-J}^J \T_j^{\ell} 
= \sum_{j=-j_{\ell}}^J \T_j^{\ell} 
= \sum_{j=0}^{J+j_{\ell}} T_j^{\ell},
\end{align*} 
and
\begin{align*}
\sum_{j=0}^{J+j_{\ell}} K_j^{\ell}(x, y_1, y_2)
= K_{\sigma_{\ell}} (x, y_1, y_2) \phi(2^{-J} (x-y_1, x-y_2)).
\end{align*} 
Suppose that $\supp f_i \subset B(0, 2^{N_0})$ for each $i=1, 2, 3$ and for some $N_0 \ge 1$. Choosing $J \ge N_0 +2$, we see that for all $x, y_1, y_2 \in B(0, 2^{N_0})$, $|2^{-J} (x-y_1, x-y_2)| \le 1$, which gives $\phi(2^{-J} (x-y_1, x-y_2)) = 1$ and 
\begin{align}\label{BJTJ-2}
\bigg\langle \sum_{j=-J}^J \T_j^{\ell}(f_1, f_2), f_3 \bigg\rangle
= \langle T^{\ell} (f_1, f_2), f_3\rangle.
\end{align} 
Consequently, we invoke \eqref{BJTJ-1} and \eqref{BJTJ-2} to conclude 
\begin{align*}
|\langle T_{\sigma}(f_1, f_2), \, f_3 \rangle|
& \le \sum_{\ell \ge 0} |\langle T^{\ell}(f_1, f_2), \, f_3\rangle|
\lesssim \sum_{\ell \ge 0} (\ell + 1) 2^{- \kappa \ell/2} \, \Lambda_{\bar{r}_1, \bar{r}_2, \bar{s}'}^*(f_1, f_2, f_3)
\\
& \lesssim \Lambda_{\bar{r}_1, \bar{r}_2, \bar{s}'}^*(f_1, f_2, f_3)
\le \Lambda_{r_1, r_2, s'}^*(f_1, f_2, f_3),
\end{align*}
which completes the proof of Theorem \ref{thm:pse}.

\subsubsection{{\bf The sufficiency at the critical order. }}
Fix $m = -d(1 - \rho) \Gamma(r_1, r_2, s)$. We use the same notation as those in the preceding subsection. Choose an even function $\psi \in \mathscr{S}(\Rd)$ such that $\mathbf{1}_{B(0, \frac12)} \le \widehat{\psi} \le \mathbf{1}_{B(0, 1)}$. Define 
\begin{align*}
S_{\ell} a(x, \xi, \eta) 
:= \mathcal{F}^{-1}_x \big[\widehat{\psi_{\ell}}(\cdot) \mathcal{F}_x a(\cdot, \xi, \eta) \big](x)
\quad \text{ and } \quad 
\widehat{\psi_{\ell}}(\xi') := \widehat{\psi}(2^{8-\ell} \xi').
\end{align*}
where $\mathcal{F}^{-1}_x$ denotes the Fourier transform with respect to the $x$-variable. Then
\begin{align}\label{eq:psva}
\int_{\Rd} \psi_{\ell}(y) \, dy = 1
\quad \text{ and } \quad 
\int_{\Rd} y^{\alpha} \psi_{\ell}(y) \, dy = 0, \quad \forall |\alpha| \ge 1.
\end{align}
Note that
\begin{align}\label{eq:Sell}
S_{\ell} a(x, \xi, \eta)
= \int_{\Rd} \psi_{\ell}(y) a(x-y, \xi, \eta) \, dy,
\end{align}
and
\begin{align}\label{eq:Taylor}
a(x - y, \xi, \eta)
= \sum_{|\alpha'| < N_0} \frac{(-y)^{\alpha'}}{\alpha'!} \partial_x^{\alpha'} a(x, \xi, \eta)
+ R_{N_0}(x, y, \xi, \eta),
\end{align}
where $N_0 \ge 1$ and
\begin{align*}
R_{N_0}(x, y, \xi, \eta)
:= \sum_{|\alpha'|=N_0} \frac{(-y)^{\alpha'}}{\alpha'!}
\int_0^1 N_0 (1-t)^{N_0-1} \partial_x^{\alpha'} a(x-ty, \xi, \eta) \, dt.
\end{align*}
As a consequence of \eqref{eq:psva}--\eqref{eq:Taylor}, we have for all $\ell \ge 1$,
\begin{align*}
& |\partial_x^{\alpha} \partial_{\xi}^{\beta} \partial_{\eta}^{\gamma} (\Id - S_{\ell}) \sigma_{\ell}(x, \xi, \eta)|
\\
&= |(\Id - S_{\ell}) \partial_x^{\alpha} \partial_{\xi}^{\beta} \partial_{\eta}^{\gamma}\sigma_{\ell}(x, \xi, \eta)|
= \bigg|\int_{\Rd} \psi_{\ell}(y) R_{N_0}(x, y, \xi, \eta) \, dy \bigg|
\\
& \lesssim \max_{|\alpha'| = N_0} \|\partial_x^{\alpha + \alpha'} \partial_{\xi}^{\beta} \partial_{\eta}^{\gamma} 
\sigma_{\ell}(\cdot, \xi, \eta)\|_{L^{\infty}_x}
\int_{\Rd} |y|^{N_0} \psi_{\ell}(y) \, dy
\\
&\lesssim (1+|\xi|+|\eta|)^{m + \delta (|\alpha|+N_0) - \rho (|\beta| + |\gamma|)} 2^{(8-\ell) N_0}
\mathbf{1}_{\{2^{\ell -1} \le |(\xi, \eta)| \le 2^{\ell+1}\}}
\\
&\lesssim (1+|\xi|+|\eta|)^{m - (1-\delta) N_0 + \delta |\alpha| - \rho (|\beta| + |\gamma|)}
\mathbf{1}_{\{2^{\ell -1} \le |(\xi, \eta)| \le 2^{\ell+1}\}},
\end{align*}
which implies
\begin{align}\label{eq:srem}
\sigma_{\mathrm{rem}}
:= \sigma_0 + \sum_{\ell \ge 1}(\Id - S_{\ell}) \sigma_{\ell} 
\in BS_{\rho, \delta}^{m- N_0(1-\delta), N}, \quad \forall N \in \N.
\end{align}
By the sparse bound with the critical order in the preceding subsection, this leads to a sparse domination for $T_{\sigma_{\mathrm{rem}}}$. Thus, it remains to handle the symbol $\sum_{\ell \ge 1} S_{\ell} \sigma_{\ell}$.

Take $\varphi \in \mathscr{C}_c^{\infty}(\Rd)$ such that $\mathbf{1}_{B(0, 1)} \le \varphi \le \mathbf{1}_{B(0, 2)}$. Since $\supp (S_{\ell} \sigma_{\ell}) \subset \{2^{\ell-1} \le |(\xi, \eta)| \le 2^{\ell+1}\}$, we decompose 
\begin{align*}
S_{\ell} \sigma_{\ell} 
= \widetilde{\sigma}_{\ell}^1+ \widetilde{\sigma}_{\ell}^2 + \widetilde{\sigma}_{\ell}^3, 
\end{align*}
where
\begin{align*}
\widetilde{\sigma}_{\ell}^1 
& := S_{\ell} \sigma_{\ell} \, (1 - \varphi(2^{8-\ell} \xi))(1 - \varphi(2^{8-\ell} \eta)) 
\in BS_{\rho, \delta}^{m, N},
\\
\widetilde{\sigma}_{\ell}^2 
& := S_{\ell} \sigma_{\ell} \, (1 - \varphi(2^{8-\ell} \xi)) \varphi(2^{8-\ell} \eta)
\in BS_{\rho, \delta}^{m, N},
\\
\widetilde{\sigma}_{\ell}^3 
& := S_{\ell} \sigma_{\ell} \, \varphi(2^{8-\ell} \xi)(1 - \varphi(2^{8-\ell} \eta))
\in BS_{\rho, \delta}^{m, N},
\end{align*}
for any $N \in \N$. Define 
\begin{align*}
\widehat{P_{\ell} f}(\xi) := p(2^{-\ell} \xi) \widehat{f}(\xi) 
\quad \text{ and } \quad 
p(\xi) := \varphi(\xi/3) [1 - \varphi(2^9 \xi)],
\end{align*}
which satisfies $p(\xi) = 1$ for all $2^{-8} \le |\xi| \le 3$ and $\supp p \subset \{2^{-9} \le |\xi| \le 6\}$. Then
\begin{equation}\label{eq:PEL}
\begin{aligned}
\langle T_{\widetilde{\sigma}_{\ell}^1}(f_1, f_2), f_3 \rangle
& = \langle T_{\widetilde{\sigma}_{\ell}^1}(P_{\ell} f_1, P_{\ell} f_2), f_3 \rangle,
\\
\langle T_{\widetilde{\sigma}_{\ell}^2}(f_1, f_2), f_3 \rangle
& = \langle T_{\widetilde{\sigma}_{\ell}^2}(P_{\ell} f_1, f_2), P_{\ell} f_3) \rangle,
\\
\langle T_{\widetilde{\sigma}_{\ell}^3}(f_1, f_2), f_3 \rangle
& = \langle T_{\widetilde{\sigma}_{\ell}^3}(f_1, P_{\ell} f_2), P_{\ell} f_3 \rangle.
\end{aligned}
\end{equation}
Let $K_{\widetilde{\sigma}_{\ell}^i}$ be the kernel of $T_{\widetilde{\sigma}_{\ell}^i}$, and set 
\begin{align*}
K_{\widetilde{\sigma}_{\ell, j}^i}(x, y_1, y_2) 
:= \phi_j \big(2^{\rho \ell} (x - y_1, x - y_2) \big)
K_{\widetilde{\sigma}_{\ell}^i}(x, y_1, y_2).
\end{align*}
We claim that 
\begin{align}\label{eq:cwds}
\|T_{\widetilde{\sigma}_{\ell, j}^i}\|_{L^{\bar{r}_1} \times L^{\bar{r}_2} \to L^{\bar{s}}}
\lesssim 2^{\ell \rho d \Gamma} 2^{-2Aj}, \quad \forall i=1, 2, 3, \, j \ge 0.
\end{align}
If we let $K_a$ be the kernel of the pseudo-differential operator $T_a$, then
\begin{align*}
T_a^{b, t}(\vec{f})(x) \
:= & \int_{\R^{2d}} b(t(x-y_1, x-y_2)) K_a(x, y_1, y_2) f_1(y_1) f_2(y_2) \, d\vec{y}
\\
= & \int_{\R^{2d}} \widehat b(\xi_1, \xi_2) e^{2\pi i t x \cdot(\xi_1 + \xi_2)} 
T_a \big(e^{-2\pi it\,\cdot\,\xi_1}f_1, e^{-2\pi it\,\cdot\,\xi_2}f_2 \big)(x) \, d\xi_1 \, d\xi_2,
\end{align*}
which along with Minkowski's inequality gives 
\begin{align}\label{eq:Tabt}
\sup_{t>0} \|T_a^{b, t}\|_{L^{p_1} \times L^{p_2} \to L^{p_0}} 
\le \|\widehat{b}\|_{L^1} \|T_a\|_{L^{p_1} \times L^{p_2} \to L^{p_0}}, \quad \forall p_0 \ge 1.
\end{align}
Then it follows from \eqref{eq:Tabt} applied to $b = \phi$ and $t = 2^{\rho \ell}$ and Lemma \ref{lem:kmt} that
\begin{align*}
\|T_{\widetilde{\sigma}_{\ell, 0}^i}\|_{L^{\bar{r}_1} \times L^{\bar{r}_2} \to L^{\bar{s}}}
\lesssim \|T_{\widetilde{\sigma}_{\ell}^i}\|_{L^{\bar{r}_1} \times L^{\bar{r}_2} \to L^{\bar{s}}}
\lesssim 2^{\ell \rho d \Gamma}.
\end{align*}
If $j \ge 1$, we rewrite 
\begin{align*}
K_{\widetilde{\sigma}_{\ell, j}^i}(x, y_1, y_2) 
& = 2^{2A(\rho \ell -j)} b \big(2^{\rho \ell -j} (x-y_1, x-y_2) \big) 
|(x-y_1, x-y_2)|^{2A} K_{\widetilde{\sigma}_{\ell}^i}(x, y_1, y_2)
\\
& = 2^{2A(\rho \ell -j)} b \big(2^{\rho \ell -j} (x-y_1, x-y_2) \big) 
(-4 \pi^2)^{-A} K_{\Delta_{\xi, \eta}^A \widetilde{\sigma}_{\ell}^i}(x, y_1, y_2),
\end{align*}
where $A \ge 1$ will be determined later and $b(z) = |z|^{-2A} [\phi_0(z) - \phi_0(2z)] \in \mathscr{C}_c^{\infty}(\Rd)$. Note that $\Delta_{\xi, \eta}^A \widetilde{\sigma}_{\ell}^i 
\in BS_{\rho, \delta}^{m-2A \rho, N}$. Thus, \eqref{eq:Tabt} and Lemma \ref{lem:kmt} imply
\begin{align*}
\|T_{\widetilde{\sigma}_{\ell, j}^i}\|_{L^{\bar{r}_1} \times L^{\bar{r}_2} \to L^{\bar{s}}}
\lesssim 2^{2A(\rho \ell -j)} 
\|T_{\Delta_{\xi, \eta}^A \widetilde{\sigma}_{\ell}^i}\|_{L^{\bar{r}_1} \times L^{\bar{r}_2} \to L^{\bar{s}}}
\lesssim 2^{-2Aj} \, 2^{\ell \rho d \Gamma}.
\end{align*}

Partition $\Rd$ into cubes of side length $2^{\lfloor j-\rho \ell \rfloor}$, and denote the resulting family by $\mathcal{Q}_{\ell, j}$. Then we invoke \eqref{eq:PEL} and \eqref{eq:cwds} to deduce 
\begin{align*}
\sum_{\ell \ge 1} |\langle T_{\widetilde{\sigma}_{\ell}^1}(f_1, f_2), f_3 \rangle|
& \le \sum_{\ell \ge 1} \sum_{j \ge 0} \sum_{Q \in \Q_{\ell, j}} 
\big\|T_{\widetilde{\sigma}_{\ell, j}^i} \big((P_{\ell} f_1) \mathbf{1}_{5Q}, 
(P_{\ell}f_2) \mathbf{1}_{5Q} \big) \big\|_{L^{\bar{s}}} 
\|f_3\|_{L^{\bar{s}'}(Q)}
\\
& \lesssim \sum_{j \ge 0} 2^{-2Aj} \sum_{\ell \ge 1}  2^{\ell \rho d \Gamma} 
\sum_{Q \in \Q_{\ell, j}} \|P_{\ell} f_1\|_{L^{\bar{r}_1}(5Q)} 
\|P_{\ell} f_2\|_{L^{\bar{r}_2}(5Q)}
\|f_3\|_{L^{\bar{s}'}(Q)}
\\
& \lesssim \sum_{j \ge 0} 2^{-j(2A - d \Gamma)} \sum_{\ell \ge 1}
\sum_{Q \in \Q_{\ell, j}} |Q| \langle P_{\ell} f_1\rangle_{5Q, \bar{r}_1} 
\langle P_{\ell} f_2 \rangle_{5Q, \bar{r}_2} 
\langle f_3 \rangle_{5Q, \bar{s}'}.
\end{align*}
By \cite[Remark 3.2]{LN}, there exist $3^d$ dyadic lattices $\{\mathscr{D}^i\}_{i=1}^{3^d}$ such that for any cube $Q \subset \Rd$, one can find some $i \in \{1, \ldots, 3^d\}$ and a cube $R \in \mathscr{D}^i$ so that $Q \subset R$ and $\ell(R) \le 3 \ell(Q)$. Given $Q \in \Q_{\ell, j}$, there exists some $R \in \mathscr{D}^i$ so that $5Q \subset R$ and $\ell(R) \le 15 \ell(Q)$. Hence,
\begin{align*}
\sum_{\ell \ge 1} |\langle T_{\widetilde{\sigma}_{\ell}^1}(f_1, f_2), f_3 \rangle|
& \lesssim \sum_{i=1}^{3^d} \sum_{j \ge 0} 2^{-j(2A - d \Gamma)} 
\sum_{\ell \ge 1}
\sum_{k_{\ell} = 2^{\lfloor j-\rho \ell \rfloor+2}}^{2^{\lfloor j-\rho \ell \rfloor+4}} 
\sum_{\substack{R \in \mathscr{D}^i \\ \ell(R) = 2^{k_{\ell}}}} 
|R| \prod_{k=1}^2 \langle P_{\ell} f_k \rangle_{R, \bar{r}_k} 
\langle f_3 \rangle_{R, \bar{s}'}.
\end{align*}
which along with Lemma~\ref{lem:pp} gives
\begin{align*}
\sum_{\ell \ge 1} |\langle T_{\widetilde{\sigma}_{\ell}^1}(f_1, f_2), f_3 \rangle|
\lesssim \sum_{j \ge 0} 2^{-j(2A - d \Gamma)} 
\Lambda^*_{\bar{r}_1, \bar{r}_2, \bar{s}'}(\vec{f}) 
\lesssim \Lambda^*_{r_1, r_2, s'}(\vec{f}),
\end{align*}
provide $2A > d \Gamma$. Similarly, one can obtain sparse domination for $\sum_{\ell \ge 1} T_{\widetilde{\sigma}_{\ell}^i}$, $i=2, 3$. This further implies the sparse bound for $T_{\sum_{\ell \ge 1} S_{\ell} \sigma_{\ell}}$ as desired.

\subsection{Bilinear oscillatory Fourier multipliers}\label{sec:osc}
Let us prove Theorem \ref{thm:osc}. Pick $d \max\{1, \frac2r - \frac12 \} < \kappa < b/a$ and $\kappa_1 = \kappa_2 = \kappa/2$ so that  
\begin{align}
\frac{\kappa_i}{d} > \frac12 > \frac1r - \frac12, \quad i=1, 2.
\end{align}
Choose a nonnegative radial function $\omega \in \mathscr{C}_c^{\infty}(\R^{2d})\) satisfying $\supp \omega \subset \{1/2 \le |\zeta| \le 2\}$ and $ \sum_{n \in \Z} \omega(2^{-n} \zeta) = 1$ for all $\zeta \ne 0$. Then
\begin{equation}\label{eq:app-osc-n-decomposition}
m^{\mathbf c}_{a, b} = \sum_{n \ge 0} m_n,
\quad \text{where} \quad
m_n(\zeta) := \sum_{k \in \Z} c_k \, m_{a, b}(2^k \zeta) \omega(2^{k-n} \zeta).
\end{equation}

Let $U \in \mathscr{C}_c^{\infty}(\R^{2d})$ with $\supp U \subset \{|\zeta| \le 1\}$ and $\widehat{U}(0) \ne 0$. Set 
$U_{\delta}(\zeta) := \delta^{-2d} U(\delta^{-1} \zeta)$. Choose $\delta \in (0, 1/2)$ small enough that $\widehat{U}(\delta \zeta)$ does not vanish on $\{1/4 \le |\zeta| \le 4\}$. Let $\varphi := (- \Delta)^N U_{\delta}$, where $N > 2d + \kappa$. Then $\widehat{\varphi}(\zeta) = (4 \pi^2 |\zeta|^2)^N \widehat{U}(\delta \zeta)$. Define, with smooth zero extension, 
\begin{align*}
\widetilde{\sigma}_{n, t}
:= \psi m_n(t\cdot) /\widehat{\varphi}^2
\quad \text{and} \quad
\widetilde{K}_{n, t} := \mathcal{F}^{-1} \widetilde{\sigma}_{n, t}.
\end{align*}
Let $\sum_{\ell \ge 0} \Psi_{\ell} = 1$ be a smooth spatial partition on $\R^{2d}$, where $\Psi_{\ell} \in \mathscr{C}_c^{\infty}(\R^{2d})$, $\supp \Psi_0 \subset \{|(u, v)| \le 1 \}$, $\supp \Psi_{\ell} \subset \{2^{\ell-1} \le |(u, v)| \le 2^{\ell+1} \}$, and $\sup_{\ell \ge 0} (\|\Psi_{\ell}\|_{L^{\infty}} + \|\widehat{\Psi}_{\ell}\|_{L^1}) < \infty$.
Define
\begin{align*}
\widetilde{\sigma}_{n, t}^{\ell}
& := \mathcal{F} \big[\widetilde{K}_{n, t} \Psi_{\ell} \big], \quad
\sigma_{n, t}^{\ell}
:= \widehat{\varphi}^2 \widetilde{\sigma}_{n, t}^{\ell}, \quad \text{and} \quad
m_{n, j}^{\ell}(\zeta)
:= \sigma_{n, 2^j}^{\ell}(2^{-j} \zeta).
\end{align*}
By construction, 
\begin{equation*}
\sum_{\ell \ge 0} m_{n, j}^{\ell}
= \psi(2^{-j} \cdot) m_n \quad\text{in }\mathscr{S}(\R^{2d}).
\end{equation*}
Note that $\mathcal{F}^{-1} (\sigma_{n, t}^{\ell}) = \varphi*\varphi* \big(\widetilde{K}_{n, t} \Psi_{\ell} \big)$ and 
\begin{align*}
\supp \mathcal{F}^{-1}(\sigma_{n, 2^j}^{\ell}) 
\subset \supp \varphi + \supp \varphi + \supp \big(\widetilde{K}_{n, 2^j} \Psi_{\ell} \big) 
\subset \{|\zeta| \le 3 \cdot 2^{\ell}\},
\end{align*}
provide $\delta$ small enough. Then the fact $\mathcal{F}^{-1} (m_{n, j}^{\ell})(\zeta) = 2^{2j d} \mathcal{F}^{-1} (\sigma_{n, 2^j}^{\ell})(2^j \zeta)$ implies
\begin{align}\label{Fmn}
\supp \mathcal{F}^{-1}(m_{n, 2^j}^{\ell}) 
\subset \{|\zeta| \le 3 \cdot 2^{\ell-j}\}.
\end{align}
For a fixed $(n, \ell)$, the map $j \mapsto J = \ell-j$ is a bijective of $\Z$. Set $T_J^{n, \ell} := T_{m_{n, j}^{\ell}}$
We are going to check the family $\{T_J^{n, \ell}\}_{J \in \Z}$ satisfies the hypotheses \eqref{H1}--\eqref{H5} with exponents $(r_1, r_2, s, s_1, s_2, r_1^*, r_2^*) = (r, r, r', 2r', 2r', \infty, \infty)$.

Obviously, \eqref{H1} follows from \eqref{Fmn}. To verify \eqref{H2} and \eqref{H3}, let us recall a Fourier multiplier theorem from \cite[Theorem~1.1]{GMNT}:
\begin{align}\label{product-H2}
\|T_{\sigma}\|_{L^{p_1} \times L^{p_2} \to L^p}
\lesssim \sup_{j \in \Z} \|\psi \sigma(2^j \cdot)\|_{W^{(t_1, t_2)}},
\end{align}
for all $\frac1p = \frac{1}{p_1} + \frac{1}{p_2}$ with $1<p_1, p_2 < \infty$, $t_1, t_2 > \frac{d}{2}$, and $\sum_{i \in I} \big(\frac{t_i}{d} - \frac{1}{p_i}) > -\frac12$ for each $\varnothing \ne I \subset \{1, 2\}$. Here $W^{(t_1, t_2)}$ is a product-type-Sobolev space defined by
\begin{equation*}
\|\sigma\|_{W^{(t_1, t_2)}}
:= \|\widehat{\sigma}(u, v) \langle u \rangle^{t_1} \langle v \rangle^{t_2} \|_{L^2},
\end{equation*}
where $\langle x \rangle = (1+|x|^2)^{1/2}$. In light of Lemma \ref{lem:transfer}, the identity 
\begin{align*}
\psi(\zeta) \sum_{J=J_1}^{J_2} m_{n, \ell-J}^{\ell}(t \zeta)
= \psi(\zeta) \sum_{i = J_1 - \ell}^{J_2 - \ell}
\widehat{\varphi}(2^i t \zeta)^2 \widetilde{\sigma}_{n, 2^{-i}}^{\ell}(2^i t \zeta)
\end{align*}
gives
\begin{align*}
\sup_{t > 0} & \bigg\| \psi \sum_{J=J_1}^{J_2} m_{n, \ell-J}^{\ell}(t \cdot) \bigg\|_{W^{(\kappa_1, \kappa_2)}}
\lesssim \sup_{t>0} \|\widetilde{\sigma}_{n, t}^{\ell}\|_{W^{(\kappa_1, \kappa_2)}}
\\
& \lesssim 2^{\ell(d+\kappa)} \sup_{t>0} \|\widetilde{K}_{n, t} \Psi_{\ell}\|_{L^{\infty}}
:= 2^{\ell(d+\kappa)} \mathscr{G}_{n, \ell}.
\end{align*}
Combining this with \eqref{product-H2}, we obtain
\begin{align*}
& \sup_{J_1 \le J_2} \bigg\| \sum_{J=J_1}^{J_2} 
T_J^{n, \ell} \bigg\|_{L^r \times L^r \to L^{r/2, \infty}}
\lesssim 2^{\ell(d+\kappa)} \mathscr{G}_{n, \ell},
\\
& \sup_{J_1\le J_2} \bigg\| \sum_{J=J_1}^{J_2} 
T_J^{n, \ell} \bigg\|_{L^{2r',1} \times L^{2r', 1} \to L^{r'}}
\lesssim 2^{\ell(d+\kappa)} \mathscr{G}_{n, \ell}.
\end{align*}
This shows both \eqref{H2} and \eqref{H3} hold with constant $A_2^{n, \ell} = A_3^{n, \ell} := 2^{\ell(d+\kappa)} \mathscr{G}_{n, \ell}$.

Let us next turn to the single-scale estimates. We claim that
\begin{align}
\label{eq:osc-11}
\sup_{t>0} \| T_{\widehat{\varphi} \widetilde{\sigma}_{n, t}^{\ell}} \|_{L^1 \times L^1 \to L^{\infty}}
& \lesssim \mathscr{G}_{n, \ell},
\\
\label{eq:osc-22}
\sup_{t>0} \|T_{\widehat{\varphi} \widetilde{\sigma}_{n, t}^{\ell}} \|_{L^2 \times L^2 \to L^2}
& \lesssim 2^{-nb}.
\end{align}
Indeed, \eqref{eq:osc-11} follows from Young's inequality and the fact $\mathcal F^{-1} (\widehat{\varphi} \widetilde{\sigma}_{n, t}^{\ell}) = \varphi*(\widetilde{K}_{n, t} \Psi_{\ell})$. Besides, \eqref{eq:osc-22} is a consequence of Lemma \ref{lem:TKfg} part \eqref{TKfg-3} and the estimates $\widehat{\varphi} \in \mathscr{S}(\R^{2d})$ and $\sup_{t>0} \|\widetilde{\sigma}_{n, t}^{\ell}\|_{L^{\infty}} \lesssim 2^{-nb}$.

Set $\beta_0 := \frac2r - 1 \in (0, 1)$ and $\beta_1 := \frac3r - 1$. Interpolating \eqref{eq:osc-11} and \eqref{eq:osc-22} yields
\begin{equation}\label{eq:TGn}
\sup_{t>0} \| T_{\widehat{\varphi} \widetilde{\sigma}_{n, t}^{\ell}}\|_{L^r \times L^r \to L^{r'}}
\lesssim \mathscr{G}_{n, \ell}^{\beta_0} \, 2^{-nb (1 - \beta_0)}.
\end{equation}
Observe that 
\begin{align*}
T_{\sigma_1 \sigma_2}(f, g)(x)
= \int_{\R^{2d}} (\mathcal{F}^{-1}{\sigma}_1)(y, z) 
T_{\sigma_2}(\tau_{-y}f, \tau_{-z} g)(x) \, dy \, dz,
\end{align*}
which along with Minkowski's inequality gives
\begin{align}\label{Tsisi}
\|T_{\sigma_1 \sigma_2}\|_{L^{p_1} \times L^{p_2} \to L^{p_0}}
\le \|\mathcal{F}^{-1}{\sigma}_1\|_{L^1}
\|T_{\sigma_2}\|_{L^{p_1} \times L^{p_2} \to L^{p_0}},
\end{align}
for all $p_0 \ge 1$. Recall that $\sigma_{n, t}^{\ell} = \widehat{\varphi} (\widehat{\varphi} \widetilde{\sigma}_{n, t}^{\ell})$. Using \eqref{eq:TGn} and \eqref{Tsisi}, we have 
\begin{align*}
\sup_{t>0} \| T_{\sigma_{n, t}^{\ell}}\|_{L^r \times L^r \to L^{r'}}
\le \sup_{t>0} \|\varphi\|_{L^1} 
\| T_{\widehat{\varphi} \widetilde{\sigma}_{n, t}^{\ell}}\|_{L^r \times L^r \to L^{r'}}
\lesssim \mathscr{G}_{n, \ell}^{\beta_0} \, 2^{-nb (1 - \beta_0)}.
\end{align*}
In view of Lemma \ref{lem:TKfg} part \eqref{TKfg-1}, one has
\begin{align*}
& \| T_{\sigma_{n, t}^{\ell}}\|_{L^r \times L^{\infty} \to L^r}
+ \| T_{\sigma_{n, t}^{\ell}}\|_{L^{\infty} \times L^r \to L^r}
\lesssim \|\mathcal{F}^{-1}(\sigma_{n, t}^{\ell})\|_{L^1} 
\\
& = \|\varphi*\varphi* (\widetilde{K}_{n, t} \Psi_{\ell})\|_{L^1}
\le \|\varphi\|_{L^1}^2 \|\widetilde{K}_{n, t} \Psi_{\ell}\|_{L^1}
\lesssim 2^{2\ell d} \mathscr{G}_{n, \ell}.
\end{align*}
Then by the fact $m_{n, j}^{\ell} = \sigma_{n, 2^j}^{\ell}(2^{-j} \cdot)$ and change of variables, the estimates above give
\begin{align*}
\|T_J^{n, \ell}\|_{L^r \times L^r \to L^{r'}}
= 2^{jd (\frac1r + \frac1r - \frac{1}{r'})} 
\| T_{\sigma_{n, 2^j}^{\ell}} \|_{L^r \times L^r \to L^{r'}}
\lesssim 2^{-Jd (\frac1r + \frac1r - \frac{1}{r'})} 
2^{\ell d \beta_1} 2^{-nb (1 - \beta_0)} \mathscr{G}_{n,\ell}^{\beta_0},
\end{align*}
and
\begin{align*}
\| T_J^{n, \ell} \|_{L^r \times L^{\infty} \to L^r}
+ \| T_J^{n, \ell} \|_{L^{\infty} \times L^r \to L^r}
\lesssim 2^{2\ell d} \mathscr{G}_{n, \ell}.
\end{align*}
This proves \eqref{H4} with constant $A_4^{n, \ell} := 2^{\ell d \beta_1} 2^{-nb (1 - \beta_0)} \mathscr{G}_{n, \ell}^{\beta_0} + 2^{2\ell d} \mathscr{G}_{n, \ell}$.  

To verify \eqref{H5}, fix $|h| \le 1$ and let $v = 2^{\ell} h$. Denote 
\begin{align*}
& \rho_0 (\xi, \eta) := \big(e^{2 \pi i v \cdot (\xi + \eta)} - 1 \big) \widehat{\varphi}(\xi, \eta), 
\\ 
& \rho_1 (\xi, \eta) := \big(e^{2 \pi i v \cdot \xi} - 1 \big) \widehat{\varphi}(\xi, \eta), 
\\
& \rho_2 (\xi, \eta) := \big(e^{2 \pi i v \cdot \eta} - 1 \big) \widehat{\varphi}(\xi, \eta),
\\
& S_0 := \Delta_{2^J h}T_J^{n, \ell}, \, \,  
S_k := T_J^{n, \ell} \circ_i \Delta_{2^J h}, \, \, k=1, 2.
\end{align*}
Then for any $0 < \vartheta \le 1$, 
\begin{align}\label{Frho}
\| \mathcal{F}^{-1} \rho_k\|_{L^1}
\lesssim \min\{1, |v|\}
\le 2^{\vartheta \ell} |h|^{\vartheta}, \quad k=0, 1, 2.
\end{align}
The symbols of $S_1, S_2, S_3$ are respectively given by 
\begin{align*}
\big(e^{2 \pi i 2^J h \cdot (\xi + \eta)} - 1 \big) m_{n, j}^{\ell}(\xi, \eta)
& = (\rho_0 \widehat{\varphi} \widetilde{\sigma}_{n, 2^j}^{\ell})(2^{-j}(\xi, \eta)),
\\
\big(e^{2 \pi i 2^J h \cdot \xi} - 1 \big) m_{n, j}^{\ell}(\xi, \eta)
& = (\rho_1 \widehat{\varphi} \widetilde{\sigma}_{n, 2^j}^{\ell})(2^{-j}(\xi, \eta)),
\\
\big(e^{2 \pi i 2^J h \cdot \eta} - 1 \big) m_{n, j}^{\ell}(\xi, \eta)
& = (\rho_2 \widehat{\varphi} \widetilde{\sigma}_{n, 2^j}^{\ell})(2^{-j}(\xi, \eta)),
\end{align*}
which together with \eqref{eq:TGn}, \eqref{Tsisi}, and \eqref{Frho} gives
\begin{align*}
\|S_k\|_{L^r \times L^r \to L^{r'}}
& = 2^{jd (\frac1r + \frac1r - \frac{1}{r'})} 
\| T_{\rho_k \widehat{\varphi} \widetilde{\sigma}_{n, 2^j}^{\ell}} \|_{L^r \times L^r \to L^{r'}}
\\
& \lesssim 2^{jd (\frac1r + \frac1r - \frac{1}{r'})} 2^{\vartheta \ell} |h|^{\vartheta} 
\mathscr{G}_{n, \ell}^{\beta_0} \, 2^{-nb (1 - \beta_0)}
\\
& = |h|^{\vartheta} 2^{-Jd (\frac1r + \frac1r - \frac{1}{r'})} 2^{\vartheta \ell} 2^{\ell d \beta_1} 
2^{-nb (1 - \beta_0)} \mathscr{G}_{n,\ell}^{\beta_0}.
\end{align*}
In addition, by Lemma \ref{lem:TKfg} part \eqref{TKfg-1} and \eqref{Frho},
\begin{align*}
& \|S_k\|_{L^r \times L^{\infty} \to L^r}
+ \|S_k\|_{L^{\infty} \times L^r \to L^r}
\\
& \lesssim \|\mathcal{F}^{-1}(\rho_k \widehat{\varphi} \widetilde{\sigma}_{n, 2^j}^{\ell})\|_{L^1} 
= \|(\mathcal{F}^{-1} \rho_k)*\varphi* (\widetilde{K}_{n, t} \Psi_{\ell})\|_{L^1}
\\
& \le \|\mathcal{F}^{-1} \rho_k\|_{L^1} \|\varphi\|_{L^1} \|\widetilde{K}_{n, t} \Psi_{\ell}\|_{L^1}
\lesssim |h|^{\vartheta} 2^{\vartheta \ell} 2^{2\ell d} \mathscr{G}_{n, \ell}.
\end{align*}
Consequently, all three estimates in \eqref{H5} hold with constant $A_5^{n, \ell} = 2^{\vartheta \ell} A_4^{n, \ell}$.

Invoking Theorem~\ref{thm:main}, we arrive at
\begin{equation}\label{eq:JJne}
\sup_{J_1 \le J_2} \bigg| \bigg\langle \sum_{J_1 \le J \le J_2} T_J^{n, \ell}(f, g), h \bigg\rangle \bigg|
\lesssim A_0^{n, \ell} \, \Lambda_{(r, r, r)}^* (f, g, h),
\end{equation}
for all functions $f, g, h \in L_c^{\infty}(\Rd)$, where
\begin{align*}
A_0^{n, \ell}
:= 2^{\ell(d + \kappa)} \mathscr{G}_{n, \ell}
+ (\ell+1) \big(2^{\ell d \beta_1} 2^{-nb (1 - \beta_0)} \mathscr{G}_{n, \ell}^{\beta_0} 
+ 2^{2\ell d} \mathscr{G}_{n, \ell} \big).
\end{align*}
Recall that $d < \kappa < b/a$. Pick $0 < \varepsilon < \min\{1, b/a - d\}$. Then by Lemma \ref{lem:Knt},
\begin{align}\label{eq:nege}
\sum_{n, \ell \ge 0} A_0^{n, \ell}
& = \sum_{n, \ell \ge 0} 2^{-n b}  2^{\ell(d + \kappa)} \mathscr{H}_{n, \ell}
+ \sum_{n, \ell \ge 0} 2^{-n b}  (\ell+1) 2^{-\ell \varepsilon} 
2^{\ell \varepsilon} 2^{\ell d \beta_1} \mathscr{H}_{n, \ell}^{\beta_0} 
\\ \notag
& \quad + \sum_{n, \ell \ge 0} 2^{-n b}  (\ell+1) 2^{-\ell \varepsilon} 
2^{\ell \varepsilon} 2^{2\ell d} \mathscr{H}_{n, \ell}
\\ \notag
& \lesssim \sum_{n \ge 0} 2^{-n (b - a \kappa)} 
+ \sum_{n \ge 0} 2^{-n [b- a(\varepsilon + d \beta_1 - d \beta_0)]}
+ \sum_{n \ge 0} 2^{-n [b- a(\varepsilon + d)]}
\\ \notag
& \lesssim 1.
\end{align}
Thus, \eqref{eq:JJne} and \eqref{eq:nege} imply 
\begin{equation}\label{eq:JJsp}
\sup_{J_1 \le J_2} \bigg| \bigg\langle \sum_{n, \ell \ge 0} 
\sum_{J_1 \le J \le J_2} T_J^{n, \ell}(f, g), h \bigg\rangle \bigg|
\lesssim \Lambda_{(r, r, r)}^* (f, g, h),
\end{equation}
for all functions $f, g, h \in L_c^{\infty}(\Rd)$.

It remains to transfer \eqref{eq:JJsp} to the original operator $T_{\m^{\c}_{a, b}}$. By the fact $\widehat{\varphi}(0) = 0$ and the fundamental theorem of calculus, 
\begin{equation*}
|\widehat{\varphi} (\zeta)| \lesssim \min\{|\zeta|, |\zeta|^{-1}\}, \qquad \zeta \ne 0,
\end{equation*}
which together with Lemma \ref{lem:Knt} gives
\begin{align*}
& \sum_{n, \ell \ge 0} \sum_{j \in \Z}|m_{n, j}^{\ell}(\zeta)|
\le \sum_{n, \ell \ge 0} \sup_{t>0} \|\widehat{\widetilde{K}_{n, t} \Psi_{\ell}}\|_{L^{\infty}} 
\sum_{j \in \Z} |\widehat{\varphi}(2^{-j} \zeta)|^2
\\
& \lesssim \sum_{n, \ell \ge 0} \sup_{t>0} \|\widetilde{K}_{n, t} \Psi_{\ell}\|_{L^1} 
\lesssim \sum_{n, \ell \ge 0} 2^{2 d \ell} \mathscr{G}_{n, \ell}
\lesssim \sum_{n \ge 0} 2^{-n(b-ad)} 
\lesssim 1.
\end{align*}
This means 
\begin{align*}
\bm{m}_N := \sum_{n, \ell \ge 0} \sum_{|J| \le N} m_{n, \ell-J}^{\ell} 
\to \m_{a, b}^{\bm{c}}, \quad \text{as } \, N \to \infty,
\end{align*}
which along with Lebesgue’s dominated convergence theorem implies
\begin{align}\label{eq:TmN}
& |\langle T_{\bm{m}_N} (f, g), h \rangle - \langle T_{\m_{a, b}^{\bm{c}}} (f, g), h \rangle| 
\le \|T_{a_N} (f, g) - T_{\m_{a, b}^{\bm{c}}} (f, g)\|_{L^{\infty}} \|h\|_{L^1}
\\ \nonumber
& \le \int_{\R^{2d}} |\bm{m}_N(\xi, \eta) - \m_{a, b}^{\bm{c}}(\xi, \eta)| 
|\widehat{f}(\xi)| |\widehat{g}(\eta)| \, d\xi \, d\eta \, \|h\|_{L^1}
\to 0, \quad \text{as } \, N \to \infty.
\end{align}
Therefore, Theorem \ref{thm:osc} follows from \eqref{eq:JJsp} and \eqref{eq:TmN}.

\begin{lemma}\label{lem:Knt}
There exists constants $0 < c_0 < 1 < c_1 < \infty$ such that for every $N > 0$,
\begin{equation*}
\mathscr{H}_{n, \ell} 
:= 2^{nb} \mathscr{G}_{n, \ell} 
\lesssim_N
\begin{cases}
\lambda_n^{-N}, & 2^{\ell} \le c_0 \lambda_n,
\\
\lambda_n^{-d}, & c_0 \lambda_n < 2^{\ell} < c_1 \lambda_n,
\\
2^{-\ell N}, & 2^{\ell} \ge c_1 \lambda_n.
\end{cases}
\end{equation*}
Hence, for all $q_1, q_2 > 0$,
\begin{equation*}
\sum_{\ell \ge 0} 2^{q_1 \ell} \mathscr{H}_{n, \ell}^{q_2}
\lesssim \lambda_n^{q_1 - dq_2}.
\end{equation*}
\end{lemma}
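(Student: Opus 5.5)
The plan is to reduce $\widetilde{K}_{n,t}$, uniformly in $t>0$, to a finite sum of oscillatory integrals on $\R^{2d}$ with large parameter $\lambda_n := 2^{na}$ (this is the normalization I take for $\lambda_n$). I then estimate $\widetilde{K}_{n,t}$ on the dyadic shells $\supp \Psi_{\ell}$ by non-stationary phase away from $|x| \simeq \lambda_n$ and by stationary phase near it.

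\emph{Step 1 (structure of the symbol).} By definition,
\[
\psi\, m_n(t\cdot)/\widehat{\varphi}^2 = \sum_k c_k\, \psi(\zeta)\, m_{a,b}(2^k t\zeta)\,\omega(2^{k-n}t\zeta)/\widehat{\varphi}(\zeta)^2 .
\]
Since $\psi$ forces $|\zeta|\simeq 1$ and $\omega$ forces $2^{k}t|\zeta|\simeq 2^n$, only $O(1)$ values of $k$ contribute, and for each of them $\mu:=2^k t\simeq 2^n$. For $n\ge 1$ we have $|\mu\zeta|\ge 2$, so $\phi(\mu\zeta)=1$ and each term equals $c_k \mu^{-b}\, a_{\mu}(\zeta)\, e^{i\mu^a|\zeta|^a}$. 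Here
\[
a_{\mu}(\zeta)=\psi(\zeta)\,|\zeta|^{-b}\,\omega(2^{-n}\mu\zeta)/\widehat{\varphi}(\zeta)^2
\]
is smooth and supported in $\{1/2\le|\zeta|\le 2\}$. Its $C^N$ norms are bounded uniformly in $n,t,k$, because $\widehat{\varphi}$ does not vanish on $\{1/4\le|\zeta|\le 4\}$ by the choice of $\delta$, and because $2^{-n}\mu\simeq1$. The cases $n=0$ and small $n$ are handled the same way, with a harmless smooth amplitude. Hence $2^{nb}\widetilde{K}_{n,t}$ is a sum of $O(1)$ integrals
\[
I(x)=\int_{\R^{2d}} a(\zeta)\, e^{i(\Lambda|\zeta|^a + 2\pi x\cdot\zeta)}\,d\zeta,\qquad \Lambda=\mu^a\simeq\lambda_n,
\]
with $|c_k|\le1$ and uniformly controlled amplitudes.

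\emph{Step 2 (non-stationary and stationary phase).} The phase gradient is $\Lambda a|\zeta|^{a-2}\zeta + 2\pi x$, and on $\supp a$ the first term has size $\simeq\Lambda$. I choose $c_0$ small and $c_1$ large, depending only on $a$ and the support annulus. If $|x|\le c_0\lambda_n$, the gradient is $\gtrsim\lambda_n$, and repeated integration by parts gives $|I(x)|\lesssim_N\lambda_n^{-N}$. If $|x|\ge c_1\lambda_n$, the gradient is $\gtrsim |x|$, so $|I(x)|\lesssim_N |x|^{-N}\lesssim 2^{-\ell N}$ on $\supp\Psi_\ell$. In the middle range I use stationary phase. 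The Hessian of $|\zeta|^a$ has eigenvalue $a(a-1)|\zeta|^{a-2}$ in the radial direction and $a|\zeta|^{a-2}$ in the tangential directions; both are nonzero because $a\ne1$ and $a>0$. Hence the Hessian is nondegenerate on the annulus, and the standard estimate in $2d$ dimensions gives $|I(x)|\lesssim \Lambda^{-2d/2}=\lambda_n^{-d}$. Taking the supremum over $x\in\supp\Psi_\ell$ and over $t$ yields the three cases for $\mathscr{H}_{n,\ell}$. The main point to check is that all constants are uniform in $t$, in $k$ and in the $O(1)$ comparability constants between $\Lambda$ and $\lambda_n$; I would handle this by enlarging $c_0^{-1}$ and $c_1$ to absorb the bounded ratio $\Lambda/\lambda_n$.

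\emph{Step 3 (summation).} I split $\sum_\ell 2^{q_1\ell}\mathscr{H}_{n,\ell}^{q_2}$ into the three ranges of $\ell$. Only $O(1)$ indices satisfy $c_0\lambda_n<2^\ell<c_1\lambda_n$, and together they contribute $\lesssim\lambda_n^{q_1}\lambda_n^{-dq_2}$. For $2^\ell\le c_0\lambda_n$ the sum is at most $\lambda_n^{q_1}\lambda_n^{-Nq_2}$ after summing the geometric series. For $2^\ell\ge c_1\lambda_n$ it is $\sum 2^{\ell(q_1-Nq_2)}\lesssim \lambda_n^{q_1-Nq_2}$. Choosing $N>d$ so large that $Nq_2>q_1$ gives the claimed bound $\lambda_n^{q_1-dq_2}$. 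I expect the only delicate step to be the uniform stationary phase bound of Step 2; the remaining steps are routine.
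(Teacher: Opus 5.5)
Your proposal is correct and follows essentially the same route as the paper. The paper likewise writes $\widetilde{\sigma}_{n,t}$ as $2^{-nb}$ times a sum of $O(1)$ oscillatory terms with amplitudes $F_{k,n,t}$ that are uniformly controlled and supported in $\{1/2\le|\zeta|\le 2\}$, with phase $\lambda_n\rho_{k,n,t}^a|\zeta|^a$; it gets the uniform bound $\lambda_n^{-d}$ by stationary phase and the bound $(\lambda_n+|w|)^{-N}$ away from $|w|\simeq\lambda_n$ by non-stationary phase, then sums over $\ell$ with $N>\max\{d,q_1/q_2\}$. Your explicit Hessian eigenvalues $a|\zeta|^{a-2}$ and $a(a-1)|\zeta|^{a-2}$, and your adjustment of $c_0,c_1$ to absorb both the factor $2$ from $\supp\Psi_\ell$ and the ratio $\Lambda/\lambda_n$, fill in details the paper leaves implicit.
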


\begin{proof}
The second estimate follows from the first one and that $N > \max\{d, q_1/q_2\}$. To show the first estimate, we rewrite 
\begin{equation}\label{eq:sym}
\widetilde{\sigma}_{n, t}(\zeta)
= 2^{-nb} \sum_{k \in \mathcal{K}_{n, t}} c_k \, F_{k, n, t}(\zeta) 
e^{i \lambda_n \rho_{k, n, t}^a |\zeta|^a},
\end{equation}
where
\begin{equation*}
F_{k, n, t}(\zeta) 
= \psi(\zeta) \widehat{\varphi}(\zeta)^{-2} \phi(2^n\rho_{k,n,t}\zeta)
\rho_{k, n, t}^{-b} |\zeta|^{-b} \w(\rho_{k, n, t} \zeta).
\end{equation*}
Note that for every multi-index $\alpha$,
\begin{equation}\label{eq:amp}
\supp F_{k, n, t} \subset \{1/2 \le |\zeta| \le 2\}
\quad \text{ and } \quad
\sup_{n, t} \sup_{k \in \mathcal{K}_{n, t}}
\|\partial^{\alpha} F_{k, n, t}\|_{L^{\infty}} < \infty.
\end{equation}
If we define 
\begin{equation*}
I_{\lambda, \rho} f(w)
:= \int_{\R^{2d}} e^{i [2 \pi w \cdot \zeta + \lambda A_\rho (\zeta)]} f(\zeta) \, d\zeta,
\end{equation*}
where $A_{\rho}(\zeta) := \rho^a |\zeta|^a$, then by \eqref{eq:sym},
\begin{align*}
\widetilde{K}_{n, t}(w) 
= \mathcal{F}^{-1} \widetilde{\sigma}_{n, t}(w)
= 2^{-nb} \sum_{k\in\mathcal{K}_{n,t}} c_k \, 
I_{\lambda_n, \rho_{k, n, t}} F_{k, n, t}(w).
\end{align*}
Then \eqref{eq:amp} yield 
\begin{align}
\label{eq:Kgl}
|\widetilde{K}_{n, t}(w)| 
& \lesssim 2^{-nb}\lambda_n^{-d}, \quad\forall w \in \R^{2d},
\\
\label{eq:Kta}
|\widetilde K_{n, t}(w)|
& \lesssim_N 2^{-nb} (\lambda_n + |w|)^{-N}, \quad \forall N>0, 
\end{align}
Now take $c_0 := \min\{1, C_0^-\}/2$ and $c_1 := 2 \max\{1, C_0^+\}$. Recall that $\mathscr{H}_{n, \ell} = 2^{nb} \sup_{t>0} \|\widetilde{K}_{n, t} \Psi_{\ell} \|_{L^{\infty}}$. If $c_0 \lambda_n < 2^{\ell} < c_1 \lambda_n$, then $\mathscr{H}_{n, \ell} \lesssim \lambda_n^{-d}$ due to \eqref{eq:Kgl}. If $2^{\ell} \le c_0 \lambda_n$, then for all $w \in \supp \Psi_{\ell}$, $|w| \le 2^{\ell+1} \le 2 c_0 \lambda_n \le C_0^- \lambda_n$. Thus, \eqref{eq:Kta} implies $\mathscr{H}_{n, \ell} \lesssim  \lambda_n^{-N}$. If $2^{\ell} \ge c_1 \lambda_n$, then for all $w \in \supp \Psi_{\ell}$, $|w| \ge 2^{\ell-1} \ge (c_1/2) \lambda_n \ge C_0^+ \lambda_n$, which along with \eqref{eq:Kta} gives $\mathscr{H}_{n, \ell} \lesssim 2^{- \ell N}$.
\end{proof}

\subsection{Bilinear spherical maximal operators}\label{sec:App-sph}
We turn to the proof of Theorems \ref{thm:sph} and \ref{thm:tri}. The following result reduces sparse domination for maximal operators to the uniform verification of \eqref{H1}--\eqref{H5} for finite $\ell^{\infty}$-valued families encoding dyadic scales and normalized radii.

\begin{lemma}\label{lem:finite}
Let $\{U_t\}_{t > 0}$ be a positive bilinear averaging family. 
\begin{list}{\rm (\theenumi)}{\usecounter{enumi}\leftmargin=1.3cm \labelwidth=1cm \itemsep=0.1cm \topsep=0.2cm \renewcommand{\theenumi}{\alph{enumi}}}

\item\label{Uval} Given nonempty finite subsets $I \subset \Z$ and $E \subset [1, 2]$, let $\B_{I, E} := \ell^{\infty}(I \times E)$. If the $\B_{I, E}$-valued family
\begin{align*}
\mathcal{U}_j(f, g)(x) := e_j U_{2^j s}(f, g)(x) \, \, \text{ if } (j, s) \in I \times E, \qquad
\mathcal{U}_j := 0 \, \, \text{ otherwise},
\end{align*}
where $e_j$ is the standard coordinate vector, satisfies \eqref{H1}--\eqref{H5} uniformly $I$ and $E$, then 
\begin{align*}
\Big\langle \sup_{t>0} |U_t (f_1, f_2)|, \, |f_3| \Big\rangle
\lesssim \Lambda_{r_1, r_2, s'}^{*}(f_1, f_2, f_3),
\end{align*}
for all $f_1, f_2, f_3 \in L_c^{\infty}(\Rd)$.

\item\label{Vval} Given a nonempty finite subset $I \subset \Z$, let $\B_I = \ell^{\infty}(I)$. If the $\B_I$-valued family
\begin{align*}
\mathcal{V}_j(f, g)(x) := e_j U_{2^j}(f, g)(x) \, \, \text{ if } j \in I, \qquad
\mathcal{V}_j := 0 \, \, \text{ if } j \notin I,
\end{align*}
satisfies \eqref{H1}--\eqref{H5} uniformly in $I$, then
\begin{align*}
\Big\langle \sup_{j \in \Z} |U_{2^j}(f_1, f_2)|, \, |f_3| \Big\rangle
\lesssim \Lambda_{r_1, r_2, s'}^*(f_1, f_2, f_3),
\end{align*}
for all $f_1, f_2, f_3 \in L_c^{\infty}(\Rd)$.
\end{list}
\end{lemma}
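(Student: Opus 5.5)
The plan is to apply Theorem \ref{thm:main} to the $\ell^\infty$-valued families and then undo the linearization by a duality and monotone-convergence argument. We treat part \eqref{Uval} in detail; part \eqref{Vval} is identical with $E = \{1\}$ and $\B_I$ in place of $\B_{I,E}$.

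First, reduce the full supremum to finite sets of radii. We may assume $f_1, f_2 \ge 0$, since $|U_t(f_1,f_2)| \le U_t(|f_1|,|f_2|)$ by positivity, and $\Lambda^*$ only sees $|f_i|$. For a fixed choice of $f_1, f_2 \in L^\infty_c$, the map $t \mapsto U_t(f_1,f_2)(x)$ is measurable in $(t,x)$, and by the usual lower semicontinuity in $t$ for positive averages (or simply by taking the supremum over rational $t$ when $U_t$ is defined by integration against a measure) we have
\begin{align*}
\sup_{t>0} U_t(f_1,f_2)(x) = \lim_{k\to\infty} \max_{(j,s)\in I_k\times E_k} U_{2^j s}(f_1,f_2)(x),
\end{align*}
with increasing finite sets $I_k \uparrow \Z$ and $E_k \uparrow [1,2]\cap\mathbb{Q}$. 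By monotone convergence it then suffices to bound $\langle \max_{I\times E} U_{2^js}(f_1,f_2), |f_3|\rangle$ uniformly in finite $I,E$.

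Next, linearize. Fix finite $I,E$ and write $T := \sum_{j\in I}\mathcal{U}_j$, which equals the partial sum $\sum_{j=N_1}^{N_2}\mathcal{U}_j$ with $N_1 = \min I$ and $N_2 = \max I$, since $\mathcal{U}_j = 0$ for $j \notin I$. Then
\begin{align*}
|T(f_1,f_2)(x)|_{\B_{I,E}} = \max_{(j,s)} U_{2^js}(f_1,f_2)(x).
\end{align*}
Choose a measurable selection $(j(x),s(x))$ attaining the maximum; this is possible because the index set is finite. Define $f_4(x) := |f_3(x)|\, \delta_{(j(x),s(x))}$, viewed in $\ell^1(I\times E)\subset \B_{I,E}^*$. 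Then $f_4$ is bounded and compactly supported with $|f_4|_{\B^*} = |f_3|$, and
\begin{align*}
\langle T(f_1,f_2), f_4\rangle_{(\B,\B^*)} = \Big\langle \max_{(j,s)} U_{2^js}(f_1,f_2), |f_3|\Big\rangle.
\end{align*}
Theorem \ref{thm:main}, whose constants are uniform in $I,E$ by hypothesis, gives the bound $\lesssim \Lambda^*_{r_1,r_2,s'}(f_1,f_2,f_4) = \Lambda^*_{r_1,r_2,s'}(f_1,f_2,f_3)$, because the sparse form depends only on $|f_4|_{\B^*}$.

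Two technical points remain. Theorem \ref{thm:main} is stated for $L^\infty_c(\Rd;\B_i)$; here the scalar inputs $f_1,f_2$ are taken with $\B_1=\B_2=\mathbb{C}$, and $f_4 \in L^\infty_c(\Rd;\B^*)$ is simple-valued, so no extra approximation is needed. The step most likely to need care is the reduction to countably many radii, because measurability and semicontinuity of $t\mapsto U_t$ are not spelled out in the statement. I would handle it by approximating $f_1,f_2$ by continuous functions, for which $t\mapsto U_t(f_1,f_2)(x)$ is continuous, and then passing to the limit using the uniform sparse bound together with Fatou's lemma.
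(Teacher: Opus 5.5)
Your argument is essentially the paper's proof. You linearize the $\ell^\infty(I\times E)$-norm through a measurable choice $(j(x),s(x))$ of the maximizing index, pair against $|f_3|\,e_{(j(x),s(x))}\in L_c^\infty(\Rd;\ell^1(I\times E))$, apply Theorem~\ref{thm:main} with constants uniform in $I,E$, and let $I_N\times E_N$ exhaust $\Z\times(\mathbb{Q}\cap[1,2])$ by monotone convergence.

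The paper simply asserts that the supremum over rational radii equals $\sup_{t>0}$, so your extra care there goes beyond it. One caveat on your proposed fix: the Fatou step needs the pointwise bound $\sup_{t>0}U_t(f_1,f_2)\le\liminf_k\sup_{t>0}U_t(f_1^{(k)},f_2^{(k)})$, and a.e.\ convergence of continuous approximants does not give this by itself. A route that does work is to majorize $f_1,f_2\ge0$ by nearby lower semicontinuous functions, for which $t\mapsto U_t$ is lower semicontinuous (by Fatou), and then use positivity.
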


\begin{proof}
We only focus on part \eqref{Uval} since the proof of part \eqref{Vval} uses a linearization argument similar to that in part \eqref{Uval}. It suffices to treat $f_1, f_2, f_3 \ge 0$ since $|U_t(f_1, f_2)| \le U_t(|f_1|, |f_2|)$. By definition, 
\begin{align*}
& \sum_{j \in I} \mathcal{U}_j (f_1, f_2)(x)
= \big(U_{2^j s}(f_1, f_2)(x)\big)_{(j, s) \in I \times E} \qquad \text{ and } \quad
\\
& \bigg|\sum_{j \in I} \mathcal{U}_j(f_1, f_2)(x) \bigg|_{\B_{I, E}}
= \max_{(j, s) \in I \times E} U_{2^j s}(f_1, f_2)(x)
= U_{2^{j(x)} s(x)}(f_1, f_2)(x),
\end{align*}
for some $(j(x), s(x)) \in I \times E$. Let $e_{j, s}$ denote the coordinate vector whose $(k, t)$ entry is $\delta_{jk} \delta_{st}$. By the fact $\B_{I, E}^* = \ell^1(I \times E)$, 
\begin{align*}
F_3(x) := f_3(x) e_{(j(x), s(x))} \quad \text{ satisfies } \quad
|F_3(x)|_{\B_{I, E}^*} = f_3(x).
\end{align*}
Since $F_3 \in L_c^{\infty}(\Rd; \B_{I, E}^*)$ and $\langle |F_3|_{B_{I, E}^*} \rangle_{Q, s'} = \langle f_3 \rangle_{Q, s'}$, Theorem~\ref{thm:main}, applied to $\B_{I, E}$-valued functions, gives
\begin{align*}
\Big\langle \max_{(j, s) \in I \times E} U_{2^j s}(f_1, f_2), f_3 \Big\rangle
= \bigg\langle \sum_{j \in I} \mathcal{U}_j(f_1, f_2), F_3 \bigg\rangle_{(\B_{I, E}, \B_{I, E}^*)}
\lesssim \Lambda^*_{r_1, r_2, s'}(f_1, f_2, f_3),
\end{align*}
where the implicit constant is independent of $I$ and $E$. Let $I_N = \{-N, -N+1, \ldots, N-1, N\}$ and $E_N = \{r_1, \ldots, r_N\}$, where $\{r_i\}_{i=1}^{\infty}$ is the rational numbers in $[1, 2]$. Then
\begin{align*}
\lim_{N \to \infty} \max_{(j, s) \in I_N \times E_N} |U_{2^j s}(f_1, f_2)(x)|
= \sup_{j \in \Z} \sup_{s \in [1, 2]} |U_{2^j s}(f_1, f_2)(x)|
= \sup_{t>0} |U_t(f_1, f_2)(x)|. 
\end{align*}
By the monotone convergence theorem, we obtain the sparse bound as desired. 
\end{proof}

\subsubsection{\bf{Bilinear spherical maximal operators}}
To show Theorem \ref{thm:sph}, we use Lemma \ref{lem:finite} with $U_t = \A_t$. For the full maximal operator, the support condition \eqref{H1} holds with $A_1 = 2$. For any finite subsets $I \subset \Z$ and $E \subset [1, 2]$, 
\begin{align}\label{FUM}
\bigg|\sum_{j=N_1}^{N_2} \mathcal{U}_j(f_1, f_2)(x) \bigg|_{\B_{I, E}}
= \max_{\substack{j \in I \cap [N_1, N_2] \\ s \in E}} |\A_{2^j s}(f_1, f_2)(x)|
\le \M_{\full}(f_1, f_2)(x),
\end{align}
for all $N_1 \le N_2$. The condition $(\frac{1}{r_1}, \frac{1}{r_2}, \frac1s) \in \mathcal{R}_{\full}(d)$ gives $\frac1r < \frac{2d-1}{d}$.  Then by \cite[Theorem~1.1]{JL},
\begin{align*}
\|\M_{\full}\|_{L^{p_1} \times L^{p_2} \to L^q} \lesssim 1
\quad \text{ and } \quad
\|\M_{\full}\|_{L^{2s} \times L^{2s} \to L^s} \lesssim 1,
\end{align*}
which together with \eqref{FUM} implies \eqref{H2} and \eqref{H3}. 

To obtain improving estimates in \eqref{H4}, note that
\begin{align}\label{Uloc-1}
|\mathcal{U}_j (f_1, f_2)(x)|_{\B_{I, E}}
= \max_{s \in E} |\A_{2^j s}(f_1, f_2)(x)|
\le \sup_{t \in [1, 2]} |\A_{2^j t}(f_1, f_2)(x)|.
\end{align}
In addition, the localized maximal estimates in \cite[Proposition~3.2]{JL} assert
\begin{align}\label{Uloc-2}
\Big\|\sup_{t \in [1, 2]} |\A_t| \Big\|_{L^{p_1} \times L^{p_2} \to L^{p_0}} \lesssim 1,
\quad \forall (\tfrac{1}{p_1}, \tfrac{1}{p_2}, \tfrac{1}{p_0}) \in \mathcal{R}_{\full}(d).
\end{align}
By the fact 
\begin{align}\label{Atsca}
\A_{2^j t}(f_1, f_2)(2^j x) = \A_t \big(f_1(2^j \cdot), f_2(2^j \cdot) \big)(x),
\end{align} 
we invoke \eqref{Uloc-1} and \eqref{Uloc-2} to deduce
\begin{align}\label{Uloc-3}
\|\mathcal{U}_j\|_{L^{p_1} \times L^{p_2} \to L^{p_0}(\B_{I, E})} 
\lesssim 2^{-jd(\frac{1}{p_1} + \frac{1}{p_2} - \frac{1}{p_0})},
\quad \forall (\tfrac{1}{p_1}, \tfrac{1}{p_2}, \tfrac{1}{p_0}) \in \mathcal{R}_{\full}(d).
\end{align}
Recall that $(\frac{1}{r_1}, \frac{1}{r_2}, \frac1s) \in \mathcal{R}_{\full}(d)$. Then choose $r_1^*, r_2^* \in (1, \infty)$ sufficiently large so that 
\begin{align}\label{rrRd}
(\tfrac{1}{r_1}, \tfrac{1}{r_2^*}, \tfrac{1}{r_1}) \in \mathcal{R}_{\full}(d)
\quad \text{ and } \quad
(\tfrac{1}{r_1^*}, \tfrac{1}{r_2}, \tfrac{1}{r_2}) \in \mathcal{R}_{\full}(d),
\end{align}
which is possible because $m(d, r_i) > 1 > \frac{1}{r_i}$ for each $i=1, 2$. Hence, \eqref{H4} follows from \eqref{Uloc-3} applied to the triples of exponents $(r_1, r_2, s)$, $(r_1, r_2^*, r_1)$, and $(r_1^*, r_2, r_2)$.

To prove \eqref{H5}, we use the localized continuity estimates from \cite[Proposition~4]{BFOPZ}: for all $(\frac{1}{p_1}, \frac{1}{p_2}, \frac{1}{p_0}) \in \mathcal{R}_{\full}(d)$, there exists some $\eta \in (0, 1)$ such that 
\begin{align}
\label{RFU-1}
\Big\|\sup_{t \in [1, 2]} |\A_t (\Delta_h \otimes \Id)| \Big\|_{L^{p_1} \times L^{p_2} \to L^{p_0}}
& \lesssim |h|^{\eta}, \quad\forall |h| \le 1,
\\
\label{RFU-2}
\Big\|\sup_{t \in [1, 2]} |\A_t (\Id \otimes \Delta_h)| \Big\|_{L^{p_1} \times L^{p_2} \to L^{p_0}}
& \lesssim |h|^{\eta}, \quad\forall |h| \le 1,
\\
\label{RFU-3}
\Big\|\sup_{t \in [1, 2]} |\Delta_h \A_t| \Big\|_{L^{p_1} \times L^{p_2} \to L^{p_0}}
& \lesssim |h|^{\eta}, \quad\forall |h| \le 1.
\end{align}
where \eqref{RFU-3} follows from \eqref{RFU-1}, \eqref{RFU-2} and the identity
\begin{align*}
\Delta_h \A_t(f_1, f_2)
= \A_t (\Delta_h f_1, \tau_h f_2) + \A_t (f_1, \Delta_h f_2).
\end{align*}
Then using \eqref{Uloc-1}, \eqref{Atsca} and \eqref{RFU-1}--\eqref{RFU-3}, we achieve that for all $(\frac{1}{p_1}, \frac{1}{p_2}, \frac{1}{p_0}) \in \mathcal{R}_{\full}(d)$,
\begin{align*}
\|\Delta_h \mathcal{U}_j\|_{L^{p_1} \times L^{p_2} \to L^{p_0}(\B_{I, E})} 
& \lesssim |h|^{\eta} 2^{-jd(\frac{1}{p_1} + \frac{1}{p_2} - \frac{1}{p_0})},
\\
\|\mathcal{U}_j (\Delta_h \otimes \Id)\|_{L^{p_1} \times L^{p_2} \to L^{p_0}(\B_{I, E})} 
& \lesssim |h|^{\eta} 2^{-jd(\frac{1}{p_1} + \frac{1}{p_2} - \frac{1}{p_0})},
\\
\|\mathcal{U}_j (\Id \otimes \Delta_h)\|_{L^{p_1} \times L^{p_2} \to L^{p_0}(\B_{I, E})} 
& \lesssim |h|^{\eta} 2^{-jd(\frac{1}{p_1} + \frac{1}{p_2} - \frac{1}{p_0})},
\end{align*}
which along with \eqref{rrRd} implies \eqref{H5} as desired. 

Consequently, the $\B_{I, E}$-valued family $\{\mathcal{U}_j\}_{j \in \Z}$ satisfies \eqref{H1}--\eqref{H5} uniformly in $I$ and $E$. Lemma~\ref{lem:finite} then gives the sparse bound for $\M_{\full}$.

Next, let us turn to the lacunary maximal operator. The argument is similar to the above. The support condition \eqref{H1} holds trivially. For any finite subset $I \subset \Z$ and all $N_1 \le N_2$,
\begin{align*}
\bigg|\sum_{j=N_1}^{N_2} \mathcal{V}_j(f_1, f_2)(x) \bigg|_{\B_I}
= \max_{j \in I \cap [N_1, N_2]} |\A_{2^j}(f_1, f_2)(x)|
\le \M_{\lac}(f_1, f_2)(x).
\end{align*}
Then \eqref{H2} and \eqref{H3} follows from \cite[Theorem~1]{BF}:
\begin{align*}
\|\M_{\lac}\|_{L^{p_1} \times L^{p_2} \to L^p} \lesssim 1,
\end{align*}
for all $\frac1p = \frac{1}{p_1} + \frac{1}{p_2} > 0$ with $p_1, p_2 \in (1, \infty]$. As abefore, to obtain \eqref{H4} and \eqref{H5}, by rescaling and the fact $|\mathcal{V}_j (f_1, f_2)(x)|_{\B_I} = |\A_{2^j}(f_1, f_2)(x)|$,
we are reduced to proving that for all $(\frac{1}{p_1}, \frac{1}{p_2}, \frac{1}{p_0}) \in \mathcal{R}_{\lac}(d)$ with $p_0 > 1$, there exists some $\eta \in (0, 1)$ such that 
\begin{align}
\label{RFV-1}
\|\A_1\|_{L^{p_1} \times L^{p_2} \to L^{p_0}}
& \lesssim 1,
\\
\label{RFV-2}
\|\A_1(\Delta_h \otimes \Id)\|_{L^{p_1} \times L^{p_2} \to L^{p_0}}
& \lesssim |h|^{\eta},
\\
\label{RFV-3}
\|\A_1 (\Id \otimes \Delta_h)\|_{L^{p_1} \times L^{p_2} \to L^{p_0}}
& \lesssim |h|^{\eta},
\end{align}
for all $|h| \le 1$.

We will see that interpolation plays a crucial role in the proof of \eqref{RFV-1}--\eqref{RFV-3}. Fix $X = (\frac{1}{p_1}, \frac{1}{p_2}, \frac{1}{p_0}) \in \mathcal{R}_{\lac}(d)$ with $p_0 > 1$. Using an orthogonal change of coordinates on $\bS^{2d-1}$, we have
\begin{align}\label{AA11}
\|\A_1(f, g)\|_{L^1}
& \le \int_{\Rd} |f(u)| \bigg[\int_{\bS^{2d-1}} |g(u+y-z)| \, d\sigma(y, z)\bigg] \, du
\\ \nonumber
& = \int_{\Rd} \int_{\Rd} |f(u)| \, |g(u+v)| \, \phi(v) \, du \, dv
\lesssim\|f\|_{L^1} \|g\|_{L^1},
\end{align}
where
\begin{align*}
\phi(v) := 
\frac{\Gamma(d)}{(2\pi)^{d/2} \Gamma(d/2)}
(1- |v|^2/2 )^{(d-2)/2} \mathbf{1}_{\{|v| < \sqrt{2}\}}
\lesssim 1.
\end{align*}
Letting
\begin{align*}
P_2 = (\tfrac12, \tfrac12, \tfrac12) 
\quad \text{and} \quad 
P_t = (1-t) P_1 + t P_2 =: (\tfrac{1}{a_t}, \tfrac{1}{b_t}, \tfrac{1}{c_t}), \qquad 0<t<1, 
\end{align*}
we invoke \cite[Theorem 1.5]{BCSS} to deduce
\begin{align}\label{AA12}
\|\A_1\|_{L^{a_t} \times L^{b_t} \to L^{c_t}}
 \lesssim_t 1, \qquad 0 < t < 1.
\end{align}
Recall $P_0 = (1, 1, 1)$. Since $X$ is an interior point, there exists some $t \in (0, 1)$ small enough so that $Y := \frac{X - t P_2}{1-t} \in \conv \big(\mathcal{R}_{\full}(d) \cup \{P_0, P_1\} \big)$. By the convexity of $\mathcal{R}_{\rm full}(d)$ and that $P_2 \in \mathcal{R}_{\rm full}(d)$, there holds $(1-t) \mathcal{R}_{\full}(d) + t P_2 \subset \mathcal{R}_{\full}(d)$, which along with the fact $P_t = (1-t) P_0 + t P_2 \in \conv \{P_0, P_2\}$ gives 
\begin{align*}
X=(1-t)Y + t P_2 \in \conv \big(\mathcal{R}_{\full}(d)\cup\{P_0, P_t\}\big).
\end{align*}
In light of \eqref{Uloc-2}, \eqref{AA11}, and \eqref{AA12}, this implies that there are finitely many triples $\{X_j = (\frac{1}{a_j}, \frac{1}{b_j}, \frac{1}{c_j})\}_{j=1}^N$ such that
\begin{align*}
& \bigg(\frac{1}{p_1}, \frac{1}{p_2}, \frac{1}{p_0} \bigg)
= X = \sum_{j=1}^N \lambda_j X_j 
= \sum_{j=1}^N \lambda_j \bigg(\frac{1}{a_j}, \frac{1}{b_j}, \frac{1}{c_j} \bigg),
\\
& \lambda_j > 0, \quad \sum_{j=1}^N \lambda_j = 1, \, \, \text{and} \, \,
\|\A_1\|_{L^{a_j} \times L^{b_j} \to L^{c_j}}
\le C_j, \quad j = 1, \ldots, N.
\end{align*}
This immediately yields \eqref{RFV-1} because of H\"{o}lder's inequality and the following
\begin{align*}
|\A_1(f_1, f_2)(x)|
& \le \int_{\bS^{2d-1}} \prod_{j=1}^N
\big[|f_1(x-y)|^{\frac{p_1}{a_j}} |f_2(x-z)|^{\frac{p_2}{b_j}} \big]^{\lambda_j} \, d\sigma 
\le \prod_{j=1}^N \A_1(|f_1|^{\frac{p_1}{a_j}}, |f_2|^{\frac{p_2}{b_j}})(x)^{\lambda_j}.
\end{align*}

Furthermore, choose $\theta > 0$ sufficiently small so that
\begin{align*}
X_1 := \frac{X - \theta P_2}{1 - \theta}
= \bigg(\frac{1}{q_1}, \frac{1}{q_2}, \frac{1}{q_0} \bigg)
\in \mathcal{R}_{\lac}(d) \quad \text{with } \, q_1, q_2, q_0 > 1,
\end{align*}
which means $\frac{1}{p_i} = \frac{1-\theta}{q_i} + \frac{\theta}{2}$, $i=0, 1, 2$. By \eqref{RFU-1} applied to $P_2 = (\frac12, \frac12, \frac12) \in \mathcal{R}_{\full}(d)$ and \eqref{RFV-1} applied to $X_1$, we have
\begin{align*}
& \|\A_1 (\Delta_h \otimes \Id)\|_{L^2 \times L^2 \to L^2} \lesssim |h|^{\eta},
\\ 
& \|\A_1 (\Delta_h \otimes \Id)\|_{L^{q_1} \times L^{q_2} \to L^{q_0}} \lesssim 1,
\end{align*}
for all $|h| \le 1$. This gives \eqref{RFV-2} by interpolation. The inequality \eqref{RFV-3} can be proved identically.

\subsubsection{\bf{Triangle averaging maximal operators}}
To demonstrate Theorem \ref{thm:tri}, we will invoke Lemma \ref{lem:finite} with $U_t = \mathcal{T}_t$. Obviously, \eqref{H1} holds with $A_1=1$. Define
\begin{align*}
\mathcal{R}_{\tri}^{(2)}(d) 
:= & \conv \bigg\{(0, 0), \, (1, 0), (0, 1), \bigg(\frac{d}{d+1}, \frac{d}{d+1} \bigg)\bigg\}.
\end{align*}
Note that the coordinate projection of $\mathcal{P}_d$ onto its first two coordinates is $\mathcal R_{\tri}^{(2)}(d)$, and 
\begin{align*}
\mathcal{R}_{\tri}^{(2)}(d) 
= \bigg\{ \bigg(\frac{1}{p_1}, \frac{1}{p_2}\bigg) \in [0, 1]^2: \frac{1}{p_1} + \frac{d}{p_2} \le d, \frac{d}{p_1}  + \frac{1}{p_1} \le d\bigg\},
\end{align*}
which gives $(\frac{1}{2r}, \frac{1}{2r}) \in \inter \mathcal{R}_{\tri}^{(2)}(d)$. Thus, \eqref{H2} and \eqref{H3} follow from Lemma \ref{lem:Tlac} and that
\begin{align*}
\bigg \|\sum_{j = N_1}^{N_2} \mathcal{U}_j(f_1, f_2)(x) \bigg\|_{\B_I}
= \max_{j \in I \cap [N_1, N_2]} |\mathcal{T}_{2^j}(f_1, f_2)(x)|
\le \mathcal{T}_{\lac}(f_1, f_2)(x),
\end{align*}
for all integers $N_1\le N_2$.

To prove \eqref{H4} and \eqref{H5}, observe that
\begin{equation*}
\begin{aligned}
& \text{for each } y \in (0, 1) \text{ and all sufficiently small } x > 0, 
\\
& \text{one has } (x, y, y) \in \mathcal{R}_{\tri}(d) \text{ and } (y, x, y) \in \mathcal{R}_{\tri}(d).
\end{aligned}
\end{equation*}
Indeed, setting $A = \frac{d}{d+1}$ and $B = \frac{1}{d+1}$, we have for sufficiently small $t > 0$,
\begin{align*}
(At, y, y) 
= \lambda_0 (0, 0, 0) + \lambda_1 (0, 1, 1) + t (A, 0, B) + \frac{t B}{A-B} (0, A, B),
\end{align*}
where
\begin{align*}
\lambda_1 = y - \frac{tAB}{A-B} > 0
\quad \text{and} \quad
\lambda_0 = 1 - \lambda_1 - t - \frac{tB}{A-B} > 0.
\end{align*}
This allows us to find $r_1^*, r_2^*>1$ such that $(\frac{1}{r_1^*}, \frac{1}{r_2}, \frac{1}{r_2}) \in \mathcal{R}_{\tri}(d)$ and $(\frac{1}{r_1^*}, \frac{1}{r_2}, \frac{1}{r_2}) \in \mathcal{R}_{\tri}(d)$. Thus, by rescaling and the fact $|\mathcal{V}_j (f_1, f_2)(x)|_{\B_I} = |T_{2^j}(f_1, f_2)(x)|$, it suffices to show that for all $(\frac{1}{p_1}, \frac{1}{p_2}, \frac{1}{p_0}) \in \mathcal{R}_{\tri}(d)$, there exists some $\eta \in (0, 1)$ such that 
\begin{align}
\label{RTV-1}
\|\mathcal{T}_1\|_{L^{p_1} \times L^{p_2} \to L^{p_0}}
& \lesssim 1,
\\
\label{RTV-2}
\|\mathcal{T}_1(\Delta_h \otimes \Id)\|_{L^{p_1} \times L^{p_2} \to L^{p_0}}
& \lesssim |h|^{\eta},
\\
\label{RTV-3}
\|\mathcal{T}_1 (\Id \otimes \Delta_h)\|_{L^{p_1} \times L^{p_2} \to L^{p_0}}
& \lesssim |h|^{\eta},
\end{align}
for all $|h| \le 1$. Note that \eqref{RTV-1} is contained in \cite[Theorem~1.1]{IPS22}. To verify \eqref{RTV-2} and \eqref{RTV-3}, we use Sobolev estimates from \cite[Theorem~1.3]{BFO} with $E = \{1\}$:
\begin{align}\label{eq:BFO}
\|\mathcal{T}_1\|_{H^{-\sigma} \times L^2 \to L^2} \lesssim 1
\quad \text{ and } \quad
\|\mathcal{T}_1\|_{L^2 \times H^{-\sigma} \to L^2} \lesssim 1,
\end{align}
where $0 < \sigma < \min\{1, (d-4)/2\}$. Note that 
\begin{align*}
\sup_{\xi \in \Rd} |e^{2 \pi i h \cdot \xi} - 1| (1 + |\xi|^2)^{-\sigma/2} \lesssim |h|^{\sigma}, \quad |h| \le 1,
\end{align*}
which implies $\|\Delta_h f\|_{H^{-\sigma}} \lesssim |h|^{\sigma} \|f\|_{L^2}$, 
\begin{align}\label{RTV-4}
\|\mathcal{T}_1(\Delta_h \otimes \Id)\|_{L^2 \times L^2 \to L^2}
\lesssim |h|^{\sigma}, \, \text{ and } \,
\|\mathcal{T}_1 (\Id \otimes \Delta_h)\|_{L^2 \times L^2 \to L^2}
\lesssim |h|^{\sigma}.
\end{align}
Write $X = (\frac{1}{p_1}, \frac{1}{p_2}, \frac{1}{p_0})$ and $X_1 = (\frac12, \frac12, \frac12)$. Since $X$ is interior, there exists $\theta \in(0, 1)$ sufficiently small so that
\begin{align*}
\bigg(\frac{1}{q_1}, \frac{1}{q_2}, \frac{1}{q_0} \bigg) 
=: X_0 = \frac{X - \theta X_1}{1-\theta} \in \mathcal{R}_{\tri}(d).
\end{align*}
Hence, $X = (1-\theta) X_0 + \theta X_1$. The estimate \eqref{RTV-1} gives
\begin{align}\label{RTV-5}
\|\mathcal{T}_1(\Delta_h \otimes \Id)\|_{L^{q_1} \times L^{q_2} \to L^{q_0}}
\lesssim 1 \, \text{ and } \,
\|\mathcal{T}_1(\Id \otimes \Delta_h)\|_{L^{q_1} \times L^{q_2} \to L^{q_0}}
\lesssim 1.
\end{align}
Accordingly, interpolating between \eqref{RTV-4} and \eqref{RTV-5} yields \eqref{RTV-2} and \eqref{RTV-3} with $\eta = \theta \sigma$.

\begin{lemma}\label{lem:Tlac}
Let $d \ge 5$. Then $\mathcal{T}_{\lac}$ is bounded from $L^{p_1}(\Rd) \times L^{p_2}(\Rd)$ to $L^p(\Rd)$ for all $(\frac{1}{p_1}, \frac{1}{p_2}) \in \inter \mathcal{R}_{\tri}^{(2)}(d)$, where $\frac1p = \frac{1}{p_1} + \frac{1}{p_2}$.
\end{lemma}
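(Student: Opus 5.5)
We will establish the bound for the lacunary triangle maximal function through the finite-scale bootstrap of \cite{BF}, using the single-scale smoothing estimate \eqref{eq:BFO} from \cite{BFO} as the decay input. First, we reduce to a maximal function over finitely many scales. For finite $I \subset \Z$, set
\begin{align*}
\mathcal{T}_I(f_1, f_2) := \max_{j \in I} |\mathcal{T}_{2^j}(f_1, f_2)|.
\end{align*}
It suffices to bound $\|\mathcal{T}_I\|_{L^{p_1} \times L^{p_2} \to L^p}$ uniformly in $I$ and then apply monotone convergence. Next, we decompose each average in frequency. Let $P_{\le k}$ denote a smooth Littlewood--Paley projection onto frequencies $\lesssim 2^k$ and $P_k$ the corresponding annular piece. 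We write
\begin{align*}
\mathcal{T}_{2^j}(f_1, f_2) = \mathcal{T}_{2^j}(P_{\le -j} f_1, P_{\le -j} f_2) + \sum_{k \ge 1} \big[\mathcal{T}_{2^j}(P_{k-j} f_1, P_{\le k-j} f_2) + \mathcal{T}_{2^j}(P_{< k-j} f_1, P_{k-j} f_2)\big].
\end{align*}
The low-frequency term is pointwise dominated by $Mf_1 \cdot Mf_2$, because $\mu$ is a finite measure with compact support. This term is therefore bounded for all $p_1, p_2 > 1$.

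Second, we treat the pieces with frequency $2^{k-j}$ on one input. Fixing $k$, we use the $\ell^2$ sum over $j$ in place of the supremum:
\begin{align*}
\sup_j |\mathcal{T}_{2^j}(P_{k-j} f_1, \cdot)| \le \Big(\sum_j |\cdot|^2\Big)^{1/2}.
\end{align*}
Rescaling \eqref{eq:BFO} gives
\begin{align*}
\|\mathcal{T}_1(P_k f_1, g)\|_{L^2} \lesssim 2^{-k\sigma} \|f_1\|_{L^2}\|g\|_{L^2},
\end{align*}
and the same holds with the roles of the two inputs swapped, for any $\sigma < (d-4)/2$. This positive decay is exactly where the hypothesis $d \ge 5$ enters. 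Combining this decay with Littlewood--Paley orthogonality in $f_1$ and the fact that $\|P_{\le k-j} f_2\|_{L^\infty} \lesssim \|f_2\|_{L^\infty}$, we expect an $L^2 \times L^\infty \to L^2$ bound of size $2^{-k\sigma'}$ for each fixed $k$. Symmetrically, we expect an $L^\infty \times L^2 \to L^2$ bound. For these mixed estimates we first localize the average to cubes of side $2^j$ at each scale. That localization lets us pass the $L^2 \times L^2$ bound to $L^2 \times L^\infty$ on a single scale, as was done for \eqref{H4}.

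Third, we need growth bounds to interpolate against. At the corners of $\mathcal{R}_{\tri}^{(2)}(d)$ away from $(\frac12,\frac12)$, we use the trivial bound $\mathcal{T}_I \lesssim \sup_j \mathcal{T}_{2^j}(|f_1|,|f_2|)$. Near the vertices $(1,0)$ and $(0,1)$, this is controlled by $M f_1 \cdot \|f_2\|_\infty$ and its mirror image. Near $(\frac{d}{d+1}, \frac{d}{d+1})$, we use the single-scale improving bound of \cite[Theorem~1.1]{IPS22}. From this we obtain, for each frequency piece, a bound with at most polynomial growth $2^{k\epsilon}$. Multilinear interpolation between the decaying $L^2$-type bounds and these slowly growing endpoint bounds then yields summable decay in $k$ at every interior point of $\mathcal{R}_{\tri}^{(2)}(d)$. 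We extend to the quasi-Banach range $p < 1$ by the bootstrap of \cite{BF}. That argument controls $\|\mathcal{T}_I\|$ in terms of a fraction of itself plus the known decaying pieces, using finiteness of $I$ to guarantee a priori finiteness, and then absorbs the self-referential term.

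The main obstacle will be the endpoint growth estimates near $(\frac{d}{d+1}, \frac{d}{d+1})$ for the maximal function, as opposed to a single average. A supremum over $j$ does not interpolate freely in the quasi-Banach range. To handle this, we will interpolate trilinear forms restricted to sets, in restricted weak type, and control the supremum over the finitely many scales in $I$ with an $\ell^{\tau}$-sum ($\tau < 1$) of the pieces. The $2^{k\epsilon}$ losses incurred this way must remain smaller than the $2^{-k\sigma}$ gain, and this is where the openness of the region will be used.
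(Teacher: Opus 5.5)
There is a genuine gap, and it lies in exactly the part of the lemma that has content.

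Your second and third steps obtain the $2^{-k\sigma}$ decay at $L^2\times L^\infty\to L^2$ by square-summing single-scale bounds in $j$, and then interpolate it against $O(1)$ corner bounds. This can only produce decay inside the Banach triangle $\frac1{p_1}+\frac1{p_2}<1$. That range is trivial anyway: the marginals of $\mu$ are spherical measures, so H\"older gives $\mathcal{T}_{\lac}(f_1,f_2)\lesssim (M_{\lac}|f_1|^{a})^{1/a}(M_{\lac}|f_2|^{a'})^{1/a'}$.

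Everything nontrivial happens in the quasi-Banach part of $\inter\mathcal{R}_{\tri}^{(2)}(d)$ near $(\frac{d}{d+1},\frac{d}{d+1})$. There your interpolation needs bounds for the \emph{maximal} frequency pieces with growth at most $2^{k\epsilon}$. The device you propose, dominating $\max_{j\in I}$ by an $\ell^\tau$-sum of single-scale \cite[Theorem~1.1]{IPS22} bounds, cannot deliver this. The single-scale bound is scale invariant, so the $j$-th term contributes $\|P_{k-j}f_1\|_{L^{p_1}}\|P_{\le k-j}f_2\|_{L^{p_2}}$ with no decay in $j$. The low-pass factor also has no orthogonality in $j$ at all. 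The loss is therefore a power of $\#I$, not of $2^k$. This gives only a priori finiteness of $\|\mathcal{T}_I\|$, never a bound uniform in $I$. Restricted weak type interpolation does not change this, since the endpoint bounds themselves are not uniform.

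The missing idea is the actual mechanism of the \cite{BF} bootstrap. It makes maximal endpoint bounds unnecessary, and it also dictates the decomposition.

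First, remove the low-frequency parts, which are controlled by $Mf\,M_{\lac}g$, $M_{\lac}f\,Mg$ and $Mf\,Mg$. Keep only doubly annular pieces $\mathcal{T}_{2^k}(P_{m-k}f,P_{n-k}g)$ with $m,n\ge1$.

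Second, use \cite{IPS22} only for the single average. The $L^2\times L^2\to L^1$ decay $2^{-\sigma(m+n)/2}$ of $\mathcal{T}_1(P_m\otimes P_n)$ follows from \eqref{RTV-4}, localization, and $\int\psi=0$. Interpolating it with the \cite{IPS22} bound at a nearby interior point gives $\|\mathcal{T}_1(P_m\otimes P_n)\|_{L^{p_1}\times L^{p_2}\to L^p}\lesssim 2^{-\kappa(m+n)}$ at the target exponents.

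Third, let $A_N$ be the a priori finite norm of $\sup_{|k|\le N}|\mathcal{T}_{2^k}|$ at the \emph{same} exponents. The sequence operator $\{\mathcal{T}_{2^k}(P_{m-k}f_k,P_{n-k}g_k)\}_{|k|\le N}$ then has two bounds:
\begin{itemize}
\item $L^{p_1}(\ell^\infty)\times L^{p_2}(\ell^\infty)\to L^p(\ell^\infty)$ with norm $\lesssim A_N$;
\item $L^{p_1}(\ell^1)\times L^{p_2}(\ell^1)\to L^p(\ell^p)$ with norm $\lesssim 2^{-\kappa(m+n)}$, by H\"older in $k$.
\end{itemize}
Interpolating at $\theta=\frac12$ and applying Littlewood--Paley to $f_k=Q_{m-k}f$, $g_k=Q_{n-k}g$ yields $A_N^\eta\le C_0+CA_N^{\eta/2}$ with $\eta=\min\{1,p\}$. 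Hence $\sup_NA_N<\infty$.

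Note that the gain is the square root of $A_N$, not ``a fraction of itself.'' Also, both the H\"older step in $k$ and the Littlewood--Paley step need both inputs annular. Your one-sided split $(P_{k-j},P_{\le k-j})$ would have to be refined before the bootstrap can be run.
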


\begin{proof}
We begin with the following estimates
\begin{align*}
\|\mathcal{T}_1(\Delta_h \otimes \Id)\|_{L^2 \times L^2 \to L^1}
& \lesssim \min\{1, |h|^{\sigma}\}, \quad\forall h \in \Rd.
\end{align*}
Indeed, the case $|h| > 1$ follows from \eqref{RTV-4}; in the case $|h| \le 1$, after partition $\Rd$ into cubes $\D_0$, we use the Cauchy--Schwarz inequality and the fact
\begin{align*}
\mathcal{T}_1(\Delta_h f_1, f_2)(x) 
= \mathcal{T}_1 \big(\Delta_h (f_1 \mathbf{1}_{5Q}), f_2 \mathbf{1}_{5Q} \big)(x), 
\quad x \in Q \in \D_0, |h| \le 1.
\end{align*}

Choose $\varphi \in \mathscr{S}(\Rd)$ such that $\mathbf{1}_{\{|\xi| \le 1\}} \le \widehat{\varphi} \le \mathbf{1}_{\{|\xi| \le 2\}}$, and set $\widehat{\psi} (\xi) = \widehat{\varphi}(\xi) - \widehat{\varphi}(2\xi)$, $\psi_m (x) = 2^{md} \psi(2^m x)$, and $P_m f = \psi_m*f$. 
Then, $\int_{\Rd} \psi \, dx = 0$ and
\begin{align*}
\mathcal{T}_1(P_m f_1, P_n f_2)
= \int_{\Rd} \psi_m(x) T_1(\Delta_{-v} f_1, P_n f_2) \,dv, \quad \forall m, n \ge 1,
\end{align*}
which along with Minkowski's inequality implies
\begin{align*}
\|\mathcal{T}_1(P_m \otimes P_n)\|_{L^2 \times L^2 \to L^1}
\lesssim \int_{\Rd} |\psi_m(v) |\min\{|v|^{\sigma}, 1\} \, dv
 \lesssim 2^{-m \sigma}.
\end{align*}
Similarly, $\|\mathcal{T}_1(P_m \otimes P_n)\|_{L^2 \times L^2 \to L^1} \lesssim 2^{-n \sigma}$, which indicates 
\begin{align*}
\|\mathcal{T}_1(P_m \otimes P_n)\|_{L^2 \times L^2 \to L^1}
 \lesssim 2^{- \sigma(m+n)/2}.
\end{align*}
In view of the fact $(\frac{1}{p_1}, \frac{1}{p_2}) \in \inter \mathcal{R}_{\tri}^{(2)}(d)$, one can find some $\theta \in (0, 1)$ small enough so that
\begin{align*}
(1/q_1, 1/q_2) := \frac{(1/p_1, 1/p_2) - \theta(1/2, 1/2)}{1-\theta}
\in \inter \mathcal{R}_{\tri}^{(2)}(d).
\end{align*}
Let $\frac{1}{q} = \frac{1}{q_1} + \frac{1}{q_2}$. Note that $\sup_{m \ge 1} \|P_m\|_{L^t \to L^t} \lesssim 1$ for all $t \in [1, \infty]$. Then by \cite[Theorem~1.1]{IPS22},
\begin{align*}
\|\mathcal{T}_1(P_m \otimes P_n)\|_{L^{q_1} \times L^{q_2} \to L^q}
 \lesssim 1,
\end{align*}
which together with bilinear interpolation gives
\begin{align}\label{TPP-1}
\|\mathcal{T}_1(P_m \otimes P_n)\|_{L^{p_1} \times L^{p_2} \to L^p}
\lesssim 2^{- \kappa(m+n)}, \quad \kappa > 0.
\end{align}

Let $\Phi_k f = \varphi_k*f$, where $\varphi_k (x) = 2^{-kd} \varphi(2^{-k} x)$. Then
\begin{align}\label{TP2k-1}
\mathcal{T}_{2^k}(f, g)
& = \mathcal{T}_{2^k}(\Phi_k f, g) + \mathcal{T}_{2^k}(f, \Phi_k g) 
- \mathcal{T}_{2^k}(\Phi_k f, \Phi_k g) 
+ \sum_{m, n \ge 1} \mathcal{T}_{2^k}(P_{m-k}f, P_{n-k}g),
\end{align}
where we used the identity $1 = \widehat{\varphi}(2^k \xi) + \sum_{m \ge 1}\widehat{\psi}(2^{k-m} \xi)$. It is not hard to check 
\begin{align}
& |\mathcal{T}_{2^k}(\Phi_k f, g)| \lesssim Mf \, M_{\lac}(|g|),
\\
& |\mathcal{T}_{2^k}(f, \Phi_k g)| \lesssim M_{\lac}(|f|) \, Mg,
\\
\label{TP2k-2}
& |\mathcal{T}_{2^k}(\Phi_k f, \Phi_k g)| \lesssim Mf \, Mg.
\end{align}
where $M$ is the Hardy--Littlewood maximal operator, and $M_{\mathrm{lac}}$ is the linear lacunary spherical maximal
operator. 

For the remaining terms, define the bilinear sequence operator
\begin{align*}
\mathscr T_N^{m,n}(\{f_k\}, \{g_k\})
:= \big\{\mathcal{T}_{2^k}(P_{m-k}f_k, P_{n-k}g_k) \big\}_{|k| \le N}.
\end{align*}
Note that 
\begin{align*}
|P_{m-k}f_k| \lesssim MF_N
\quad \text{ and } \quad
|P_{n-k} g_k| \lesssim MG_N,
\end{align*} 
where $F_N = \sup_{|k| \le N} |f_k|$ and $G_N = \sup_{|k| \le N} |g_k|$. Besides, by dilation and \cite[Theorem~1.1]{IPS22},
\begin{align*}
A_N := \bigg\|\sup_{|k| \le N} |\mathcal{T}_{2^k}| \bigg\|_{L^{p_1} \times L^{p_2} \to L^p}
\lesssim (2N+1)^{\frac1p} < \infty, \quad \forall N \ge 1,
\end{align*}
which gives
\begin{align*}
& \|\mathscr{T}_N^{m, n}(\{f_k\}, \{g_k\})\|_{L^p(\ell^{\infty})}
\lesssim \bigg\|\sup_{|k| \le N} |\mathcal{T}_{2^k}(M F_N, M G_N)| \bigg\|_{L^p}
\\
& \lesssim A_N \|MF_N\|_{L^{p_1}} \|MG_N\|_{L^{p_2}}
\lesssim A_N \|\{f_k\}\|_{L^{p_1}(\ell^{\infty})}
 \|\{g_k\}\|_{L^{p_2}(\ell^{\infty})}.
\end{align*}
On the other hand, letting $\widetilde{f}_k := f_k(2^{-k} \cdot)$ and $\widetilde{g}_k := g_k(2^{-k} \cdot)$, we use \eqref{TPP-1} and H\"older's inequality to arrive at
\begin{align*}
\|\mathscr{T}_N^{m, n}(\{f_k\}, \{g_k\})\|_{L^p(\ell^p)}
& = \bigg[\sum_{|k| \le N} \|\mathcal{T}_{2^k}(P_{m-k} f_k, P_{n-k} g_k)\|_{L^p}^p \bigg]^{\frac1p}
\\
& = \bigg[\sum_{|k| \le N} \|\mathcal{T}_1(P_{m-k} \widetilde{f}_k, 
P_{n-k} \widetilde{g}_k)(2^{-k} \cdot)\|_{L^p}^p \bigg]^{\frac1p}
\\
& \lesssim 2^{- \kappa(m+n)} 
\bigg[\sum_{|k|\le N}\|f_k\|_{L^{p_1}}^p \|g_k\|_{L^{p_2}}^p \bigg]^{\frac1p}
\\
& \le 2^{- \kappa(m+n)} \bigg[\sum_k \|f_k\|_{L^{p_1}}^{p_1} \bigg]^{\frac{1}{p_1}}
\bigg[\sum_k \|g_k\|_{L^{p_2}}^{p_2} \bigg]^{\frac{1}{p_2}}
\\
& \le 2^{- \kappa(m+n)} \|\{f_k\}\|_{L^{p_1}(\ell^1)} \|\{g_k\}\|_{L^{p_2}(\ell^1)}.
\end{align*}
Then interpolating yields
\begin{align}\label{TPP-2}
\|\mathscr{T}_N^{m, n}(\{f_k\}, \{g_k\})\|_{L^p(\ell^{2p})}
\lesssim 2^{- \kappa(m+n)/2} A_N^{1/2} 
\|\{f_k\}\|_{L^{p_1}(\ell^2)} \|\{g_k\}\|_{L^{p_2}(\ell^2)}.
\end{align}
Define $\widehat{Q_j  f}(\xi) := \widehat{\Phi}(2^{-j} \xi) \widehat{f}(\xi)$, where $\widehat{\Phi}(\xi) := \widehat{\varphi}(\xi/2) - \widehat{\varphi}(4\xi)$. Thus, $P_j Q_j = P_j$, which along with \eqref{TPP-2} applied to 
$f_k = Q_{m-k} f$ and $g_k =Q_{n-k}g$ gives
\begin{align}\label{TPP-3}
& \bigg\|\sup_{|k| \le N} |\mathcal{T}_{2^k}(P_{m-k}f, P_{n-k}g)| \bigg\|_{L^p}
= \bigg\|\sup_{|k| \le N} |\mathcal{T}_{2^k}(P_{m-k}f_k, P_{n-k}g_k)| \bigg\|_{L^p}
\\ \nonumber
& \le \|\mathscr{T}_N^{m, n}(\{f_k\}, \{g_k\})\|_{L^p(\ell^{2p})}
\lesssim 2^{-\kappa(m+n)/2} A_N^{1/2} \|f\|_{L^{p_1}} \|g\|_{L^{p_2}},
\end{align}
where the Littlewood--Paley theory was used in the last step. 
Setting $\eta = \min\{1, p\}$ and invoking \eqref{TP2k-1}--\eqref{TP2k-2} and \eqref{TPP-3}, we obtain
\begin{align*}
A_N^\eta
\le C_0 + C_1 A_N^{\eta/2} \sum_{m, n \ge 1} 2^{-\kappa\eta(m+n)/2}
\le C_0 + C_2 A_N^{\eta/2},
\end{align*}
which together with the finiteness of $A_N$ implies $\sup_N A_N \lesssim 1$. Therefore, the monotone convergence theorem proves $\|\mathcal{T}_{\lac}\|_{L^{p_1} \times L^{p_2} \to L^p} \lesssim 1$.
\end{proof}

\subsection{Bilinear Bochner--Riesz means}\label{sec:App-BR}
Let us justify Theorem \ref{thm:BR}. For each $d \ge 2$, set $a_d = \frac{d+1}{2d}$ and $b_d = \frac{d+1}{2} + \frac{d-1}{d(d+1)}$. Given $q_1, q_2 \in [1, \frac{2d}{d+1})$, define
\begin{equation}\label{def:aqq}
\alpha(q_1, q_2) = 
\begin{cases}
\frac{4}{d+1}, & \text{if } (\tfrac{1}{q_1}, \tfrac{1}{q_2}) \in (a_d, b_d) \times (a_d, b_d),
\vspace{4pt}
\\
\frac{2}{d+1} - \frac{d-1}{2d} + \frac{1}{q_2}, & \text{if } (\tfrac{1}{q_1}, \tfrac{1}{q_2}) \in (a_d, b_d) \times [b_d, 1],
\vspace{4pt}
\\
\frac{2}{d+1} - \frac{d-1}{2d} + \frac{1}{q_1}, & \text{if } (\tfrac{1}{q_1}, \tfrac{1}{q_2}) \in [b_d, 1] \times (a_d, b_d),
\vspace{4pt}
\\
\frac{1}{q_1} + \frac{1}{q_2} - \frac{d-1}{d}, & \text{if } (\tfrac{1}{q_1}, \tfrac{1}{q_2}) \in [b_d, 1] \times [b_d, 1].
\end{cases}
\end{equation}

Choose an even function $\Psi \in \mathscr{C}_c^{\infty}(\R)$ such that $\supp \Psi \subset \{1/4 \le |t| \le1\}$ and $\sum_{j \in \Z} \Psi(2^{-j} t) =1$ for any $t>0$. Set $\Psi_0(t) := 1 - \sum_{j \ge 1} \Psi(2^{-j} t)$ and $\Psi_j(t) := \Psi(2^{-j} t)$ for each $j \ge 1$. Then define
\begin{align*}
& \m_0(s, t) := (1 - s^2 - t^2)_+^{\delta}, 
\\
& \widehat{\m}_{0, j}(s, t) := \Psi_j(|(s, t)|) \widehat{\m}_0(s, t),
\\
& \widehat{K}_j(\xi, \eta) := \m_j(\xi, \eta) := \m_{0, j}(|\xi|, |\eta|), \quad \xi, \eta \in \Rd.
\end{align*}
Hence, we write
\begin{align}\label{BdTm}
B^{\delta} = \sum_{j \ge 0} T_{\m_j},
\end{align}
where
\begin{align*}
T_{\m_j}(f, g)(x) 
& = \int_{\R^{2d}} \m_j(\xi, \eta) \,
\widehat{f}(\xi) \widehat{g}(\eta) \, e^{2 \pi i x \cdot (\xi + \eta)} \, d\xi \, d\eta
\\
& = \int_{\R^{2d}} K_j(x-y, x-z) f(y) g(z) \, dy \, dz.
\end{align*}

Fix $1 \le q_1, q_2 < \frac{2d}{d+1}$ and denote $\mu := \delta + 1 - d \alpha(q_1, q_2) > 0$. Let $(\frac{1}{r_1}, \frac{1}{r_2}), (\frac{1}{s_1}, \frac{1}{s_2}) \in \mathcal{R}(\vec{q})$ be such that $\mu > \max\{\frac1r - \frac1s, \frac{1}{r_1}, \frac{1}{r_2}\} > 0$, where $\frac1r = \frac{1}{r_1} + \frac{1}{r_2}$ and $\frac1s = \frac{1}{s_1} + \frac{1}{s_2}$. We are going to verify that the hypotheses \eqref{H1}--\eqref{H5} are satisfied by the family $\{T_{\m_j}\}_{j \ge 0}$ with exponents $(r_1^*, r_2^*) = (r_1, r_2)$.

It was shown in \cite[Lemma~3.1]{BGSY} that
\begin{equation*}
\begin{aligned}
& \text{for any even function $a_0$ on $\R^2$ with $\supp \widehat{a}_0 \subset [-L, L]^2$,} 
\\
& \text{the function $a(x, y) := a_0(|x|, |y|)$ satisfies $\supp \widehat{a} \subset [-L, L]^{2d}$.} 
\end{aligned}
\end{equation*}
Since $K_j(y, z) = \widehat{\m}_j(-y, -z)$, there holds $\supp K_j(x-y, x-z) \subset \{|x-y| \le 2^j, |x-z| \le 2^j\}$, which implies the support condition \eqref{H1}.

To prove \eqref{H2}, let us record a result from \cite[Lemma~3.9]{BGSY} that for every $m_0 \in L_c^{\infty}(\R^2)$ with $\supp m_0 \subset [-2, 2]^2$,
\begin{equation}\label{Tmws}
\|T_m\|_{L^{p_1} \times L^{p_2} \to L^p}
\lesssim \|m_0\|_{W^{\beta,1}}, \quad \beta > d \alpha(q_1, q_2),
\end{equation}
for all $\big(\frac{1}{p_1}, \frac{1}{p_2} \big) \in \mathcal{R}(\vec{q})$, where $m(\xi, \eta) = m_0(|\xi|, |\eta|)$. Let $\rho_0$ be a smooth cutoff function on $\R^2$ such that $\mathbf{1}_{B(0, \frac32)} \le \rho_0 \le \mathbf{1}_{B(0, 2)}$. Set $\rho(\xi, \eta) := \rho_0(|\xi|, |\eta|)$ for any $\xi, \eta \in \Rd$. Then $\supp(\rho_0 \m_{0, j}) \subset \{|(s, t)| \le 2\}$ and 
\begin{align}\label{rmwb}
\|\rho \m_{0, j}\|_{W^{\beta, 1}}
\lesssim \|\m_{0, j}\|_{W^{\beta, 1}}
\lesssim_{\beta, \eta} \, 2^{-j(\delta + 1 - \beta - \theta)}, \quad \forall \theta > 0, \, 0 < \beta < \delta + 1,
\end{align}
since $\m_0 \in W^{\beta, 1}(\R^2)$ (cf. \cite[Lemma~4.4]{BGSY}) and $\m_{0, j}$ is the $j$-th Littlewood--Paley piece of $\m_0$. Pick 
\begin{align*}
0 < \mu_0 < \mu - \max\bigg\{\frac1r - \frac1s, \frac{1}{r_1}, \frac{1}{r_2} \bigg\}.
\end{align*} 
Applying \eqref{Tmws} and \eqref{rmwb} to $m_0 = \rho \m_{0, j}$ with $\beta = d\alpha(q_1, q_2) + \mu_0/2$ and $\theta = \mu_0/2$, we obtain 
\begin{align}\label{Trm-1}
\|T_{\rho \m_j}\|_{L^{p_1} \times L^{p_2} \to L^p}
\lesssim 2^{-j (\mu - \mu_0)}, \quad \forall (p_1, p_2) \in \Xi(q_1, q_2).
\end{align}
In addition, by Lemmas \ref{lem:NALj} and \ref{lem:TKfg},
\begin{align}\label{Trm-2}
\|T_{(1-\rho) \m_j}\|_{L^{t_1} \times L^{t_2} \to L^{t_0}} \lesssim_N \, 2^{-jN}, \quad\forall N > 0,
\end{align}
for all $\frac{1}{t_0} \le \frac{1}{t_1} + \frac{1}{t_2}$ with $t_1, t_2 \in [1, \infty]$. Thus, \eqref{Trm-1} and \eqref{Trm-2} yield
\begin{align}\label{Trm-3}
\|T_{\m_j}\|_{L^{p_1} \times L^{p_2} \to L^p} \lesssim 2^{-j (\mu - \mu_0)}, 
\quad\forall (p_1, p_2) \in \Xi(q_1, q_2).
\end{align}
Then for all $\big(\frac{1}{p_1}, \frac{1}{p_2} \big) \in \mathcal{R}(\vec{q})$ with $\frac1p = \frac{1}{p_1} + \frac{1}{p_2} \le 1$, \eqref{Trm-3} gives
\begin{align*}
\sup_{N_1 \le N_2} \bigg\|\sum_{j=N_1}^{N_2} T_{\m_j} \bigg\|_{L^{p_1} \times L^{p_2} \to L^p}
\lesssim 1,
\end{align*}
where we used Minkowski's inequality in the case $p \ge 1$ and $(\sum_j |a_j|)^p \le \sum_j |a_j|^p$ in the case $0<p<1$. This implies \eqref{H2} and \eqref{H3}.

To check \eqref{H4}, take $\psi \in \mathscr{C}_c^{\infty}(\Rd)$ such that $\mathbf{1}_{B(0, 2)} \le \psi \le \mathbf{1}_{B(0, 4)}$. Define $\widehat{P f}(\xi) := \psi(\xi) \widehat{f}(\xi)$. Since $\supp(\rho \m_j) \subset B(0, 2)$ and $\widehat{(\Id - P) f} = 0$ on $B(0, 2)$, one has
\begin{align*}
T_{\rho \m_j}((\Id - P)f, g) = 0
\quad \text{and} \quad 
T_{\rho \m_j}(Pf, (\Id - P)g) = 0.
\end{align*}
Using the fact $r < s$, Bernstein's inequality, and \eqref{Trm-1}, we deduce
\begin{align}\label{Trm-41}
\|T_{\rho \m_j}(f, g)\|_{L^s}
& = \|T_{\rho \m_j}(Pf, Pg)\|_{L^s}
\lesssim \|T_{\rho \m_j}(Pf, Pg)\|_{L^r}
\\ \notag
& \lesssim 2^{-j (\mu - \mu_0)} \|Pf\|_{L^{r_1}} \|Pg\|_{L^{r_2}}
\lesssim 2^{-j (\frac1r - \frac1s)} \|f\|_{L^{r_1}} \|g\|_{L^{r_2}}.
\end{align}
Analogously, 
\begin{align}
\label{Trm-42}
& \|T_{\rho \m_j}\|_{L^{r_1} \times L^{r_2^*} \to L^{r_1}} 
\lesssim 2^{-j (\mu - \mu_0)}
\le 2^{-j \frac{1}{r_2}} 
= 2^{-j \frac{1}{r_2^*}},
\\
\label{Trm-43}
& \|T_{\rho \m_j}\|_{L^{r_1^*} \times L^{r_2} \to L^{r_2}}
\lesssim 2^{-j (\mu - \mu_0)}
\le 2^{-j \frac{1}{r_1}} 
= 2^{-j \frac{1}{r_1^*}}.
\end{align}
Thus, \eqref{H4} follows from \eqref{Trm-2} and \eqref{Trm-41}--\eqref{Trm-43}. 

It remains to justify \eqref{H5} holds. Let $|h| \le 1$ and $v := 2^j h$. By Lemma \ref{lem:NALj}, the kernel of $T_{(1-\rho) \m_j}(\Delta_v f, g)(x)$ satisfies
\begin{align*}
|L_j(x-y+v, x-z) - L_j(x-y, x-z)|
\lesssim \frac{\min\{1, |v|\} \, 2^{-jN}}{(1+|x-y|+|x-z|)^M},
\end{align*}
which together with Lemma \ref{lem:TKfg} yields
\begin{align}\label{Trm-51}
& \|T_{(1-\rho) \m_j} \circ (\Delta_v \otimes \Id)\|_{L^{t_1} \times L^{t_2} \to L^{t_0}}
\lesssim_N \min\{1, |v|\} \, 2^{-jN}, \quad \forall N > 0,
\end{align}
for all $\frac{1}{t_0} \le \frac{1}{t_1} + \frac{1}{t_2}$ with $t_1, t_2 \in [1, \infty]$.

Define 
\begin{align*}
\widehat{A_v f}(\xi) := \psi(\xi) \big(e^{2\pi i v \cdot \xi} - 1\big) \widehat{f}(\xi).
\end{align*}
Then by Young's inequality,
\begin{align*}
\|A_v\|_{L^p \to L^p} \lesssim \min\{1, |v|\}, \quad \forall p \in [1, \infty],
\end{align*}
which along with \eqref{Trm-41} implies
\begin{align}\label{Trm-52}
\|T_{\rho \m_j} (\Delta_v f, g)\|_{L^s}
& = \|T_{\rho \m_j} (A_v f, P g)\|_{L^s}
\lesssim 2^{-j (\mu - \mu_0)} \|A_v f\|_{L^{r_1}} \|Pg\|_{L^{r_2}}
\\ \notag
& \lesssim \min\{1, |v|\}\, 2^{-j (\mu - \mu_0)} \|f\|_{L^{r_1}} \|g\|_{L^{r_2}}
\\ \notag
& \le |h|^{\vartheta} 2^{-j (\mu - \mu_0 - \vartheta)} \|f\|_{L^{r_1}} \|g\|_{L^{r_2}}
\\ \notag
& \le |h|^{\vartheta} 2^{-j (\frac1r - \frac1s)} \|f\|_{L^{r_1}} \|g\|_{L^{r_2}},
\end{align}
provided $0 < \vartheta < \min \big\{1, \mu - \mu_0 - \max \{\frac1r - \frac1s, \frac{1}{r_1}, \frac{1}{r_2} \} \big\}$. Similarly, in light of \eqref{Trm-42} and \eqref{Trm-43}, one has
\begin{align*}
& \|T_{\rho \m_j} (\Delta_{2^j h} \otimes \Id)\|_{L^{r_1} \times L^{r_2^*} \to L^{r_1}} 
\lesssim |h|^{\vartheta} 2^{-j \frac{1}{r_2^*}},
\\
& \|T_{\rho \m_j} (\Delta_{2^j h} \otimes \Id) \|_{L^{r_1^*} \times L^{r_2} \to L^{r_2}} 
\lesssim |h|^{\vartheta} 2^{-j \frac{1}{r_1^*}}.
\end{align*}
Thus, \eqref{Trm-51} and \eqref{Trm-52} imply
\begin{align*}
\|T_{\m_j} (\Delta_{2^j h} \otimes \Id)\|_{L^{r_1} \times L^{r_2} \to L^s}
\lesssim |h|^{\vartheta} 2^{-j (\frac1r - \frac1s)}.
\end{align*}
Symmetrically, 
\begin{align*}
\|T_{\m_j} (\Id \otimes \Delta_{2^j h})\|_{L^{r_1} \times L^{r_2} \to L^s}
\lesssim |h|^{\vartheta} 2^{-j (\frac1r - \frac1s)}.
\end{align*}
Note that
\begin{align*}
\Delta_h T_{\m_j}(f_1, f_2) = T_{\m_j}(\Delta_h f_1, f_2) + T_{\m_j}(\tau_h f_1, \Delta_h f_2),
\end{align*}
which gives
\begin{align*}
\|\Delta_{2^j h} T_{\m_j}\|_{L^{r_1} \times L^{r_2} \to L^s}
\lesssim |h|^{\vartheta} 2^{-j (\frac1r - \frac1s)}.
\end{align*}
In the same way, we obtain
\begin{align*}
\|T_{\m_j} \circ_i \Delta_{2^j h}\|_{L^{r_1} \times L^{r_2^*} \to L^{r_1}}
& \lesssim |h|^{\vartheta} 2^{-j \frac{1}{r_2^*}}, \quad i=1, 2,
\\
\|T_{\m_j} \circ_i \Delta_{2^j h}\|_{L^{r_1^*} \times L^{r_2} \to L^{r_2}}
& \lesssim |h|^{\vartheta} 2^{-j \frac{1}{r_1^*}}, \quad i=1, 2,
\\
\|\Delta_{2^j h} T_{\m_j}\|_{L^{r_1} \times L^{r_2^*} \to L^{r_1}} 
& \lesssim |h|^{\vartheta} 2^{-j \frac{1}{r_2^*}},
\\
\|\Delta_{2^j h} T_{\m_j}\|_{L^{r_1^*} \times L^{r_2} \to L^{r_2}} 
& \lesssim |h|^{\vartheta} 2^{-j \frac{1}{r_1^*}}.
\end{align*}
The above justifies \eqref{H5}.

Hence, it follows from Theorem~\ref{thm:main} that
\begin{align*}
\sup_{\substack{N_1, N_2 \in \Z \\ N_1 \le N_2}} 
\bigg| \bigg\langle \sum_{j = N_1}^{N_2} T_{\m_j}(f_1, f_2), \, f_3 \bigg\rangle \bigg|
\lesssim \Lambda_{\vec{r}, s'}^*(f_1, f_2, f_3),
\end{align*}
for all $f_1, f_2, f_3 \in L_c^{\infty}(\Rd)$. In view of \eqref{Trm-3}, this implies Theorem \ref{thm:BR}.

\begin{lemma}\label{lem:NALj}
Let $L_j(y, z) := \mathcal{F}^{-1}_{\xi,\eta} [(1 - \rho) \m_j](y, z)$ for $y, z \in \Rd$. Then for all $N, M > 0$, 
\begin{align*}
|\nabla^s L_j(y, z)|
\lesssim_{N, M} 2^{-jN} (1 + |y| + |z|)^{-M}, \quad \forall j \ge 1, \, s=0, 1.
\end{align*}
\end{lemma}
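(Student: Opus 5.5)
We need to prove Lemma \ref{lem:NALj}: the kernel $L_j = \mathcal{F}^{-1}[(1-\rho)\m_j]$ satisfies, together with its gradient, $|\nabla^s L_j(y,z)| \lesssim_{N,M} 2^{-jN}(1+|y|+|z|)^{-M}$ for $s=0,1$.

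The plan is to reduce everything to the smoothness of $\m_0$ away from the singular circle $\{s^2+t^2=1\}$. Since $\m_j(\xi,\eta) = \m_{0,j}(|\xi|,|\eta|)$ and $1-\rho$ vanishes where $|(|\xi|,|\eta|)|\le 3/2$ in the radial variables, the product $(1-\rho)\m_j$ lives away from the region where $\m_0$ is supported and nonsmooth.

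First write $\m_{0,j} = \mathcal{F}^{-1}[\Psi_j \widehat{\m}_0]$, which equals the convolution $\m_0 * \Phi_j$, where $\Phi_j := \mathcal{F}^{-1}[\Psi_j]$ (or its reflection) is a Littlewood--Paley kernel on $\R^2$. It has frequency support in $\{2^{j-2}\le |\cdot| \le 2^j\}$, hence all moments vanish, and it obeys $|\partial^\alpha \Phi_j(w)| \lesssim 2^{j(2+|\alpha|)}(1+2^j|w|)^{-K}$.

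For $|(s,t)|\ge 3/2$, the function $\m_0$ vanishes on the ball $B((s,t),1/2)$, because $\supp \m_0\subset \overline{B(0,1)}$. Consequently
\begin{align*}
\partial^\alpha \m_{0,j}(s,t) = \int_{\R^2} \m_0(u)\,\partial^\alpha\Phi_j((s,t)-u)\,du
\end{align*}
only sees $|(s,t)-u| \ge \max\{1/2,\ |(s,t)|-1\}$. Combining this with $\|\m_0\|_{L^1}\lesssim 1$ gives
\begin{align*}
|\partial^\alpha \m_{0,j}(s,t)| \lesssim_{\alpha,K} 2^{j(2+|\alpha|-K)}(1+|(s,t)|)^{-K}
\end{align*}
on the support of $1-\rho_0$. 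Choosing $K$ large yields $2^{-jN}$ decay together with arbitrary polynomial decay in $(s,t)$.

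Next transfer this to $\R^{2d}$. The composite $(1-\rho)\m_j(\xi,\eta) = [(1-\rho_0)\m_{0,j}](|\xi|,|\eta|)$ is smooth: near $\xi=0$ or $\eta=0$ it is a smooth even function of $|\xi|$ (respectively $|\eta|$), since $\m_{0,j}$ is even in each variable. Here I would use that $\Psi$ is even and $\m_0$ is even, so by the standard fact that a smooth even function of $s$ is a smooth function of $s^2$, it is smooth in $\xi$. By the chain rule, each $\partial_{\xi,\eta}^\beta$ of it is bounded by $\lesssim 2^{-jN}(1+|\xi|+|\eta|)^{-K'}$. The evenness point is the main obstacle. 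For the derivative bounds near the axes I would try the formula $\partial_\xi [F(|\xi|)] = F'(|\xi|)\,\xi/|\xi|$ with $F'$ odd, so $F'(r)/r$ is smooth. Iterating this, all derivatives up to order $2d+M+1$ are controlled by finitely many derivatives of $\m_{0,j}$, with polynomial weights in $(\xi,\eta)$.

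Finally, integrate by parts. To get the bound on $(1+|y|+|z|)^{M}|\nabla^s L_j(y,z)|$, multiply by monomials $y^\beta z^\gamma$ with $|\beta|+|\gamma|\le M$. This turns each monomial into derivatives of $(\xi,\eta)^{\le 1}(1-\rho)\m_j$, where the extra factor of degree at most one comes from $\nabla$. Bound the resulting integral by the $L^1(\R^{2d})$ norm of those derivatives, which is finite with size $2^{-jN}$ by the previous step once $K'>2d+1$. This gives the claim for $s=0,1$ uniformly in $j\ge1$.
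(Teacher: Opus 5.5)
Your proposal is correct and follows essentially the same route as the paper: write $\m_{0,j}$ as $\m_0$ convolved with a rescaled Schwartz kernel, use that the support of $1-\rho_0$ stays at distance at least $1/2$ from $\supp \m_0$ to get $2^{-jN}(1+|(s,t)|)^{-N}$ bounds on all derivatives, lift these to $\R^{2d}$ through $(\xi,\eta)\mapsto(|\xi|,|\eta|)$, and integrate by parts, with an extra factor of $(\xi,\eta)$ for the gradient. Your evenness argument for smoothness near $\xi=0$ or $\eta=0$ supplies a step the paper's proof passes over silently; it also requires $\rho_0$ to be even in each variable, for instance radial.
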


\begin{proof}
Set $\Phi(s, t) := \Psi (|(s, t)|)$ and $\Phi_j(s, t) := \Phi(2^j s, 2^j t)$. Then $\m_{0, j} = \m_0 * \check{\Phi}_j$, where $\supp(\m_0) \subset \{|\xi| \le 1\}$ and $\check{\Phi}_j(s, t) = 2^{2j} \check{\Phi}(2^j s, 2^j t)$. By the fact $\check{\Phi} \in \mathcal{S}(\R^2)$, for every multi-index $\gamma$ and every $A > 0$,
\begin{align*}
|\partial^{\gamma} \check{\Phi}_j(s, t)|
\lesssim_{\gamma, A} 2^{j(2 + |\gamma|)} (1 + 2^j |(s, t)| )^{-A}, \quad (s, t) \in \R^2,
\end{align*}
which gives that for all for every $N>0$ and $(s, t) \in \R^2$ with $|(s, t)| \ge \frac32$, 
\begin{align*}
|\partial^{\gamma} \m_{0, j}(s, t)|
&= \bigg| \int_{B(0,1)} \m_0(s', t') \partial^{\gamma} \check{\Phi}_j \big((s, t) - (s', t')\big) \, ds' \, dt' \bigg|
\\
&\lesssim 2^{j(2 + |\gamma|)} \int_{B(0,1)} |\m_0(s', t')| [1 + 2^j |(s, t) - (s', t')|]^{-A} \, ds' \, dt'
\\
&\lesssim 2^{j(2 + |\gamma|)} [1 + 2^j (|s| + |t|)]^{-A}
\le 2^{-jN} (1 + |s| + |t|)^{-N},
\end{align*}
provided $A \ge 2N + 2 + |\gamma|$. Recall $\m_j(\xi, \eta) = \m_{0, j}(|\xi|, |\eta|)$, and set $a_j(\xi, \eta) := (1 - \rho(\xi, \eta)) \m_j(\xi, \eta)$. Then for all multi-indices $(\alpha, \beta)$ and $N>0$,
\begin{align*}
|\partial_{\xi}^{\alpha} \partial_{\eta}^{\beta} a_j(\xi, \eta)|
& \lesssim 2^{-j N} (1 + |\xi| + |\eta|)^{-N},
\\
|\partial_{\xi}^{\alpha} \partial_{\eta}^{\beta} [(\xi, \eta) a_j(\xi, \eta)]|
& \lesssim 2^{-j N} (1 + |\xi| + |\eta|)^{-N}.
\end{align*}
This, together with integration by parts and the identity
\begin{align*}
(\Id - \Delta_{\xi})^k e^{i z \cdot \xi} = (1 + |z|^2)^k e^{i z \cdot \xi}, \quad \forall k \in \N,
\end{align*} 
implies
\begin{align*}
|L_j(y, z)|
& = \bigg| \int_{\R^{2d}} 
\frac{a_j(\xi, \eta) \, (\Id - \Delta_{\xi, \eta})^k e^{2\pi i (y \cdot \xi + z \cdot \eta)}}{(1 + 4 \pi^2 |y|^2 + 4 \pi^2 |z|^2)^k} \, d\xi \, d\eta \bigg|
\\
& = \bigg| \int_{\R^{2d}} 
\frac{e^{2\pi i (y \cdot \xi + z \cdot \eta)} (\Id - \Delta_{\xi, \eta})^k a_j(\xi, \eta)}{(1 + 4 \pi^2 |y|^2 + 4 \pi^2 |z|^2)^k} \, d\xi \, d\eta \bigg|
\\
& \lesssim \int_{\R^{2d}} \frac{2^{-j (N+2d)}(1 + |\xi| + |\eta|)^{-(N+2d)}}{(1 + |y| + |z|)^k} \, d\xi \, d\eta
\\
& \lesssim \frac{2^{-j N}}{(1 + |y| + |z|)^k}
\le \frac{2^{-j N}}{(1 + |y| + |z|)^M},
\end{align*}
for any $M>0$, where $k \ge M$ is an integer. The gradient estimate is similar.
\end{proof}


\begin{thebibliography}{99}


\bibitem{Bar}A. Barron, 
\emph{Weighted estimates for rough bilinear singular integrals via sparse domination}, 
N. Y. J. Math. \textbf{23} (2017), 779--811.


\bibitem{BC}D. Beltran and L. Cladek,
\emph{Sparse bounds for pseudodifferential operators},
J. Anal. Math. \textbf{140} (2020), 89--116.


\bibitem{BRS}D. Beltran, J. Roos, and A. Seeger,
\emph{Multi-scale sparse domination},
Mem. Amer. Math. Soc. \textbf{298} (2024), no. 1491.


\bibitem{BBL}C. Benea, F. Bernicot, and T. Luque,
\emph{Sparse bilinear forms for Bochner Riesz multipliers and applications},
Trans. London Math. Soc. \textbf{4} (2017), 110--128.


\bibitem{BFP}F. Bernicot, D. Frey, and S. Petermichl, 
\emph{Sharp weighted norm estimates beyond Calder\'{o}n--Zygmund theory},  
Anal. PDE \textbf{9} (2016), 1079--1113.


\bibitem{BGSY}F. Bernicot, L. Grafakos, L. Song, and L. Yan,
\emph{The bilinear Bochner--Riesz problem},
J. Anal. Math. \textbf{127} (2015), 179--217.


\bibitem{BCS}A. Bhojak, S.S. Choudhary, and S. Shrivastava,
\emph{Bilinear Bochner--Riesz means for convex domains and Kakeya maximal function},
Math. Ann. \textbf{391} (2025), 2281--2318.


\bibitem{BCSS}A. Bhojak, S.S. Choudhary, and S. Shrivastava, and K. Shuin,
\emph{Sharp endpoint $L^p$-estimates for bilinear spherical maximal functions},
https://arxiv.org/abs/2310.00425.


\bibitem{BF}T. Borges and B. Foster,
\emph{Bounds for lacunary bilinear spherical and triangle maximal functions},
J. Fourier Ana. Appl. \textbf{30} (2024), No. 55.


\bibitem{BFO}T. Borges, B. Foster, and Y. Ou,
\emph{Sobolev smoothing estimates for bilinear maximal operators with fractal dilation sets},
J. Funct. Anal. \textbf{288} (2025), 110694.


\bibitem{BFOPZ}T. Borges, B. Foster, Y. Ou, J. Pipher, and Z. Zhou,
\emph{Sparse bounds for the bilinear spherical maximal function},
J. London. Math. Soc. (2) \textbf{107} (2023), 1409--1449.


\bibitem{CIRXY}M. Cao, G. Iba\~{n}ez-Firnkorn, I.P. Rivera-R\'{i}os, Q. Xue, and K. Yabuta,
\emph{A class of multilinear bounded oscillation operators on measure spaces and applications},
Math. Ann. \textbf{388} (2024), 3627--3755.


\bibitem{CXY}M. Cao, Q. Xue, and K. Yabuta,
\emph{Weak and strong type estimates for the multilinear pseudo-differential operators},
J. Funct. Anal. \textbf{278} (2020), 108454. 


\bibitem{P.C}A.P. Calder\'{o}n, 
\emph{Commutators of singular integral operators}, 
Proc. Natl. Acad. Sci. USA \textbf{53} (1965), 1092--1099. 


\bibitem{C.C}C. Calder\'{o}n, 
\emph{On commutators of singular integrals},  
Studia Math. \textbf{53} (1975), 139--174. 


\bibitem{CO}L. Cladek and Y. Ou,
\emph{Sparse domination of Hilbert transforms along curves},
Math. Res. Lett. \textbf{25} (2018), 415--436.


\bibitem{CCDO}J.M. Conde-Alonso, A. Culiuc, F. Di Plinio, and Y. Ou,
\emph{A sparse domination principle for rough singular integrals}, 
Anal. PDE \textbf{10} (2017), 1255--1284.


\bibitem{CDO}A. Culiuc, F. Di Plinio, and Y. Ou,
\emph{Domination of multilinear singular integrals by positive sparse forms},
J. Lond. Math. Soc. \textbf{98} (2018), 369--392.


\bibitem{DS}G. Dosidis and L. Slav\'{i}kov\'{a},
\emph{Multilinear singular integrals with homogeneous kernels near $L^1$},
Math. Ann. \textbf{389} (2024), 2259--2271.


\bibitem{DGHST}G. Diestel, L. Grafakos, P. Honz\'{i}k, Z. Si, and E. Terwilleger,
\emph{Method of rotations for bilinear singular integrals}, 
Commun. Math. Anal. 2011, Conference 3, 99--107.


\bibitem{FS}C. Fefferman and E.M. Stein,
\emph{Some maximal inequalities},
Amer. J. Math. \textbf{93} (1971), 107--115.


\bibitem{Gra1}L. Grafakos, 
\emph{Classical Fourier Analysis}, 
Third edition. GTM, 249. Springer, New York, 2014. 


\bibitem{GHH}L. Grafakos, D. He, and P. Honz\'{i}k, 
\emph{Rough bilinear singular integrals},
Adv. Math. \textbf{326} (2018), 54--78.


\bibitem{GHHP23}L. Grafakos, D. He, P. Honz\'{i}k, and B.J. Park,
\emph{Initial $L^2 \times \cdots \times L^2$ bounds for multilinear operators},
Trans. Amer. Math. Soc. \textbf{376} (2023), 3445--3472.


\bibitem{GHHP24}L. Grafakos, D. He, P. Honz\'{i}k, and B.J. Park,
\emph{Multilinear rough singular integral operators},
J. London Math. Soc. \textbf{109} (2024), No. e12867. 


\bibitem{GMNT}L. Grafakos, A. Miyachi, H.V. Nguyen, and N. Tomita,
\emph{Multilinear Fourier multipliers with minimal Sobolev regularity, II},
J. Math. Soc. Japan \textbf{69} (2017), 529--562.


\bibitem{GWX}L. Grafakos, Z. Wang, and Q. Xue,
\emph{Sparse domination and weighted estimates for rough bilinear singular integrals},
J. Fourier Anal. Appl. \textbf{28} (2022), No. 80.


\bibitem{HP}D. He and B.J. Park,
\emph{Improved estimates for bilinear rough singular integrals},
Math. Ann. \textbf{386} (2023), 1951--1978.


\bibitem{Hu}B. Hu,
\emph{Sparse domination of singular Radon transform},
J. Math. Pures Appl. (9) \textbf{139} (2020), 235--316.


\bibitem{IPS20}A. Iosevich, E.A. Palsson, and S.R. Sovine,
\emph{The triangle averaging operator},
J. Funct. Anal. \textbf{279} (2020), 108671.


\bibitem{IPS22} A. Iosevich, E.A. Palsson, and S.R. Sovine,
\emph{Simplex averaging operators: quasi-Banach and $L^p$-improving bounds in lower dimensions},
J. Geom. Anal. \textbf{32} (2022), 87.


\bibitem{JL}E. Jeong and S. Lee,
\emph{Maximal estimates for the bilinear spherical averages and the bilinear Bochner--Riesz operators},
J. Funct. Anal. \textbf{279} (2020), 108629.


\bibitem{JS}K. Jotsaroop and S. Shrivastava,
\emph{Maximal estimates for bilinear Bochner--Riesz means},
Adv. Math. \textbf{395} (2022), No. 108100.


\bibitem{KMT}T. Kato, A. Miyachi, and N. Tomita,
\emph{Boundedness of bilinear pseudo-differential operators of
$S_{0,0}$-type in Wiener amalgam spaces and in Lebesgue spaces},
J. Math. Anal. Appl. \textbf{515} (2022), No. 126382.


\bibitem{KS}T. Kato and N. Shida, 
\emph{A remark on the condition $1/p = 1/p_1 + 1/p_2$ for boundedness of bilinear pseudo-differential operators with exotic symbols}, 
Hokkaido Math. J. \textbf{52} (2023), 285--300.


\bibitem{KL}R. Kesler and M.T. Lacey,
\emph{Sparse endpoint estimates for Bochner--Riesz multipliers on the plane},
Collect. Math. \textbf{69} (2018), 427--435.


\bibitem{KraL}B. Krause and M.T. Lacey,
\emph{Sparse bounds for maximally truncated oscillatory singular integrals},
Ann. Sc. Norm. Super. Pisa Cl. Sci. (5) \textbf{20} (2020), 415--435.


\bibitem{Lac}M.T. Lacey,
\emph{Sparse bounds for spherical maximal functions},
J. Anal. Math. \textbf{139} (2019), 613--635.


\bibitem{LMR}M.T. Lacey, D. Mena, and M.C. Reguera,
\emph{Sparse bounds for Bochner–Riesz multipliers},
J. Fourier Anal. Appl. \textbf{25} (2019), 523--537.


\bibitem{LS}M.T. Lacey and S. Spencer,
\emph{Sparse bounds for oscillatory and random singular integrals},
New York J. Math. \textbf{23} (2017), 119--131.


\bibitem{Ler}A.K. Lerner, 
\emph{On pointwise estimates involving sparse operators},
New York J. Math. \textbf{22} (2016), 341--349.


\bibitem{LN}A.K. Lerner and F. Nazarov,
\emph{Intuitive dyadic calculus: the basics},
Expo. Math. \textbf{37} (2019), 225--265.


\bibitem{LMO}K. Li, J.M. Martell, and S. Ombrosi,
\emph{Extrapolation for multilinear Muckenhoupt classes and applications to the bilinear Hilbert transform},
Adv. Math. \textbf{373} (2020), 107286.


\bibitem{MT13}A. Miyachi and N. Tomita,
\emph{Calder\'{o}n--Vaillancourt-type theorem for bilinear operators},
Indiana Univ. Math. J. \textbf{62} (2013), 1165--1201.


\bibitem{MT20}A. Miyachi and N. Tomita,
\emph{Bilinear pseudo-differential operators with exotic symbols},
Ann. Inst. Fourier (Grenoble) \textbf{70} (2020), 2737--2769.


\bibitem{Nie}Z. Nieraeth, 
\emph{Quantitative estimates and extrapolation for multilinear weight classes}, 
Math. Ann. \textbf{375} (2019), 453--507.


\bibitem{Obe}R. Oberlin,
\emph{Sparse bounds for a prototypical singular Radon transform},
Canad. Math. Bull. \textbf{62} (2019), 405--415.


\bibitem{PS}E.A. Palsson and S.R. Sovine,
\emph{Sparse bounds for maximal triangle and bilinear spherical averaging operators},
J. Fourier Anal. Appl. \textbf{32} (2026), No. 2.


\bibitem{RS}S. Rodr\'{i}guez-L\'{o}pez and W. Staubach, 
\emph{Estimates for rough Fourier integral and pseudodifferential operators and applications to the boundedness of multilinear operators},
J. Funct. Anal. \textbf{264} (2013), 2356--2385.


\bibitem{Stein}E.M. Stein, 
\emph{Singular Integrals and Differentiability Properties of Functions},
Princeton Mathematical Series, No. 30, 
Princeton University Press, Princeton, NJ, 1970.

\end{thebibliography}
\end{document}